\newtheorem{theorem}{Theorem}[section]
\newtheorem{lemma}[theorem]{Lemma}
\newtheorem{proposition}[theorem]{Proposition}
\newtheorem{question}[theorem]{Question}
\newtheorem{corollary}[theorem]{Corollary}
\newtheorem{fact}[theorem]{Fact}
\theoremstyle{definition}
\newtheorem{definition}[theorem]{Definition}
\newtheorem{example}[theorem]{Example}
\theoremstyle{remark}
\newtheorem{remark}[theorem]{Remark}
\numberwithin{equation}{section}
\newcommand{\frakc}{\mathfrak{c}}
\newcommand{\eps}{\varepsilon}
\newcommand{\N}{\mathbb{N}}
\newcommand{\R}{\mathbb{R}}
\newcommand{\Q}{\mathbb{Q}}
\newcommand{\G}{\mathbb{G}}
\newcommand{\aA}{\mathcal{A}}
\newcommand{\bB}{\mathcal{B}}
\newcommand{\cC}{\mathcal{C}}
\newcommand{\dD}{\mathcal{D}}
\newcommand{\fF}{\mathcal{F}}
\newcommand{\jJ}{\mathcal{J}}
\newcommand{\kK}{\mathcal{K}}
\newcommand{\sS}{\mathcal{S}}
\newcommand{\uU}{\mathcal{U}}
\newcommand{\zZ}{\mathcal{Z}}
\newcommand{\concat}{{^\smallfrown}}
\DeclareMathOperator{\der}{\mathrm{d}}
\newcommand{\ol}{\overline}
\newcommand{\rstr}{\restriction}
\DeclareMathOperator{\sgn}{sgn}
\newcommand{\sm}{\setminus}
\newcommand{\sub}{\subseteq}
\DeclareMathOperator{\supp}{supp}
\newcommand{\wh}{\widehat}
\newcommand{\seq}[2]{\big\langle#1\colon\ #2\big\rangle}
\newcommand{\seqn}[1]{\big\langle#1\colon\ n\io\big\rangle}
\newcommand{\seqk}[1]{\big\langle#1\colon\ k\io\big\rangle}
\newcommand{\ctblsub}[1]{\left[#1\right]^\omega}
\newcommand{\finsub}[1]{\left[#1\right]^{<\omega}}
\newcommand{\iA}{\in\aA}
\newcommand{\io}{\in\omega}
\newcommand{\wo}{{\wp(\omega)}}
\newcommand{\bo}{{\beta\omega}}
\newcommand{\cso}{\ctblsub{\omega}}
\newcommand{\fso}{\finsub{\omega}}
\newcommand{\elli}{{\ell_\infty}}
\newcommand{\Cantor}{2^\omega}
\newcommand{\noproof}{\hfill$\Box$}
\newcommand{\grp}{the Grothendieck property}
\newcommand{\gr}{Grothendieck\ }
\newcommand{\nik}{Nikodym\ }
\newcommand{\jn}{Josefson--Nissenzweig\ }
\begin{document}

\title[The Josefson--Nissenzweig theorem, Grothendieck property, and...]{The Josefson--Nissenzweig theorem, Grothendieck property, and finitely supported measures on compact spaces}
\author[J. K\k{a}kol]{J. K\k{a}kol}
\address{Faculty of Mathematics and Computer Science, A. Mickiewicz University, Pozna\'n, Poland, and Institute of Mathematics, Czech Academy of Sciences, Prague, Czech Republic.}
\email{kakol@amu.edu.pl}
\author[D.\ Sobota]{D. Sobota}
\address{Universit\"at Wien, Institut f\"ur Mathematik,
Kurt G\"odel Research Center, Augasse 2-6, UZA 1 --- Building 2, 1090
Wien, Austria.}
\email{ein.damian.sobota@gmail.com}
\urladdr{www.logic.univie.ac.at/~{}dsobota}
\author[L. Zdomskyy]{L. Zdomskyy}
\address{Universit\"at Wien, Institut f\"ur Mathematik,
Kurt G\"odel Research Center, Augasse 2-6, UZA 1 --- Building 2, 1090
Wien, Austria.}
\email{lzdomsky@gmail.com}
\urladdr{www.logic.univie.ac.at/~{}lzdomsky}
\thanks{The research of the first  named author is supported by the GA\v{C}R project 20-22230L and
RVO: 67985840. The second and third named authors were supported by the Austrian Science Fund FWF, Grants I 2374-N35, I 3709-N35, M 2500-N35.}

\begin{abstract}
The celebrated Josefson--Nissenzweig theorem implies that for a Banach space $C(K)$ of continuous real-valued functions on an infinite compact space $K$ there exists a sequence of Radon measures $\langle\mu_n\colon\ n\in\omega\rangle$ on $K$ which is weakly* convergent to the zero measure on $K$ and such that $\big\|\mu_n\big\|=1$ for every $n\io$. We call such a sequence of measures \textit{a Josefson--Nissenzweig sequence}. In this paper we study the situation when the space $K$ admits a Josefson--Nissenzweig sequence of measures such that its every element has finite support. We prove among the others that $K$ admits such a Josefson--Nissenzweig sequence if and only if $C(K)$ does not have the Grothendieck property restricted to functionals from the space $\ell_1(K)$. We also investigate miscellaneous analytic and topological properties of finitely supported Josefson--Nissenzweig sequences on general Tychonoff spaces.

We prove that various properties of compact spaces guarantee the existence of finitely supported Josefson--Nissenzweig sequences. One such property is, e.g., that a compact space can be represented as the limit of an inverse system of compact spaces based on simple extensions. An immediate consequence of this result is that many classical consistent examples of Efimov spaces, i.e. spaces being counterexamples to the famous Efimov problem, admit such sequences of measures.

Similarly, we show that if $K$ and $L$ are infinite compact spaces, then their product $K\times L$ always admits a finitely supported Josefson--Nissenzweig sequence. As a corollary we obtain a constructive proof that the space $C_p(K\times L)$ contains a complemented copy of the space $c_0$ endowed with the pointwise topology---this generalizes results of Cembranos and Freniche.

Finally, we provide a direct proof of the Josefson--Nissenzweig theorem for the case of Banach spaces $C(K)$.
\end{abstract}


\maketitle

\section{Introduction}

The celebrated Josefson--Nissenzweig theorem, stating that for every infinite-dimensional Banach space there is a sequence $\seqn{x_n^*}$ in the dual space $X^*$ which is weakly* convergent to $0$ and such that $\big\|x_n^*\big\|=1$ for every $n\io$, is one of the most fundamental results in Banach space theory and has found many remarkable applications. For instance, it was used to obtain such structural results as: (1) for every infinite compact space $K$ and infinite-dimensional Banach space $X$ the space $C(K,X)$ of all continuous functions from $K$ to $X$ is not a Grothendieck space (Khurana \cite{Khu78}); (2) for every infinite compact space $K$ the space $C(K\times K)$ of continuous real-valued functions on $K\times K$ contains a complemented copy of the Banach space $c_0$ (Cembranos \cite{Cem84}, Freniche \cite{Fre84}); (3) if $X$ is a Banach space, then $X$ contains a complemented copy of the Banach space $\ell_1$ if and only if for every infinite-dimensional Banach space $Y$ there exists a non-compact bounded linear operator $T\colon X\to Y$ (Bator \cite{Bat92}); etc. Several interesting equivalent statements of the theorem have been also found e.g. by Borwein and Fabian \cite{BF93}.

The original proofs of the theorem due to Josefson \cite{Jos75} and Nissenzweig \cite{Nis75} were rather long and complicated. Alternative or simpler proofs were later obtained by Hagler and Johnson \cite{HJ77}, Bourgain and Diestel \cite{BD84}, Behrends \cite{Beh95}, or Mujica \cite{Muj03}. The conclusion of the theorem was also proved valid separately for several particular classes of Banach spaces, e.g. spaces $C(K)$ of continuous real-valued functions on extremely disconnected compact spaces $K$ (And\^{o} \cite{And61}, 14 years before the works of Josefson and Nissenzweig!), Banach lattices (W\'{o}jtowicz \cite{Woj07}), Banach spaces which are not Asplund but have weakly* sequentially compact dual unit balls (H\'ajek and Talponen \cite{HT14}), etc. In Section \ref{section:plebanek_proof} of our paper we provide a simple measure-theoretic proof of the Josefson--Nissenzweig theorem for the class of all Banach spaces $C(K)$ of continuous real-valued functions on compact spaces $K$.

The Josefson--Nissenzweig theorem does not hold for general topological vector spaces, e.g. it is known that it fails for \textit{Fr\'echet  spaces}, i.e. metrizable and complete locally convex spaces (see Bonet \cite{Bon91}, Lindstr\"om and Schlumprecht \cite{LS93}, Bonet, Lindstr\"om and Valdivia \cite{BLV}). Banakh, K\k{a}kol and \'Sliwa \cite{BKS19} studied the validity of the theorem in the class of \textit{$C_p(X)$-spaces}, i.e. spaces of continuous real-valued functions on Tychonoff spaces $X$ endowed with the product topology, and found its remarkable connection with the Separable Quotient Problem for $C_p(X)$-spaces and complementability of the space $(c_0)_p$, the classical Banach space $c_0$ but equipped with the product topology (see Section \ref{section:notation} for explanation of the notation). Namely, they proved that given a Tychonoff space $X$, there exists a sequence $\seqn{\mu_n}$ of measures with finite supports on $X$ such that $\lim_{n\to\infty}\int_Xf{\der}\mu_n=0$ for every $f\in C_p(X)$ and $\big\|\mu_n\big\|=1$ for every $n\io$ if and only if $C_p(X)$ contains a complemented copy of $(c_0)_p$. 
Note that there is a natural one-to-one correspondence between continuous functionals on the topological vector space $C_p(X)$ and measures on $X$ with finite supports, so their result can be considered as a characterization of those $C_p(X)$-spaces for which the Josefson--Nissenzweig theorem holds. Recall also that by the Riesz representation theorem every continuous functional on a space $C(K)$ of continuous real-valued functions on a compact space $K$ endowed with the supremum norm corresponds similarly to a unique Radon measure on $K$. We may thus introduce the following notions (cf. \cite[page 1122]{BF93}).

\begin{definition}\label{def:jn_sequence}
A sequence $\seqn{\mu_n}$ of Radon measures on a Tychonoff space $X$ is \textit{a Josefson--Nissenzweig sequence} (or \textit{a JN-sequence}) on $X$ if $\lim_{n\to\infty}\int_Xf{\der}\mu_n=0$ for every $f\in C(X)$ and $\big\|\mu_n\big\|=1$ for every $n\io$.
\end{definition}

\begin{definition}\label{def:fsjn_sequence}
A sequence $\seqn{\mu_n}$ of Radon measures on a Tychonoff space $X$ is \textit{finitely supported} (resp. \textit{countably supported}) if $\mu_n$ has finite (resp. countable) support for every $n\io$.
\end{definition}

\begin{definition}\label{def:fsjn_sequence}
A finitely supported JN-sequence (resp. countably supported JN-sequence) is called in short \textit{an fsJN-sequence} (resp. \textit{a csJN-sequence}).
\end{definition}

To show that the Josefson--Nissenzweig theorem does not hold for every space $C_p(X)$, Banakh, K\k{a}kol and \'Sliwa \cite{BKS19} proved that $\beta\N$, the \v{C}ech-Stone compactification of the space $\N$ of natural numbers, does not admit any fsJN-sequences of measures, although it clearly admits \textit{some} JN-sequence by the general Josefson--Nissenzweig theorem. This motivated them to introduce the following property.

\begin{definition}[\cite{BKS19}]
Given a Tychonoff space $X$, we say that $C_p(X)$ has \textit{the Josefson--Nissenzweig property} (\textit{the JNP}, in short) if $X$ admits an fsJN-sequence of measures.
\end{definition}

For the sake of precision, we additionally declare the following.

\begin{definition}\label{def:fsjnp}
A Tychonoff space $X$ has \textit{the finitely supported Josefson--Nissenzweig property} (or \textit{the fsJNP}) if $X$ admits an fsJN-sequence. Similarly, $X$ has \textit{the countably supported Josefson--Nissenzweig property} (or \textit{the csJNP)} if $X$ admits a csJN-sequence.
\end{definition}

Thus, given a Tychonoff space $X$, $C_p(X)$ has the JNP if and only if $X$ has the fsJNP, and it follows that e.g. every metric non-discrete space has the fsJNP and that $\beta\N$ does not have the fsJNP. The question hence arises.

\begin{question}\label{ques:main}
Which Tychonoff spaces have finitely supported JN-sequences of measures?
\end{question}

There are several motivations standing behind Question \ref{ques:main}, which constitutes the main research problem of this paper. First of all, we would like to continue the line of research presented in \cite{BKS19} and attempt to provide necessary and sufficient conditions for Tychonoff spaces (or, in particular, compact Hausdorff spaces) implying that their $C_p$-spaces will contain a complemented copy of the space $(c_0)_p$. This would allow us, e.g., to better understand the Separable Quotient Problem for topological vector spaces of the form $C_p(X)$ (see \cite{KS18} and \cite{BKS18}). Second, because of the utility of the Josefson--Nissenzweig theorem we would like to understand it more thoroughly. In particular, since the original proofs are purely existential, it still seems necessary to understand how the sequence in the theorem can be obtained in a \textit{constructive} way, what properties it may have and to what extent it can be modified. Third, by answering Question \ref{ques:main} we could better comprehend the nature and behavior of convergent sequences of Radon measures on compact spaces, objects playing a fundamental role e.g. in probability theory. And the last, we would like to know how \textit{simple} JN-sequences on Tychonoff spaces may be. We approach those issues in the following threefold manner. 

\medskip

In the first main research part of the paper, i.e. Sections \ref{section:jn_sequences} and \ref{section:sizes_of_supports}, we do assume that a given Tychonoff space has the fsJNP and study what kind of fsJN-sequences it may carry. In particular, starting from a given fsJN-sequence $\seqn{\mu_n}$ of measures on a Tychonoff space $X$, by manipulating its elements and studying special limit subsets of the union of the supports of $\mu_n$'s, we prove that there is another fsJN-sequence $\seqn{\nu_n}$ on $X$ with disjoint supports, that is, $\supp\big(\nu_n\big)\cap\supp\big(\nu_{n'}\big)=\emptyset$ for every $n\neq n'\io$ (Theorem \ref{theorem:disjoint_supps}). Since, by the virtue of the Schur property, fsJN-sequences are never weakly convergent to $0$, this result is related to the Dieudonn\'e--Grothendieck characterization of non-weakly compact subsets of the dual Banach space $C(X)^*$ for $X$ compact (see \cite[Theorem 14, Chapter VII]{Die84}).

It is immediate that if a Tychonoff space contains a non-trivial convergent sequence (e.g. if it is metric and non-discrete), then it admits an fsJN-sequence $\seqn{\mu_n}$ such that $\big|\supp\big(\mu_n\big)\big|=2$. Example \ref{example:schachermayer} shows that the converse does not hold, and in Example \ref{example:plebanek} we study an instance of a compact space with the fsJNP and such that its every fsJN-sequence $\seqn{\mu_n}$ satisfies the conditions $\lim_{n\to\infty}\big|\supp\big(\mu_n\big)\big|=\infty$. These two examples show that the finitely supported Josefson--Nissenzweig property for non-metric compact spaces may be realized in two extremely different ways. However, these two ways are in some sense the only ones. Namely, in Theorem \ref{theorem:sizes_of_supps} we prove that for a given compact space $K$ if there is an fsJN-sequence $\seqn{\mu_n}$ on $K$ for which there exists $M\in\N$ such that $\big|\supp\big(\mu_n\big)\big|\le M$ for every $n\io$, then there is another fsJN-sequence $\seqn{\nu_n}$ on $K$ for which we have $\big|\supp\big(\nu_n\big)\big|=2$ for every $n\io$. Thus either there is a very simple fsJN-sequence on $K$, or every fsJN-sequence on $K$ gets more and more complicated. For metric compact spaces, various examples of  ``complicated'' fsJN-sequences are presented in Proposition \ref{prop:square_fsjnseqs}.

\medskip

The second part of the paper is devoted to the study of relations between the finitely supported Josefson--Nissenzweig property of compact spaces and the Grothendieck property of their Banach spaces of continuous functions. Recall that a Banach space $X$ is \textit{a Grothendieck space} or has \textit{the Grothendieck property} if every weakly* convergent sequence of functionals in the dual space $X^*$ of $X$ is also weakly convergent. Similarly, we say that a compact space $K$ has \textit{the Grothendieck property} if $C(K)$ is a Grothendieck space. Grothendieck \cite{Gro53} proved  that spaces of the form $\ell_\infty(\Gamma)$ are Grothendieck spaces (or, equivalently, spaces $C(K)$ for $K$ compact and extremely disconnected). Later, many other Banach spaces were recognized to be Grothendieck, e.g. von Neumann algebras (Pfitzner \cite{Pfi94}), the space $H^\infty$ of bounded analytic functions on the unit disc (Bourgain \cite{Bou83}), spaces of the form $C(K)$ for $K$ an F-space (Seever \cite{See68}; see also Haydon \cite{Hay81}, Molt\'o \cite{Mol81}, Schachermayer \cite{Sch82} or Freniche \cite{Fre84_vhs}), etc. On the other hand, the space $c_0$ is not Grothendieck, since a separable Banach  space is Grothendieck if and only if it is reflexive. 
In fact, Cembranos \cite{Cem84} proved that a space $C(K)$ is Grothendieck if and only if it does not contain any complemented copy of $c_0$. For more information on Grothendieck $C(K)$-spaces we refer the reader to the papers of Haydon \cite{Hay01}, Koszmider \cite{Kos04}, or Sobota and Zdomskyy \cite{SZ19}.

The Josefson--Nissenzweig theorem has found numerous applications in the study of Grothendieck Banach spaces of continuous functions, see e.g. Khurana \cite{Khu78} and Freniche \cite{Fre84}. Also, since the characterization of Grothendieck $C(K)$-spaces due to Cembranos (which we mentioned in the previous paragraph) looks very similar to the above stated characterization of the Josefson--Nissenzweig property of $C_p(X)$-spaces by Banakh, K\k{a}kol and \'Sliwa, it seemed natural to seek a connection between the finitely supported Josefson--Nissenzweig property of compact spaces and the Grothendieck property of their spaces of continuous functions. To describe such a relation, in Section \ref{section:grothendieck} we introduce \textit{the $\ell_1$-Grothendieck property}. This new property can be described as the restriction of the Grothendieck property of a given space $C(K)$ for $K$ compact to the space of measures on $K$ having countable (equivalently, finite) support, i.e. to the functionals from the subspace $\ell_1(K)$ of the dual $C(K)^*$, see Definition \ref{def:ell1_gr}. Then we prove in Theorem \ref{theorem:ell1_grothendieck_equiv_no_fsjnp} that a compact space $K$ has the fsJNP if and only if its space $C(K)$ does not have the $\ell_1$-Grothendieck property. It follows immediately that if $C(K)$ is a Grothendieck space, then $C_p(K)$ does not have the JNP---this generalizes the result of \cite{BKS19} stating that $C_p(\beta\N)$ does not have the JNP.

The $\ell_1$-Grothendieck property follows from the general Grothendieck property, but the converse is not true. Namely, in Section \ref{section:ell1_gr_no_gr} we construct a separable compact space $K$, in fact a continuous image of $\beta\N$, such that $C(K)$ has the $\ell_1$-Grothendieck property but it does not have the Grothendieck property. The construction was suggested to us by G. Plebanek and it generalizes results from his unpublished note \cite{Ple05}, where he constructed a compact space $L$ such that $C(L)$ does not have the Grothendieck property but for every separable closed subset $L'\sub L$ the space $C(L')$ is Grothendieck.

\medskip

The third main part of the paper deals with various classes of compact spaces having the finitely supported Josefson--Nissenzweig property. The first major class consists of compact spaces obtained as limits of inverse systems based on so-called simple extensions, see Definition \ref{def:inv_sys_simple_ext}. Intuitively speaking, such compact spaces may be thought as inverse limits of sequences of compact spaces such that every successor space in a sequence is obtained from its predecessor by splitting only one point into two new points. E.g. every metrizable compact space can be obtained in such a way; see also Koppelberg \cite{Kop88,Kop89} and Borodulin--Nadzieja \cite{PBN07} for many non-trivial examples coming from the theory of Boolean algebras.

It is a folklore fact that every compact space obtained as the limit of an inverse system based on simple extensions does not have the Grothendieck property. In Corollary \ref{cor:min_gen_no_ell_1_gr} we generalize this result and prove that every such compact space does not have even the $\ell_1$-Grothendieck property, or equivalently, that it admits an fsJN-sequence of measures. As a corollary we obtain that many classical (consistent) examples of Efimov spaces do have the fsJNP, too; consequently, they fail to have the $\ell_1$-Grothendieck property (Corollary \ref{cor:efimov_fsjnp}). Recall that an infinite compact space $K$ is \textit{an Efimov space} if $K$ neither contains any non-trivial convergent sequences nor any copies of $\beta\N$. The famous Efimov problem asks if there exists an Efimov space (in ZFC). So far many consistent examples of Efimov spaces have been found, see e.g. Fedorchuk \cite{Fed77}, Dow \cite{Dow05}, Dow and Fremlin \cite{DF07}, Dow and Shelah \cite{DS13}, Sobota \cite{Sob19}, but no ZFC example is known. For more information concerning the Efimov problem, we refer the reader to Hart's survey \cite{Har07}. A weaker form of the problem in terms of the space $\ell_\infty$ and Grothendieck $C(K)$-spaces was also shortly discussed in Koszmider and Shelah \cite[Section 3]{KS12}.

In Section \ref{section:tau_simple_ext} we provide a generalization of Corollary \ref{cor:min_gen_no_ell_1_gr} to a broader class of compact spaces obtained as the limits of inverse systems. The generalization has interesting connections (Corollary \ref{cor:delavega_fsjnp}) with the Separable Quotient Problem for $C_p(X)$-spaces in the context of \cite{KS18}. An interesting tool proved and used in the section is Proposition \ref{prop:transport_fsjn_seq} asserting that in particular cases we can obtain an fsJN-sequence on a given compact space by ``transporting'' it from the standard Cantor space.

Khurana's result, mentioned in the first paragraph, yields that for every infinite compact space $K$ the Banach space $C(K\times K)\cong C(K,C(K))$ is not Grothendieck. This fact, together with the aforementioned theorem of Cembranos, implies that $C(K\times K)$ contains a complemented copy of the space $c_0$---the result also proved by Freniche \cite{Fre84}. In Section \ref{section:products} we generalize this theorem by proving that given two infinite compact spaces $K$ and $L$ their product $K\times L$ always admits an fsJN-sequence (Theorem \ref{theorem:products_fsjnp}), thus, in particular, $K\times L$ does not have the $\ell_1$-Grothendieck property and the space $C_p(K\times L)$ contains a complemented copy of the space $(c_0)_p$. The result of Cembranos and Freniche follows immediately (see Corollary \ref{cor:complemented_c0p_complemented_c0}). It is worth to mention here that the theorems of Khurana, Cembranos and Freniche are all proved with an aid of the Josefson--Nissenzweig theorem and therefore their original proofs are purely existential. Our proof of Theorem \ref{theorem:products_fsjnp} is different---we provide a direct and simple definition of the required fsJN-sequence and use basic probability tools to demonstrate its properties.

\section*{Acknowledgments}

We would like to thank Grzegorz Plebanek for many valuable comments and discussions which helped us to obtain several of the results contained in the paper, in particular, we are grateful for presenting us the ideas of the proofs provided in Sections \ref{section:plebanek_proof} and \ref{section:ell1_gr_no_gr}.

\section{Preliminaries and notation\label{section:notation}}

The notations and terminology used in the paper are rather standard and follow the books of Kunen \cite{Kun80} (set theory), Engelking \cite{Eng89} (general topology), Koppelberg \cite{Kop89hbk} (Boolean algebras), Diestel \cite{Die84} (Banach space theory), Tkachuk \cite{TkaVol1} ($C_p$-theory), and Bogachev \cite{Bog07} (measure theory).

\medskip

In particular, we use the following standard notions and symbols. If $X$ is a set and $A$ its subset, then $A^c=X\sm A$ and $\chi_A$ denotes the characteristic function of $A$ in $X$. The cardinality of a set $X$ is denoted by $|X|$. $\omega$ denotes the first infinite cardinal number and $\omega_1$ denotes the first uncountable cardinal number. If $\kappa$ is a cardinal, finite or infinite, then by $[X]^\kappa$ we mean the family of all subsets of $X$ of size $\kappa$; in particular, $\ctblsub{X}$ denotes the families of all countable subsets of $X$. The families of all subsets of $X$ and all finite subsets of $X$ are denoted by $\wp(X)$ and $\finsub{X}$, respectively. The continuum, i.e. the size of the real line $\R$, is denoted either by $\frakc$ or $2^\omega$. We also put simply $\R_+=[0,\infty)$ and $\omega_+=\omega\sm\{0\}$.

\medskip

Throughout this paper, we assume that all topological spaces we consider are \textbf{Tychonoff}, so, e.g., every compact space we deal with is Hausdorff. The weight of a topological space $X$ is denoted by $w(X)$. If $X$ is a space and $A$ its subspace, then $\ol{A}^X$ denotes the closure of $A$ in $X$. We will often omit the superscript and write simply $\ol{A}$. $A^\circ$ and $\partial A$ denote the interior and the boundary of $A$ in $X$, respectively. $\beta X$ denotes the \v{C}ech--Stone compactification of $X$. Given two spaces $X$ and $Y$, $X\approx Y$ means that they are homeomorphic. The Cantor space will be usually denoted by $\Cantor$. We also usually identify $\omega$ with the discrete space $\N$ of natural numbers.

\medskip

If $\aA$ is a Boolean algebra, then by $St(\aA)$ we denote its Stone space. Recall that $St(\aA)$ is a totally disconnected compact space and that the Boolean algebra of clopen subsets of $St(\aA)$ is isomorphic to $\aA$. For every element $A\iA$ by $[A]_\aA$ we denote the corresponding clopen subset of $St(\aA)$.

\medskip

If $X$ is a (Tychonoff) space, then by $C_p(X)$ we denote the space of real-valued continuous functions on $X$ endowed with the pointwise topology (i.e. the topology inherited from the product space $\R^X$). If $E$ is a topological vector space, then we say that $C_p(X)$ contains \textit{a complemented copy of $E$} if there are closed linear subspaces $E_1$ and $E_2$ of $C_p(X)$ such that $C_p(X)$ is a direct algebraic sum of $E_1$ and $E_2$ (i.e. $C_p(X)=E_1+E_2$ and $E_1\cap E_2=\{0\}$), $E_1$ is isomorphic to $E$, and the natural projection from $C_p(X)$ onto $E_1$ is continuous. If $K$ is a compact space, then $C(K)$ denotes the Banach space of real-valued continuous functions on $K$ endowed with the supremum norm defined as $\|f\|_\infty=\sup\big\{|f(x)|\colon x\in K\big\}$ for every $f\in C(K)$. The symbols $\ell_1$, $\elli$, $c$ and $c_0$ denote the usual standard sequence Banach spaces. We also write $(c_0)_p$ for the space $\big\{x\in\R^\omega\colon \lim_{n\to\infty}x(n)=0\big\}$ but endowed with the product topology inherited from $\R^\omega$.

\medskip

If we say that \textit{$\mu$ is a measure on a topological space $X$}, then we mean that $\mu$ is a signed $\sigma$-additive measure defined on the Borel $\sigma$-algebra of $X$ and that $\mu$ is Radon, i.e. $\mu$ is (outer and inner) regular and locally finite. We define the norm $\|\mu\|$ of $\mu$ as
\[\|\mu\|=\sup\big\{|\mu(A)|+|\mu(B)|\colon\ A,B\sub X\text{ are Borel and disjoint}\big\}\]
If $X$ is compact, then $\|\mu\|<\infty$. $|\mu|$ denotes the variation of $\mu$---it follows that $|\mu|(X)=\|\mu\|$. On the other hand, if we say that \textit{$\mu$ is a measure on a Boolean algebra $\aA$}, then we assume that it is signed, finitely additive and that the norm $\|\mu\|$ of $\mu$ defined similarly as
\[\|\mu\|=\sup\big\{|\mu(A)|+|\mu(B)|\colon\ A,B\iA, A\wedge B=0_\aA\big\}\]
is finite. Note that every measure $\mu$ on a Boolean algebra $\aA$ (and hence on the Boolean algebra of clopen subsets of $St(\aA)$) has a unique extension to a measure $\wh{\mu}$ on $St(\aA)$ and that $\|\mu\|=\|\wh{\mu}\|$. We will usually identify $\mu$ and $\wh{\mu}$ and omit $\wh{\ }$.

A measure $\mu$ on a space $X$ is \textit{a probability measure} if $\mu(A)\ge0$ for every Borel $A$ and $\|\mu\|=1$. We say that $\mu$ \textit{vanishes at points} (or, is \textit{non-atomic}) if $\mu(\{x\})=0$ for every $x\in X$.

If $\mu$ is a measure on a space $X$, then by $\supp(\mu)$ we denote \textit{the support} of $\mu$, i.e. the smallest closed subset $L$ of $X$ such that for every open subset $U\sub X\sm L$ we have $|\mu|(U)=0$. We will say that $\mu$ is \textit{finitely (countably) supported} if $\supp(\mu)$ is a finite (countable) set. A sequence $\seqn{\mu_n}$ of measures on $X$ is \textit{finitely (countably) supported} if every $\mu_n$ is finitely (countably) supported. The space of all finitely supported measures on $X$ is denoted by $\Delta(X)$. $\ell_1(X)$ denotes on the other hand the space of all countably supported measures on $X$. Note that the norm $\|\cdot\|$ defined above makes it a Banach space isometrically isomorphic to the space $\ell_1(|X|)$. Obviously, $\Delta(X)$ is a linear subspace of $\ell_1(X)$. If $x\in X$, then by $\delta_x$ we mean \textit{the point measure} (or \textit{the Dirac measure}) concentrated at $x$ and defined as $\delta_x(A)=\chi_A(x)$. $\Delta(X)$ may be thus understood as a linear hull of a set $\big\{\delta_x\colon\ x\in X\}$ in the space $C_p(C_p(X))$. Also, each element $\mu$ of $\Delta(X)$ may be written as:
\[\mu=\sum_{x\in\supp(\mu)}\alpha_x\cdot\delta_x\]
for some non-zero $\alpha_x\in\R$ and every $x\in\supp(\mu)$. Similarly, the variation of $\mu$ may be written as $|\mu|=\sum_{x\in\supp(\mu)}\big|\alpha_x\big|\cdot\delta_x$ and thus the norm $\|\mu\|$ is equal to $\sum_{x\in\supp(\mu)}\big|\alpha_x\big|$.

\medskip

If $\mu$ is a measure on a space $X$, then $L_1(\mu)$ and $L_\infty(\mu)$ denote the spaces of all $\mu$-integrable and $\mu$-essentially bounded functions on $X$, respectively. Note that if $X$ is compact or $\mu$ is finitely supported, then $C(X)$ is a subspace of $L_1(\mu)$. If $f\in L_1(\mu)$, then we write simply $\mu(f)=\int_Xf{\der}\mu$.

If $\seqn{\mu_n}$ is a sequence of measures on a compact space $K$, then we say that $\seqn{\mu_n}$ is \textit{weakly* convergent} to a measure $\mu$ on $K$ if $\lim_{n\to\infty}\mu_n(f)=\mu(f)$ for every $f\in C(K)$, and that it is \textit{weakly convergent} to $\mu$ if $\lim_{n\to\infty}\mu_n(B)=\mu(B)$ for every Borel subset $B$ of $K$. We also say that $\seqn{\mu_n}$ is \textit{weakly* null} (\textit{weakly null}) if it is weakly* convergent (weakly convergent) to the zero measure $0$ on $K$. Note that, due to the Riesz representation theorem, these notions of weak* and weak convergences coincide with the weak* and weak convergences in the dual space $C(K)^*$ (see also \cite[Theorem 11, page 90]{Die84}). Recall also that $\ell_1(K)$ is a complemented linear subspace of $C(K)^*$ and that it has the Schur property, i.e. every weakly convergent sequence in $\ell_1(K)$ is also norm convergent.

Similarly, if $\seqn{\mu_n}$ is a finitely supported sequence of measures on a space $X$, then we say that $\seqn{\mu_n}$ is \textit{weakly* convergent} to a measure $\mu$ on $X$ if $\lim_{n\to\infty}\mu_n(f)=\mu(f)$ for every $f\in C(X)$, and that $\seqn{\mu_n}$ is \textit{weakly* null} if it is weakly* convergent to the zero measure $0$ on $X$.

\section*{Part I. JN-sequences on (non)-compact spaces}

\section{The \jn\ theorem for $C(K)$-spaces}

Josefson \cite{Jos75} and Nissenzweig \cite{Nis75} proved their theorem for general Banach spaces and both of the proofs are rather long, technical and intricate. However, when we restrict our attention only to the Banach spaces of continuous functions on compact spaces, then it appears that the theorem may be proved in a much easier way. Below we present one of such proofs suggested to the authors by G. Plebanek and relying on measure-theoretic tools (such as the Maharam theorem). Let us note here that another basic proof for the case of $C(K)$-spaces can be also easily extracted from the proof of the general \jn\ theorem due to Behrends \cite{Beh94,Beh95}, who proved the theorem using famous Rosenthal's $\ell_1$-lemma and Banach limits (however, since the space $\ell_1$ embeds into $C(K)^*$, we may omit the application of the $\ell_1$-lemma and directly go to Case 2 of Behrends' proof presented in \cite{Beh94}). A common point of the two proofs is that both consist of two cases from which the first one concerns sequences of finitely supported measures---a main subject of this paper.


\subsection{A measure-theoretic proof of the Josefson--Nissenzweig theorem for $C(K)$-spaces\label{section:plebanek_proof}}

We will prove that every infinite compact space admits a JN-sequence. Let thus $K$ be an infinite compact space. If $K$ is a scattered space, i.e. every subset of $K$ contains an isolated point in the inherited topology, then it is a simple folklore fact that $K$ contains a non-trivial sequence $\seqn{x_n}$ convergent to some point $x\in K$. A sequence $\seqn{\mu_n}$ of measures defined for each $n\io$ by the formula $\mu_n=\frac{1}{2}\big(\delta_{x_n}-\delta_x\big)$ is then a JN-sequence on $K$.

If $K$ is not scattered, then the proof requires more work. By \cite[Theorem 19.7.6]{Sem71}, there is a non-atomic probability measure $\mu$ on $K$. It follows from the celebrated Maharam theorem (\cite{Mah42}, see also \cite{Fre89}) that there exists a sequence $\seqn{B_n}$ of $\mu$-independent Borel subsets of $K$ such that $\mu\big(B_n\big)=1/2$ for every $n\io$. (The $\mu$-independence of $\seqn{B_n}$ means here that for every finite sequence $n_1,\ldots,n_k$ of distinct natural numbers and every sequence $\eps_1,\ldots,\eps_k\in\{-1,1\}$ we have:
\[\mu\Big(\bigcap_{i=1}^kB_{n_i}^{\eps_i}\Big)=\prod_{i=1}^k\mu\big(B_{n_i}^{\eps_i}\big)=1/2^k,\]
where $A^1=A$ and $A^{-1}=K\sm A$ for a subset $A$ of $K$.) For each $n\io$ define the measure $\mu_n$ as follows:
\[\mu_n(A)=\mu\big(B_n\cap A\big)-\mu\big(B_n^c\cap A\big),\]
where $A$ is a Borel subset of $K$; then, $\big\|\mu_n\big\|=1$. The sequence $\seqn{\mu_n}$ is a desired JN-sequence on $K$. Indeed, note that $\mu_n(g)=\int_Kg\cdot\big(\chi_{B_n}-\chi_{B_n^c}\big){\der}\mu$ for every $n\io$ and $g\in L_1(\mu)$. By the $\mu$-independence of the sequence $\seqn{B_n}$ and the generalized Riemann--Lebesgue lemma (\cite[Page 3]{Tal84Pettis}), the bounded sequence $\seqn{\chi_{B_n}-\chi_{B_n^c}}$ of functions in $L_\infty(\mu)$ has the property that
\[\int_Kg\cdot\big(\chi_{B_n}-\chi_{B_n^c}\big){\der}\mu=0\]
for every $g\in L_1(\mu)$, which implies that $\lim_{n\to\infty}\mu_n(g)=0$ for every $g\in C(K)$, too. The proof of theorem is thus finished.

%

\section{JN-sequences of measures\label{section:jn_sequences}}

This section is devoted to the study of basic analytic and topological properties of fsJN-sequences. 
The first result asserts that in our study of \textit{simple} JN-sequences on compact spaces we can confine our attention to finitely supported JN-sequences only.

\begin{proposition}\label{prop:fsJNP_equiv_csJNP}
The properties fsJNP and csJNP are equivalent for compact spaces.
\end{proposition}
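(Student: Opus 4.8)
The plan is to prove the non-trivial direction: every compact space with the csJNP also has the fsJNP (the other direction is immediate, since finite supports are countable). So let $\seqn{\mu_n}$ be a csJN-sequence on a compact space $K$; we want to manufacture an fsJN-sequence. The key idea is that each $\mu_n$, being countably supported, is concentrated (up to arbitrarily small error in total variation norm) on a \emph{finite} subset of its support. First I would fix a sequence $\seqn{\eps_n}$ of positive reals tending to $0$, say $\eps_n=1/(n+1)$. Writing $\mu_n=\sum_{x\in\supp(\mu_n)}\alpha^n_x\cdot\delta_x$ with $\sum_x|\alpha^n_x|=1$, choose a finite set $F_n\sub\supp(\mu_n)$ such that $\sum_{x\in F_n}|\alpha^n_x|>1-\eps_n$, and set $\tilde\mu_n=\sum_{x\in F_n}\alpha^n_x\cdot\delta_x$. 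Then $\|\mu_n-\tilde\mu_n\|=\sum_{x\notin F_n}|\alpha^n_x|<\eps_n\to0$, so $\tilde\mu_n$ is finitely supported, $\|\tilde\mu_n\|\in(1-\eps_n,1]$, and for every $f\in C(K)$ we have $|\tilde\mu_n(f)-\mu_n(f)|\le\|f\|_\infty\cdot\|\mu_n-\tilde\mu_n\|<\|f\|_\infty\eps_n\to0$, hence $\lim_n\tilde\mu_n(f)=\lim_n\mu_n(f)=0$.

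The only defect of $\seqn{\tilde\mu_n}$ is that its norms are not exactly $1$; this is fixed by normalizing. Put $\nu_n=\tilde\mu_n/\|\tilde\mu_n\|$ for all but finitely many $n$ (discarding those finitely many $n$, if any, with $\tilde\mu_n=0$; since $\|\tilde\mu_n\|>1-\eps_n$, for large $n$ this cannot happen anyway). Then each $\nu_n$ is finitely supported with $\|\nu_n\|=1$. For $f\in C(K)$, $|\nu_n(f)|=|\tilde\mu_n(f)|/\|\tilde\mu_n\|\le|\tilde\mu_n(f)|/(1-\eps_n)\to0$. Thus $\seqn{\nu_n}$ is an fsJN-sequence on $K$, and $K$ has the fsJNP. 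Passing to a subsequence (or reindexing) at the end to absorb the finitely many discarded terms causes no problem, since a subsequence of a JN-sequence is again a JN-sequence.

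I do not anticipate any serious obstacle here; the statement is essentially a soft approximation argument, and the hypothesis of compactness is used only to guarantee that $C(K)$ is norm-bounded-testing, i.e. that weak*-convergence is controlled by the total variation norm via $|\mu(f)|\le\|f\|_\infty\|\mu\|$ — which holds for any finitely or countably supported measure on any Tychonoff space as well, so in fact the same argument shows the analogous equivalence for arbitrary Tychonoff spaces. The one point requiring a word of care is the normalization step: one must note that $\|\tilde\mu_n\|$ is bounded away from $0$ (it exceeds $1-\eps_n\ge1/2$ for $n\ge1$), so dividing by it is harmless and does not blow up the quantities $|\tilde\mu_n(f)|$. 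That is the whole proof.
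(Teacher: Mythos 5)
Your proposal is correct and follows essentially the same route as the paper: truncate each $\mu_n$ to a finite set $F_n$ carrying all but $1/n$ of its variation, observe that the truncation changes $\mu_n(f)$ by at most $\|f\|_\infty/n$, and then normalize, using that the truncated norms are bounded away from $0$. The only (harmless) cosmetic difference is that the paper writes the restriction $\mu_n\rstr F_n$ where you write the explicit sum over $F_n$.
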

\begin{proof}
Let $K$ be a compact space.  If $K$ has the fsJNP, then $K$ has
trivially also the csJNP, since $\Delta(K)\sub\ell_1(K)$. Let us
thus assume that $K$ has the csJNP and let $\seqn{\mu_n}$ be a
csJN-sequence. For each $n\io$ let $F_n$ be a finite subset of
$\supp\big(\mu_n\big)$ such that $\big\|\mu_n\rstr\big(K\sm
F_n\big)\big\|<1/n$, so $\big\|\mu_n\rstr F_n\big\|>1-1/n$. For
every $n\io$ define the measure $\nu_n$ on $K$ as follows:
\[\nu_n=\big(\mu_n\rstr F_n\big)\Big/\big\|\mu_n\rstr F_n\big\|,\]
then, $\nu_n\in\Delta(K)$ and $\big\|\nu_n\big\|=1$. For every $f\in C(K)$ we have:
\[\big|\nu_n(f)\big|=\big|\big(\mu_n\rstr F_n\big)(f)\big|\Big/\big\|\mu_n\rstr F_n\big\|\le\Big(\big|\mu_n(f)\big|+\big|\big(\mu_n\rstr\big(K\sm F_n\big)\big)(f)\big|\Big)\Big/\big\|\mu_n\rstr F_n\big\|<\]
\[\Big(\big|\mu_n(f)\big|+\|f\|_\infty/n\Big)\Big/\big(1-1/n\big),\]
so $\lim_{n\to\infty}\nu_n(f)=0$, since $\lim_{n\to\infty}\mu_n(f)=0$, which implies that $\seqn{\nu_n}$ is weakly* null. It follows that $\seqn{\nu_n}$ is an fsJN-sequence on $K$ and hence $K$ has the fsJNP.
\end{proof}

The following lemma shows that measures in an fsJN-sequence have eventually similar absolute values on their negative and positive parts, equal to $\approx\frac{1}{2}$.

\begin{lemma}\label{lemma:jnseq_pos_neg}
Let $\seqn{\mu_n}$ be an fsJN-sequence on a space $X$. For every $n\io$ let $P_n=\big\{x\in\supp\big(\mu_n\big)\colon\ \mu_n(x)>0\big\}$ and $N_n=\supp\big(\mu_n\big)\sm P_n$. Then,
\[\lim_{n\to\infty}\big\|\mu_n\rstr P_n\big\|=\lim_{n\to\infty}\big\|\mu_n\rstr N_n\big\|=1/2.\]
\end{lemma}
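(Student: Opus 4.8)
The plan is to reduce the entire statement to a single application of the weak* nullity of $\seqn{\mu_n}$, namely its application to the constant function $\one$, which is continuous because $X$ is Tychonoff.

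First I would record two elementary identities coming straight from the definitions. Since each $\mu_n$ is finitely supported, $\big\|\mu_n\rstr P_n\big\|=\sum_{x\in P_n}\mu_n(x)$ and $\big\|\mu_n\rstr N_n\big\|=-\sum_{x\in N_n}\mu_n(x)$, the minus sign appearing because $\mu_n(x)<0$ for $x\in N_n$. As $P_n$ and $N_n$ partition $\supp\big(\mu_n\big)$ and $\big\|\mu_n\big\|=\sum_{x\in\supp(\mu_n)}\big|\mu_n(x)\big|=1$, adding these two identities gives
\[\big\|\mu_n\rstr P_n\big\|+\big\|\mu_n\rstr N_n\big\|=1\qquad(n\io),\]
and subtracting them gives
\[\big\|\mu_n\rstr P_n\big\|-\big\|\mu_n\rstr N_n\big\|=\sum_{x\in\supp(\mu_n)}\mu_n(x)=\mu_n(X)=\mu_n(\one).\]

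Next, since $\one\in C(X)$ and $\seqn{\mu_n}$ is weakly* null, $\mu_n(\one)\to 0$ as $n\to\infty$. Combining the two displayed identities, i.e. solving the resulting $2\times 2$ linear system, yields $\big\|\mu_n\rstr P_n\big\|=\tfrac12\big(1+\mu_n(\one)\big)$ and $\big\|\mu_n\rstr N_n\big\|=\tfrac12\big(1-\mu_n(\one)\big)$, both of which converge to $1/2$. This is exactly the assertion of the lemma. I do not expect any genuine obstacle here; the only point worth flagging is that the argument uses nothing about $X$ beyond the fact that $\one\in C(X)$, so in particular compactness of $X$ plays no role.
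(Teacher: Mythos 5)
Your proof is correct and rests on exactly the same facts as the paper's: the identities $\big\|\mu_n\rstr P_n\big\|+\big\|\mu_n\rstr N_n\big\|=1$ and $\big\|\mu_n\rstr P_n\big\|-\big\|\mu_n\rstr N_n\big\|=\mu_n(X)$, together with $\mu_n(X)\to 0$ from weak* nullity against the constant function. The only difference is presentational: the paper extracts a contradiction from a subsequence with limit $\alpha\neq 1/2$, whereas you solve the $2\times 2$ system explicitly, which is arguably the cleaner way to say the same thing.
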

\begin{proof}
Assume there exists a subsequence $\seqk{\mu_{n_k}}$ such that the limit $\alpha=\lim_{k\to\infty}\big\|\mu_{n_k}\rstr P_{n_k}\big\|$ exists and $\alpha\neq 1/2$. 
Assume first that $\alpha>1/2$. Let $\eps=\big(\alpha-1/2\big)/2$. There exists $K\io$ such that for every $k>K$ we have:
\[\Big|\big\|\mu_{n_k}\rstr P_{n_k}\big\|-\alpha\Big|<\eps\]
and hence
\[1-\big\|\mu_{n_k}\rstr N_{n_k}\big\|=\big\|\mu_{n_k}\rstr P_{n_k}\big\|>1/2+\eps.\]
Then,
\[\big|\mu_{n_k}(X)\big|=\big\|\mu_{n_k}\rstr P_{n_k}\big\|-\big\|\mu_{n_k}\rstr N_{n_k}\big\|
>1+2\eps-1=2\eps>0,\]
so $\liminf_{k\to\infty}\big|\mu_{n_k}(X)\big|>0$, a contradiction, since $\seqn{\mu_n}$ is weakly* null.

The proof for $\alpha<1/2$ is similar. Naturally,
\[\lim_{n\to\infty}\big\|\mu_{n_k}\rstr N_{n_k}\big\|=1-\lim_{n\to\infty}\big\|\mu_{n_k}\rstr P_{n_k}\big\|=1/2.\]
\end{proof}

For a given finitely supported sequence $\seqn{\mu_n}$ of measures on a space $X$, let us put:
\[S\big(\seqn{\mu_n}\big)=\bigcup_{n\io}\supp\big(\mu_n\big),\]
\[LS\big(\seqn{\mu_n}\big)=\Big\{x\in X\colon\ \limsup_{n\to\infty}\big|\mu_n(\{x\})\big|>0\Big\},\]
\[LI\big(\seqn{\mu_n}\big)=\Big\{x\in X\colon\ \liminf_{n\to\infty}\big|\mu_n(\{x\})\big|>0\Big\},\]
and
\[L\big(\seqn{\mu_n}\big)=\Big\{x\in X\colon\ \lim_{n\to\infty}\mu_n(\{x\})\text{ exists and is not }0\Big\}.\]
We will usually write shorter $S\big(\mu_n\big)$, $LS\big(\mu_n\big)$, $LI\big(\mu_n\big)$ and $L\big(\mu_n\big)$ instead of $S\big(\seqn{\mu_n}\big)$, $LS\big(\seqn{\mu_n}\big)$, $LI\big(\seqn{\mu_n}\big)$ and $L\big(\seqn{\mu_n}\big)$, or even simply $S$, $LS$, $LI$ and $L$ if the sequence $\seqn{\mu_n}$ is clear from the context. Of course, always $L\sub LI\sub LS\sub S$, but the reverse inclusions may not hold (cf. Proposition \ref{prop:square_fsjnseqs}).

\begin{lemma}\label{lemma:s_infinite}
If $\seqn{\mu_n}$ is an fsJN-sequence on a space $X$, then $S$ is infinite.
\end{lemma}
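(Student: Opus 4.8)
The plan is to argue by contradiction: assume $S = S\big(\seqn{\mu_n}\big)$ is finite, say $S = \{x_1, \ldots, x_m\}$, and derive a contradiction with the fact that $\seqn{\mu_n}$ is a JN-sequence. The key observation is that if all the supports are contained in a fixed finite set, then each $\mu_n$ is a vector in the finite-dimensional space $\Delta(S) \cong \R^m$, and the norm condition $\big\|\mu_n\big\| = 1$ says precisely that these vectors all lie on the unit sphere of $\ell_1^m$, which is compact.

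First I would note that, since $S$ is finite, every function $x_i \mapsto c_i$ on $S$ extends to a continuous function $f$ on $X$ (the space is Tychonoff, and a finite subspace is discrete, hence $C^*$-embedded; alternatively one simply picks continuous functions separating the finitely many points, which exist by complete regularity). In particular the coordinate functionals $\mu \mapsto \mu(\{x_i\})$ are of the form $\mu \mapsto \mu(f_i)$ for suitable $f_i \in C(X)$. Hence the weak* null condition $\lim_{n\to\infty}\mu_n(f) = 0$ for all $f \in C(X)$ forces $\lim_{n\to\infty}\mu_n(\{x_i\}) = 0$ for each $i = 1, \ldots, m$. But then
\[
\big\|\mu_n\big\| = \sum_{i=1}^m \big|\mu_n(\{x_i\})\big| \longrightarrow 0
\]
as $n \to \infty$, contradicting $\big\|\mu_n\big\| = 1$ for every $n\io$. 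This completes the argument.

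I do not expect any serious obstacle here; the only point requiring a word of care is the justification that the coordinate evaluations factor through $C(X)$, which is where Tychonoff-ness (complete regularity) is used — on a general topological space a finite set need not be $C$-embedded, but for Tychonoff $X$ it is, since $X$ embeds in a cube $[0,1]^\kappa$ and the finitely many points can be separated by finitely many of the coordinate maps. Everything else is a one-line finite-dimensional computation. (One could alternatively phrase the whole thing as: the restriction map $C(X)^* \supseteq \Delta(S) \to \R^m$ is a linear isomorphism onto a finite-dimensional space, on which weak* convergence coincides with norm convergence, so a weak* null sequence in $\Delta(S)$ is norm null — again incompatible with norm $1$.)
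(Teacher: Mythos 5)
Your proof is correct and uses essentially the same idea as the paper's: complete regularity lets you isolate each point of the finite set $S$ by a continuous function, so weak* nullity forces every point mass $\mu_n(\{x_i\})$ to vanish, contradicting $\|\mu_n\|=1$. The paper merely organizes the contradiction in reverse (it first extracts a single point $x_0$ with $\limsup_n|\mu_n(\{x_0\})|>\eps$ and tests against one separating function), whereas you sum all $m$ coordinates directly; the two arguments are interchangeable.
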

\begin{proof}
If $S$ is finite, then there exists $x_0\in S$ and $\eps>0$ such that $\limsup_{n\to\infty}\big|\mu_n\big(\big\{x_0\big\}\big)\big|>\eps$ (if not, then there is $N\io$ such that $\big|\mu_n(\{x\})\big|<1/|S|$ for every $x\in S$ and $n>N$, which implies that $\big\|\mu_n\big\|<1$ for every $n>N$). Let $f\in C(X)$ be such that $f(x_0)=1$ and $f(x)=0$ for every $x\in S\sm\big\{x_0\big\}$. It follows that $\limsup_{n\to\infty}\big|\mu_n(f)\big|>\eps$, which is a contradiction.
\end{proof}

Note that despite the fact that the set $S$ is a countable subset of $X$ its topology may be very hard to study --- see e.g. Levy \cite{Lev77}, where it was proved that there exist $2^\frakc$ many non-homeomorphic countable regular (hence normal) spaces without points of countable character.

\begin{remark}\label{remark:jn_pointwise_limits}
Let $\seqn{\mu_n}$ be a JN-sequence on a given space $X$. Then, since $S$ is countable, by induction we can find a subsequence $\seqk{\mu_{n_k}}$ such that $\lim_{k\to\infty}\big|\mu_{n_k}(\{x\})\big|$ exists for every $x\in X$.
\end{remark}

\begin{definition}
A sequence $\seqn{\mu_n}$ of finitely supported measures on a space $X$ is \textit{pointwise convergent} if the limit $\lim_{n\to\infty}\mu_n\big(\{x\}\big)$ exists for every $x\in X$.
\end{definition}

Note that the definition is equivalent to say that $\lim_{n\to\infty}\mu_n\big(\{x\}\big)=0$ for every $x\in X\sm L$. It follows that $L\big(\mu_n\big)=LI\big(\mu_n\big)=LS\big(\mu_n\big)\sub S\big(\mu_n\big)$ if $\seqn{\mu_n}$ is pointwise convergent. By the previous remark, every fsJN-sequence $\seqn{\mu_n}$ on a space $X$ contains a pointwise convergent fsJN-(sub)sequence $\seqk{\mu_{n_k}}$. Of course, every subsequence of a pointwise convergent sequence of measures is also pointwise convergent.

The proof of the following lemma is left to the reader.

\begin{lemma}\label{lemma:fsjn_subsequences_sets}
For every finitely supported sequence $\seqn{\mu_n}$ of measures on a space $X$ and its subsequence $\seqk{\mu_{n_k}}$ it holds:
\begin{enumerate}[(i)]
    \item $S\big(\seqk{\mu_{n_k}}\big)\sub S\big(\seqn{\mu_n}\big)$;
    \item $LS\big(\seqk{\mu_{n_k}}\big)\sub LS\big(\seqn{\mu_n}\big)$;
    \item $LI\big(\seqn{\mu_n}\big)\sub LI\big(\seqk{\mu_{n_k}}\big)$;
    \item $L\big(\seqn{\mu_n}\big)\sub L\big(\seqk{\mu_{n_k}}\big)$.
\end{enumerate}
If $\seqn{\mu_n}$ is pointwise convergent, then
\[L\big(\seqn{\mu_n}\big)=L\big(\seqk{\mu_{n_k}}\big)=LS\big(\seqk{\mu_{n_k}}\big)=LS\big(\seqn{\mu_n}\big).\]\noproof
\end{lemma}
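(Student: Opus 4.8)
The plan is to read off all four inclusions directly from the definitions, relying only on three elementary facts: that a subsequence of a sequence indexed by $\omega$ is again indexed by a subset of $\omega$; that passing to a subsequence can only decrease a $\limsup$ and only increase a $\liminf$; and that every subsequence of a convergent sequence converges to the same limit. No machinery beyond this is needed.

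For (i), since $\{n_k\colon\ k\io\}\sub\omega$ we get $S\big(\seqk{\mu_{n_k}}\big)=\bigcup_k\supp\big(\mu_{n_k}\big)\sub\bigcup_n\supp\big(\mu_n\big)=S\big(\seqn{\mu_n}\big)$ at once. For (ii) and (iii), I would fix $x\in X$ and note that $\big\langle\big|\mu_{n_k}(\{x\})\big|\colon\ k\io\big\rangle$ is a subsequence of the sequence of reals $\big\langle\big|\mu_n(\{x\})\big|\colon\ n\io\big\rangle$, so $\limsup_k\big|\mu_{n_k}(\{x\})\big|\le\limsup_n\big|\mu_n(\{x\})\big|$ and $\liminf_k\big|\mu_{n_k}(\{x\})\big|\ge\liminf_n\big|\mu_n(\{x\})\big|$; the first inequality gives (ii) and the second gives (iii). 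For (iv), if $\lim_n\mu_n(\{x\})$ exists and is nonzero, then $\lim_k\mu_{n_k}(\{x\})$ exists and has the same nonzero value, so $x\in L\big(\seqk{\mu_{n_k}}\big)$.

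For the final displayed identity I would use the general inclusions $L\sub LI\sub LS$ together with the dichotomy supplied by pointwise convergence: for each $x\in X$ either $\lim_n\mu_n(\{x\})\neq0$, so that $x\in L\big(\seqn{\mu_n}\big)$, or $\lim_n\mu_n(\{x\})=0$, so that $\limsup_n\big|\mu_n(\{x\})\big|=0$ and $x\notin LS\big(\seqn{\mu_n}\big)$; hence $L\big(\seqn{\mu_n}\big)=LS\big(\seqn{\mu_n}\big)$. A subsequence of a pointwise convergent sequence is again pointwise convergent and converges at each point to the same value, so the identical argument yields $L\big(\seqk{\mu_{n_k}}\big)=LS\big(\seqk{\mu_{n_k}}\big)$ and also $L\big(\seqn{\mu_n}\big)=L\big(\seqk{\mu_{n_k}}\big)$; chaining these three equalities gives the claim. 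I do not expect any genuine obstacle here; the only point that deserves care is that the equality $L=LS$ really does use pointwise convergence (the inclusions are strict in general, cf. Proposition \ref{prop:square_fsjnseqs}), so the dichotomy above must be invoked explicitly rather than treated as automatic.
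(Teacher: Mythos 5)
Your proof is correct and is exactly the elementary argument the paper intends: the lemma is stated with its proof left to the reader, and the subsequence inequalities for $\limsup$/$\liminf$ together with the dichotomy from pointwise convergence are all that is needed. Your care in noting that $L=LS$ genuinely requires pointwise convergence, and that a subsequence of a pointwise convergent sequence converges to the same limits (which gives the reverse inclusion $L\big(\seqk{\mu_{n_k}}\big)\sub L\big(\seqn{\mu_n}\big)$ needed for the final chain of equalities), is exactly right.
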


The following proposition asserts that the unit square $[0,1]^2$ admits fsJN-sequences satisfying various proper inclusions between sets $L$, $LI$, $LS$ and $S$ as well as they have other quantitative properties. It also shows that even in the case of a metric space an fsJN-sequence may be quite intricate.

\begin{proposition}\label{prop:square_fsjnseqs}
Let $\alpha\in(0,1)$. The unit square $[0,1]^2$ admits fsJN-sequences $\seqn{\mu_n^1}$, $\seqn{\mu_n^2}$, $\seqn{\mu_n^3}$ and $\seqn{\mu_n^4}$ such that:
\begin{enumerate}
    \item 
$\emptyset\neq L\big(\mu_n^1\big)\subsetneq LI\big(\mu_n^1\big)\subsetneq LS\big(\mu_n^1\big)\subsetneq S\big(\mu_n^1\big)$;
    \medskip
    \item 
    \begin{enumerate}[(i)]
        \item $LS\big(\mu_n^2\big)=\big([0,1]\cap\Q\big)\times\{0\}$, so $LS\big(\mu_n^2\big)$ is dense-in-itself;
        \item $\emptyset=L\big(\mu_n^2\big)=LI\big(\mu_n^2\big)\subsetneq LS\big(\mu_n^2\big)\subsetneq S\big(\mu_n^2\big)$;
        \item $\mu_n^2(\{x\})\in\{0,1/2\}$ for every $x\in LS\big(\mu_n^2\big)$ and $n\io$;
        \item for every $x\in LS\big(\mu_n^2\big)$ we have $\limsup_{n\to\infty}\mu_n^2(\{x\})=1/2$, so for every finite $F\sub LS\big(\mu_n^2\big)$ it holds:
        \[\sum_{x\in F}\limsup_{n\to\infty}\mu_n^2(\{x\})=|F|/2,\]
        and hence:
        \[\sum_{x\in LS(\mu_n^2)}\limsup_{n\to\infty}\mu_n^2(\{x\})=\infty;\]
    \end{enumerate}
    \medskip
    \item 
    \begin{enumerate}[(i)]
        \item $L\big(\mu_n^3\big)=\big([0,1]\cap\Q\big)\times\{0\}$, so $L\big(\mu_n^3\big)$ is dense-in-itself;
        \item $\emptyset\neq L\big(\mu_n^3\big)=LI\big(\mu_n^3\big)=LS\big(\mu_n^3\big)\subsetneq S\big(\mu_n^3\big)$;
        \item 
        \[\sum_{x\in L(\mu_n^3)}\lim_{n\to\infty}\mu_n^3(\{x\})=(1-\alpha)/2\le1/2\]
        and
        \[\lim_{n\to\infty}\big\|\mu_n\rstr L\big\|=(1-\alpha)/2\le1/2;\]
    \end{enumerate}
    \medskip
    \item 
    \begin{enumerate}[(i)]
        \item $L\big(\mu_n^4\big)=\big\{k/2^{n+1}\colon\ k,n\io,\ 0\le k<2^{n+1}\big\}\times\{0\}$;
        \item $\emptyset\neq L\big(\mu_n^4\big)=LI\big(\mu_n^4\big)=LS\big(\mu_n^4\big)=S\big(\mu_n^4\big)$;
        \item $\big\|\mu_n^4\rstr L\big\|=1$ for every $n\io$.
    \end{enumerate}
\end{enumerate}
\end{proposition}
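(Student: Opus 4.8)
The plan is to write down all four sequences explicitly as finite sums of signed Dirac masses living on the bottom edge $[0,1]\times\{0\}$ of the square, with a single auxiliary ``floating'' block on the top edge $[0,1]\times\{1\}$ needed only in case~(3). Two elementary devices do everything. The first is \emph{gluing}: if $I_0,\dots,I_r$ are pairwise disjoint closed subintervals of $[0,1]$, if for each $j$ the sequence $\seqn{\nu_n^{(j)}}$ consists of finitely supported measures on $I_j\times\{0\}$ with $\big\|\nu_n^{(j)}\big\|=1$ for all $n\io$ and $\lim_n\nu_n^{(j)}(f)=0$ for every $f\in C\big([0,1]^2\big)$, and if $t_0,\dots,t_r>0$ with $\sum_j t_j=1$, then $\mu_n:=\sum_j t_j\nu_n^{(j)}$ is an fsJN-sequence (the norms add because the supports are separated), and $S$, $LS$, $LI$, $L$ of $\seqn{\mu_n}$ are the disjoint unions of the corresponding sets of the blocks. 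The second is the \emph{vanishing dipole}: for points $a_n,b_n\to x$ with $|a_n-b_n|\to0$ and all $a_n,b_n$ pairwise distinct, the sequence $\tfrac12\big(\delta_{a_n}-\delta_{b_n}\big)$ has norm $1$, is weak* null by uniform continuity on the compact square, and contributes only to $S$; more generally, pairing a target atom $\pm c\,\delta_x$ with a moving companion $\mp c\,\delta_{b_n}$, $b_n\to x$, cancels it in the weak* limit while keeping the atom $\mu_n(\{x\})\equiv \pm c$.

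For~(1) I would fix three disjoint closed subintervals $I_0,I_1,I_2$, points $p_j\in I_j$, weights $t_j=1/3$, and set: $\nu_n^{(0)}=\tfrac12\big(\delta_{(p_0,0)}-\delta_{a_n}\big)$ with distinct $a_n\to(p_0,0)$, so $p_0\in L$; $\nu_n^{(1)}=a_n'\delta_{(p_1,0)}-a_n'\delta_{y_n}+b_n'\delta_{z_n}-b_n'\delta_{z_n'}$ with pairwise distinct $y_n,z_n,z_n'\to(p_1,0)$, $a_n'$ alternating between $1/4$ and $1/8$ and $b_n'=1/2-a_n'$, so that $\big\|\nu_n^{(1)}\big\|=1$, it is weak* null, and $\nu_n^{(1)}(\{(p_1,0)\})=a_n'$ has positive liminf but no limit, i.e. $p_1\in LI\setminus L$; and $\nu_n^{(2)}$ equal to $\tfrac14\delta_{(p_2,0)}-\tfrac14\delta_{y_n}+\tfrac14\delta_{z_n}-\tfrac14\delta_{z_n'}$ for $n$ even and to the dipole $\tfrac12\big(\delta_{w_n}-\delta_{w_n'}\big)$ for $n$ odd (auxiliary points distinct, all $\to(p_2,0)$), so that $\nu_n^{(2)}(\{(p_2,0)\})$ is $1/4$ along the evens and $0$ along the odds, i.e. $p_2\in LS\setminus LI$. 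Gluing yields $p_0\in L$, $p_1\in LI\setminus L$, $p_2\in LS\setminus LI$ and every $a_n\in S\setminus LS$, hence all four strict inclusions. For~(2) I would enumerate $\Q\cap[0,1]=\{q_k:k\io\}$, fix an ``infinitely onto'' map $n\mapsto k(n)$ (running through $0;\,0,1;\,0,1,2;\dots$, so that each $k$ is attained along an infinite, co-infinite set of $n$'s), and put $\mu_n^2=\tfrac12\delta_{(q_{k(n)},0)}-\tfrac12\delta_{(r_n,0)}$, where the $r_n\in[0,1]$ are irrational, pairwise distinct, with $|r_n-q_{k(n)}|<1/(n+1)$. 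Each $\mu_n^2$ has norm $1$ and is weak* null by uniform continuity; for a rational $q_k$ one has $\mu_n^2(\{(q_k,0)\})\in\{0,1/2\}$ with value $1/2$ exactly when $k(n)=k$, whence $\limsup=1/2$, $\liminf=0$, while each $(r_n,0)$ carries mass only at that single $n$. This gives $LS(\mu_n^2)=(\Q\cap[0,1])\times\{0\}$ (dense-in-itself), $L=LI=\emptyset$, $S\supsetneq LS$, and (iii), (iv).

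For~(3) and~(4) I would use the same strategy: prescribe the limit weights $c_x>0$ of the persistent atoms, and cancel the limit measure $\mu=\sum_x c_x\delta_x$ by companions that escape to nearby points. For~(3) the persistent atoms are $(q_k,0)$ with $c_k=\tfrac{1-\alpha}{2}2^{-k-1}$, so $\sum_k c_k=(1-\alpha)/2$, and I would set
\[\mu_n^3=\sum_{k\le n}c_k\big(\delta_{(q_k,0)}-\delta_{y_{k,n}}\big)+\tfrac{\beta_n}{2}\big(\delta_{(u_n,1)}-\delta_{(u_n',1)}\big),\]
where the $y_{k,n}$ are pairwise distinct irrational points of the $x$-axis with $|y_{k,n}-(q_k,0)|<1/n$, the $(u_n,1),(u_n',1)$ are pairwise distinct with $|u_n-u_n'|\to0$, and $\beta_n=1-2\sum_{k\le n}c_k$ (which satisfies $\beta_n>\alpha>0$ and $\beta_n\to\alpha$). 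Then $\big\|\mu_n^3\big\|=1$, $\mu_n^3$ is weak* null (the first sum is bounded by $\tfrac{1-\alpha}{2}\omega_f(1/n)$, where $\omega_f$ is the modulus of continuity of $f$, and the dipole by uniform continuity), $\mu_n^3(\{(q_k,0)\})=c_k$ for $n\ge k$, every other atom occurs once, and $\mu_n^3\rstr L=\sum_{k\le n}c_k\delta_{(q_k,0)}$; this yields $L=LI=LS=(\Q\cap[0,1])\times\{0\}$, $S\supsetneq L$, and the two displayed identities in (iii). For~(4) the persistent atoms must be \emph{exactly} $D\times\{0\}$, where $D=\{k/2^m:m\ge1,\ 0\le k<2^m\}$, and no junk points are allowed, so the cancellation has to stay inside $D\times\{0\}$. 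I would choose $c_d>0$ ($d\in D$) with $A_\infty:=\sum_d c_d<1/2$ (geometrically, along an enumeration of $D$ respecting the dyadic level $\mathrm{lev}$), put $A_n=\sum_{\mathrm{lev}(d)\le n}c_d$, and set
\[\mu_n^4=\sum_{\mathrm{lev}(d)\le n}c_d\,\delta_{(d,0)}\ -\ \sum_{\mathrm{lev}(d)\le n}c_d\,\delta_{(e_n(d),0)}\ +\ (1-2A_n)\,2^{-n}\sum_{j<2^n}(-1)^j\,\delta_{((2j+1)/2^{n+1},\,0)},\]
where $e_n(d)$ is a level-$(n+2)$ dyadic with $|e_n(d)-d|\le 2^{-(n+1)}$. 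The three blocks sit on levels $\le n$, exactly $n+2$, and exactly $n+1$ respectively, hence are disjoint, so $\big\|\mu_n^4\big\|=A_n+A_n+(1-2A_n)=1$; the first block tends weak* to $\mu$, the second to $-\mu$, the third (a scaled Rademacher sum) to $0$, so $\mu_n^4$ is weak* null; and $\mu_n^4(\{(d,0)\})=c_d$ for all $n\ge\mathrm{lev}(d)$, while no point off $D\times\{0\}$ ever carries mass. Hence $S=L=LI=LS=D\times\{0\}$ and (iii) holds trivially since $\supp(\mu_n^4)\sub L$.

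The only genuinely delicate point throughout is the weak* convergence in cases~(3) and~(4): one must transport a \emph{fixed positive amount} of mass, namely the limit measure $\mu$, off to infinity while remaining finitely supported and of constant total norm, and this is exactly what forces the bookkeeping $\beta_n=1-2\sum_{k\le n}c_k$ in~(3) and the three-block split together with the requirement $\|\mu\|<1/2$ in~(4). In~(4) there is the extra constraint that every atom ever used must be dyadic and must eventually stabilize, which is precisely why the cancelling companions $e_n(d)$ are placed on a \emph{deeper} dyadic level than the one being frozen at step $n$, so that they themselves get frozen one or two steps later. Once these layouts are fixed, verifying weak* nullity of the escaping parts reduces everywhere to $|f(a)-f(b)|\le\omega_f(|a-b|)$ on the compact square, and the remaining verifications (norms, and the exact descriptions of $L$, $LI$, $LS$, $S$) are routine.
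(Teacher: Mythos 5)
Your proposal is correct, and for items (1)--(3) it follows essentially the same strategy as the paper: persistent atoms at prescribed points cancelled in the weak* limit by nearby ``moving'' companions, weak* nullity checked via uniform continuity of $f$ on the compact square, and in (3) a floating dipole of mass $\beta_n=\alpha+(1-\alpha)2^{-(n+1)}$ carrying the excess norm off to a limit point (the paper sends it up the left edge toward $(0,1)$, you put it on the top edge; your bookkeeping $\beta_n=1-2\sum_{k\le n}c_k$ is numerically identical to the paper's coefficient $2\big(\tfrac{\alpha}{2}+\tfrac{1-\alpha}{2^{n+2}}\big)$). The genuine divergence is in (4): the paper builds $\mu_n^4$ from a recursively defined coefficient array $\alpha_k^n$ on the dyadic grid, proves $\|\nu_n\|=1-c_n$ by induction, and then adds a small correcting atom $c_n\delta_{(e_0^n,0)}$ to normalize; your three-block decomposition (frozen atoms on levels $\le n$, a shifted copy on level exactly $n+2$, and a Rademacher-type remainder on level exactly $n+1$) achieves the same thing more transparently --- the blocks are disjoint by dyadic level, so the norm computation $2A_n+(1-2A_n)=1$ and the identification $S=L=D\times\{0\}$ are immediate, at the cost of having to check injectivity of $d\mapsto e_n(d)$ (which your choice $e_n(d)=d+2^{-(n+2)}$ handles). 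Your explicitly stated gluing principle, which the paper uses only implicitly, is also a useful organizing device for (1).
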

\begin{proof}
Put $K=[0,1]^2$ and fix an enumeration $\big\{q_n\colon\ n\io\big\}$ of $[0,1]\cap\Q$.

\medskip

(1) If $n\io$ is even, then let $\mu_n^1$ be defined as follows:
    \[\mu_n^1=\frac{1}{4}\big(\delta_{(0,0)}-\delta_{(0,1/(n+1))}\big)+\frac{1}{4}\big(\delta_{(1/2,0)}-\delta_{(1/2,1/(n+1))}\big),\]
    and if $n$ is odd, then define $\mu_n^1$ as follows:
    \[\mu_n^1=\frac{1}{4}\big(\delta_{(0,0)}-\delta_{(0,1/(n+1))}\big)+\frac{1}{8}\big(\delta_{(1/2,0)}-\delta_{(1/2,1/(n+1))}\big)+\frac{1}{8}\big(\delta_{(1,0)}-\delta_{(1,1/(n+1))}\big).\]
    It is immediate that $\seqn{\mu_n^1}$ is an fsJN-sequence on $K$ and:
    \[L\big(\mu_n^1\big)=\big\{(0,0)\big\},\]
    \[LI\big(\mu_n^1\big)=\big\{(0,0),\ (1/2,0)\big\},\]
    \[LS\big(\mu_n^1\big)=\big\{(0,0),\ (1/2,0),\ (1,0)\big\},\]
    \[S\big(\mu_n^1\big)=\big\{(0,0),\ (1/2,0),\ (1,0)\big\}\cup\big\{(x,1/(n+1)\colon\ x\in\{0,1/2,1\},\ n\io\big\},\]
    which yields (1).

    \medskip

(2) Let $\big\{P_n\colon\ n\io\big\}$ be a partition of $\omega$ into infinite sets. For every $n\io$ and $k\in P_n$ write:
    \[\mu_k^2=\frac{1}{2}\big(\delta_{(q_n,0)}-\delta_{(q_n,1/k)}\big).\]
    Then, for each $k\io$ we have $\big\|\mu_k^2\big\|=1$ and it is immediate that for every $n\io$ the sequence $\seq{\mu_k^2}{k\in P_n}$ is weakly* null. We will now show that the whole sequence $\seqk{\mu_k^2}$ is weakly* null. Let $f\in C(K)$. We have:
    \[\mu_k^2(f)=\frac{1}{2}\big(f(q_n,0)-f(q_n,1/k)\big),\]
    where $n\io$ and $k\in P_n$. Fix $\eps>0$. Since $K$ is compact, $f$ is uniformly continuous, so there is $\delta>0$ such that if for $k,n\io$ we have $1/k<\delta$ and $k\in P_n$, then $\big|f(q_n,0)-f(q_n,1/k)\big|<\eps$. So pick $N\io$ such that $1/N<\delta$. For every $k>N$ and $n\io$ such that $k\in P_n$ we have:
    \[\big|\mu_k^2(f)\big|=\frac{1}{2}\big|f(q_n,0)-f(q_n,1/k)\big|<\eps.\]
    Thus, $\seqk{\mu_k^2}$ is weakly* null.

    That the conditions (i)--(iv) are satisfied follows directly from the definition of the sequence $\seqn{\mu_n^2}$.

    \medskip

(3) Let us assume additionally that $0=q_n$ for some $n>2$. 
    For every $n\io$ define the measure $\mu_n^3$ as follows:
    \[\mu_n^3=(1-\alpha)\cdot\sum_{k=0}^n\big(\delta_{(q_k,0)}-\delta_{(q_k,1/(n+1))}\big)/2^{k+2}+\Big(\frac{\alpha}{2}+\frac{1-\alpha}{2^{n+2}}\Big)\cdot\big(\delta_{(0,1-1/(n+1))}-\delta_{(0,1-1/(n+2))}\big).\]
    It follows that $\big\|\mu_n^3\big\|=1$. 
    That $\seqn{\mu_n^3}$ is weakly* null follows again from the fact that every $f\in C(K)$ is uniformly continuous---cf. the previous example.

    For every $k\io$ and $n\ge k$ we have:
    \[\tag{$*$}\mu_n^3\big(\big\{(q_k,0)\big\}\big)=(1-\alpha)/2^{k+2},\]
    so $\big(q_k,0\big)\in L\big(\mu_n^3\big)$. If $x\in K$ is of the form $\big(q_k,1/(n+1)\big)$ or $\big(0,1-1/n\big)$ for some $k,n\io$, then $\mu_l^3(\{x\})=0$ for every $l>n+2$, so $x\not\in L\big(\mu_n^3\big)$. Thus, (i) is satisfied. (ii) follows immediately from (i) and the definition of $\seqn{\mu_n^3}$. (iii) follows from ($*$).

    \medskip

(4) Let $n\io$. Put $P_n=\big\{0,\ldots,2^n-1\big\}$ and for each $k\in P_n$ write $e_k^n=(2k)/2^{n+1}$ and $o_k^n=(2k+1)/2^{n+1}$. Note that $e_0^n=0$. Put: $E_n=\big\{e_k^n\colon k\in P_n\big\}$, $O_n=\big\{o_k^n\colon k\in P_n\big\}$ and $S_n=E_n\cup O_n$. The set $S_n$ will be the support of the measure $\mu_n^4$ we are going to construct.

    Note that for every $n\io$ we have $S_n=E_{n+1}$ and $\big|S_n\big|=2\big|P_n\big|=2\cdot2^n$, so $\big|S_{n+1}\big|=2\big|S_n\big|$. For every $n\io$ let $c_n=1/2^{n+1}$ and define the auxiliary measure $\nu_n$ as follows:
    \[\nu_n=\sum_{k\in P_n}\alpha_k^n\cdot\big(\delta_{(e_k^n,0)}-\delta_{(o_k^n,0)}\big),\]
    where the coefficients $\alpha_k^n$'s are defined in the following way. For $n=0$ we simply write $\alpha_0^0=1/4$ and for $n>0$ every $k\in P_n$ we define:
    \[\alpha_k^n=\begin{cases}
    \alpha_{k/2}^{n-1},&\text{ if }e_n^k\in E_{n-1},\\
    c_n/2^n,&\text{ otherwise.}
    \end{cases}\]
    Note that if $e_n^k\in E_{n-1}$, then $k$ is even, so the definition is correct. It also holds $\big|\supp\big(\nu_n\big)\big|=2^{n+1}$.

    It follows that $\big\|\nu_n\big\|=1-c_n$. Indeed, this is obviously true for $n=0$, so fix $n\ge0$ and assume that $\big\|\nu_n\big\|=1-c_n$. Since $E_n\sub S_n\sub S_{n+1}$ and $\big|S_{n+1}\big|=2\big|S_n\big|$, we have:
    \[\big\|\nu_{n+1}\big\|=\big\|\nu_n\big\|+2\cdot 2^n\cdot\frac{c_{n+1}}{2^{n+1}}=1-c_n+c_{n+1}=1-c_{n+1},\]
    as required.

    We will now show that $\seqn{\nu_n}$ is weakly* null. Let $f\in C(K)$ and $\eps>0$. Again, note that $f$ is uniformly continuous, so there is $\delta>0$ such that for every $n\io$ if $1/2^{n+1}<\delta$, then $\big|f\big(e_k^n,0\big)-f\big(o_k^n,0\big)\big|<\eps$. Let thus $N$ be such that $1/2^{n+1}<\delta$ for every $n>N$. We have:
    \[\big|\nu_n(f)\big|\le\sum_{k\in P_n}\alpha_k^n\cdot\big|f\big(e_k^n,0\big)-f\big(o_k^n,0\big)\big|<\eps\cdot\sum_{k\in P_n}\alpha_k^n<\eps\cdot\big(1-c_n\big)<\eps,\]
    which yields that $\lim_{n\to\infty}\nu_n(f)=0$.

    Finally, for every $n\io$ let
    \[\mu_n^4=c_n\cdot\delta_{(e_0^n,0)}+\nu_n,\]
    so $\mu_n^4\big(\big\{\big(e_0^n,0\big)\big\}\big)=c_n+\alpha_0^n$ and hence $\big\|\mu_n^4\big\|=1$ and $(0,0)\in L\big(\mu_n^4\big)$. Since $\lim_{n\to\infty}c_n=0$, the sequence $\seqn{\mu_n^4}$ is weakly* null.

    We will now prove (i) and (ii) together. First, notice that $\supp\big(\mu_n^4\big)=S_n\times\{0\}$ for every $n\io$, so
    \[S\big(\mu_n^4\big)=\bigcup_{n\io}S_n=\big\{k/2^{n+1}\colon\ k,n\io,\ 0\le k<2^{n+1}\big\}\times\{0\}.\]
    Next, if for $x\in(0,1]$ and $n\io$ it holds that $x\in S_n$, then $x\in E_{n+1}$, so $\mu_l^4\big(\{(x,0)\}\big)=\alpha_{n+1}^k$ for some $k\in P_{n+1}$ and every $l>n+1$. It follows that $(x,0)\in L\big(\mu_n^4\big)$. 
    (i) and (ii) are thus proved.

    (iii) follows from (ii).
\end{proof}

Let us note here that we presented the constructions of the sequences in Proposition \ref{prop:square_fsjnseqs} in the square $[0,1]^2$ only for simplicity---similar constructions may be carried out also in the unit interval $[0,1]$ or, in fact, any metric compact dense-in-itself space.


\medskip

The next lemma shows that the value $1/2$ in the property (iii) of $\seqn{\mu_n^3}$ is not accidental. An intuitive meaning of the lemma is that if for some fixed points of the space $X$ the values of measures of the corresponding singletons grow too much, then they must be nullified by the values on some other points which lie closer and closer to these fixed ones (in the sense of the topology of $X$), cf. also Lemma \ref{lemma:fsjnseq_liminf_s_ls_infinite}. The property (iv) of $\seqn{\mu_n^2}$ implies that we cannot relax here limits to inferior limits or superior limits.

\begin{lemma}\label{lemma:fsjn_sum_lim_12}
For every fsJN-sequence $\seqn{\mu_n}$ on a space $X$ it holds:
\[\sum_{x\in L(\mu_n)}\lim_{n\to\infty}\big|\mu_n(\{x\})\big|\le 1/2.\]
\end{lemma}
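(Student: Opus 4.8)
The plan is to reduce the claimed inequality to a statement about finite subsets of $L := L(\mu_n)$. Since $L\sub S$ is countable and each summand $\lim_{n\to\infty}\big|\mu_n(\{x\})\big|$ is nonnegative (for $x\in L$ the limit $\lim_n\mu_n(\{x\})=:\ell_x$ exists and is nonzero, so $\lim_n|\mu_n(\{x\})|=|\ell_x|>0$), the sum in question equals $\sup\big\{\sum_{x\in F}|\ell_x|\colon F\sub L\text{ finite}\big\}$. Thus it suffices to prove that $\sum_{x\in F}|\ell_x|\le 1/2$ for every finite $F\sub L$.

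Fix such an $F$ and split it as $F=F^+\sqcup F^-$ according to whether $\ell_x>0$ or $\ell_x<0$; this is well defined, and moreover $\mu_n(\{x\})$ has eventually constant sign (that of $\ell_x$) for each $x\in F$. Since $X$ is Tychonoff and $F^+,F^-$ are disjoint finite, hence closed, sets, I would pick a continuous $g\colon X\to[0,1]$ with $g\equiv 1$ on $F^+$ and $g\equiv 0$ on $F^-$, and put $f=2g-1\in C(X)$, so that $f\equiv 1$ on $F^+$, $f\equiv -1$ on $F^-$, and---crucially---$\|f\|_\infty\le 1$ on all of $X$.

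Now I would estimate $\mu_n(f)$. Choose $N$ so large that for every $n>N$ each $x\in F$ lies in $\supp(\mu_n)$ with $\mu_n(\{x\})$ of the same sign as $\ell_x$. For such $n$, writing $T_n=\sum_{x\in F}\big|\mu_n(\{x\})\big|$ and using $\|\mu_n\|=\sum_{z\in\supp(\mu_n)}\big|\mu_n(\{z\})\big|=1$, we have
\[
\mu_n(f)=\sum_{x\in F}\mu_n(\{x\})f(x)+\sum_{z\in\supp(\mu_n)\sm F}\mu_n(\{z\})f(z)=T_n+R_n,
\]
where $|R_n|\le\sum_{z\in\supp(\mu_n)\sm F}\big|\mu_n(\{z\})\big|=1-T_n$. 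Hence $\mu_n(f)\ge T_n-(1-T_n)=2T_n-1$. Letting $n\to\infty$, using that $\seqn{\mu_n}$ is weakly* null (so $\mu_n(f)\to 0$) and that $T_n\to\sum_{x\in F}|\ell_x|$, we obtain $0\ge 2\sum_{x\in F}|\ell_x|-1$, i.e. $\sum_{x\in F}|\ell_x|\le 1/2$. Taking the supremum over finite $F\sub L$ finishes the proof.

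There is no genuine obstacle here once the test function is chosen correctly; the only points requiring care are that $f$ must satisfy $\|f\|_\infty\le 1$ on the \emph{whole} space (which is precisely what complete regularity provides) and that one passes to $n>N$ so the signs of $\mu_n(\{x\})$ for $x\in F$ have stabilized. It is worth remarking that this argument uses neither Lemma \ref{lemma:jnseq_pos_neg} nor any property of $X$ beyond being Tychonoff---only weak* nullity and $\|\mu_n\|=1$.
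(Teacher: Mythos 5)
Your proof is correct and follows essentially the same route as the paper's: both arguments test the sequence against a continuous function with $\|f\|_\infty\le 1$ that equals $\sgn(\ell_x)$ at the points of a finite $F\sub L$, and bound the contribution of $\supp(\mu_n)\sm F$ by $1-\sum_{x\in F}|\mu_n(\{x\})|$. The only difference is organizational: the paper argues by contradiction with explicit $\eps$'s and a function vanishing outside disjoint neighborhoods of $F$, while you run the same estimate directly and pass to the limit, which is slightly cleaner.
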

\begin{proof}
Let $\seqn{\mu_n}$ be an fsJN-sequence on a space $X$. For the sake of contradiction, assume that
\[\sum_{x\in L(\mu_n)}\lim_{n\to\infty}\big|\mu_n(\{x\})\big|>1/2,\]
so there is a finite set $F\sub L\big(\mu_n\big)$ such that
\[\sum_{x\in F}\lim_{n\to\infty}\big|\mu_n(\{x\})\big|>1/2.\]
Denote the above sum by $\alpha$, so $\alpha>1/2$. Let $\eps=\big(\alpha-1/2\big)/2$, so $\alpha=2\eps+1/2$. For every $x\in F$ there is $N_x\io$ such that for every $n>N_x$ we have:
\[\Big|\mu_n(\{x\})-\lim_{k\to\infty}\mu_k(\{x\})\Big|<\eps/|F|.\]
Let $N>\max_{x\in F}N_x$ be such that
\[\sgn\big(\mu_n(\{x\})\big)=\sgn\big(\lim_{k\to\infty}\mu_k(\{x\})\big)\]
for every $x\in F$ and $n>N$. 
Let $\big\{U_x\colon x\in F\big\}$ be a collection of pairwise disjoint open subsets of $X$ such that $x\in U_x$ for every $x\in F$. 
There exists a continuous function $f\in C(X)$ such that: $-1\le f\le 1$, $f(x)=\sgn\big(\lim_{k\to\infty}\mu_k(\{x\})\big)$ for every $x\in F$  (so $\|f\|_\infty\le1$), and $f(y)=0$ for every $y\in X\sm\bigcup_{x\in F}U_x$. For every $n>N$ it holds:
\[\big(\mu_n\rstr F\big)(f)=\sum_{x\in F}\big|\mu_n(\{x\})\big|,\]
so
\[\big|\big(\mu_n\rstr F\big)(f)\big|=\Big|\sum_{x\in F}\big|\mu_n(\{x\})\big|\Big|=\]
\[\Big|\sum_{x\in F}\big|\mu_n(\{x\})\big|-\sum_{x\in F}\lim_{k\to\infty}\big|\mu_k(\{x\})\big|+\sum_{x\in F}\lim_{k\to\infty}\big|\mu_k(\{x\})\big|\Big|\ge\]
\[\Big|\sum_{x\in F}\lim_{k\to\infty}\big|\mu_k(\{x\})\big|\Big|-\Big|\sum_{x\in F}\big|\mu_n(\{x\})\big|-\sum_{x\in F}\lim_{k\to\infty}\big|\mu_k(\{x\})\big|\Big|=\]
\[\alpha-\Big|\sum_{x\in F}\big(\big|\mu_n(\{x\})\big|-\lim_{k\to\infty}\big|\mu_k(\{x\})\big|\big)\Big|\ge\]
\[\alpha-\sum_{x\in F}\Big|\big|\mu_n(\{x\})\big|-\lim_{k\to\infty}\big|\mu_k(\{x\})\big|\Big|>\]
\[\alpha-|F|\cdot\eps/|F|=\alpha-\eps=\eps+1/2.\]
It follows that for every $n>N$ we have:
\[\mu_n(f)=\big|\big(\mu_n\rstr F\big)(f)+\big(\mu_n\rstr(X\sm F)\big)(f)\big|\ge\]
\[\big|\big(\mu_n\rstr F\big)(f)\big|-\big|\big(\mu_n\rstr(X\sm F)\big)(f)\big|>\]
\[\eps+1/2-\|f\|_\infty\cdot\big\|\mu_n\rstr(X\sm F)\big\|>\eps+1/2-1\cdot(1/2-\eps)=2\eps>0,\]
so $\limsup_{n\to\infty}\big|\mu_n(f)\big|>2\eps>0$, which is a contradiction.
\end{proof}

\begin{corollary}
For every fsJN-sequence $\seqn{\mu_n}$ on a space $X$ it holds:
\[\lim_{x\in L(\mu_n)}\lim_{n\to\infty}\big|\mu_n(\{x\})\big|=0,\]
i.e. for every $\eps>0$ there is a finite subset $F\sub L\big(\mu_n\big)$ such that $\lim_{n\to\infty}\big|\mu_n(\{x\})\big|<\eps$ for every $x\in L\big(\mu_n\big)\sm F$.\noproof
\end{corollary}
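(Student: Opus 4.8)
The plan is to read the displayed equality as a statement about a net over the finite subsets of $L\big(\mu_n\big)$, precisely as spelled out in the ``i.e.'' clause, and to derive it at once from the bound supplied by Lemma \ref{lemma:fsjn_sum_lim_12}. The only ingredient beyond that lemma is the elementary fact that a countable family of strictly positive reals with uniformly bounded partial sums can have only finitely many members above any fixed threshold.

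Concretely, I would argue by contradiction. Fix $\eps>0$ and suppose that the set
\[
F=\big\{x\in L\big(\mu_n\big)\colon\ \lim_{n\to\infty}\big|\mu_n(\{x\})\big|\ge\eps\big\}
\]
is infinite. Choose a finite $F_0\sub F$ with $|F_0|>1/(2\eps)$. Since $\lim_{n\to\infty}\big|\mu_n(\{x\})\big|\ge\eps$ for each $x\in F_0\sub L\big(\mu_n\big)$, we get
\[
\sum_{x\in L(\mu_n)}\lim_{n\to\infty}\big|\mu_n(\{x\})\big|\ \ge\ \sum_{x\in F_0}\lim_{n\to\infty}\big|\mu_n(\{x\})\big|\ \ge\ |F_0|\cdot\eps\ >\ 1/2,
\]
contradicting Lemma \ref{lemma:fsjn_sum_lim_12}. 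Hence $F$ is finite, and by the very definition of the set $L\big(\mu_n\big)$ it satisfies $\lim_{n\to\infty}\big|\mu_n(\{x\})\big|<\eps$ for every $x\in L\big(\mu_n\big)\sm F$, which is exactly the asserted conclusion.

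There is essentially no obstacle here: all the content of the corollary is carried by Lemma \ref{lemma:fsjn_sum_lim_12}. The only minor point worth recording is that $L\big(\mu_n\big)\sub S\big(\mu_n\big)$ is countable, so the sum in that lemma is a genuine countable sum of positive terms; but this is not even strictly needed for the argument above, which only inspects finite subsets of $L\big(\mu_n\big)$.
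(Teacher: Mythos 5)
Your argument is correct and is exactly the intended derivation: the paper marks this corollary as an immediate consequence of Lemma \ref{lemma:fsjn_sum_lim_12} and gives no separate proof, and your finite-subset contradiction is the standard way to extract the ``at most finitely many terms above any threshold'' conclusion from a uniformly bounded sum of positive limits.
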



\begin{lemma}\label{lemma:fsjnseq_liminf_s_ls_infinite}
For every pointwise convergent fsJN-sequence $\seqn{\mu_n}$ on a space $X$, if $\liminf_{n\to\infty}\big\|\mu_n\rstr L\big(\mu_n\big)\big\|<1$, then the set $S\big(\mu_n\big)\sm L\big(\mu_n\big)$ is infinite.
\end{lemma}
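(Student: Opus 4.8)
The plan is to prove the contrapositive in a slightly stronger form: if $S\big(\mu_n\big)\sm L\big(\mu_n\big)$ is finite, then in fact $\lim_{n\to\infty}\big\|\mu_n\rstr L\big(\mu_n\big)\big\|=1$, which of course contradicts the hypothesis $\liminf_{n\to\infty}\big\|\mu_n\rstr L\big(\mu_n\big)\big\|<1$. Write $S=S\big(\mu_n\big)$ and $L=L\big(\mu_n\big)$, and suppose toward a contradiction that $S\sm L=\big\{y_1,\dots,y_m\big\}$ is finite.

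First I would invoke pointwise convergence. Since each $y_i$ lies outside $L$ and the sequence $\seqn{\mu_n}$ is pointwise convergent, the observation recorded right after the definition of pointwise convergence gives $\lim_{n\to\infty}\mu_n\big(\{y_i\}\big)=0$ for every $i\le m$. As this is a sum of finitely many null sequences, we obtain that $\big\|\mu_n\rstr\big(S\sm L\big)\big\|=\sum_{i=1}^{m}\big|\mu_n\big(\{y_i\}\big)\big|\to 0$ as $n\to\infty$.

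Next I would combine this with $\big\|\mu_n\big\|=1$ and the additivity of the variation norm of a finitely supported measure over a partition of its support. Since $\supp\big(\mu_n\big)\sub S$ for every $n\io$, the support splits as $\supp\big(\mu_n\big)=\big(\supp\big(\mu_n\big)\cap L\big)\cup\big(\supp\big(\mu_n\big)\sm L\big)$, whence $\big\|\mu_n\rstr L\big\|+\big\|\mu_n\rstr\big(S\sm L\big)\big\|=\big\|\mu_n\big\|=1$. Therefore $\big\|\mu_n\rstr L\big\|=1-\big\|\mu_n\rstr\big(S\sm L\big)\big\|\to 1$, so $\liminf_{n\to\infty}\big\|\mu_n\rstr L\big\|=1$, contradicting the assumption. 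Hence $S\sm L$ must be infinite.

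I do not anticipate any genuine obstacle: the lemma is essentially bookkeeping resting on two elementary facts — that pointwise convergence forces the pointwise limit to be exactly $0$ at every point of $X\sm L$, and that $\big\|\mu_n\big\|$ decomposes as $\big\|\mu_n\rstr L\big\|+\big\|\mu_n\rstr(S\sm L)\big\|$ — and it does not even use the weak$^*$-nullity of $\seqn{\mu_n}$, only that $\big\|\mu_n\big\|=1$. The one point that should be stated carefully is the passage from ``$\lim_n\mu_n(\{y_i\})$ exists'' to ``$\lim_n\mu_n(\{y_i\})=0$'', which is exactly where the definition of $L\big(\mu_n\big)$ together with pointwise convergence is used.
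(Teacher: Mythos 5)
Your proof is correct and follows essentially the same route as the paper's: both arguments rest on the facts that pointwise convergence forces $\mu_n(\{y\})\to 0$ for each of the finitely many $y\in S\sm L$, and that $\big\|\mu_n\rstr L\big\|+\big\|\mu_n\rstr(S\sm L)\big\|=\big\|\mu_n\big\|=1$. The only difference is organizational — the paper extracts a subsequence realizing the liminf and runs an explicit $(1-\alpha)/2$ estimate to reach a contradiction, whereas you prove the cleaner contrapositive that finiteness of $S\sm L$ forces $\big\|\mu_n\rstr L\big\|\to 1$ outright.
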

\begin{proof}
Let $\seqk{\mu_{n_k}}$ be such a subsequence that $\lim_{k\to\infty}\big\|\mu_{n_k}\big\|=\alpha$, where $\alpha<1$. There is $K\io$ such that for every $k>K$ we have:
\[\Big|\big\|\mu_{n_k}\rstr L\big\|-\alpha\Big|<(1-\alpha)/2,\]
so $\big\|\mu_{n_k}\rstr L\big\|-\alpha/2<1/2$. Since $\seqk{\mu_{n_k}}$ is pointwise convergent, $\lim_{k\to\infty}\mu_{n_k}(\{x\})=0$ for every $x\in S\sm L$, so if $S\sm L$ is finite, then there is $K'>K$ such that for every $k>K'$ we have $\big\|\mu_{n_k}\rstr(S\sm L)\big\|<(1-\alpha)/2$, so $\big\|\mu_{n_k}\rstr(S\sm L)\big\|+\alpha/2<1/2$, but then for every $k>K'$ we also have:
\[1=\big\|\mu_{n_k}\big\|=\Big(\big\|\mu_{n_k}\rstr(S\sm L)\big\|+\alpha/2\Big)+\Big(\big\|\mu_{n_k}\rstr L\big\|-\alpha/2\Big)<1/2+1/2=1,\]
a contradiction.
\end{proof}

Note that Proposition \ref{prop:square_fsjnseqs}.(4) provides an example of an fsJN-sequence for which the assumption stated in the above lemma does not hold. The following lemma asserts an interesting and useful property of the subspace $S\big(\mu_n\big)$.

\begin{lemma}\label{lemma:fsjn_f_bounded}
Let $\seqn{\mu_n}$ be an fsJN-sequence on a Tychonoff space $X$. Then, every function $f\in C(X)$ is bounded on the subspace $\ol{S\big(\mu_n\big)}^X$.
\end{lemma}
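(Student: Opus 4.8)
The plan is to prove the apparently stronger statement that $f$ is bounded on $S:=S\big(\seqn{\mu_n}\big)=\bigcup_{n\io}\supp\big(\mu_n\big)$; since $f$ is continuous, $f\big(\ol{S}^X\big)\sub\ol{f(S)}$, so this is enough. Suppose towards a contradiction that $f$ is unbounded on $S$. Replacing $f$ by $f^2+1$ we may assume $f\ge 1$ and $\sup_{x\in S}f(x)=\infty$. Since every $\supp\big(\mu_n\big)$ is finite, finiteness of $\sup_n\max_{x\in\supp(\mu_n)}f(x)$ would make $f$ bounded on $S$; hence this supremum is $\infty$, and passing to a subsequence (which by Lemma \ref{lemma:fsjn_subsequences_sets} is again an fsJN-sequence, with $S$ only shrinking) and relabelling, we may assume $t_n:=\max_{x\in\supp(\mu_n)}f(x)$ increases strictly to $\infty$ with $t_{n+1}>t_n+1$ for all $n\io$; fix $y_n\in\supp\big(\mu_n\big)$ with $f(y_n)=t_n$.

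The topological heart of the argument is that $D:=\big\{y_n\colon n\io\big\}$ is \emph{closed and discrete} in $X$ --- hence every subset of $D$ is closed: the $y_n$ are pairwise distinct because the $t_n$ are, while any point clustering $D$ would be forced to have arbitrarily large values of $f$ in each of its neighbourhoods, contradicting the continuity of $f$; the Hausdorff property of $X$ then supplies the details. We also note that $y_m\notin\supp\big(\mu_n\big)$ whenever $m>n$, since $f(y_m)=t_m>t_n=\max_{\supp(\mu_n)}f$. Now, using complete regularity, for each $n$ choose $g_n\in C(X,[0,1])$ with $g_n(y_n)=1$ vanishing on the closed set $f^{-1}\big((-\infty,t_n-\tfrac12]\big)\cup f^{-1}\big([t_n+\tfrac12,\infty)\big)\cup\big(\supp(\mu_n)\sm\{y_n\}\big)$ (which does not contain $y_n$). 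Then $\{g_n\neq 0\}\sub f^{-1}\big((t_n-\tfrac12,\,t_n+\tfrac12)\big)$, and as the intervals $(t_n-\tfrac12,t_n+\tfrac12)$ are pairwise disjoint, $g:=\sum_{n\io}g_n$ is well defined with $0\le g\le 1$, near every point equals a sum of at most two of the $g_n$'s, and so $g\in C(X)$. Since $g_n(y_n)=1$, since $g_n$ vanishes on $\supp(\mu_n)\sm\{y_n\}$, and since $g_m$ vanishes on all of $\supp(\mu_n)$ for $m>n$, one obtains $\mu_n(g)=\mu_n\big(\{y_n\}\big)+E_n$, where $E_n$ involves only the finitely many $x\in\supp(\mu_n)\sm\{y_n\}$ with $f(x)$ lying in some $(t_m-\tfrac12,t_m+\tfrac12)$, $m<n$.

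If every $y_n$ carried a definite fraction of $\big\|\mu_n\big\|$ and could be put on a single side of $\mu_n$, this would already contradict weak* nullity of $\seqn{\mu_n}$; the real obstacle is that it need not, and (as in the Josefson--Nissenzweig theorem itself) one must argue by cases. The plan for this last step is: first pass to a pointwise convergent fsJN-subsequence (Remark \ref{remark:jn_pointwise_limits}), and split according to whether $f$ is bounded on $L:=L\big(\seqn{\mu_n}\big)$. In the case where $f$ is unbounded on $L$, Lemma \ref{lemma:fsjn_sum_lim_12} and its corollary let one select points in $L$ along which $f\to\infty$ whose masses stay bounded below along suitable subsequences, and the bump functions above --- inserted into the argument of Lemma \ref{lemma:fsjn_sum_lim_12} in place of the separating function used there --- produce the contradiction. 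In the case where $f$ is bounded on $L$, so that $f$ is unbounded on $S\sm L$, I would combine Lemma \ref{lemma:jnseq_pos_neg} with a diagonal argument on the countable set $S$ to thin the sequence so that the selected large-$f$ points carry a uniformly positive share of $\big\|\mu_n\big\|$ and sit on one side ($P_n$ or $N_n$) of $\mu_n$, whereupon choosing the signs of the $g_n$'s accordingly keeps $\big|\mu_n(g)\big|$ bounded away from $0$ along the subsequence, again contradicting weak* nullity. The technical crux throughout is coordinating the $f$-localization and zero-sets of the bumps with the finitely many support points of each $\mu_n$, which is feasible precisely because the $\supp\big(\mu_n\big)$ are finite and $D$ is closed discrete.
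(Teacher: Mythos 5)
Your setup --- reducing to boundedness on $S\big(\mu_n\big)$, extracting $y_n\in\supp\big(\mu_n\big)$ with $f\big(y_n\big)=t_n\to\infty$ lying in pairwise disjoint $f$-windows, and localizing test functions inside $f$-preimages of those windows --- runs parallel to the first half of the paper's proof. The genuine gap is in the concluding step. Because you insist on $0\le g\le 1$, the contribution of $y_n$ to $\mu_n(g)$ is at most $\big|\mu_n\big(\{y_n\}\big)\big|$, and nothing you cite shows that this quantity (or the variation of $\mu_n$ on any set of large-$f$ points) stays bounded away from $0$ along a subsequence; the uncontrolled error term $E_n$ from the lower windows is a further, unaddressed obstruction. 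The assertion that a ``diagonal argument on $S$'' lets you thin the sequence so that the large-$f$ points ``carry a uniformly positive share of $\big\|\mu_n\big\|$'' is precisely the missing content: a priori $\big\|\mu_n\rstr f^{-1}\big[(M,\infty)\big]\big\|$ could tend to $0$ for every fixed $M$ while remaining positive for $M=t_{n-1}$, and in that configuration every test function bounded by $1$ and supported where $f$ is large integrates to something tending to $0$, so your method yields no contradiction with weak* nullity. (The Case-A route through Lemma \ref{lemma:fsjn_sum_lim_12} suffers from the same defect: that lemma's argument produces a contradiction only when the selected limit masses sum to \emph{more} than $1/2$, which the points of $L$ with $f\to\infty$ need not do --- their limit masses necessarily tend to $0$.)

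The paper closes exactly this gap by using an \emph{unbounded} test function, and that is the one idea your proposal lacks. It first perturbs $f$ to a continuous $h$ which separates the distinguished point $x_n$ from the finitely many other points of $\supp\big(\mu_n\big)$ by \emph{value}, and then builds by induction on the windows a continuous $g\colon\R_+\to\R$ that is frozen on $[0,k_n-1]$ but may be taken astronomically large at the single value $h\big(x_n\big)$; since $\mu_n\big(\big\{x_n\big\}\big)\neq 0$, one can force $\big|\mu_n(g\circ h)\big|>n$ no matter how small that mass is, contradicting $\mu_n(g\circ h)\to 0$. Note also that the composition $g\circ h$ is automatically continuous on $X$; if you instead tried to place unbounded bumps directly on your closed discrete set $D$, you would additionally need to justify continuity of the resulting sum on an arbitrary Tychonoff space (a countable closed discrete set need not be $C$-embedded there). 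If you replace your bounded bumps by such a composed, inductively rescaled function, your argument becomes essentially the paper's; as written, it does not close.
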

\begin{proof}
Suppose that there is a function $f\in C(X)$, $f\ge0$, which is unbounded on
$S\big(\mu_n\big)$. Passing to a subsequence of $\seqn{\mu_n}$, if
necessary, we may assume that there exist a strictly increasing
sequence $\seqn{k_n\io}$ and a sequence $\seqn{x_n\in\supp\big(\mu_n\big)}$
such that for every $n\io$ we have
\[f\Big[\bigcup_{l<n}\supp\big(\mu_l\big)\Big]\subset\big(0,k_n-1\big)\]
and $f\big(x_n\big)>k_n$. It follows that $f\big(x_n\big)<k_{n+1}-1$. For every $n\io$ put
\[\epsilon_n=k_{n+1}-1-f\big(x_n\big)\]
and let
$\rho_n\colon X\to\big[0,\epsilon_n\big)$ (note that $\epsilon_n>0$) be a continuous function such that:
\[\rho_n\rstr\Big(X\setminus f^{-1}\big[\big(k_n,k_{n+1}-1\big)\big]\Big)\equiv 0,\]
the restriction $\rho_n\rstr\Big(f^{-1}\big(f\big(x_n\big)\big)\cap\supp\big(\mu_n\big)\Big)$ is injective and non-zero, and
\[\rho_n\rstr\Big(f^{-1}\big[\big(k_n,k_{n+1}-1\big)\big]\setminus f^{-1}\big(f\big(x_n\big)\big)\Big)\cap\supp\big(\mu_n\big)\equiv 0.\]
The existence of such a function $\rho_n$ is a direct consequence of the complete regularity of $X$.
By replacing $f$ on $f^{-1}\big[\big(k_n,k_{n+1}-1\big)\big]$ by $f+\rho_n$ for each
$n\io$, we may additionally obtain a continuous function $h$ on $X$ such that there is no $x\neq x_n$ in
$\supp\big(\mu_n\big)$ for which we have $h(x)=h\big(x_n\big)$ (to see the continuity of $h$, note that the family $\big\{\rho_n^{-1}\big[\R\sm\{0\}\big]\colon\ n\io\big\}$ is locally finite).

By induction on $n\io$, construct a continuous function
$g_n\colon\big[0,k_{n+1}-1\big]\to\mathbb R$ such that
\[\Big|\sum_{x\in\supp(\mu_n)}g_n(h(x))\cdot\mu_n(x)\Big|>n,\]
and $g_n\rstr\big[0,k_n-1\big]=g_{n-1}$. Then, the union $g=\bigcup_{n\io}g_n$ is a continuous function $g\colon\R_+\to\R$ and has the property that
\[\Big|\sum_{x\in\supp(\mu_n)}g(h(x))\cdot\mu_n(x)\Big|>n\]
for all $n\io$, contradicting the fact that $\seqn{\mu_n}$ is an fsJN-sequence.
\end{proof}

\begin{corollary}
If a normal space $X$ admits an fsJN-sequence $\seqn{\mu_n}$, then the subspace $\ol{S\big(\mu_n\big)}^X$ is pseudocompact.
\end{corollary}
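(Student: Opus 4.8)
Recall that a Tychonoff space is \emph{pseudocompact} precisely when every continuous real-valued function on it is bounded. So to prove the corollary it suffices to show that each $g\in C\big(\ol{S(\mu_n)}^X\big)$ is bounded. The plan is to reduce this to Lemma \ref{lemma:fsjn_f_bounded}, which already tells us that every function coming from $C(X)$ is bounded on $\ol{S(\mu_n)}^X$; the only gap is that an arbitrary continuous function on the subspace need not a priori be the restriction of a global continuous function. This is exactly where the normality hypothesis enters.

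Concretely, I would argue as follows. The set $\ol{S(\mu_n)}^X$ is by definition a closed subspace of $X$. Since $X$ is normal, the Tietze extension theorem applies: given any $g\in C\big(\ol{S(\mu_n)}^X\big)$, there exists $f\in C(X)$ with $f\rstr\ol{S(\mu_n)}^X=g$. By Lemma \ref{lemma:fsjn_f_bounded}, $f$ is bounded on $\ol{S(\mu_n)}^X$, hence so is $g$. As $g$ was arbitrary, every continuous real-valued function on $\ol{S(\mu_n)}^X$ is bounded, i.e. $\ol{S(\mu_n)}^X$ is pseudocompact.

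There is essentially no genuine obstacle here once Lemma \ref{lemma:fsjn_f_bounded} is available; the content of the corollary is the observation that pseudocompactness is a statement about functions defined \emph{on the subspace}, and that normality is precisely what converts the "functions extend from $X$" conclusion of the lemma into the "all functions on the closed subspace are bounded" conclusion. If one wished to state the corollary without normality one would instead only get that $\ol{S(\mu_n)}^X$ is what is sometimes called \emph{relatively pseudocompact in $X$}; the normality assumption is what upgrades this to genuine pseudocompactness of the subspace.
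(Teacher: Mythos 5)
Your proof is correct and is essentially identical to the paper's own argument: apply the Tietze extension theorem to the closed set $\ol{S\big(\mu_n\big)}^X$ (using normality of $X$) to extend a given continuous function to all of $X$, then invoke Lemma \ref{lemma:fsjn_f_bounded} to conclude boundedness. Nothing further is needed.
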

\begin{proof}
Put $S=\ol{S\big(\mu_n\big)}^X$. Let $f\in C(S)$. By the Tietze extension theorem there is $F\in C(X)$ extending $f$. By Lemma \ref{lemma:fsjn_f_bounded}, $f=F\rstr S$ is bounded.
\end{proof}

\subsection{Disjointly supported fsJN-sequences\label{section:fsjn_disjoint_supps}}

In this section we will show that if a compact space $K$ has the fsJNP, then $K$ admits an fsJN-sequence with disjoint supports (Theorem \ref{theorem:disjoint_supps}). Let us thus start with the following convenient definition.

\begin{definition}
A finitely supported sequence $\seqn{\mu_n}$ of measures on a space $X$ is \textit{disjointly supported} if $\supp\big(\mu_n\big)\cap\supp\big(\mu_{n'}\big)=\emptyset$ for every $n\neq n'\io$.
\end{definition}

The following two lemmas imply that if a space admits an fsJN-sequence with measures having supports of size $2$, then there exists also such a sequence with disjoint supports. In Theorem \ref{theorem:disjoint_supps} we will generalize this result, however the proof will be much more complicated. For more information on sizes of supports of fsJN-sequences, see Section \ref{section:sizes_of_supports}, especially Theorem \ref{theorem:sizes_of_supps}.

\begin{lemma}\label{lemma:fsjn_conv_seq}
Let $X$ be a space. Fix a sequence $\seqn{x_n}$ in $X$ and a point $x\in X$. For every $n\io$ put $\mu_n=\frac{1}{2}\big(\delta_{x_n}-\delta_x\big)$. Then, $\seqn{\mu_n}$ is an fsJN-sequence if and only if $x_n\to x$ in $X$.
\end{lemma}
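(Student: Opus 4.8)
The plan is to prove the equivalence directly from the definition of an fsJN-sequence, since for measures of the form $\mu_n=\frac12(\delta_{x_n}-\delta_x)$ almost everything reduces to the behaviour of $f(x_n)$.

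\textbf{The $(\Leftarrow)$ direction.} Suppose $x_n\to x$ in $X$. First note that $\big\|\mu_n\big\|=\frac12(|1|+|-1|)=1$ for every $n\io$ (with the minor caveat that if $x_n=x$ for some $n$ then $\mu_n=0$; so the hypothesis $x_n\to x$ should be understood, as is implicit in the surrounding text, together with $x_n\neq x$ — or one simply observes that the interesting content is the non-trivial case). Given $f\in C(X)$, continuity of $f$ at $x$ gives $f(x_n)\to f(x)$, hence
\[
\mu_n(f)=\tfrac12\big(f(x_n)-f(x)\big)\longrightarrow 0.
\]
Thus $\seqn{\mu_n}$ is weakly* null with norm-one terms, i.e. an fsJN-sequence.

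\textbf{The $(\Rightarrow)$ direction.} Suppose $\seqn{\mu_n}$ is an fsJN-sequence; we must show $x_n\to x$. Assume not: then there is an open neighbourhood $U$ of $x$ and an infinite set $A\sub\omega$ with $x_n\notin U$ for all $n\in A$. Since $X$ is Tychonoff, pick $f\in C(X)$ with $f(x)=0$, $0\le f\le 1$, and $f\equiv 1$ off $U$ (using complete regularity to separate the point $x$ from the closed set $X\sm U$). Then for $n\in A$ we have $\mu_n(f)=\frac12(f(x_n)-f(x))=\frac12(1-0)=\frac12$, so $\mu_n(f)\not\to 0$, contradicting weak* nullity. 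Hence every neighbourhood of $x$ contains $x_n$ for all but finitely many $n$, i.e. $x_n\to x$.

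\textbf{Main obstacle.} There is essentially no obstacle here; the only point requiring a little care is the degenerate case $x_n=x$ for infinitely many $n$, which makes $\big\|\mu_n\big\|=0$ and violates the norm-one requirement, so such a sequence is automatically excluded from being an fsJN-sequence — consistent with the fact that a sequence converging to $x$ in the relevant (non-trivial) sense has $x_n\neq x$ eventually. One should state this caveat explicitly or tacitly assume $x_n\neq x$, matching the conventions already in use in the paper. Beyond that, the proof is a direct two-way application of the definitions plus Urysohn-type separation available in any Tychonoff space.
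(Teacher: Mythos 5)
Your proof is correct and is precisely the routine argument the paper intends — its own proof of this lemma is literally the single word ``Easy'': the reverse implication is immediate from continuity of each $f$ at $x$, and the forward implication uses complete regularity to separate $x$ from the closed complement of a neighbourhood, exactly as you do. Your caveat about the degenerate case $x_n=x$ (which makes $\|\mu_n\|=0$ and so is automatically excluded on the fsJN side but not by the hypothesis $x_n\to x$) is a fair and correctly handled observation about the statement's phrasing.
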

\begin{proof}
Easy.
\end{proof}

\begin{lemma}\label{lemma:disjoint_supps_size_2}
Let a space $X$ admit an fsJN-sequence $\seqn{\mu_n}$ defined for every $n\io$ as $\mu_n=\frac{1}{2}\big(\delta_{x_n}-\delta_{y_n}\big)$, where $x_n,y_n\in X$. Then, there exists a disjointly supported fsJN-sequence $\seqn{\nu_n}$ defined for every $n\io$ as $\nu_n=\frac{1}{2}\big(\delta_{u_n}-\delta_{w_n}\big)$, where $u_n,w_n\in X$.
\end{lemma}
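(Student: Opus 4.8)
The plan is to distinguish two cases according to whether the supports $e_n:=\supp\big(\mu_n\big)=\{x_n,y_n\}$ (each of size $2$, since $\big\|\mu_n\big\|=1$) admit an infinite pairwise disjoint subfamily. First I would build greedily an increasing chain $A_0\sub A_1\sub\cdots$ of finite index sets with $\big\{e_n\colon n\in A_k\big\}$ pairwise disjoint: put $A_0=\{0\}$, and, having chosen $A_k$ with $V_k:=\bigcup_{n\in A_k}e_n$, enlarge it by some index $m\notin A_k$ with $e_m\cap V_k=\emptyset$ whenever such an $m$ exists. If this process never stops, then $A:=\bigcup_kA_k$ is infinite and $\big\{e_n\colon n\in A\big\}$ is a matching, so the subsequence $\big\langle\mu_n\colon n\in A\big\rangle$ of $\seqn{\mu_n}$ is already a disjointly supported fsJN-sequence of the required form, and we are done. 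Otherwise the process terminates at some $A_k$, yielding a \emph{finite} set $T:=V_k\sub S\big(\mu_n\big)$ with $e_n\cap T\neq\emptyset$ for every $n\io$.

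In this remaining case I would reduce the problem to a single fixed point. Since $T$ is finite and each $e_n$ meets $T$, one of the sets $\{n\io\colon x_n\in T\}$, $\{n\io\colon y_n\in T\}$ is infinite; passing to the corresponding subsequence and exchanging $x_n$ with $y_n$ within each pair if necessary — which replaces $\mu_n$ by $-\mu_n$ and hence preserves being an fsJN-sequence, as does passing to a subsequence — we may assume $x_n\in T$ for all $n$. As $T$ is finite, a further subsequence makes $x_n=t$ for a fixed $t\in T$, so $\mu_n=\frac{1}{2}\big(\delta_t-\delta_{y_n}\big)$ with $y_n\neq t$. Lemma \ref{lemma:fsjn_conv_seq} now applies and gives $y_n\to t$ in $X$.

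Finally, since $X$ is Hausdorff and $y_n\to t$ with $y_n\neq t$, no point of $X$ can equal $y_n$ for infinitely many $n$ (a constant subsequence would converge to that point as well as to $t$, contradicting uniqueness of limits). Hence I can choose $n_0<n_1<\cdots$ with $y_{n_0},y_{n_1},\ldots$ pairwise distinct, and this sequence still converges to $t$. Set
\[\nu_k=\frac{1}{2}\big(\delta_{y_{n_{2k}}}-\delta_{y_{n_{2k+1}}}\big)\qquad(k\io).\]
Then $\big\|\nu_k\big\|=1$, the supports $\big\{y_{n_{2k}},y_{n_{2k+1}}\big\}$ are pairwise disjoint, and for each $f\in C(X)$ both $f\big(y_{n_{2k}}\big)$ and $f\big(y_{n_{2k+1}}\big)$ tend to $f(t)$ by continuity, so $\nu_k(f)\to0$. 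Thus $\seqk{\nu_k}$ is the desired disjointly supported fsJN-sequence.

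The only genuine obstacle is the dichotomy in the first step; once one lands in the ``finite hitting set'' case, the decisive point is the elementary observation that a non-trivial convergent sequence in a Hausdorff space has no value occurring infinitely often, which is exactly what lets us manufacture pairs of genuinely distinct points. (The considerably harder Theorem \ref{theorem:disjoint_supps} will remove the restriction that the supports have size $2$, and its proof is accordingly much more involved.)
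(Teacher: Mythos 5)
Your proof is correct. It takes a recognizably different route from the paper's: the paper splits on whether $X$ contains a non-trivial convergent sequence (if yes, pair up its terms and ignore the given $\seqn{\mu_n}$ entirely; if no, use Lemma \ref{lemma:fsjn_conv_seq} to conclude that no point lies in infinitely many supports and extract a disjointly supported subsequence), whereas you split on a purely combinatorial property of the given sequence of supports, namely whether they admit an infinite pairwise disjoint subfamily. The two dichotomies are essentially contrapositive to one another and both ultimately rest on the same two ingredients --- Lemma \ref{lemma:fsjn_conv_seq} and the trick of pairing consecutive terms of a non-trivial convergent sequence --- but your organization has two small advantages: the greedy construction makes explicit the disjointification step that the paper only asserts, and in the ``finite hitting set'' case the convergent sequence you use is manufactured from the given $\seqn{\mu_n}$ itself rather than taken from an unrelated part of $X$. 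The sign bookkeeping (replacing $\mu_n$ by $-\mu_n$ to normalize which point of the pair lies in $T$, and again when invoking Lemma \ref{lemma:fsjn_conv_seq} for measures of the form $\frac{1}{2}\big(\delta_t-\delta_{y_n}\big)$) is harmless since negation preserves both the norm and weak* nullity, and your observation that a non-trivial convergent sequence in a Hausdorff space takes no value infinitely often correctly yields the pairwise distinctness needed for disjoint supports.
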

\begin{proof}
If the space $X$ contains a non-trivial convergent sequence $\seqn{z_n}$, then it is easy to see that the measures defined as $\nu_n=\frac{1}{2}\big(\delta_{z_{2n}}-\delta_{z_{2n+1}}\big)$ form an fsJN-sequence satisfying the conclusion of the lemma.

If $X$ does not contain any non-trivial convergent sequences, then, by Lemma \ref{lemma:fsjn_conv_seq}, for every $A\in\cso$ we have $\bigcap_{n\in A}\supp\big(\mu_n\big)=\emptyset$, so there exists a subsequence $\seqk{\mu_{n_k}}$ such that $\supp\big(\mu_{n_k}\big)\cap\supp\big(\mu_{n_l}\big)=\emptyset$ for every $k\neq l\io$. To finish the proof put $\nu_k=\mu_{n_k}$ for every $k\io$.
\end{proof}

%
%

Before we present the proof of the main result of this section, Theorem \ref{theorem:disjoint_supps}, we need to prove several auxiliary lemmas, mostly concerning modifying given fsJN-sequences in order to obtain new fsJN-sequences having nicer (or more tamed) properties.

\begin{lemma}\label{lemma:normalization_fsJN}
Fix $\eps>0$. Let $\seqn{\mu_n}$ be a weakly* null sequence of measures on a space $X$ such that $\big\|\mu_n\big\|>\eps$ for every $n\io$. Then, $\seqn{\mu_n/\big\|\mu_n\big\|}$ is a JN-sequence on $X$.
\end{lemma}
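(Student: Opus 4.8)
The plan is entirely elementary: write $\nu_n=\mu_n/\big\|\mu_n\big\|$ and check the two defining conditions of a JN-sequence directly. The normalization condition is immediate, since $\big\|\nu_n\big\|=\big\|\mu_n\big\|\big/\big\|\mu_n\big\|=1$ for every $n\io$ (here we use that $\big\|\mu_n\big\|>\eps>0$, so the quotient is well defined).

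For the weak* null condition, fix an arbitrary $f\in C(X)$; note that $\mu_n(f)$ is meaningful for all $n$ precisely because the hypothesis that $\seqn{\mu_n}$ is weakly* null already presupposes this (in the relevant settings $X$ is compact or each $\mu_n$ is finitely supported, so $C(X)\sub L_1(\mu_n)$). Then for every $n\io$ one has the estimate
\[\big|\nu_n(f)\big|=\frac{\big|\mu_n(f)\big|}{\big\|\mu_n\big\|}<\frac{\big|\mu_n(f)\big|}{\eps},\]
using the uniform lower bound $\big\|\mu_n\big\|>\eps$. Since $\seqn{\mu_n}$ is weakly* null we have $\lim_{n\to\infty}\mu_n(f)=0$, and as $\eps$ is a fixed positive constant the right-hand side tends to $0$, whence $\lim_{n\to\infty}\nu_n(f)=0$. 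As $f\in C(X)$ was arbitrary, $\seqn{\nu_n}$ is weakly* null, and together with $\big\|\nu_n\big\|=1$ this shows $\seqn{\nu_n}$ is a JN-sequence on $X$.

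There is no real obstacle here; the only point worth stating carefully is that the hypothesis $\big\|\mu_n\big\|>\eps$ with a \emph{single} $\eps$ valid for all $n$ is exactly what lets the division by $\big\|\mu_n\big\|$ not destroy convergence — a sequence of norms merely converging to a positive limit, or only eventually bounded below, would suffice just as well, but the uniform bound makes the estimate cleanest. (If one wants to record the finitely/countably supported variant, observe that $\supp(\nu_n)=\supp(\mu_n)$, so $\seqn{\nu_n}$ inherits finite, resp. countable, supports from $\seqn{\mu_n}$ and is then an fsJN-, resp. csJN-, sequence.)
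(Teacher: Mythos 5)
Your proof is correct and is essentially identical to the paper's: normalize, note $\big\|\nu_n\big\|=1$, and use the bound $\big|\nu_n(f)\big|<\big|\mu_n(f)\big|/\eps\to 0$. The extra remarks about well-definedness and preservation of supports are harmless additions to the same argument.
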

\begin{proof}
Let $\nu_n=\mu_n/\big\|\mu_n\big\|$ for each $n\io$. Then, $\big\|\nu_n\big\|=1$. For every $f\in C(X)$ we have:
\[\big|\nu_n(f)\big|=\big|\mu_n(f)\big|\Big/\big\|\mu_n\big\|<\big|\mu_n(f)\big|/\eps\longrightarrow0\]
as $n\to\infty$, so $\seqn{\nu_n}$ is weakly* null. It follows that it is a JN-sequence.
\end{proof}

\begin{lemma}\label{lemma:norm_conv_fsjn_seqs}
Let $\seqn{\mu_n}$ and $\seqn{\nu_n}$ be two finitely supported sequences of measures on a space $X$ such that $\lim_{n\to\infty}\big\|\mu_n-\nu_n\big\|=0$. Assume that $\seqn{\mu_n}$ is an fsJN-sequence on $X$, $\big\|\nu_n\big\|=1 $ for every $n\io$, and that every function $f\in C(X)$ is bounded on $S\big(\nu_n\big)$. Then, $\seqn{\nu_n}$ is also an fsJN-sequence on $X$.
\end{lemma}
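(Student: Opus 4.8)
The plan is to check the three defining properties of an fsJN-sequence for $\seqn{\nu_n}$. Finite support and $\big\|\nu_n\big\|=1$ for every $n\io$ are given outright, so the whole content of the proof is to verify that $\seqn{\nu_n}$ is weakly* null, i.e. that $\lim_{n\to\infty}\nu_n(f)=0$ for every $f\in C(X)$.

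First I would fix $f\in C(X)$ and observe that, although $f$ need not be bounded on all of $X$ (recall $X$ is only assumed Tychonoff), it is bounded on the relevant countable set $S\big(\mu_n\big)\cup S\big(\nu_n\big)$. Indeed, by Lemma \ref{lemma:fsjn_f_bounded} applied to the fsJN-sequence $\seqn{\mu_n}$, the function $f$ is bounded on $\ol{S\big(\mu_n\big)}^X\supseteq S\big(\mu_n\big)$, and by hypothesis $f$ is bounded on $S\big(\nu_n\big)$. Let $M\ge 0$ be a common bound, so that $|f(x)|\le M$ for every $x\in S\big(\mu_n\big)\cup S\big(\nu_n\big)$.

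Next I would compare $\nu_n(f)$ with $\mu_n(f)$. For each $n\io$ the measure $\nu_n-\mu_n$ is finitely supported with $\supp\big(\nu_n-\mu_n\big)\sub\supp\big(\nu_n\big)\cup\supp\big(\mu_n\big)\sub S\big(\mu_n\big)\cup S\big(\nu_n\big)$, and the norm of a finitely supported measure equals the sum of the absolute values of its point masses (Section \ref{section:notation}). Hence
\[\big|\nu_n(f)-\mu_n(f)\big|=\Big|\sum_{x\in\supp(\nu_n-\mu_n)}\big(\nu_n-\mu_n\big)(\{x\})\cdot f(x)\Big|\le M\sum_{x\in\supp(\nu_n-\mu_n)}\big|\big(\nu_n-\mu_n\big)(\{x\})\big|=M\big\|\nu_n-\mu_n\big\|.\]
Since $\big\|\nu_n-\mu_n\big\|\to 0$ by assumption and $\mu_n(f)\to 0$ because $\seqn{\mu_n}$ is weakly* null, we get
\[\big|\nu_n(f)\big|\le\big|\mu_n(f)\big|+M\big\|\nu_n-\mu_n\big\|\longrightarrow 0.\]
As $f\in C(X)$ was arbitrary, $\seqn{\nu_n}$ is weakly* null, and together with the finiteness of supports and $\big\|\nu_n\big\|=1$ this shows $\seqn{\nu_n}$ is an fsJN-sequence.

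I do not expect any real obstacle here. The only point demanding care is that on a general Tychonoff space continuous functions need not be bounded, so one cannot simply write $\big|(\nu_n-\mu_n)(f)\big|\le\|f\|_\infty\big\|\nu_n-\mu_n\big\|$; this is exactly why the hypothesis that every $f\in C(X)$ be bounded on $S\big(\nu_n\big)$ is included, and why Lemma \ref{lemma:fsjn_f_bounded} is invoked to obtain the same control on $S\big(\mu_n\big)$.
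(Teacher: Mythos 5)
Your proof is correct and follows essentially the same route as the paper: both invoke Lemma \ref{lemma:fsjn_f_bounded} to bound $f$ on $S\big(\mu_n\big)$, use the hypothesis to bound it on $S\big(\nu_n\big)$, and then estimate $\big|\mu_n(f)-\nu_n(f)\big|$ by a constant times $\big\|\mu_n-\nu_n\big\|$ before applying the triangle inequality. The only cosmetic difference is that the paper uses the constant $\alpha+\beta$ where you use the (slightly sharper) common bound $M$.
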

\begin{proof}
It is only necessary to prove that $\lim_{n\to\infty}\nu_n(f)=0$ for every $f\in C(X)$. Let thus $f\in C(X)$ and put $\alpha=\sup\big\{|f(x)|\colon\ x\in S\big(\mu_n\big)\big\}$ and $\beta=\sup\big\{|f(x)|\colon\ x\in S\big(\nu_n\big)\big\}$. By Lemma \ref{lemma:fsjn_f_bounded} the function $f$ is bounded on $S\big(\mu_n\big)$, so $\alpha<\infty$. Similarly, $\beta<\infty$ by the assumption. We then have:
\[\big|\nu_n(f)\big|\le\big|\mu_n(f)-\nu_n(f)\big|+\big|\mu_n(f)\big|\le(\alpha+\beta)\big\|\mu_n-\nu_n\big\|+\big|\mu_n(f)\big|,\]
so $\lim_{n\to\infty}\nu_n(f)=0$. It follows that $\seqn{\nu_n}$ is an fsJN-sequence on $X$.
\end{proof}

\begin{lemma}\label{lemma:pointwise_conv}
Let $\seqn{\mu_n}$ and $\seqn{\nu_n}$ be two sequences of measures on a space $X$ such that $\lim_{n\to\infty}\big\|\mu_n-\nu_n\big\|=0$. If $\seqn{\mu_n}$ is pointwise convergent, then $\seqn{\nu_n}$ is also pointwise convergent and $L\big(\mu_n\big)=L\big(\nu_n\big)$.
\end{lemma}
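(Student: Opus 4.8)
The plan is to reduce the whole statement to a single pointwise triangle‑inequality estimate, since the hypothesis $\lim_{n\to\infty}\big\|\mu_n-\nu_n\big\|=0$ controls the difference of the measures of every individual singleton. The first step I would record is the elementary domination, valid for any signed Radon measure $\lambda$ on $X$ and any point $x\in X$, that
\[\big|\lambda(\{x\})\big|\le|\lambda|(\{x\})\le|\lambda|(X)=\|\lambda\|,\]
which uses only that $\{x\}$ is Borel and the definition of $\|\cdot\|$ recalled in Section \ref{section:notation}. Applying this with $\lambda=\mu_n-\nu_n$ gives, for every $x\in X$ and every $n\io$,
\[\big|\mu_n(\{x\})-\nu_n(\{x\})\big|=\big|(\mu_n-\nu_n)(\{x\})\big|\le\big\|\mu_n-\nu_n\big\|.\]

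Next I would fix $x\in X$ and use that $\seqn{\mu_n}$ is pointwise convergent, so the limit $a:=\lim_{n\to\infty}\mu_n(\{x\})$ exists in $\R$. Combining the displayed inequality with the assumption $\lim_{n\to\infty}\big\|\mu_n-\nu_n\big\|=0$, the real sequence $\seqn{\nu_n(\{x\})-\mu_n(\{x\})}$ tends to $0$, and hence $\lim_{n\to\infty}\nu_n(\{x\})$ exists and equals $a$. Since $x\in X$ was arbitrary, $\seqn{\nu_n}$ is pointwise convergent and, crucially, $\lim_{n\to\infty}\nu_n(\{x\})=\lim_{n\to\infty}\mu_n(\{x\})$ for every $x\in X$.

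Finally, the equality $L\big(\mu_n\big)=L\big(\nu_n\big)$ is then immediate from the definition of $L(\cdot)$: a point $x$ lies in $L\big(\mu_n\big)$ exactly when $\lim_{n\to\infty}\mu_n(\{x\})$ exists and is nonzero, and we have just shown this limit exists for $\mu_n$ precisely when it does for $\nu_n$, with the same value, so the two limits vanish simultaneously. There is no real obstacle in this argument; the only point that deserves to be stated carefully is the singleton domination $\big|\lambda(\{x\})\big|\le\|\lambda\|$, after which everything is a direct consequence of the hypothesis.
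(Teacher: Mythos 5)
Your proof is correct and follows essentially the same route as the paper: both rest on the single inequality $\big|\mu_n(\{x\})-\nu_n(\{x\})\big|\le\big\|\mu_n-\nu_n\big\|$, from which the existence and equality of the pointwise limits, and hence $L\big(\mu_n\big)=L\big(\nu_n\big)$, follow immediately. You merely make the singleton-domination step more explicit than the paper does.
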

\begin{proof}
For every $x\in X$ we have $\big|\mu_n(\{x\})-\nu_n(\{x\})\big|\le\big\|\mu_n-\nu_n\big\|$, so if $\lim_{n\to\infty}\mu_n(\{x\})$ exists, then $\lim_{n\to\infty}\nu_n(\{x\})$ must exist, too. Similarly, $\lim_{n\to\infty}\mu_n(\{x\})\neq0$ if and only if $\lim_{n\to\infty}\nu_n(\{x\})\neq0$, so $L\big(\mu_n\big)=L\big(\nu_n\big)$.
\end{proof}

\begin{lemma}\label{lemma:disjoint_supps_outside_ls}
For every fsJN-sequence $\seqn{\mu_n}$ on a space $X$ there exists a pointwise convergent fsJN-sequence $\seqk{\nu_k}$ on $X$ and an increasing sequence $\seqk{n_k}$ of indices such that:
\begin{enumerate}
    \item $\supp\big(\nu_k\big)\sub\supp\big(\mu_{n_k}\big)$ for every $k\io$,
    \item for every $k\io$ there exists $\alpha_k\in\big(1,1+1/k\big)$ such that
\[\nu_k=\alpha_k\cdot\Big(\mu_{n_k}\rstr\supp\big(\nu_k\big)\Big)\quad\text{and}\quad\lim_{k\to\infty}\big\|\nu_k-\mu_{n_k}\big\|=0,\]
    \item $L\big(\seqk{\nu_k}\big)=LS\big(\seqk{\nu_k}\big)=LS\big(\seqk{\mu_{n_k}}\big)$,
    \item $\supp\big(\nu_l\big)\cap\supp\big(\nu_{l'}\big)\sub L\big(\seqk{\nu_k}\big)$ for every $l\neq l'\io$.
\end{enumerate}
\end{lemma}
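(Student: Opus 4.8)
The plan is to pass to a pointwise convergent fsJN-subsequence and then, by a recursion on $k$, to discard from each chosen measure $\mu_{n_k}$ those points outside $LS$ that were already used in earlier measures (together with one extra tiny atom), renormalising by a factor tending to $1$.

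First I would apply Remark~\ref{remark:jn_pointwise_limits} and assume, after passing to a subsequence, that $\seqn{\mu_n}$ is itself pointwise convergent; then $L\big(\mu_n\big)=LI\big(\mu_n\big)=LS\big(\mu_n\big)=:D$, and by Lemma~\ref{lemma:fsjn_subsequences_sets} this set is unchanged under any further passage to a subsequence. For $x\in D$ put $\ell_x=\lim_n\mu_n(\{x\})\neq0$ and fix finite sets $F_0\sub F_1\sub\cdots$ with $\bigcup_kF_k=D$. I would then recursively build indices $n_0<n_1<\cdots$ and finite discard sets $R_k\sub\supp\big(\mu_{n_k}\big)$ so that
\begin{enumerate}[(a)]
\item $F_k\sub\supp\big(\mu_{n_k}\big)$ and $F_k\cap R_k=\emptyset$;
\item $R_k$ contains every point of $\supp\big(\mu_{n_k}\big)$ lying in $\bigcup_{l<k}\big(\supp\big(\nu_l\big)\sm D\big)$;
\item $0<\big\|\mu_{n_k}\rstr R_k\big\|<1/(k+1)$,
\end{enumerate}
where $A_k:=\supp\big(\mu_{n_k}\big)\sm R_k$, $\alpha_k:=1\big/\big\|\mu_{n_k}\rstr A_k\big\|$ and $\nu_k:=\alpha_k\cdot\big(\mu_{n_k}\rstr A_k\big)$. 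Then $\supp\big(\nu_k\big)=A_k\sub\supp\big(\mu_{n_k}\big)$, condition (c) is exactly $\alpha_k\in(1,1+1/k)$, and since $\mu_{n_k}\rstr A_k$ and $\mu_{n_k}\rstr R_k$ have disjoint supports one gets $\big\|\nu_k-\mu_{n_k}\big\|=2(\alpha_k-1)/\alpha_k\to0$; so (1) and (2) hold.

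The remaining properties follow formally. As $\big\|\nu_k\big\|=1$, $\big\|\nu_k-\mu_{n_k}\big\|\to0$, $S\big(\nu_k\big)\sub S\big(\mu_n\big)$, and every $f\in C(X)$ is bounded on $\ol{S\big(\mu_n\big)}^X$ by Lemma~\ref{lemma:fsjn_f_bounded}, Lemma~\ref{lemma:norm_conv_fsjn_seqs} shows $\seqk{\nu_k}$ is an fsJN-sequence. If $x\in D$ then $x\in F_k\sub\supp\big(\nu_k\big)$ for all large $k$, so $\nu_k(\{x\})=\alpha_k\mu_{n_k}(\{x\})\to\ell_x\neq0$; if $x\notin D=LS\big(\mu_n\big)$ then $\mu_{n_k}(\{x\})\to0$, hence $\nu_k(\{x\})\to0$. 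Thus $\seqk{\nu_k}$ is pointwise convergent with $L\big(\nu_k\big)=D$, which with Lemma~\ref{lemma:fsjn_subsequences_sets} gives $L\big(\nu_k\big)=LS\big(\nu_k\big)=LS\big(\mu_{n_k}\big)$, i.e.\ (3); and requirement (b) forces the sets $\supp\big(\nu_k\big)\sm D$ to be pairwise disjoint, which is (4).

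The real work is in realising the recursion, i.e.\ producing at stage $k$ a large $n_k$ and a set $R_k$ meeting (a)--(c) at once. Requirement (b) is cheap: $\bigcup_{l<k}\big(\supp\big(\nu_l\big)\sm D\big)$ is a fixed finite subset of $S\big(\mu_n\big)\sm LS\big(\mu_n\big)$, so the $\mu_n$-mass it carries tends to $0$ and is $<1/\big(2(k+1)\big)$ once $n_k$ is large. The delicate requirement is $0<\big\|\mu_{n_k}\rstr R_k\big\|<1/(k+1)$ together with $F_k\cap R_k=\emptyset$: one must be able to throw away a strictly positive yet arbitrarily small piece of mass while keeping all of $F_k$. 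When $D$ is infinite this is exactly what the corollary following Lemma~\ref{lemma:fsjn_sum_lim_12} gives: for $\eps=1/\big(4(k+1)\big)$ there is a finite $F\sub D$ with $|\ell_x|<\eps$ for $x\in D\sm F$, so choosing $x_k\in D\sm(F\cup F_k)$ and $n_k$ large enough that $0<\big|\mu_{n_k}(\{x_k\})\big|<1/\big(2(k+1)\big)$ lets us put $x_k$ into $R_k$. This is the step I expect to be the main obstacle; when $D$ is finite a similar but more delicate argument is needed, since then the transient mass $\big\|\mu_n\rstr\big(\supp(\mu_n)\sm D\big)\big\|$ is bounded below by Lemma~\ref{lemma:fsjn_sum_lim_12} but need not consist of small atoms, and one falls back on Theorem~\ref{theorem:sizes_of_supps} in the borderline case of uniformly large atoms.
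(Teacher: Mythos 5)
Your construction is, at its core, the paper's own proof: pass to a pointwise convergent subsequence, inductively choose $n_k$ so that the $\mu_{n_k}$-mass of the (fixed, finite) set $\bigcup_{l<k}\big(\supp(\nu_l)\sm LS\big)$ is below $1/(k+1)$, discard that set, and renormalise; the verification of (1)--(4) via Lemmas \ref{lemma:fsjn_f_bounded}, \ref{lemma:norm_conv_fsjn_seqs}, \ref{lemma:pointwise_conv} and \ref{lemma:fsjn_subsequences_sets} is exactly as you describe and is correct. The one genuine gap is the part you yourself flag as the main obstacle: forcing $0<\big\|\mu_{n_k}\rstr R_k\big\|$, i.e.\ $\alpha_k>1$ strictly. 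Your argument for this covers only the case where $D=L\big(\mu_n\big)$ is infinite; when $D$ is finite the proposed fallback does not work. Theorem \ref{theorem:sizes_of_supps} is stated for compact spaces while the present lemma concerns arbitrary Tychonoff $X$, and even in the compact case it produces a different fsJN-sequence whose supports need not sit inside the $\supp\big(\mu_{n_k}\big)$, so it cannot be used to complete the recursion. Worse, the strict version of your condition (c) is sometimes literally unachievable: for Schachermayer's sequence $\mu_n=\frac{1}{2}\big(\delta_{2n}-\delta_{2n+1}\big)$ one has $D=\emptyset$ and every nonempty $R_k\sub\supp\big(\mu_{n_k}\big)$ carries mass at least $1/2$, so no $\nu_k$ with $\|\nu_k\|=1$ and $\alpha_k\in(1,1+1/k)$ exists for $k\ge2$.

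That said, the obstacle you ran into is an artifact of the lemma's statement rather than of your argument. The strict lower bound $\alpha_k>1$ is a slip: the paper's own proof only yields $\alpha_k=\big\|\nu_k'\big\|^{-1}\ge1$ (nothing rules out $\nu_k'=\mu_{n_k}$), the companion Lemma \ref{lemma:lim_l_1_supp_in_l} correctly uses the half-open interval $\big[1,1+1/k\big)$, and no application of the present lemma (in particular Theorem \ref{theorem:disjoint_supps}) ever uses $\alpha_k\neq1$. If you drop the positivity requirement from your condition (c) --- allowing $R_k$ to carry zero mass, hence $\alpha_k\in\big[1,1+1/k\big)$ --- your recursion closes without any case distinction on $D$, and your proof becomes complete and essentially identical to the paper's.
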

\begin{proof}
By Remark \ref{remark:jn_pointwise_limits}, without loss of generality we may assume that $\seqn{\mu_n}$ is pointwise convergent. Put $Y=LS\big(\seqn{\mu_n}\big)$.

Let $n_0=0$. There exists $n_1>n_0$ such that for every $n\ge n_1$ we have:
\[\big\|\mu_n\rstr\big(\supp\big(\mu_{n_0}\big)\sm Y\big)\big\|<1/2.\]
Again, there exists $n_2>n_1$ such that for every $n\ge n_2$ we have:
\[\big\|\mu_n\rstr\Big(\big(\supp\big(\mu_{n_0}\big)\cup\supp\big(\mu_{n_1}\big)\big)\sm Y\big)\Big)\big\|<1/3.\]
Continuing in this manner, we will get a subsequence $\seqk{\mu_{n_k}}$ such that for every $k\io$ it holds:
\[\tag{$*$}\big\|\mu_{n_k}\rstr\big(\bigcup_{i=0}^{k-1}\supp\big(\mu_{n_i}\big)\sm Y\big)\big\|<1/(k+1).\]
For every $k\io$ define the measures $\nu_k'$ and $\nu_k$ as follows:
\[\nu_k'=\mu_{n_k}\rstr\supp\big(\mu_{n_k}\big)\sm\big(\bigcup_{i=0}^{k-1}\supp\big(\mu_{n_i}\big)\sm Y\big),\]
and
\[\nu_k=\alpha_k\cdot\nu_k',\]
where $\alpha_k=\big\|\nu_k'\big\|^{-1}$, so $\big\|\nu_k\big\|=1$. Note that:
\[\nu_k=\alpha_k\cdot\Big(\mu_{n_k}\rstr\supp\big(\nu_k\big)\Big).\]

(1) follows immediately. We have $k/(k+1)<\big\|\nu_k'\big\|<1$ for every $k\io$, so by ($*$) it holds:
\[\big\|\nu_k-\mu_{n_k}\big\|=\big\|\nu_k-\big(\mu_{n_k}\rstr\supp\big(\nu_k\big)\big)-\big(\mu_{n_k}\rstr\supp\big(\nu_k\big)^c\big)\big\|\le\]
\[\big\|\nu_k-\big(\mu_{n_k}\rstr\supp\big(\nu_k\big)\big)\big\|+\big\|\mu_{n_k}\rstr\supp\big(\nu_k\big)^c\big\|=\]
\[\big(\alpha_k-1)\big\|\mu_{n_k}\rstr\supp\big(\nu_k\big)\big\|+\big\|\mu_{n_k}\rstr\supp\big(\nu_k\big)^c\big\|<\]
\[\big\|\mu_{n_k}\rstr\supp\big(\nu_k\big)\big\|/k+1/(k+1)\le 1/k+1/(k+1),\]
which converges to $0$ as $k\to\infty$, hence (2) holds as well. Since $S\big(\nu_k\big)\sub S\big(\mu_{n_k}\big)$, by Lemmas \ref{lemma:fsjn_f_bounded} and \ref{lemma:norm_conv_fsjn_seqs}, $\seqn{\nu_k}$ is an fsJN-sequence on $X$. By Lemma \ref{lemma:pointwise_conv}, $\seqn{\nu_k}$ is pointwise convergent.



We now show that (3) holds. Let $x\in LS\big(\nu_k\big)$, so $\limsup_{k\to\infty}\big|\nu_k(\{x\})\big|>0$. 
We have:
\[\limsup_{k\to\infty}\big|\mu_{n_k}(\{x\})\big|=\limsup_{k\to\infty}\alpha_k^{-1}\big|\nu_k(\{x\})\big|\ge\limsup_{k\to\infty}\frac{k}{k+1}\big|\nu_k(\{x\})\big|>0,\]
which proves that $x\in LS\big(\mu_{n_k}\big)$. Conversely, for every $x\in LS\big(\mu_{n_k}\big)$ (so $\limsup_{k\to\infty}\big|\mu_{n_k}(\{x\})\big|>0$) we have:
\[\limsup_{k\to\infty}\big|\nu_k(\{x\})\big|=\limsup_{k\to\infty}\alpha_k\big|\mu_{n_k}(\{x\})\big|\ge\limsup_{k\to\infty}\big|\mu_{n_k}(\{x\})\big|>0,\]
which yields that $x\in LS\big(\nu_k\big)$. Since $\seqk{\nu_k}$ is pointwise convergent, $L\big(\nu_k\big)=LS\big(\nu_k\big)$ and (3) is satisfied.

It is left to prove (4). Note that immediately by the definition of $\nu_k$'s we have $\supp\big(\nu_l\big)\cap\supp\big(\nu_{l'}\big)\sub Y$ for every $l\neq l'\io$. Since $\seqn{\mu_n}$ is pointwise convergent, Lemma \ref{lemma:fsjn_subsequences_sets} implies that $LS\big(\mu_{n_k}\big)=Y$. By (3), it follows that $\supp\big(\nu_l\big)\cap\supp\big(\nu_{l'}\big)\sub LS\big(\nu_k\big)=L\big(\nu_k\big)$ for every $l\neq l'$, so (4) holds.
\end{proof}


If the set $LS(\mu_n)$ for a given fsJN-sequence $\seqn{\mu_n}$ on a space $X$ has an isolated point $x$, then it is easy to construct an fsJN-sequence $\seqn{\nu_n}$ on $X$ with $LS\big(\nu_n\big)=\{x\}$---intuitively speaking, such an fsJN-sequence is ``concentrated'' in a sense around the point $x$.

\begin{proposition}\label{prop:jn_isolated}
Let $\seqn{\mu_n}$ be an fsJN-sequence on a space $X$ such that the set $LS\big(\mu_n\big)$ has an isolated point $x$ in the relative topology. Then, there exists an increasing sequence $\seqk{n_k}$ and an fsJN-sequence $\seqk{\nu_k}$ such that $LS\big(\nu_k\big)=\{x\}$ and $\supp\big(\nu_k\big)\sub\supp\big(\mu_{n_k}\big)$ for every $k\io$.
\end{proposition}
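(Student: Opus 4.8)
The plan is to \emph{localize} the given sequence near $x$: multiply each $\mu_n$ by a continuous bump function that equals $1$ at $x$ and vanishes outside a neighbourhood of $x$ meeting $LS\big(\mu_n\big)$ only in $\{x\}$, and then renormalize. Multiplying by a continuous function (rather than merely restricting to the neighbourhood) is what preserves weak* nullity.

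First I would use that $x$ is isolated in $LS\big(\mu_n\big)$ to fix an open set $U\sub X$ with $x\in U$ and $U\cap LS\big(\mu_n\big)=\{x\}$. Since $x\in LS\big(\mu_n\big)$ we have $\limsup_{n\to\infty}\big|\mu_n(\{x\})\big|>0$, so I can pick an increasing sequence $\seqk{n_k}$ along which $\mu_{n_k}(\{x\})\to\beta$ for some $\beta\neq0$. By complete regularity of $X$ choose a continuous $\psi\colon X\to[0,1]$ with $\psi(x)=1$ and $\psi\equiv0$ on $X\sm U$, and for $k\io$ put $\widetilde{\mu}_k=\psi\cdot\mu_{n_k}$, i.e.\ the finitely supported measure with $\widetilde{\mu}_k(\{y\})=\psi(y)\mu_{n_k}(\{y\})$; equivalently $\widetilde{\mu}_k(f)=\mu_{n_k}(\psi\cdot f)$ for all $f\in C(X)$.

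The verification then consists of three easy points. (a) Weak* nullity: for $f\in C(X)$ we have $\psi\cdot f\in C(X)$, so $\widetilde{\mu}_k(f)=\mu_{n_k}(\psi f)\to0$; thus $\seqk{\widetilde{\mu}_k}$ is finitely supported and weakly* null. (b) Norm control and normalization: $\widetilde{\mu}_k(\{x\})=\mu_{n_k}(\{x\})\to\beta$ and $\big\|\widetilde{\mu}_k\big\|=\sum_{y}\psi(y)\big|\mu_{n_k}(\{y\})\big|$ (sum over $\supp\big(\mu_{n_k}\big)$) lies in $\big[\big|\mu_{n_k}(\{x\})\big|,\,1\big]$, so $\big\|\widetilde{\mu}_k\big\|>|\beta|/2$ for all large $k$; discarding finitely many terms (and re-indexing $\seqk{n_k}$ accordingly) and applying Lemma \ref{lemma:normalization_fsJN} with $\eps=|\beta|/2$, the sequence $\nu_k:=\widetilde{\mu}_k/\big\|\widetilde{\mu}_k\big\|$ is an fsJN-sequence. (c) Supports: $\supp\big(\nu_k\big)=\supp\big(\widetilde{\mu}_k\big)=\big\{y\in\supp\big(\mu_{n_k}\big)\colon\psi(y)\neq0\big\}\sub\supp\big(\mu_{n_k}\big)$.

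Finally I would compute $LS\big(\nu_k\big)$. Since $|\beta|/2<\big\|\widetilde{\mu}_k\big\|\le1$, for each $y\in X$ we have $\psi(y)\big|\mu_{n_k}(\{y\})\big|\le\big|\nu_k(\{y\})\big|\le\tfrac{2}{|\beta|}\psi(y)\big|\mu_{n_k}(\{y\})\big|$, so $\limsup_k\big|\nu_k(\{y\})\big|$ is positive exactly when $\psi(y)\neq0$ and $\limsup_k\big|\mu_{n_k}(\{y\})\big|>0$. The second condition forces $y\in U$ (as $\psi$ vanishes off $U$) and $y\in LS\big(\mu_n\big)$ (a subsequential $\limsup$ being at most the full one), hence $y\in U\cap LS\big(\mu_n\big)=\{x\}$; conversely $\psi(x)=1$ and $\limsup_k\big|\mu_{n_k}(\{x\})\big|=|\beta|>0$ give $x\in LS\big(\nu_k\big)$. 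Thus $LS\big(\nu_k\big)=\{x\}$, as required. There is no genuine obstacle in this argument; the only two points one must not overlook are that restricting $\mu_n$ to $U$ need not keep the sequence weakly* null (hence the detour through multiplication by the continuous function $\psi$), and that the renormalization step is legitimate only after first passing to a subsequence along which $\big|\mu_n(\{x\})\big|$ stays bounded away from $0$.
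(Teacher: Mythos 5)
Your proof is correct and is essentially the paper's own argument: the paper likewise fixes an open $U$ with $U\cap LS\big(\mu_n\big)=\{x\}$, passes to a subsequence along which $\big|\mu_{n_k}(\{x\})\big|$ stays bounded away from $0$, multiplies by a continuous bump function $g$ supported in $U$ with $g(x)=1$ (your $\psi$), and normalizes $g{\der}\mu_{n_k}$, using the lower bound $\big\|g{\der}\mu_{n_k}\big\|\ge\big|\mu_{n_k}(\{x\})\big|$ to justify both the normalization and the computation of $LS\big(\nu_k\big)$. The only cosmetic difference is that you extract a subsequence with $\mu_{n_k}(\{x\})\to\beta\neq0$ rather than $\big|\mu_{n_k}(\{x\})\big|\to\alpha>0$, which changes nothing.
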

\begin{proof}
Let $U$ be an open set in $X$ such that $U\cap LS\big(\mu_n\big)=\{x\}$ and let:
\[\alpha=\limsup_{n\to\infty}\big|\mu_n(\{x\})\big|.\]
There exists an increasing sequence $\seqk{n_k}$ such that $\lim_{k\to\infty}\big|\mu_{n_k}(\{x\})\big|=\alpha$ and $\big|\mu_{n_k}(\{x\})\big|>\alpha/2$ for every $k\io$. There exists a function $g\in C(X)$ such that $0\le g\le 1$, $g(x)=1$ and $g\rstr U^c\equiv 0$. For every $k\io$ we define the measure $\nu_k$ as follows:
\[\nu_k=g{\der}\mu_{n_k}\ \big/\ \big\|g{\der}\mu_{n_k}\big\|.\]
Note that $\big\|g{\der}\mu_{n_k}\big\|\neq0$, since
\[1\ge\big\|g{\der}\mu_{n_k}\big\|\ge|g(x)|\cdot\big|\mu_{n_k}(\{x\})\big|=\big|\mu_{n_k}(\{x\})\big|>\alpha/2>0,\]
so $\nu_k$ is well-defined and $\big\|\nu_k\big\|=1$. Obviously, $\supp\big(\nu_k\big)\sub\supp\big(\mu_{n_k}\big)\cap U$ for every $k\io$. Since
\[\big|\nu_k(f)\big|<\big|\mu_{n_k}(f\cdot g)\big|\cdot2/\alpha\]
for every $f\in C(X)$ and $k\io$, the sequence $\seqk{\nu_k}$ is weakly* convergent to $0$ and hence it is an fsJN-sequence. Finally, $\big|\nu_k(\{x\})\big|>\alpha/2$ for every $k\io$, so $\{x\}\sub LS\big(\nu_k\big)$. To prove the converse inclusion, let $y\in U\sm\{x\}$. If $y\not\in\supp\big(\mu_{n_k}\big)$ for some $k\io$, then $\nu_k(\{y\})=0$. If $y\in\supp\big(\mu_{n_k}\big)$ for some $k\io$, then
\[\big\|\nu_k(\{y\})\big|=\big|g(y)\cdot\mu_{n_k}(\{y\})\big|\big/\big\|g{\der}\mu_{n_k}\big\|<\big|g(y)\cdot\mu_{n_k}(\{y\})\big|\cdot2/\alpha,\]
which converges to $0$ as $k\to\infty$, since $y\not\in LS\big(\mu_{n_k}\big)$ (by Lemma \ref{lemma:fsjn_subsequences_sets}). It follows that $LS\big(\nu_k\big)=\{x\}.$
\end{proof}

\begin{lemma}\label{lemma:lim_l_1_supp_in_l}
Assume that a space $X$ admits an fsJN-sequence $\seqn{\mu_n}$ such that $\lim_{m\to\infty}\big\|\mu_m\rstr L\big(\mu_n\big)\big\|=1$. Then, $\seqn{\mu_n}$ is pointwise convergent and there exist a pointwise convergent fsJN-sequence $\seqk{\nu_k}$ on $X$ and an increasing sequence $\seqk{n_k}$ of indices such that:
\begin{enumerate}
    \item $\supp\big(\nu_k\big)\sub\supp\big(\mu_{n_k}\big)\cap L\big(\mu_n\big)$ for every $k\io$,
    \item for every $k\io$ there exists $\alpha_k\in\big[1,1+1/k\big)$ such that
\[\nu_k=\alpha_k\cdot\Big(\mu_{n_k}\rstr\supp\big(\nu_k\big)\Big)\quad\text{and}\quad\lim_{k\to\infty}\big\|\nu_k-\mu_{n_k}\big\|=0,\]
    \item $L\big(\seqk{\nu_k}\big)=L\big(\seqn{\mu_n}\big)$.
\end{enumerate}
In particular, $\supp\big(\nu_l\big)\sub L\big(\nu_k\big)$ for every $l\io$.
\end{lemma}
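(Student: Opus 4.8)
The plan is to follow the template of Lemma~\ref{lemma:disjoint_supps_outside_ls}, discarding the part of each $\mu_n$ lying off $L\big(\mu_n\big)$ and renormalizing; the argument is shorter here because the hypothesis already pushes almost all the mass of $\mu_m$ onto $L\big(\mu_n\big)$ itself. Write $L=L\big(\seqn{\mu_n}\big)$. The first observation is that, since each $\mu_m$ is finitely supported, $\big\|\mu_m\rstr L\big\|+\big\|\mu_m\rstr\big(X\sm L\big)\big\|=\big\|\mu_m\big\|=1$, so the hypothesis is equivalent to $\lim_{m\to\infty}\big\|\mu_m\rstr\big(X\sm L\big)\big\|=0$. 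For $x\in X\sm L$ this gives $\big|\mu_m(\{x\})\big|\le\big\|\mu_m\rstr\big(X\sm L\big)\big\|\to 0$, hence $\lim_{m}\mu_m(\{x\})=0$; combined with the definition of $L$ (which guarantees the limit exists at the points of $L$) this already yields that $\seqn{\mu_n}$ is pointwise convergent.

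Next I would build the subsequence. Using $\lim_{m}\big\|\mu_m\rstr L\big\|=1$, pick $n_0<n_1<\cdots$ with $\big\|\mu_{n_k}\rstr L\big\|>\max\{1/2,\,k/(k+1)\}$ for every $k\io$; in particular $\mu_{n_k}\rstr L\neq 0$. Set $\alpha_k=\big\|\mu_{n_k}\rstr L\big\|^{-1}$ and $\nu_k=\alpha_k\cdot\big(\mu_{n_k}\rstr L\big)$. Then $\big\|\nu_k\big\|=1$, $\supp\big(\nu_k\big)=\supp\big(\mu_{n_k}\big)\cap L$---so (1) holds and $\nu_k=\alpha_k\cdot\big(\mu_{n_k}\rstr\supp\big(\nu_k\big)\big)$---and $\alpha_k\in[1,1+1/k)$ because $k/(k+1)<\big\|\mu_{n_k}\rstr L\big\|\le1$. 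Since $\nu_k-\mu_{n_k}=(\alpha_k-1)\big(\mu_{n_k}\rstr L\big)-\big(\mu_{n_k}\rstr(X\sm L)\big)$, I get
\[\big\|\nu_k-\mu_{n_k}\big\|\le(\alpha_k-1)\big\|\mu_{n_k}\rstr L\big\|+\big\|\mu_{n_k}\rstr(X\sm L)\big\|=2\big(1-\big\|\mu_{n_k}\rstr L\big\|\big)<\tfrac{2}{k+1}\lra 0,\]
which establishes (2).

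It then remains to see that $\seqk{\nu_k}$ is an fsJN-sequence and that (3) holds. As $\supp\big(\nu_k\big)\sub\supp\big(\mu_{n_k}\big)$ for all $k$, we have $S\big(\nu_k\big)\sub S\big(\mu_n\big)$, so every $f\in C(X)$ is bounded on $S\big(\nu_k\big)$ by Lemma~\ref{lemma:fsjn_f_bounded}; since $\seqk{\mu_{n_k}}$ is itself an fsJN-sequence (a subsequence of one), $\big\|\nu_k\big\|=1$, and $\big\|\nu_k-\mu_{n_k}\big\|\to0$, Lemma~\ref{lemma:norm_conv_fsjn_seqs} shows $\seqk{\nu_k}$ is an fsJN-sequence. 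Because $\seqk{\mu_{n_k}}$ is pointwise convergent and $\big\|\nu_k-\mu_{n_k}\big\|\to0$, Lemma~\ref{lemma:pointwise_conv} gives that $\seqk{\nu_k}$ is pointwise convergent with $L\big(\seqk{\nu_k}\big)=L\big(\seqk{\mu_{n_k}}\big)$, and the last assertion of Lemma~\ref{lemma:fsjn_subsequences_sets} (valid since $\seqn{\mu_n}$ is pointwise convergent) identifies this with $L\big(\seqn{\mu_n}\big)$, proving (3); alternatively one checks directly that $\nu_k(\{x\})=0$ for $x\notin L$, while $\nu_k(\{x\})=\alpha_k\mu_{n_k}(\{x\})\lra\lim_n\mu_n(\{x\})\neq0$ for $x\in L$. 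Finally $\supp\big(\nu_l\big)=\supp\big(\mu_{n_l}\big)\cap L\sub L=L\big(\seqk{\nu_k}\big)$ yields the ``in particular'' clause. I do not expect a genuine obstacle here; the only point requiring care is keeping track of the three sets $L\big(\seqn{\mu_n}\big)$, $L\big(\seqk{\mu_{n_k}}\big)$, $L\big(\seqk{\nu_k}\big)$ and verifying the pointwise-convergence hypotheses of Lemmas~\ref{lemma:fsjn_subsequences_sets} and~\ref{lemma:pointwise_conv} before invoking them.
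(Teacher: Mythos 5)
Your proposal is correct and follows essentially the same route as the paper: the same normalization $\nu_k=(\mu_{n_k}\restriction L)/\|\mu_{n_k}\restriction L\|$ along a subsequence chosen so that $\|\mu_{n_k}\restriction L\|>k/(k+1)$, the same $2/(k+1)$ estimate for $\|\nu_k-\mu_{n_k}\|$, and the same appeals to Lemmas \ref{lemma:fsjn_f_bounded}, \ref{lemma:norm_conv_fsjn_seqs}, \ref{lemma:pointwise_conv} and \ref{lemma:fsjn_subsequences_sets}. The derivation of pointwise convergence via $|\mu_m(\{x\})|\le\|\mu_m\restriction(X\sm L)\|\to0$ is the same bound the paper uses, just phrased through the complement.
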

\begin{proof}
Let $L=L\big(\mu_n\big)$. For every $k\io$ let $n_k\io$ be such that $\big\|\mu_m\rstr L^c\big\|<1/(k+1)$ for every $m\ge n_k$ and put:
\[\nu_k=\Big(\mu_{n_k}\rstr L\Big)\big/\big\|\mu_{n_k}\rstr L\big\|.\]
(1) follows immediately. For every $k\io$ we have $\big\|\nu_k\big\|=1$ and $k/(k+1)<\big\|\mu_{n_k}\rstr L\big\|\le 1$, so $\nu_k=\alpha_k\cdot\Big(\mu_{n_k}\rstr\supp\big(\nu_k\big)\Big)$ for some $\alpha_k\in\big[1,1+1/k\big)$. It holds that:
\[\big\|\nu_k-\mu_{n_k}\big\|=\big\|\big(\nu_k\rstr L\big)-\big(\mu_{n_k}\rstr L\big)\big\|+\big\|\mu_{n_k}\rstr L^c\big\|=\big(1-\big\|\mu_{n_k}\rstr L\big\|\big)+\big\|\mu_{n_k}\rstr L^c\big\|=\]
\[=2\big\|\mu_{n_k}\rstr L^c\big\|<2/(k+1),\]
which converges to $0$ as $k\to\infty$. Thus, (2) is also proved.

To see that $\seqn{\mu_n}$ is pointwise convergent, note that for every $x\in L$ the limit $\lim_{n\to\infty}\mu_n(\{x\})$ exists by the definition and for every $x\in X\sm L$ we have:
\[\big|\mu_n(\{x\})\big|=1-\big\|\mu_n\rstr X\sm\{x\}\big\|\le1-\big\|\mu_n\rstr L\big\|,\]
which converges to $0$ as $n\to\infty$. The sequence $\seqk{\nu_k}$ is then pointwise convergent by Lemma \ref{lemma:pointwise_conv}. Similarly as in the proof of Lemma \ref{lemma:disjoint_supps_outside_ls}, we conclude that $\seqk{\nu_k}$ is an fsJN-sequence on $X$.


Since $\seqn{\mu_n}$ is pointwise convergent, $L\big(\mu_n\big)=L\big(\mu_{n_k}\big)$. By Lemma \ref{lemma:pointwise_conv}, $L\big(\nu_k\big)=L\big(\mu_{n_k}\big)$, which proves (3).
\end{proof}

We are now in the position to prove the main theorem.

\begin{theorem}\label{theorem:disjoint_supps}
Assume that a space $X$ has the fsJNP. Then, $X$ admits a disjointly supported fsJN-sequence.
\end{theorem}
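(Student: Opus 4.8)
The plan is to reduce the statement to a case already settled by Lemma~\ref{lemma:disjoint_supps_outside_ls}. The crucial observation is that \emph{passing to differences} annihilates all pointwise limits at once: if $\seqn{\mu_n}$ is a pointwise convergent fsJN-sequence and $\ell_x:=\lim_n\mu_n(\{x\})$ for $x\in X$, then $\mu_n(\{x\})-\mu_{n'}(\{x\})\to\ell_x-\ell_x=0$ for every $x$ as $n,n'\to\infty$. Hence any fsJN-sequence built from differences $\mu_n-\mu_{n'}$ has empty $L$-set, and for such sequences Lemma~\ref{lemma:disjoint_supps_outside_ls} automatically yields disjoint supports.

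Here is how I would carry this out. By Remark~\ref{remark:jn_pointwise_limits} I first fix a pointwise convergent fsJN-sequence $\seqn{\mu_n}$ on $X$. I claim that for every $N\io$ there are $n,n'\ge N$ with $\big\|\mu_n-\mu_{n'}\big\|>1/2$: otherwise, picking $f\in C(X)$ with $\|f\|_\infty\le1$ and $\mu_N(f)=\big\|\mu_N\big\|=1$ (possible by complete regularity, since $\mu_N$ is finitely supported) and using $\big|\mu_n(f)-\mu_N(f)\big|\le\|f\|_\infty\big\|\mu_n-\mu_N\big\|\le1/2$, we would get $\big|\mu_n(f)\big|\ge1/2$ for all $n\ge N$, contradicting weak* nullity of $\seqn{\mu_n}$. (This is the only point where the Schur-type rigidity of fsJN-sequences---already invoked in the introduction to note that they are never weakly null---enters.) So one may choose recursively indices $m_1<m_1'<m_2<m_2'<\cdots$ with $\big\|\mu_{m_j}-\mu_{m_j'}\big\|>1/2$ for every $j\io$, and put
\[\omega_j=\big(\mu_{m_j}-\mu_{m_j'}\big)\big/\big\|\mu_{m_j}-\mu_{m_j'}\big\|.\]
Then $\big\|\omega_j\big\|=1$ and $\big|\omega_j(f)\big|\le2\big(\big|\mu_{m_j}(f)\big|+\big|\mu_{m_j'}(f)\big|\big)\to0$ for every $f\in C(X)$, so $\seqj{\omega_j}$ is an fsJN-sequence on $X$; moreover $\omega_j(\{x\})\to0$ for every $x\in X$ by the opening observation, hence $LS\big(\seqj{\omega_j}\big)=\emptyset$.

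It remains to apply Lemma~\ref{lemma:disjoint_supps_outside_ls} to $\seqj{\omega_j}$: it produces a pointwise convergent fsJN-sequence $\seqk{\nu_k}$ on $X$ and an increasing sequence $\seqk{j_k}$ of indices such that $\supp\big(\nu_l\big)\cap\supp\big(\nu_{l'}\big)\sub L\big(\seqk{\nu_k}\big)$ for all $l\neq l'\io$ and $L\big(\seqk{\nu_k}\big)=LS\big(\seqk{\nu_k}\big)=LS\big(\seqk{\omega_{j_k}}\big)$. Since $\seqk{\omega_{j_k}}$ is a subsequence of $\seqj{\omega_j}$, Lemma~\ref{lemma:fsjn_subsequences_sets}(ii) gives $LS\big(\seqk{\omega_{j_k}}\big)\sub LS\big(\seqj{\omega_j}\big)=\emptyset$; therefore $L\big(\seqk{\nu_k}\big)=\emptyset$ and $\supp\big(\nu_l\big)\cap\supp\big(\nu_{l'}\big)=\emptyset$ for all $l\neq l'$, i.e.\ $\seqk{\nu_k}$ is a disjointly supported fsJN-sequence. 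I do not anticipate a serious obstacle: the two substantive ideas---the difference trick eliminating $L$, and the failure of the Cauchy condition keeping those differences non-degenerate---are short, and the rest is carried by Lemmas~\ref{lemma:disjoint_supps_outside_ls} and~\ref{lemma:fsjn_subsequences_sets}; the one point deserving care is to keep the index pairs $m_j<m_j'$ pairwise disjoint, so that no coincidences of supports among the $\omega_j$'s are forced on us.
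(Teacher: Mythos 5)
Your proof is correct, and while it shares the paper's overall skeleton---pass to a pointwise convergent fsJN-sequence, form normalized differences so that all pointwise limits cancel and $LS$ becomes empty, then invoke Lemma \ref{lemma:disjoint_supps_outside_ls}, whose conclusions (3) and (4) together with Lemma \ref{lemma:fsjn_subsequences_sets}(ii) force disjoint supports---it handles the one delicate point, namely keeping the differences non-degenerate, in a genuinely different and simpler way. The paper secures a lower bound $\big\|\nu_{2n}-\nu_{2n+1}\big\|\ge\beta>0$ only after a preliminary application of Lemma \ref{lemma:disjoint_supps_outside_ls} and a three-way case analysis on $\alpha=\lim_n\big\|\nu_n\rstr L\big\|$, with the hardest case $\alpha=1$ resting on Lemma \ref{lemma:lim_l_1_supp_in_l} and on the bound $\sum_{x\in L}\lim_n\big|\mu_n(\{x\})\big|\le 1/2$ of Lemma \ref{lemma:fsjn_sum_lim_12}. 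You instead observe that no fsJN-sequence can be norm-Cauchy on a tail: if $\big\|\mu_n-\mu_N\big\|\le 1/2$ for all $n\ge N$, then a norming function $f$ for the finitely supported measure $\mu_N$ (which exists by complete regularity, with $\|f\|_\infty\le 1$) keeps $\big|\mu_n(f)\big|\ge 1/2$ on that tail, contradicting weak* nullity. This one-paragraph argument yields the required pairs $m_j<m_j'$ with $\big\|\mu_{m_j}-\mu_{m_j'}\big\|>1/2$ directly, makes the first application of Lemma \ref{lemma:disjoint_supps_outside_ls} and the two auxiliary lemmas unnecessary, and the normalization and the verification that $LS\big(\seqj{\omega_j}\big)=\emptyset$ go through exactly as you say. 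The only thing your shortcut gives up is the extra structural information the paper's case analysis collects along the way (the explicit control of $\supp\big(\nu_l\big)\cap\supp\big(\nu_{l'}\big)$ relative to $L$ before differencing), but none of that is needed for the statement of the theorem.
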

\begin{proof}
Let $\seqn{\mu_n}$ be an fsJN-sequence on $X$. Apply Lemma \ref{lemma:disjoint_supps_outside_ls} to $\seqn{\mu_n}$ to obtain a pointwise convergent fsJN-sequence $\seqn{\nu_n}$ such that
\[\tag{$\#$}\supp\big(\nu_l\big)\cap\supp\big(\nu_{l'}\big)\sub L\big(\seqn{\nu_n}\big)\]
for every $l\neq l'\io$. Let $L=L\big(\nu_n\big)$. By going to a further subsequence, we may assume that the limit $\lim_{n\to\infty}\big\|\nu_n\rstr L\big\|$ exists and is equal to some $\alpha\in\R$. If $L=\emptyset$, then we are done by the properties of $\seqn{\nu_n}$. Let us thus assume that $|L|>0$ and enumerate $L=\big\{q_k\colon k<|L|\big\}$ (note that $|L|$ may be finite or $\omega$). By going again to a subsequence, we may assume that for every $k<|L|$ and $l\ge k$ we have:
\[\tag{$\dagger$}\Big|\nu_l\big(\big\{q_k\big\}\big)-\lim_{n\to\infty}\nu_n\big(\big\{q_k\big\}\big)\Big|<\frac{1}{l+1}\cdot\big|\lim_{n\to\infty}\nu_n\big(\big\{q_k\big\}\big)\big|.\]
It follows that $\supp\big(\nu_k\big)\cap L\sub\supp\big(\nu_l\big)\cap L$ for every pair $k,l\io$ such that $k\le l$. We now need to consider three cases depending on the value of $\alpha$.
\begin{enumerate}
    \item If $\alpha=0$, then $L=\emptyset$ and we are done.
    \item If $\alpha\in(0,1)$, then there is $N\io$ such that for every $n>N$ we have
    \[\big\|\nu_n\rstr L\big\|<\alpha+\frac{1-\alpha}{2}=\frac{1+\alpha}{2},\]
    so
    \[\big\|\nu_n\rstr L^c\big\|=1-\big\|\nu_n\rstr L\big\|>1-\frac{1+\alpha}{2}=\frac{1-\alpha}{2}>0.\]
    Since for every $n\io$ we have $\supp\big(\nu_{2n}\big)\cap\supp\big(\nu_{2n+1}\big)\sub L$, for every $n>N$ it holds:
    \[\tag{$*$}\big\|\nu_{2n}-\nu_{2n+1}\big\|\ge\big\|\nu_{2n}\rstr L^c\big\|+\big\|\nu_{2n+1}\rstr L^c\big\|>2\cdot\frac{1-\alpha}{2}=1-\alpha>0.\]
    Let us remove the first $N$ elements from the sequence $\seqn{\nu_n}$, so we may assume that ($*$) holds actually for every $n\io$. Let $\beta=1-\alpha$.
    \item If $\alpha=1$, then we can substitute the current fsJN-sequence $\seqn{\nu_n}$ by the pointwise convergent fsJN-sequence $\seqn{\nu_n}$ from Lemma \ref{lemma:lim_l_1_supp_in_l}, i.e. $\supp\big(\nu_l\big)\sub L\big(\nu_n\big)$ for every $l\io$. By going again to a subsequence we may assume that for every $k\io$ and $x\in\supp\big(\nu_{2k}\big)$ we have:
    \[\tag{$\dagger\dagger$}\big|\nu_{2k+1}(\{x\})-\lim_{n\to\infty}\nu_n(\{x\})\big|<\frac{1}{4\cdot\big|\supp\big(\nu_{2k}\big)\big|}.\]
    It follows by ($\dagger\dagger$) and Lemma \ref{lemma:fsjn_sum_lim_12} that for every $n\io$ it holds:
    \[\big\|\nu_{2k+1}\rstr\supp\big(\nu_{2k}\big)\big\|=\sum_{x\in\supp(\nu_{2k})}\big|\nu_{2k+1}(\{x\})\big|\le\sum_{x\in\supp(\nu_{2k})}\Big(\frac{1}{4\cdot\big|\supp\big(\nu_{2k}\big)\big|}+\big|\lim_{n\to\infty}\nu_n(\{x\})\big|\Big)=\]
    \[\sum_{x\in\supp(\nu_{2k})}\Big(\frac{1}{4\cdot\big|\supp\big(\nu_{2k}\big)\big|}+\lim_{n\to\infty}\big|\nu_n(\{x\})\big|\Big)\le\big|\supp\big(\nu_{2k}\big)\big|\cdot\frac{1}{4\cdot\big|\supp\big(\nu_{2k}\big)\big|}+1/2=3/4,\]
    so $\big\|\nu_{2k+1}\rstr\supp\big(\nu_{2k}\big)^c\big\|\ge1/4$ and hence:
    \[\tag{$**$}\big\|\nu_{2k}-\nu_{2k+1}\big\|\ge\big\|\nu_{2k+1}\rstr\supp\big(\nu_{2k}\big)^c\big\|\ge1/4>0.\]
    Let $\beta=1/4$.
\end{enumerate}

\medskip

Note that in both cases ($*$) and ($**$) the inequality $\big\|\nu_{2n}-\nu_{2n+1}\big\|\ge\beta>0$ holds for every $n\io$. For every $n\io$ we define the measure $\theta_n$ as follows:
\[\theta_n=\big(\nu_{2n}-\nu_{2n+1}\big)\ \big/\ \big\|\nu_{2n}-\nu_{2n+1}\big\|.\]
For every $f\in C(X)$ and $n\io$ we have:
\[\big|\theta_n(f)\big|\le\beta^{-1}\big(\big|\nu_{2n}(f)\big|+\big|\nu_{2n+1}(f)\big|\big),\]
so $\seqn{\theta_n}$ is an fsJN-sequence. We also claim that $LS\big(\theta_n\big)=\emptyset$. Let thus $x\in X$. If $x\not\in L$, then immediately by ($\#$) and the definition of $\theta_n$'s we have that $\lim_{n\to\infty}\theta_n(\{x\})=0$, so $x\not\in LS\big(\theta_n\big)$. On the other hand, if $x\in L$, then $x=q_k$ for some $k\io$, but then by ($\dagger$) for every $l\ge k$ we have:
\[\big|\theta_l\big(\big\{q_k\big\}\big)\big|=\big|\nu_{2l}\big(\big\{q_k\big\}\big)-\nu_{2l+1}\big(\big\{q_k\big\}\big)\big|\ \big/\ \big\|\nu_{2l}-\nu_{2l+1}\big\|\le\]
\[\beta^{-1}\cdot\Big|\ \Big(\nu_{2l}\big(\big\{q_k\big\}\big)-\lim_{n\to\infty}\nu_n\big(\big\{q_k\big\}\big)\Big)-\Big(\nu_{2l+1}\big(\big\{q_k\big\}\big)-\lim_{n\to\infty}\nu_n\big(\big\{q_k\big\}\big)\Big)\ \Big|\le\]
\[\beta^{-1}\cdot\Big(\ \Big|\nu_{2l}\big(\big\{q_k\big\}\big)-\lim_{n\to\infty}\nu_n\big(\big\{q_k\big\}\big)\Big|+\Big|\nu_{2l+1}\big(\big\{q_k\big\}\big)-\lim_{n\to\infty}\nu_n\big(\big\{q_k\big\}\big)\Big|\ \Big)<\]
\[\frac{2\beta^{-1}}{2l+1}\cdot\big|\lim_{n\to\infty}\nu_n\big(\big\{q_k\big\}\big)\big|,\]
which goes to $0$ as $l\to\infty$, so again $\lim_{l\to\infty}\theta_l(\{x\})=0$ and hence $x\not\in LS\big(\theta_n\big)$. This proves that $LS\big(\theta_n\big)=\emptyset$ indeed.

Finally, by Lemma \ref{lemma:disjoint_supps_outside_ls}, there exists an fsJN-sequence $\seqn{\rho_n}$ on $X$ such that for every $m\neq m'$ we have:
\[\supp\big(\rho_m\big)\cap\supp\big(\rho_{m'}\big)\sub LS\big(\seqn{\rho_n}\big)\sub LS\big(\seqn{\theta_n}\big)=\emptyset,\]
and the proof is finished.
\end{proof}

In Lemma \ref{lemma:disjoint_supps_size_2} we stated that if the supports of a given fsJN-sequence on a space $X$ have cardinality $2$, then there is a disjointly supported fsJN-sequence on $X$ having the same property. It is an easy topological fact that if $\seqn{F_n}$ is a countable collection of pairwise disjoint finite subsets of a space $X$ for which there exists $M\io$ such that $\big|F_n\big|\le M$, then there exist a subsequence $\seqk{F_{n_k}}$ and a sequence $\seqk{U_k}$ of pairwise disjoint open subsets of $X$ such that $F_{n_k}\sub U_k$ for every $k\io$. It follows that the union $\bigcup_{k\io}F_{n_k}$ is a discrete subspace of $X$. Consequently, if there is a disjointly supported fsJN-sequence $\seqn{\mu_n}$ on $X$ such that $\big|\supp\big(\mu_n\big)\big|=2$ for every $n\io$, then there is a subsequence $\seqk{\mu_n}$ such that the union $\bigcup_{k\io}\supp\big(\mu_{n_k}\big)$ is discrete (cf. Theorem \ref{theorem:sizes_of_supps}). We do not know whether the same property holds for every fsJN-sequence.

\begin{question}\label{ques:fsjnp_discrete_supps}
Does every space $X$ with the fsJNP admit an fsJN-sequence $\seqn{\mu_n}$ such that the union $\bigcup_n\supp\big(\mu_n\big)$ is a discrete subspace of $X$?
\end{question}

The positive answer to Question \ref{ques:fsjnp_discrete_supps} would bring much simplification to the study of the finitely supported Josefson--Nissenzweig property.



\section{Sizes of supports in fsJN-sequences \label{section:sizes_of_supports}}

In this section we will study possible cardinalities of supports of measures from fsJN-sequences. We have two cases here: either (1) a space $X$ admits an fsJN-sequence $\seqn{\mu_n}$ for which there exists $M\io$ such that $\big|\supp\big(\mu_n\big)\big|\le M$ for every $n\io$, or (2) every fsJN-sequence $\seqn{\mu_n}$ on $X$ has the property that $\lim_{n\io}\big|\supp\big(\mu_n\big)\big|=\infty$. As an example of the former case we may name any space $X$ having a non-trivial convergent sequence. An appropriate example for the latter case is more difficult to find---however, it appears that the space $K$ considered in Banakh, K\k{a}kol and \'Sliwa \cite[Section 4]{BKS19} (\textit{Plebanek's example}) has the required property. In Subsection \ref{section:sizes_two_examples} we prove this statement as well as we present another example (due to Schachermayer) which is in many places very similar to Plebanek's one but satisfies the case (1).

In Subsection \ref{section:study_of_sizes_of_supports} we will provide several general statements concerning cardinalities of supports. In particular, we prove in Theorem \ref{theorem:sizes_of_supps} that if a compact space $K$ satisfies the case (1), then there exists an fsJN-sequence $\seqn{\mu_n}$ such that $\big|\supp\big(\mu_n\big)\big|=2$ for every $n\io$.

\subsection{Two examples}\label{section:sizes_two_examples}

\begin{example}\label{example:plebanek}
In Banakh, K\k{a}kol and \'Sliwa \cite[Section 4]{BKS19}, the authors provided the following example of a Boolean algebra $\dD$ due to Plebanek:
\[\dD=\big\{A\in\wo\colon\ \lim_{n\to\infty}\frac{|A\cap\{0,\ldots,n-1\}|}{n}\in\{0,1\}\big\}.\]
Let $\dD$ be called \textit{the density Boolean algebra}. Since for each $n\io$ the set $\{n\}$ belongs to $\dD$ and is an atom therein, we may consider $St(\dD)$ as a compactification of $\omega$. Let us additionally define the ideal $\zZ$ and the ultrafilter $p$ in $\dD$ as follows:
\[\zZ=\big\{A\in\wo\colon\ \lim_{n\to\infty}\frac{|A\cap\{0,\ldots,n-1\}|}{n}=0\big\}\]
and
\[p=\dD\sm\zZ.\]
\end{example}

\begin{proposition}\label{prop:ex_plebanek}
The density Boolean algebra $\dD$ has the following properties:
\begin{enumerate}
    \item $St(\dD)$ does not have any non-trivial convergent sequences;
    \item if $X\sub St(\dD)$ is infinite, then there exists an infinite subset $Y\sub X$ such that $\ol{Y}^{St(\dD)}$ is homeomorphic to $\bo$;
    \item $St(\dD)$ has the fsJNP;
    \item every fsJN-sequence $\seqn{\mu}$ on $St(\dD)$ has the property that $\lim_{n\to\infty}\big|\supp\big(\mu_n\big)\big|=\infty$.
\end{enumerate}
\end{proposition}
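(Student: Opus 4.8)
The four statements should be handled roughly in order, with (1) and (2) being purely topological facts about $St(\dD)$ and (3), (4) being the measure-theoretic heart. For (1), I would argue that no sequence of atoms $\seqn{n_k}$ converges: given such a sequence, the set $E=\{n_{2k}\colon k\io\}$ has upper density that can be made anything in $[0,1]$ after thinning, but more simply, I would choose $E$ so that $E\in\zZ$ (upper density $0$), hence $E\in\dD$, and also $\omega\sm E\supseteq\{n_{2k+1}\colon k\io\}$, so $[E]_\dD$ is a clopen set containing infinitely many terms of the sequence and missing infinitely many terms; this rules out convergence to any point. Points of $St(\dD)\sm\omega$ are non-isolated in a strong way and a similar clopen-separation argument works there. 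For (2), given an infinite $X\sub St(\dD)$, pass to an infinite subset $Y$ which is either a subset of $\omega$ of density zero, or a discrete set of non-atomic points; in the first case, by a diagonal/almost-disjointness argument one shows that the subalgebra of $\dD$ generated by traces on $Y$ is all of $\wp(Y)$ (any subset $A\sub Y$ with $Y$ of density zero still lies in $\zZ\sub\dD$), so $\ol Y^{St(\dD)}\approx\bo$; the non-atomic case reduces to this by picking atoms nearby.

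For (3), the cleanest route is to exhibit an explicit fsJN-sequence. Using the natural dyadic-type blocks: partition $\omega$ into consecutive intervals $I_n$ with $|I_n|\to\infty$, split each $I_n$ into two halves $A_n$ and $B_n$ of equal size, and set
\[
\mu_n=\frac{1}{|I_n|}\Big(\sum_{k\in A_n}\delta_k-\sum_{k\in B_n}\delta_k\Big).
\]
Then $\|\mu_n\|=1$ and $\supp(\mu_n)=I_n$ is finite. Weak* nullity must be checked against all $f\in C(St(\dD))$, not just $f\in c$: here one uses that a continuous function on $St(\dD)$ restricted to $\omega$ is a bounded sequence $f\rstr\omega$ that ``respects density'' — more precisely, for every clopen $[A]_\dD$ with $A\in\zZ$, $f$ is small off a density-one set, and a Cesàro-type averaging argument (the values of $f$ on the block $I_n$ are nearly constant in the density sense as $n\to\infty$) gives $\mu_n(f)\to0$. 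This is the step I expect to be the main obstacle: one needs a genuine lemma saying that continuity over $St(\dD)$ forces the oscillation of $f$ across the blocks $I_n$ to vanish, which should follow from the fact that the clopen algebra is exactly $\dD$ together with uniform continuity on the compact space $St(\dD)$.

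Statement (4) is the real novelty and I would prove it by contradiction using (2). Suppose $\seqn{\mu_n}$ is an fsJN-sequence on $St(\dD)$ with $\liminf_n|\supp(\mu_n)|\le M<\infty$; pass to a subsequence with $|\supp(\mu_n)|\le M$ for all $n$. By Theorem \ref{theorem:sizes_of_supps} there is then an fsJN-sequence $\seqn{\nu_n}$ with $|\supp(\nu_n)|=2$ for every $n$, i.e. $\nu_n=\tfrac12(\delta_{x_n}-\delta_{y_n})$; by Lemma \ref{lemma:disjoint_supps_size_2} we may take it disjointly supported, and then (as noted in the paragraph before Question \ref{ques:fsjnp_discrete_supps}) a further subsequence makes $\bigcup_n\{x_n,y_n\}$ a discrete subspace of $St(\dD)$. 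But an infinite discrete subspace $Y$ of $St(\dD)$ has, by (2), an infinite subset whose closure is homeomorphic to $\bo$, and one checks the relevant points $x_n,y_n$ cannot converge — indeed by Lemma \ref{lemma:fsjn_conv_seq} this disjointly supported size-$2$ fsJN-sequence would force, via weak* convergence against a suitably chosen $f\in C(St(\dD))$ built from the $\bo$-copy (e.g.\ $f$ separating the two ``halves'' of an almost-disjoint family), that $\nu_n(f)\not\to0$, a contradiction. The technical core here is extracting from the discreteness of $\bigcup_n\supp(\nu_n)$ a copy of $\bo$ compatible with the indexing $n\mapsto(x_n,y_n)$ and then producing the bad continuous function; this is where property (2) does the decisive work, so most of the effort in (4) is really in setting up (2) correctly.
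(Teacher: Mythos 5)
Your reduction in (4) opens exactly as the paper does (Theorem \ref{theorem:sizes_of_supps} plus Lemma \ref{lemma:disjoint_supps_size_2} to get a disjointly supported sequence $\nu_n=\frac12(\delta_{x_n}-\delta_{y_n})$), and the paper indeed only cites \cite{BKS19} for (1)--(3); but both of the steps you yourself flag as ``the main obstacle'' contain genuine gaps. In (3), the hypothesis $|I_n|\to\infty$ is not enough for your sequence to be weakly* null. Take $|I_n|=n$ (so $I_n=[a_n,a_{n+1})$ with $a_n=n(n-1)/2$), let $S=\{2^k\colon k\io\}$ and $D=\bigcup_{n\in S}A_n$. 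Then $|D\cap[0,m)|\le N(\log_2N+1)$ for $m<a_{N+1}$ while $m\ge a_N\sim N^2/2$, so $D$ has density $0$, i.e. $D\in\zZ\sub\dD$ and $\chi_{[D]_\dD}\in C(St(\dD))$; yet $\mu_n\big([D]_\dD\big)=|A_n|/|I_n|\approx 1/2$ for every $n\in S$. So the ``oscillation lemma'' you hope for (continuity forces near-constancy of $f$ on the blocks $I_n$) is simply false for general blocks. The construction is saved only if each block is comparable to an initial segment, e.g. $I_n=[2^n,2^{n+1})$: then $|D\cap I_n|/|I_n|\le 2|D\cap[0,2^{n+1})|/2^{n+1}\to0$ for every $D\in\zZ$, and weak* nullity follows by checking characteristic functions of $[D]_\dD$ for $D\in\dD$ and using $\sup_n\|\mu_n\|=1$.

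In (4) the contradiction is never actually derived, and statement (2) cannot deliver it in the form you use it: (2) gives \emph{some} infinite $Y\sub\bigcup_n\{x_n,y_n\}$ with $\ol{Y}\approx\bo$, but with no control over the pairing --- $Y$ might consist only of $x_n$'s, and the fact that its closure is $\bo$ says nothing about the values $\nu_n(f)$, which also charge the $y_n$'s. (For the same reason, your reduction of the non-isolated case of (2) by ``picking atoms nearby'' does not produce a subset of $X$.) The tool the paper uses instead, and which is missing from your proposal, is the pseudo-union property of the ideal $\zZ$: every countable family in $\zZ$ is almost contained in a single member of $\zZ$. After splitting into cases according to how the supports meet $\omega$ (both points in $\omega$, neither, or exactly one), the paper traps \emph{both} points of every support of a subsequence inside one clopen set $[C]_\dD$ with $C\in\zZ$; since every subset of a density-zero set has density zero, the trace of $\dD$ on $C$ is all of $\wp(C)$, so $[C]_\dD\approx\bo$, and the restricted subsequence would be an fsJN-sequence on $\bo$ --- impossible. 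That ``capture the whole supports in one copy of $\bo$'' step is the decisive one, and it comes from the structure of $\zZ$, not from (1), (2) or Lemma \ref{lemma:fsjn_conv_seq}.
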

\begin{proof}
For (1)--(3), see \cite[Section 4, Fact 1--3, page 3026]{BKS19}. We
now prove (4), so for the sake of contradiction let us assume that
there exists an fsJN-sequence $\seqn{\mu_n}$ on $St(\dD)$ and  an
integer $M>1$ such that $\big|\supp\big(\mu_n\big)\big|=M$ for every
$n\io$. By Theorem \ref{theorem:sizes_of_supps}, we may assume that
$\mu_n=\frac{1}{2}\big(\delta_{x_n}-\delta_{y_n}\big)$. By Lemma
\ref{lemma:disjoint_supps_size_2}, we may also assume that
$\big\{x_n,y_n\big\}\cap\big\{x_{n'},y_{n'}\big\}=\emptyset$ for
every $n\neq n'\io$ and that $p\not\in\big\{x_n,y_n\big\}$ for every
$n\io$. We need to consider several cases:
\begin{enumerate}[(i)]
    \item There is $Q\in\cso$ such that $\big\{x_n,y_n\big\}\sub\omega$ for every
     $n\in Q$. We then go to a subsequence $\seqk{n_k\in Q}$ such that $A=\bigcup_{k\io}\big\{x_{n_k},y_{n_k}\big\}\in\zZ$. Since $[A]_\dD$ is homeomorphic to $\bo$, it follows that $\seqk{\mu_{n_k}\rstr[A]_\dD}$ gives rise to an fsJN-sequence in $\bo$, which is impossible.
    \item There is $Q\in\cso$ such that   $\big\{x_n,y_n\big\}\cap\omega=\emptyset$ for
     every $n\in Q$. We find $A_n\in\zZ$ such that $\big\{x_n,y_n\big\}\sub\big[A_n\big]_\dD$
     for every $n\in Q$. By \cite[Section 4, Fact 1, page 3026]{BKS19}, there is infinite
      $B\in\zZ$ such that $A_n\sm B$ is finite for every $n\in Q$. Since
       $\big\{x_n,y_n\big\}\cap\omega=\emptyset$ for every $n\in Q$, it follows that
        $A_n\sm B\not\in x_n$ and $A_n\sm B\not\in y_n$, and hence
        $\big\{x_n,y_n\big\}\sub[B]_\dD$. Again, since $[B]_\dD$ is homeomorphic to $\bo$, we
        obtain an fsJN-sequence on $\bo$, which is a contradiction.
    \item There is $Q\in\cso$ such that $\big|\big\{x_n,y_n\big\}\cap\omega\big|=1$ for every $n\in Q$. Without loss of generality, we may assume that $x_n\io$ for every $n\in Q$. First, let us find $R\in[Q]^\omega$ such that $\big\{x_n\colon\ n\in R\big\}\in\zZ$. Then, similarly as in (ii), let us find $B\in\zZ$ such that $\big\{y_n\colon\ n\in R\big\}\sub[B]_\dD$. Since $\zZ$ is an ideal, $C=\big\{x_n\colon\ n\in R\big\}\cup B\in\zZ$. It follows that $[C]_\dD$ is homeomorphic to $\bo$ and $\seq{\mu_n\rstr[C]_\dD}{n\in R}$ is an fsJN-sequence on $[C]_\dD$, a contradiction.
\end{enumerate}
\end{proof}

\begin{example}\label{example:schachermayer}
In \cite[Example 4.10]{Sch82} Schachermayer provided a simple example of a Boolean algebra $\sS$ without the Grothendieck property (see Section \ref{section:grothendieck}) and such that $St(\sS)$ does not have any non-trivial convergent sequences. The algebra $\sS$ is defined as a subalgebra of $\wo$ in the following way:
\[\sS=\big\{A\in\wo\colon\ \big(\forall^\infty k\io\big)\ \big(2k\in A\equiv 2k+1\in A\big)\big\}.\]
Since $\{n\}\in\sS$ for every $n\io$, we may---as previously---identify isolated points of the Stone space $St(\sS)$ with the elements of $\omega$, so $St(\sS)$ is a compactification of $\omega$. $St(\sS)$ has the fsJNP---indeed, we can define an fsJN-sequence as follows:
\[\mu_n=\frac{1}{2}\big(\delta_{2n}-\delta_{2n+1}\big),\ n\io.\]
Note that for every $A\in\sS$ we have $\mu_n(A)=0$ for sufficiently large $n\io$.
\end{example}

The next proposition is similar to \ref{prop:ex_plebanek}.

\begin{proposition}\label{prop:ex_schachermayer}
Schachermayer's Boolean algebra $\sS$ has the following properties:
\begin{enumerate}
    \item $St(\sS)$ does not have any non-trivial convergent sequences;
    \item if $X\sub St(\sS)$ is infinite, then there exists an infinite subset
    $Y\sub X$ such that $\ol{Y}^{St(\dD)}$ is homeomorphic to $\bo$;
    \item there exists an fsJN-sequence $\seqn{\mu_n}$ such that $\big|\supp\big(\mu_n\big)\big|=2$ for every $n\io$.
\end{enumerate}
\end{proposition}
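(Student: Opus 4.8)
The plan is to give an explicit description of $St(\sS)$: up to homeomorphism it is $\bo$ with each of its isolated points ``doubled''. Identify $\omega$ with $\omega\times\{0,1\}$ via $2k+i\leftrightarrow(k,i)$, write $\wh B=\bigcup_{k\in B}\{2k,2k+1\}$ for $B\sub\omega$, and let $D=\big\{\wh B\colon B\sub\omega\big\}$, a subalgebra of $\sS$ isomorphic to $\wo$. The inclusion $D\embeds\sS$, together with the identification $St(D)\approx\bo$, yields a continuous surjection $q\colon St(\sS)\to\bo$. I would first record a few routine facts. (a) Every $A\in\sS$ can be written as $A=\wh B\triangle F$ with $B\sub\omega$ and $F$ finite, since only finitely many pairs $\{2k,2k+1\}$ are split by $A$. (b) The isolated points of $St(\sS)$ are exactly the points of $\omega$ (each singleton is in $\sS$), so $St(\sS)\sm\omega$ is closed and hence compact. (c) $q(2k)=q(2k+1)=k$ for every $k\io$, so $q$ is finite-to-one (in fact at most $2$-to-$1$). (d) Using (a) and the fact that a non-principal ultrafilter on $\sS$ contains no finite set, $q$ maps $St(\sS)\sm\omega$ bijectively onto $\omega^*:=\bo\sm\omega$: for injectivity, if $A=\wh B\triangle F$ lies in $z$ but not in $z'$ for non-principal ultrafilters $z,z'$ on $\sS$, then $F\notin z$ and $F\notin z'$, so $\wh B\in z\sm z'$ and hence $B\in q(z)\sm q(z')$; for surjectivity, $p\in\omega^*$ is the $q$-image of the ultrafilter $\big\{\wh B\triangle F\colon B\in p,\ F\text{ finite}\big\}$ on $\sS$. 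Being a continuous bijection between compact Hausdorff spaces, $q\rstr\big(St(\sS)\sm\omega\big)$ is a homeomorphism onto $\omega^*$.

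Granting this, (1) is immediate. If $\seqn{z_n}$ were a non-trivial convergent sequence in $St(\sS)$, we could pass to a subsequence with pairwise distinct terms, limit $z$, and (refining once more) with all terms in $\omega$ or all terms in $St(\sS)\sm\omega$. Since $q$ is finite-to-one and injective on $St(\sS)\sm\omega$, after one further refinement the points $q(z_n)$ are pairwise distinct, and by continuity of $q$ they converge to $q(z)$ in $\bo$ --- which is impossible, as $\bo$ has no non-trivial convergent sequences. Item (3) is essentially already contained in Example~\ref{example:schachermayer}: for $\mu_n=\tfrac12(\delta_{2n}-\delta_{2n+1})$ we have $\big\|\mu_n\big\|=1$ and $\big|\supp(\mu_n)\big|=2$, while $\mu_n([A]_\sS)=\tfrac12\big(\chi_A(2n)-\chi_A(2n+1)\big)=0$ for all large $n$ whenever $A\in\sS$; since $C(St(\sS))$ is the uniform closure of the linear span of $\big\{\chi_{[A]_\sS}\colon A\in\sS\big\}$ and $\big\|\mu_n\big\|\equiv1$, it follows that $\mu_n(f)\to0$ for every $f\in C(St(\sS))$.

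For (2) I would reduce to $\bo$. Let $X\sub St(\sS)$ be infinite; by (c) the set $q[X]$ is infinite, so by the well-known fact (cf.\ \cite[Section~4]{BKS19}) that every infinite subset of $\bo$ contains a countably infinite subset which is relatively discrete and $C^*$-embedded in $\bo$ --- and therefore has closure homeomorphic to $\bo$ --- there is such a $Z\sub q[X]$. For each $w\in Z$ pick $x_w\in X\cap q^{-1}(w)$ and put $Y=\big\{x_w\colon w\in Z\big\}\sub X$, so $q\rstr Y$ is a bijection onto $Z$. Whenever $A,A'\sub Y$ are disjoint, $q[A]$ and $q[A']$ are disjoint subsets of $Z$, hence have disjoint closures in $\ol{Z}^{\bo}$; since $q$ is continuous, $\ol{A}^{St(\sS)}\sub q^{-1}\big[\ol{q[A]}^{\bo}\big]$ and likewise for $A'$, so $\ol{A}^{St(\sS)}\cap\ol{A'}^{St(\sS)}=\emptyset$. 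In particular $Y$ is relatively discrete and disjoint subsets of $Y$ have disjoint closures, so $\ol{Y}^{St(\sS)}$ is the \v{C}ech--Stone compactification of the countably infinite discrete space $Y$, i.e.\ $\ol{Y}^{St(\sS)}\approx\bo$.

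The part I expect to require the most care is the structural description (a)--(d), in particular the claim that $q$ collapses the remainder $St(\sS)\sm\omega$ \emph{homeomorphically} onto $\omega^*$; once this is in place, (1) and (2) are short, and (3) merely records the observation already present in Example~\ref{example:schachermayer}.
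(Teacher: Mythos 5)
Your proposal is correct, but it proves item (2) by a genuinely different route than the paper. The paper's proof of (2) is a direct two-case construction inside the algebra $\sS$: depending on whether $X\cap\omega$ is infinite or not, it builds pairwise disjoint elements $A_k$ (resp.\ $B_n$) of $\sS$ that are unions of full pairs $\{2l,2l+1\}$ and then checks by hand that disjoint subsets of the selected $Y$ lie in disjoint clopen sets, so that $\ol{Y}^{St(\sS)}\approx\bo$; items (1) and (3) are simply referred back to Example \ref{example:schachermayer} (i.e.\ to Schachermayer's original argument and the explicit sequence $\mu_n=\frac12(\delta_{2n}-\delta_{2n+1})$). You instead establish a global structural description: the subalgebra $D=\{\wh{B}\colon B\sub\omega\}\cong\wo$ induces a continuous surjection $q\colon St(\sS)\to\bo$ that is exactly $2$-to-$1$ on $\omega$ and, as your computation with the decomposition $A=\wh{B}\triangle F$ shows, restricts to a homeomorphism of $St(\sS)\sm\omega$ onto $\omega^*$; you then transfer both the absence of convergent sequences and the ``every infinite set contains a copy of $\bo$'' property from $\bo$ along $q$. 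Your verifications of (a)--(d) are sound (in particular the injectivity argument via $F\notin z,z'$ and the surjectivity via the filter $\{\wh{B}\triangle F\colon B\in p,\ F\text{ finite}\}$), and the transfer of (2) using the fact that countable relatively discrete subsets of $\bo$ are $C^*$-embedded is correct. What the paper's approach buys is self-containedness and brevity for (2) alone; what yours buys is a complete identification of $St(\sS)$ as ``$\bo$ with its isolated points doubled'', from which (1), (2) and the failure of the Grothendieck property all become transparent at once, and which makes the kinship with the Alexandrov duplicate $AD(\bo)$ discussed later in the paper explicit. One cosmetic remark: the superscript $St(\dD)$ in item (2) of the statement is a typo inherited from the paper and should read $St(\sS)$, as you implicitly assume.
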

\begin{proof}
For (1) and (3), see Example \ref{example:schachermayer}. We now show (2), so let $X\sub St(\sS)$ be infinite. We have two cases:
\begin{enumerate}[(i)]
    \item $X\cap\omega$ is infinite, so without loss of generality we may assume that $X\sub\omega$. Enumerate $X$ as a strictly increasing sequence $\seqn{a_n\in X}$ and go to a subsequence $\seqk{a_{n_k}}$ such that $a_{n_{k+1}}-a_{n_k}>1$ for every $k\io$. For every $k\io$ let $A_k$ be a subset of $\omega$ of size $2$ such that:
\begin{itemize}
    \item $a_{n_k}\in A_k$, and
    \item for every $l\io$ we have: $2l\in A_k$ if and only if $2l+1\in A_k$.
\end{itemize}
Then, $A_k\in\sS$. Let $U,V\in\cso$ be two disjoint sets such that $U\cup V=\omega$. It follows that $\bigcup_{k\in U}A_k\in\sS$, $\bigcup_{k\in V}A_k\in\sS$, and $\bigcup_{k\in U}A_k\cap\bigcup_{k\in V}A_k=\emptyset$, and hence finally
\[\ol{\big\{a_{n_k}\colon\ k\in U\big\}}^{St(\sS)}\sub\Big[\bigcup_{k\in U}A_k\Big]_\sS\quad\text{and}\quad\ol{\big\{a_{n_k}\colon\ k\in V\big\}}^{St(\sS)}\sub\Big[\bigcup_{k\in V}A_k\Big]_\sS,\]
so
\[\ol{\big\{a_{n_k}\colon\ k\in U\big\}}^{St(\sS)}\cap\ol{\big\{a_{n_k}\colon\ k\in V\big\}}^{St(\sS)}=\emptyset.\]
Since $U$ and $V$ were arbitrary, we get that $\ol{\big\{a_{n_k}\colon\ k\io\big\}}^{St(\sS)}$ is homeomorphic to $\bo$.
    \item $X\cap\omega$ is finite, so without loss of generality we may assume that $X\cap\omega=\emptyset$. Let $Y=\big\{x_n\in X\colon\ n\io\big\}$ be a discrete subset of $X$ and find a sequence $\seqn{A_n}$ of pairwise disjoint elements of $\sS$ such that $x_n\in\big[A_n\big]_\sS$ for every $n\io$. For each $n\io$ let $B_n\in\sS$ be such that:
\begin{itemize}
    \item $B_n\le A_n$,
    \item $A_n\sm B_n$ is finite,
    \item for every $l\io$ we have: $2l\in B_n$ if and only if $2l+1\in B_n$.
\end{itemize}
Since for each $n\io$ we have $x_n\not\in\omega$, $x_n\in B_n$. Now, notice that for every $W\in\cso$ we have $\bigcup_{n\in W}B_n\in\sS$ and proceed as in the previous case to prove that $\ol{\big\{x_n\colon\ n\io\big\}}^{St(\sS)}$ is homeomorphic to $\bo$.
\end{enumerate}
\end{proof}

\begin{remark}
Let us note that we can provide a completely different proof of Proposition \ref{prop:ex_schachermayer}.(2) in the case when $X=[A]_\sS$ for some infinite $A\in\sS$. Indeed, let $B\sub A$ be an infinite subset such that for every $k\io$ we have: $2k\in B\equiv 2k+1\in B$. Put:
\[\bB=\big\{C\iA\colon\ C\le B\big\};\]
then, $\bB$ is a Boolean algebra with obvious operations and the unit element $B$. $\bB$ is also isomorphic with $\sS$. By Koszmider and Shelah \cite[Proposition 2.5]{KS12}, $\bB$ has the so-called Weak Subsequential Separation Property, and hence, by \cite[Theorem 1.4]{KS12}, it contains an independent family $\fF$ of size $\frakc$. Let $\cC$ be the subalgebra of $\bB$ generated by $\fF$. Since $|\fF|=\frakc$, there is a homomorphism $\varphi$ from $\cC$ onto $\wo$. Since $\wo$ is complete, by the Sikorski extension theorem, there is an extension $\Phi$ of $\varphi$ onto $\bB$. By the Stone duality, it follows that $\bo$ is a subspace of $St(\bB)\approx[B]_\sS\sub[A]_\sS$.
\end{remark}

\subsection{Study of sizes of supports\label{section:study_of_sizes_of_supports}}

We will now restrict our study to those compact space which admits fsJN-sequences with bounded sizes of supports, i.e. such fsJN-sequences $\seqn{\mu_n}$ that there exists $M\io$ such that $\big|\supp\big(\mu_n\big)\big|\le M$ for every $n\io$.

\begin{proposition}\label{prop:supports_never_singletons}
Let $\seqn{\mu_n}$ be an fsJN-sequence on a compact space $K$. Then, there is $N\io$ such that for every $n>N$ the support $\supp\big(\mu_n\big)$ is not a singleton.
\end{proposition}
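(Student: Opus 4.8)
The plan is a short proof by contradiction, using the single test function $\one\in C(K)$. Suppose towards a contradiction that the conclusion fails, i.e. that the set $A=\big\{n\io\colon\ \supp\big(\mu_n\big)\text{ is a singleton}\big\}$ is infinite. For each $n\in A$ write $\supp\big(\mu_n\big)=\big\{x_n\big\}$; by the standard representation of finitely supported measures recalled in Section \ref{section:notation}, $\mu_n=\alpha_n\cdot\delta_{x_n}$ for some non-zero scalar $\alpha_n\in\R$, and since $\big\|\mu_n\big\|=\sum_{x\in\supp(\mu_n)}\big|\alpha_x\big|=\big|\alpha_n\big|=1$, we get $\alpha_n\in\{-1,1\}$.

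Next I would evaluate $\mu_n$ on the constant function $\one$, which belongs to $C(K)$ since $K$ is compact: for every $n\in A$ we have $\mu_n(\one)=\alpha_n\cdot\one\big(x_n\big)=\alpha_n$, so $\big|\mu_n(\one)\big|=1$. As $A$ is infinite, this gives $\limsup_{n\to\infty}\big|\mu_n(\one)\big|\ge 1>0$, contradicting the fact that $\seqn{\mu_n}$ is weakly* null (applied to $f=\one$, using that $\lim_{n\to\infty}\mu_n(f)=0$ for every $f\in C(K)$). Hence $A$ is finite, and taking $N=\max A$ (or $N=0$ if $A=\emptyset$) yields the statement.

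There is essentially no obstacle here; the only point worth noting is that the support of a unit-norm measure is never empty (otherwise $\big\|\mu_n\big\|=0\neq 1$), so that ``$\supp\big(\mu_n\big)$ is not a singleton'' for $n>N$ genuinely means $\big|\supp\big(\mu_n\big)\big|\ge 2$, which is how the proposition is applied later (e.g. in the proof of Proposition \ref{prop:ex_plebanek}). Everything else reduces to the one-line computation $\mu_n(\one)=\pm 1$ combined with weak* nullity.
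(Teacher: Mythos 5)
Your proof is correct and is essentially the paper's own argument: the paper also passes to a subsequence with singleton supports, writes $\mu_{n_k}=\alpha_k\delta_{x_k}$ with $\alpha_k\in\{-1,1\}$, and derives a contradiction from $\big|\mu_{n_k}(K)\big|=1$, which is exactly your evaluation against the constant function $\one$.
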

\begin{proof}
Assume for the sake of contradiction that there exists a subsequence $\seqk{\mu_{n_k}}$ such that $\big|\supp\big(\mu_{n_k}\big)\big|=1$ for every $k\io$. Then, for each $k\io$ there exist $x_k\in K$ and $\alpha_k\in\{-1,1\}$ such that $\mu_{n_k}=\alpha_k\delta_{x_k}$. It follows that $\big|\mu_{n_k}(K)\big|=1$ for every $k\io$, which contradicts the fact that $\big|\mu_{n_k}(K)\big|\to 0$ as $k\to\infty$.
\end{proof}

\begin{proposition}\label{prop:constant_coeffs}
Let a compact space $K$ admit an fsJN-sequence $\seqn{\mu_n}$ such that there exists $M\ge2$, $M\io$, for which we have $\big|\supp\big(\mu_n\big)\big|=M$ for every $n\io$. For each $n\io$ write $\supp\big(\mu_n\big)=\big\{x_1^n,\ldots,x_M^n\big\}$. Then, there exist $\alpha_1,\ldots,\alpha_M\in\R$ and an increasing sequence $\seqk{n_k}$  such that the measures $\nu_k=\sum_{i=1}^M\alpha_i\delta_{x_i^{n_k}}$, $k\io$, form an fsJN-sequence such that $\big\|\nu_k-\mu_{n_k}\big\|\to0$ as $k\to\infty$.
\end{proposition}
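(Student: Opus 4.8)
The plan is to exploit the fact that, once the common support size $M$ is fixed, the coefficient data of the measures $\mu_n$ all live in a single finite-dimensional compact set, so a convergent subsequence of coefficient vectors can be extracted and its limit used as the common coefficient vector.

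First I would write each $\mu_n$ in the canonical form
\[\mu_n=\sum_{i=1}^M\alpha_i^n\cdot\delta_{x_i^n},\qquad\text{with}\quad\sum_{i=1}^M\big|\alpha_i^n\big|=\big\|\mu_n\big\|=1,\]
as recalled in Section \ref{section:notation}. Thus the vector $\big(\alpha_1^n,\ldots,\alpha_M^n\big)$ lies on the unit sphere of $\big(\R^M,\|\cdot\|_1\big)$, which is compact; hence there are an increasing sequence $\seqk{n_k}$ and reals $\alpha_1,\ldots,\alpha_M$ such that $\alpha_i^{n_k}\to\alpha_i$ as $k\to\infty$ for every $i\in\{1,\ldots,M\}$. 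Since $(\beta_i)_{i=1}^M\mapsto\sum_{i=1}^M|\beta_i|$ is continuous, $\sum_{i=1}^M|\alpha_i|=1$.

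Next, for each $k\io$ set $\nu_k=\sum_{i=1}^M\alpha_i\cdot\delta_{x_i^{n_k}}$. Because $\big|\supp\big(\mu_{n_k}\big)\big|=M$, the points $x_1^{n_k},\ldots,x_M^{n_k}$ are pairwise distinct, so $\big\|\nu_k\big\|=\sum_{i=1}^M|\alpha_i|=1$ and
\[\big\|\nu_k-\mu_{n_k}\big\|=\Big\|\sum_{i=1}^M\big(\alpha_i-\alpha_i^{n_k}\big)\delta_{x_i^{n_k}}\Big\|=\sum_{i=1}^M\big|\alpha_i-\alpha_i^{n_k}\big|\longrightarrow0.\]
Since $K$ is compact, every $f\in C(K)$ is bounded on $K$, hence in particular on $S\big(\nu_k\big)$; therefore Lemma \ref{lemma:norm_conv_fsjn_seqs}, applied to the pair of sequences $\seqk{\mu_{n_k}}$ (itself an fsJN-sequence, being a subsequence of $\seqn{\mu_n}$) and $\seqk{\nu_k}$, shows that $\seqk{\nu_k}$ is an fsJN-sequence. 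One could also argue directly from $\big|\nu_k(f)\big|\le\|f\|_\infty\cdot\big\|\nu_k-\mu_{n_k}\big\|+\big|\mu_{n_k}(f)\big|\to 0$. Either way, this is exactly the assertion.

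There is no genuine obstacle here; the only points worth a moment's attention are that the limiting coefficient vector retains $\ell_1$-norm exactly $1$ (continuity of the norm in finitely many coordinates) and that the $M$ points of each $\supp\big(\mu_{n_k}\big)$ stay pairwise distinct, so that passing from $\mu_{n_k}$ to $\nu_k$ does not drop the norm below $1$. Compactness of $K$ enters only to supply the boundedness hypothesis needed to invoke Lemma \ref{lemma:norm_conv_fsjn_seqs}.
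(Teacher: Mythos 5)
Your proposal is correct and follows essentially the same route as the paper: extract a subsequence along which the coefficient vectors converge, observe that the limit vector still has $\ell_1$-norm $1$, and conclude via Lemma \ref{lemma:norm_conv_fsjn_seqs} (the paper obtains the convergent subsequence by iterated extraction coordinate-by-coordinate and proves $\sum_{i=1}^M|\alpha_i|=1$ by contradiction, whereas you use compactness of the $\ell_1$-sphere and continuity of the norm, but these are only cosmetic streamlinings).
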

\begin{proof}
For each $n\io$ let $\mu_n=\sum_{i=1}^M\alpha_i^n\delta_{x_i^n}$. Since $\alpha_1^n\in[-1,1]$ for each $n\io$, there is $A_1\in\cso$ and $\alpha_1\in\R$ such that
\[\lim_{\substack{n\to\infty\\n\in A_1}}\alpha_1^n=\alpha_1.\]
Similarly, since $\alpha_2^n\in[-1,1]$ for each $n\in A_1$, there is $A_2\in\big[A_1\big]^\omega$ and $\alpha_2$ such that
\[\lim_{\substack{n\to\infty\\n\in A_2}}\alpha_2^n=\alpha_2.\]
We continue in this way, until we get a sequence $A_1\supset\ldots\supset A_M$ of infinite subsets of $\omega$ and a sequence of real numbers $\alpha_1,\ldots,\alpha_M\in\R$ such that
\[\lim_{\substack{n\to\infty\\n\in A_i}}\alpha_M^n=\alpha_i\]
for each $i=1,\ldots,M$. We claim that $\sum_{i=1}^M\big|\alpha_i\big|=1$. To see this, assume that $\sum_{i=1}^M\big|\alpha_i\big|=\alpha\neq 1$. Assume first that $\alpha<1$. Let $\eps=(1-\alpha)/M$. There is $N\io$ such that for every $n\in A_M\sm\{0,\ldots,N\}$ and $i=1,\ldots,M$ we have: $\big|\alpha_i^n-\alpha_i\big|<\eps$. But then for those $n$ it holds:
\[1=\sum_{i=1}^M\big|\alpha_i^n\big|\le\sum_{i=1}^M\Big(\big|\alpha_i^n-\alpha_i\big|+\big|\alpha_i\big|\Big)=\sum_{i=1}^M\big|\alpha_i^n-\alpha_i\big|+\sum_{i=1}^M\big|\alpha_i\big|<M\cdot\eps+\alpha=1,\]
a contradiction. Similarly, if $\alpha>1$, then let $\eps=(\alpha-1)/M$, and again let $N\io$ be such such that for every $n\in A_M\sm\{0,\ldots,N\}$ and $i=1,\ldots,M$ we have: $\big|\alpha_i^n-\alpha_i\big|<\eps$. But then for every such $n$ we have:
\[\alpha-1=M\cdot\eps>\sum_{i=1}^M\big|\alpha_i^n-\alpha_i\big|\ge\sum_{i=1}^M\Big(\big|\alpha_i\big|-\big|\alpha_i^n\big|\Big)=\sum_{i=1}^M\big|\alpha_i\big|-\sum_{i=1}^M\big|\alpha_i^n\big|=\alpha-1,\]
again a contradiction, so $\alpha=1$.

Enumerate $A_M=\seqk{n_k}$ and define for every $k\io$ the measure $\nu_k$ as follows:
\[\nu_k=\sum_{i=1}^M\alpha_i\delta_{x_i^{n_k}};\]
then, $\big\|\nu_k\big\|=1$, by the previous argument. 
To finish the proof notice that
\[\big\|\nu_k-\mu_{n_k}\big\|=\sum_{i=1}^M\big|\alpha_i-\alpha_i^{n_k}\big|\longrightarrow0\]
as $k\to\infty$ and appeal to Lemma \ref{lemma:norm_conv_fsjn_seqs} to conclude that $\seqk{\nu_k}$ is an fsJN-sequence on $K$.
\end{proof}

By Lemma \ref{lemma:jnseq_pos_neg}, we immediately get the following corollary.

\begin{corollary}\label{cor:constant_coeffs_neg_pos}
If $\seqn{\mu_n}$ is an fsJN-sequence on a compact space $K$ and
there exist numbers $\alpha_1,\ldots,\alpha_M\in\R$ such that every
$\mu_n$ can be written in the form
$\mu_n=\sum_{i=1}^M\alpha_i\delta_{x_i^n}$, for some
$x_1^n,\ldots,x_M^n\in K$, then:
\[\sum\big\{\alpha_i\colon\ \alpha_i>0, 1\le i\le M\big\}=1/2= -
\sum\big\{\alpha_i\colon\ \alpha_i<0, 1\le i\le M\big\}.\]
\noproof
\end{corollary}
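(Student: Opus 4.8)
The plan is to read the identity straight off Lemma~\ref{lemma:jnseq_pos_neg}, using that the coefficients $\alpha_1,\dots,\alpha_M$ here do not depend on $n$. First I would fix, exactly as in Proposition~\ref{prop:constant_coeffs}, a representation $\mu_n=\sum_{i=1}^M\alpha_i\delta_{x_i^n}$ in which the supporting points $x_1^n,\dots,x_M^n$ are pairwise distinct for every $n$ (discarding any index $i$ with $\alpha_i=0$); then $\mu_n(\{x_i^n\})=\alpha_i$ for all $n$ and $i$, and $\supp(\mu_n)=\{x_i^n\colon\alpha_i\neq0\}$.

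Next I would compare this with the decomposition in Lemma~\ref{lemma:jnseq_pos_neg}: the positive part $P_n=\{x\in\supp(\mu_n)\colon\mu_n(x)>0\}$ is precisely $\{x_i^n\colon\alpha_i>0\}$ and the negative part $N_n=\supp(\mu_n)\sm P_n$ is precisely $\{x_i^n\colon\alpha_i<0\}$. Hence $\big\|\mu_n\rstr P_n\big\|=\sum\{\alpha_i\colon\alpha_i>0,\ 1\le i\le M\}$ and $\big\|\mu_n\rstr N_n\big\|=-\sum\{\alpha_i\colon\alpha_i<0,\ 1\le i\le M\}$, and the key observation is that both of these quantities are constant in $n$. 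Since $\seqn{\mu_n}$ is an fsJN-sequence, Lemma~\ref{lemma:jnseq_pos_neg} gives $\lim_{n\to\infty}\big\|\mu_n\rstr P_n\big\|=\lim_{n\to\infty}\big\|\mu_n\rstr N_n\big\|=1/2$; a constant sequence with limit $1/2$ is identically $1/2$, which is exactly the claimed equality.

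I do not expect a real obstacle in the argument — it is wholly supplied by Lemma~\ref{lemma:jnseq_pos_neg} — but one point deserves attention, namely the distinctness of the supporting points. If one permitted coincidences among the $x_i^n$ (so that $\delta_{x_i^n}$ with $\alpha_i$ of opposite signs could partially cancel), the identifications of $P_n$ and $N_n$ above would fail and so would the conclusion; thus the statement is to be understood for representations with pairwise distinct supporting points, which is precisely the form in which Proposition~\ref{prop:constant_coeffs} delivers it. (If one prefers to avoid even this remark, one may instead first note that $\mu_n(K)=\sum_{i=1}^M\alpha_i$ is constant and equals $0$ because $\seqn{\mu_n}$ is weakly* null, which already yields $\sum\{\alpha_i\colon\alpha_i>0\}=-\sum\{\alpha_i\colon\alpha_i<0\}$, and then pin down the common value to $1/2$ exactly as above.)
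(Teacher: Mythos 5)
Your proof is correct and follows exactly the route the paper intends: the corollary appears there with no proof beyond the remark that it is immediate from Lemma~\ref{lemma:jnseq_pos_neg}, and your identification of $P_n$ and $N_n$ with $\{x_i^n\colon \alpha_i>0\}$ and $\{x_i^n\colon \alpha_i<0\}$, together with the observation that the resulting norms are constant in $n$ and hence equal to their limit $1/2$, is precisely that argument spelled out. Your caveat about the pairwise distinctness of the supporting points is well taken and consistent with the form in which Proposition~\ref{prop:constant_coeffs} supplies such representations.
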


Note that Proposition \ref{prop:constant_coeffs} does not say that $\alpha_i\neq 0$ for all $i=1,\ldots,M$, but of course we may remove from the definition of $\nu_k$ all such points $x_i^{n_k}$ for which we have $\alpha_i=0$ and obtain a sequence $\seqn{\nu_n}$ such that $\big|\supp\big(\nu_n\big)\big|<M$ for every $n\io$. The next lemma is thus a variant of Proposition \ref{prop:constant_coeffs} (with an alternative proof).

\begin{lemma}\label{lemma:supports_decrease_sizes}
Let a compact space $K$ admit an fsJN-sequence $\seqn{\mu_n}$  such
that there exists $M>2$, $M\io$, for which we have
$\big|\supp\big(\mu_n\big)\big|=M$ for every $n\io$. If there exists
a sequence $\seqn{x_n}$ such that $x_n\in\supp\big(\mu_n\big)$ for every $n\io$ and
$\lim_{n\to\infty}\mu_n\big(\big\{x_n\big\}\big)=0$, then $K$ admits
an fsJN-sequence $\seqn{\nu_n}$ such that
$\big|\supp\big(\nu_n\big)\big|=M-1$ for every $n\io$.
\end{lemma}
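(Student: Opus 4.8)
The plan is to produce $\seqn{\nu_n}$ by simply deleting the point $x_n$ from the support of $\mu_n$ and renormalizing, and then to verify that the result is an fsJN-sequence by invoking Lemma \ref{lemma:norm_conv_fsjn_seqs}.

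First I would record that, since $x_n\in\supp\big(\mu_n\big)$ and $\big|\supp\big(\mu_n\big)\big|=M>2$, the set $\supp\big(\mu_n\big)\sm\big\{x_n\big\}$ has exactly $M-1\ge2$ elements, each carrying non-zero $\mu_n$-mass; hence
\[\big\|\mu_n\rstr\big(K\sm\big\{x_n\big\}\big)\big\|=1-\big|\mu_n\big(\big\{x_n\big\}\big)\big|>0,\]
and in particular $\big|\mu_n(\{x_n\})\big|<1$. Thus the measure
\[\nu_n=\Big(\mu_n\rstr\big(K\sm\big\{x_n\big\}\big)\Big)\Big/\big\|\mu_n\rstr\big(K\sm\big\{x_n\big\}\big)\big\|\]
is well defined, has $\big\|\nu_n\big\|=1$, and equals $\alpha_n\big(\mu_n\rstr(K\sm\{x_n\})\big)$ for $\alpha_n=1/\big(1-\big|\mu_n(\{x_n\})\big|\big)\ge1$. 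Since multiplication by the positive constant $\alpha_n$ neither introduces new atoms nor annihilates old ones, $\supp\big(\nu_n\big)=\supp\big(\mu_n\big)\sm\big\{x_n\big\}$, so $\big|\supp\big(\nu_n\big)\big|=M-1$ for every $n\io$, exactly as required.

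Next I would control the perturbation. From $\big(\alpha_n-1\big)\big\|\mu_n\rstr(K\sm\{x_n\})\big\|=1-\big\|\mu_n\rstr(K\sm\{x_n\})\big\|=\big|\mu_n(\{x_n\})\big|$ and the triangle inequality,
\[\big\|\nu_n-\mu_n\big\|\le\big(\alpha_n-1\big)\big\|\mu_n\rstr(K\sm\{x_n\})\big\|+\big\|\mu_n\rstr\{x_n\}\big\|=2\big|\mu_n(\{x_n\})\big|\longrightarrow0\]
by hypothesis. Moreover $K$ is compact, so every $f\in C(K)$ is bounded on $S\big(\nu_n\big)$ (alternatively $S\big(\nu_n\big)\sub S\big(\mu_n\big)$, and one may cite Lemma \ref{lemma:fsjn_f_bounded}). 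Both $\seqn{\mu_n}$ and $\seqn{\nu_n}$ are finitely supported, $\seqn{\mu_n}$ is an fsJN-sequence, $\big\|\nu_n\big\|=1$ for every $n\io$, and $\big\|\mu_n-\nu_n\big\|\to0$; hence Lemma \ref{lemma:norm_conv_fsjn_seqs} gives that $\seqn{\nu_n}$ is an fsJN-sequence on $K$, completing the proof.

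I expect no serious obstacle here: the only points to check are that the renormalizing denominator $1-\big|\mu_n(\{x_n\})\big|$ never vanishes — guaranteed because $M>2$ forces a support point other than $x_n$ to survive with positive mass — and that $\big\|\mu_n-\nu_n\big\|$ is dominated by $\big|\mu_n(\{x_n\})\big|$, which is an elementary computation. The mild subtlety worth a sentence is that deleting a single support point and renormalizing leaves the support with cardinality exactly $M-1$, which is why the conclusion is an equality rather than an inequality.
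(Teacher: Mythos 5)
Your proof is correct and takes essentially the same approach as the paper: delete $x_n$ from $\supp\big(\mu_n\big)$ and renormalize the restriction to $K\sm\big\{x_n\big\}$. The only cosmetic difference is that the paper first passes to a subsequence with $\big|\mu_{n_k}\big(\big\{x_{n_k}\big\}\big)\big|<1/(k+1)$ and verifies weak* nullity by a direct estimate, whereas you keep the whole sequence and invoke Lemma \ref{lemma:norm_conv_fsjn_seqs} via the (correct) identity $\big\|\nu_n-\mu_n\big\|=2\big|\mu_n\big(\big\{x_n\big\}\big)\big|\to0$; both verifications are sound.
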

\begin{proof}
Let $\seqk{n_k}$ be such an increasing sequence that $\big|\mu_{n_k}\big(\big\{x_{n_k}\big\}\big)\big|<1/(k+1)$, so $\big\|\mu_{n_k}\rstr\big(K\sm\big\{x_{n_k}\big)\big\}\big\|>1-1/(k+1)$. For every $k\io$ put:
\[\nu_k=\Big(\mu_{n_k}\rstr\big(K\sm\big\{x_{n_k}\big\}\big)\Big)\Big/\big\|\mu_{n_k}\rstr\big(K\sm\big\{x_{n_k}\big\}\big)\big\|.\]
Obviously, $\big|\supp\big(\nu_k\big)\big|=M-1$ and $\big\|\nu_k\big\|=1$ for every $k\io$. We need to show that $\seqk{\nu_k}$ is weakly* null. Let $f\in C(K)$. For every $k\io$ we have:
\[\big|\nu_k(f)\big|=\big|\int_{K\sm\{x_{n_k}\}}f{\der}\mu_{n_k}\big|\Big/\big\|\mu_{n_k}\rstr\big(K\sm\big\{x_{n_k}\big\}\big)\big\|\le\]
\[\Big(\big|\mu_{n_k}(f)\big|+\big|f\big(x_{n_k}\big)\cdot\mu_{n_k}\big(\big\{x_{n_k}\big\}\big)\big|\Big)\Big/\big\|\mu_{n_k}\rstr\big(K\sm\big\{x_{n_k}\big\}\big)\big\|<\]
\[\Big(\big|\mu_{n_k}(f)\big|+\|f\|_\infty\cdot1/(k+1)\Big)\Big/\big(1-1/(k+1)\big),\]
so, since $\lim_{k\to\infty}\mu_{n_k}(f)=0$, $\lim_{k\to\infty}\nu_k(f)=0$, too. This proves that $\seqk{\nu_k}$ is weakly* null.
\end{proof}

Note that a compact space $K$ admits an fsJN-sequence $\seqn{\mu_n}$ of the form $\mu_n=\frac{1}{2}\big(\delta_{x_n}-\delta_{y_n}\big)$, where $x_n,y_n\in K$, if and only if there exist two disjoint sequences $\seqn{x_n\in K}$ and $\seqn{y_n\in K}$ such that for every $f\in C(K)$ and $\eps>0$ there exists $N\io$ such that for every $n>N$ we have $\big|f\big(x_n\big)-f\big(y_n\big)\big|<\eps$. If $K$ is totally disconnected, this observation boils down to the following one: $K$ admits an fsJN-sequence $\seqn{\mu_n}$ of the form $\mu_n=\frac{1}{2}\big(\delta_{x_n}-\delta_{y_n}\big)$, where $x_n,y_n\in K$, if and only if there exist two disjoint sequences $\seqn{x_n\in K}$ and $\seqn{y_n\in K}$ such that for every clopen set $U$ there is $N\io$ such that for every $n>N$ either $x_n,y_n\in U$ or $x_n,y_n\in U^c$. 
These two  observations, after appropriate generalizations, are
crucial for proving Theorem \ref{theorem:sizes_of_supps}---see the
next two lemmas, where $\kK(K)$ denotes the space of all non-empty
closed subsets of a compact space $K$ endowed with the Vietoris
topology.

\begin{lemma}\label{lemma:supports_minimal_sizes_limit_points}
Let a compact space $K$ have the fsJNP and assume that $M\io$ is the
minimal natural number for which there exists an fsJN-sequence
$\seqn{\mu_n}$ such that $\big|\supp\big(\mu_n\big)\big|=M$ for
every $n\io$. For every $n\io$ put $F_n=\supp\big(\mu_n\big)$. Then,
the set $\fF=\big\{F_n\colon\ n\io\big\}$ has the following two
properties as a subset of the space $\kK(K)$:
\begin{enumerate}
    \item every limit point of $\fF$ is a singleton;
    \item $\fF$ is not closed.
\end{enumerate}
\end{lemma}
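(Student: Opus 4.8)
The plan is to establish (1) first --- that every limit point of $\fF$ in $\kK(K)$ is a singleton --- and then deduce (2) almost immediately. Note at the outset that $M\ge2$: by Proposition \ref{prop:supports_never_singletons} no fsJN-sequence can have all supports of size $1$, so $1$ is not among the numbers occurring in the definition of $M$.

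For (1), I would argue by contradiction, manufacturing from a non-singleton limit point an fsJN-sequence all of whose supports have size at most $M-1$, which violates the minimality of $M$. So suppose $F$ is a limit point of $\fF$ with $|F|\ge2$. First, a standard feature of the Vietoris topology forces $|F|\le M$: if $F$ had $M+1$ distinct points, then pairwise disjoint open neighbourhoods of these points would determine a basic Vietoris neighbourhood of $F$, which must contain some $F_n$; but then $F_n$ meets $M+1$ disjoint open sets and hence $\big|\supp\big(\mu_n\big)\big|>M$, a contradiction. Write $F=\{y_1,\ldots,y_m\}$ with $2\le m\le M$, and choose pairwise disjoint open sets $U_1,\ldots,U_m$ with $y_j\in U_j$ and with the closures $\ol{U_j}$ still pairwise disjoint. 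The basic open set $\big\langle U_1,\ldots,U_m\big\rangle=\big\{G\in\kK(K)\colon\ G\sub\bigcup_jU_j\text{ and }G\cap U_j\neq\emptyset\text{ for all }j\big\}$ is a Vietoris neighbourhood of $F$; since $F$ is a limit point of the countable set $\fF$ and $\kK(K)$ is Hausdorff, this neighbourhood must contain $F_n$ for infinitely many $n\io$ (otherwise one could shrink it away from finitely many sets and separate $F$ from $\f\setminus\{F\}$).

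The key step is the following. For every such $n$ we have $F_n=\bigcup_{j=1}^m\big(F_n\cap U_j\big)$, a disjoint union of non-empty sets, so $\sum_{j=1}^m\big\|\mu_n\rstr U_j\big\|=\big\|\mu_n\big\|=1$ and hence $\big\|\mu_n\rstr U_j\big\|\ge1/m$ for at least one $j$. By the pigeonhole principle there are a fixed index $j^*$ and an infinite set $A\sub\omega$ with $F_n\in\big\langle U_1,\ldots,U_m\big\rangle$ and $\big\|\mu_n\rstr U_{j^*}\big\|\ge1/m$ for all $n\in A$. Fix any $j'\neq j^*$ (possible as $m\ge2$) and, using normality of $K$, a function $g\in C(K)$ with $0\le g\le1$, $g\equiv1$ on $\ol{U_{j^*}}$ and $g\equiv0$ on $\ol{U_{j'}}$. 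For $n\in A$ put $\nu_n=g{\der}\mu_n\big/\big\|g{\der}\mu_n\big\|$. Then $\big\|g{\der}\mu_n\big\|=\int_Kg{\der}|\mu_n|\ge|\mu_n|\big(U_{j^*}\big)=\big\|\mu_n\rstr U_{j^*}\big\|\ge1/m$, so $\nu_n$ is well defined and $\big\|\nu_n\big\|=1$; since $g$ vanishes on $\ol{U_{j'}}\supseteq F_n\cap U_{j'}\neq\emptyset$, the support of $g{\der}\mu_n$ omits at least one point of $F_n$, whence $\big|\supp\big(\nu_n\big)\big|\le M-1$; and $\big|\nu_n(f)\big|=\big|\mu_n(f\cdot g)\big|\big/\big\|g{\der}\mu_n\big\|\le m\,\big|\mu_n(f\cdot g)\big|\to0$ for every $f\in C(K)$ because $\seqn{\mu_n}$ is weakly* null and $f\cdot g\in C(K)$. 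Thus $\seq{\nu_n}{n\in A}$ is an fsJN-sequence; passing to a subsequence on which $\big|\supp\big(\nu_n\big)\big|$ is a constant $M'\le M-1$ contradicts the minimality of $M$. This proves (1). For (2), observe that $\fF$ is infinite, since otherwise $S\big(\seqn{\mu_n}\big)=\bigcup_n\supp\big(\mu_n\big)$ would be finite, contradicting Lemma \ref{lemma:s_infinite}; as $\kK(K)$ is compact, the infinite set $\fF$ has a limit point $F$, which by (1) satisfies $|F|=1$, whereas every element of $\fF$ has exactly $M\ge2$ points; hence $F\notin\fF$ and $\fF$ is not closed.

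The one delicate point is the choice made in the key step: one must arrange the open sets and the function $g$ so that the passage from $\mu_n$ to $g{\der}\mu_n$ \emph{simultaneously} (a) retains a uniformly positive mass --- which is exactly what the pigeonhole bound $\big\|\mu_n\rstr U_{j^*}\big\|\ge1/m$ combined with $g\equiv1$ on $U_{j^*}$ delivers --- and (b) genuinely removes at least one point from the support --- which $g\equiv0$ on a set meeting every $F_n$ guarantees. Everything else (weak* nullity of $\seqn{\nu_n}$, normalisation, the hyperspace facts, and the final deduction of (2)) is routine.
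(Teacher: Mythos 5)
Your proof is correct, and its skeleton coincides with the paper's: take a non-singleton limit point $F$, separate points of $F$ by disjoint open sets, multiply $\mu_n$ by a continuous bump $g$ equal to $1$ near one point and $0$ near another, renormalize, and contradict the minimality of $M$ with the resulting fsJN-sequence of strictly smaller supports. Where you genuinely diverge is in securing the uniform lower bound on $\big\|g{\der}\mu_n\big\|$ that keeps the renormalized sequence weakly* null: the paper first invokes Lemma \ref{lemma:supports_decrease_sizes} (itself another use of the minimality of $M$) to assume a uniform $\eps>0$ below all point masses $\big|\mu_n(\{x\})\big|$, after which it only needs the ``hit'' half of a Vietoris neighbourhood determined by two points of $F$; you instead prove $|F|\le M$, cover all of $F$ by $m$ pairwise disjoint open sets, use the ``contained in the union'' half of the basic Vietoris neighbourhood to get $\sum_{j}\big\|\mu_n\rstr U_j\big\|=1$, and pigeonhole to find one piece carrying mass at least $1/m$ along an infinite set of indices. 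Your route is self-contained in that it bypasses Lemma \ref{lemma:supports_decrease_sizes} entirely, at the cost of the extra hyperspace observations ($|F|\le M$ and the full form of the basic open set); the paper's route is shorter once that lemma is in hand. Part (2) is handled the same way in both, though you usefully make explicit the step left implicit in the paper, namely that $\fF$ is infinite (via Lemma \ref{lemma:s_infinite}) and therefore has a limit point in the compact space $\kK(K)$, which by (1) is a singleton and so lies outside $\fF$.
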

\begin{proof}
(1) By Lemma \ref{lemma:supports_decrease_sizes} and the minimality of $M$, we may assume that there exists $\eps>0$ such that for every $n\io$ and $x\in F_n$ we have $\big|\mu_n(\{x\})\big|>\eps$. By Proposition \ref{prop:supports_never_singletons}, $M>1$. Let $F\in\kK(K)$ be a limit point of $\fF$. We claim that $|F|=1$. To see this, let us suppose that $|F|>1$, so there exist distinct $x_0,x_1\in F$. 
Let $U_0$, and $U_1$ be two open subsets of $K$ such that $x_0\in U_0$, $x_1\in U_1$ and $\ol{U_0}\cap\ol{U_1}=\emptyset$. Put:
\[I=\big\{n\io\colon\ F_n\cap U_0\neq\emptyset,\ F_n\cap U_1\neq\emptyset\big\}.\]
Since $F$ is a
limit point of $\big\{F_n\colon\ n\io\big\}$, $I$ is infinite. Let
$g\in C(K)$ be a continuous function such that $0\le g\le 1$,
$g\rstr\ol{U_0}\equiv 1$ and $g\rstr\ol{U_1}\equiv 0$. For every
$n\in I$ define the measure $\theta_n$ as follows:
\[\theta_n=g{\der}\mu_n\Big/\big\|g{\der}\mu_n\big\|.\]
Then, $\seq{\theta_n}{n\in I}$ is an fsJN-sequence. Indeed, for each $n\in I$ we have $\big\|\theta_n\big\|=1$ and since $F_n\cap U_0=\supp\big(\mu_n\big)\cap U_0\neq\emptyset$, it follows that
\[\big\|g{\der}\mu_n\big\|\ge\big\|\big(g{\der}\mu_n\big)\rstr U_0\big\|=\big\|\mu_n\rstr U_0\big\|>\eps,\]
so if $f\in C(K)$, then for every $n\in I$ we have
$\theta_n(f)=\mu_n(f\cdot g)/\big\|g{\der}\mu_n\big\|$ and
\[\big|\theta_n(f)\big|=\big|\mu_n(f\cdot g)\big|\Big/\big\|g{\der}\mu_n\big\|<\big|\mu_n(f\cdot g)\big|/\eps.\]
Since
\[\lim_{\substack{n\to\infty\\n\in I}}\mu_n(f\cdot g)=0,\]
it follows that
\[\lim_{\substack{n\to\infty\\n\in I}}\theta_n(f)=0.\]
This proves that $\seq{\theta_n}{n\in I}$ is weakly* null and hence an fsJN-sequence.
Since $g\rstr\ol{U_1}\equiv 0$ and for each $n\in I$ it holds that $\supp\big(\theta_n\big)\sub\supp\big(\mu_n\big)$, it follows that $\supp\big(\theta_n\big)\subsetneq\supp\big(\mu_n\big)$, so $\big|\supp\big(\theta_n\big)\big|<M$, which is a contradiction with the assumption that $M$ is minimal. This proves that $F$ is a singleton.

\medskip

(2) By (1), each limit point of $\fF$ is a singleton, so since, by Proposition \ref{prop:supports_never_singletons}, none of the elements of $\fF$ is a singleton, $\fF$ cannot be closed.
\end{proof}

\begin{lemma}\label{lemma:supports_bounded_sizes_2}
Assume that a compact space $K$ admits an fsJN-sequence $\seqn{\mu_n}$ such that there exists $M\io$ for which we have $\big|\supp\big(\mu_n\big)\big|=M$ for every $n\io$. Then, there exists an fsJN-sequence $\seqn{\nu_n}$ such that $\nu_n=\frac{1}{2}\big(\delta_{x_n}-\delta_{y_n}\big)$ for every $n\io$, where $x_n,y_n\in K$.
\end{lemma}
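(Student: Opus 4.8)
The plan is to reduce to the situation where $M$ is the least natural number for which $K$ carries an fsJN-sequence all of whose supports have size exactly $M$, then to prove that this minimal $M$ must be $2$, and finally to pin the two coefficients down to $\tfrac12$ and $-\tfrac12$ via Proposition \ref{prop:constant_coeffs} and Corollary \ref{cor:constant_coeffs_neg_pos}. Such a least $M$ exists (the given sequence witnesses that some constant support size is attainable) and $M\ge 2$ by Proposition \ref{prop:supports_never_singletons}.

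Assume toward a contradiction that $M\ge 3$, and fix an fsJN-sequence $\seqn{\mu_n}$ with $\big|\supp\big(\mu_n\big)\big|=M$ for all $n$. First I would observe that we may take the sets $F_n:=\supp\big(\mu_n\big)$ pairwise distinct: if one finite set supported infinitely many $\mu_n$, then on the finite-dimensional space of measures carried by it weak* convergence is norm convergence, contradicting $\big\|\mu_n\big\|=1$. Fixing for each $n$ an enumeration $F_n=\big\{x_1^n,\dots,x_M^n\big\}$ and putting $x^n=\big(x_1^n,\dots,x_M^n\big)\in K^M$, the key step is to show that every cluster point of $\seqn{x^n}$ in the compact space $K^M$ lies on the diagonal. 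This is exactly where Lemma \ref{lemma:supports_minimal_sizes_limit_points} is used: given a cluster point $\big(\xi_1,\dots,\xi_M\big)$, I claim the set $E:=\big\{\xi_1,\dots,\xi_M\big\}$ is a limit point of $\fF:=\big\{F_n\colon n\io\big\}$ in $\kK(K)$. Indeed, for a basic Vietoris neighbourhood $\langle U_1,\dots,U_m\rangle$ of $E$, set $V_i=\bigcap\big\{U_l\colon \xi_i\in U_l\big\}$; then $\prod_iV_i$ is a neighbourhood of $x^n$'s limit candidate, so infinitely many $n$ satisfy $x_i^n\in V_i$ for all $i$, and for each such $n$ one checks $F_n\in\langle U_1,\dots,U_m\rangle$ (each $V_i$ sits inside $\bigcup_lU_l$, and for every $l$ some $\xi_i\in U_l$ forces $x_i^n\in U_l$). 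Since the $F_n$ are distinct, infinitely many of them differ from $E$, so $E$ is a genuine limit point of $\fF$, hence a singleton by Lemma \ref{lemma:supports_minimal_sizes_limit_points}; thus $\xi_1=\dots=\xi_M$.

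In particular every cluster point of $\seqn{(x_1^n,x_2^n)}$ in $K^2$ lies on the diagonal $\Delta_K$ (lift it to a cluster point of $\seqn{x^n}$ using compactness of $K^M$). Now I would use the standard fact that a sequence in a compact space is eventually inside every open neighbourhood of its (closed, nonempty) set of cluster points: for $f\in C(K)$ and $\eps>0$ the open set $\big\{(a,b)\colon|f(a)-f(b)|<\eps\big\}$ contains $\Delta_K$, hence eventually contains $\big(x_1^n,x_2^n\big)$, so $f\big(x_1^n\big)-f\big(x_2^n\big)\to0$ for every $f\in C(K)$. Therefore $\nu_n:=\tfrac12\big(\delta_{x_1^n}-\delta_{x_2^n}\big)$ is weakly* null, has norm $1$ (as $x_1^n\neq x_2^n$), and has support of size $2<M$ --- contradicting minimality of $M$. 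Hence $M=2$; fixing an fsJN-sequence with $2$-point supports and applying Proposition \ref{prop:constant_coeffs} gives constant coefficients $\alpha_1,\alpha_2$, which are nonzero by Proposition \ref{prop:supports_never_singletons}, so by Corollary \ref{cor:constant_coeffs_neg_pos} one of them is $\tfrac12$ and the other $-\tfrac12$; after relabelling this is the desired sequence.

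The main obstacle is precisely the passage from ``cluster point'' to usable asymptotic data when $K$ is not metrizable: one cannot in general extract a subsequence of $\seqn{x^n}$ converging to a single point, so the argument must instead trap the entire sequence $\seqn{(x_1^n,x_2^n)}$ near the diagonal (this is why ``eventually in every neighbourhood of the cluster set'' replaces ``choose a convergent subsequence''), and ruling out non-diagonal cluster points is exactly what Lemma \ref{lemma:supports_minimal_sizes_limit_points} is for. The delicate bookkeeping point inside this is verifying that a basic Vietoris neighbourhood of $\big\{\xi_1,\dots,\xi_M\big\}$ is forced by a box neighbourhood of $\big(\xi_1,\dots,\xi_M\big)$ even when some of the $\xi_i$ coincide.
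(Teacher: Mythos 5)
Your proof is correct and follows essentially the same route as the paper's: reduce to the minimal constant support size $M$, use Lemma \ref{lemma:supports_minimal_sizes_limit_points} to force the supports to accumulate only at singletons (you phrase this via cluster points in $K^M$, the paper works directly in the Vietoris hyperspace), and deduce that $f\big(x_n\big)-f\big(y_n\big)\to 0$ for every $f\in C(K)$, so that two points chosen from each support yield the desired sequence. Your explicit check that the supports may be taken pairwise distinct (so that $\fF$ genuinely has limit points) is a detail the paper leaves implicit, while your final detour through Proposition \ref{prop:constant_coeffs} and Corollary \ref{cor:constant_coeffs_neg_pos} is unnecessary: once every cluster point of $\seqn{(x_1^n,x_2^n)}$ lies on the diagonal, the measures $\frac{1}{2}\big(\delta_{x_1^n}-\delta_{x_2^n}\big)$ are already of the required form, whether $M=2$ or $M\ge 3$.
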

\begin{proof}
Let $M$ be minimal such that there exists an fsJN-sequence $\seqn{\mu_n}$ on $K$ for which $\big|\supp\big(\mu_n\big)\big|=M$ for every $n\io$. By Proposition \ref{prop:supports_never_singletons}, $M>1$. We shall show that $M=2$.

By Lemma \ref{lemma:supports_decrease_sizes} and the minimality of $M$, we may assume that there is $\eps>0$ such that for every $n\io$ and $x\in\supp\big(\mu_n\big)$ it holds $\big|\mu_n(\{x\})\big|>\eps$. For every $n\io$ put $F_n=\supp\big(\mu_n\big)$; then, $|F_n|=M$. Let $\fF=\big\{F_n\colon\ n\io\big\}$; by Lemma \ref{lemma:supports_minimal_sizes_limit_points} every limit point of $\fF$ in the Vietoris topology of $\kK(K)$ is a singleton.

For every $n\io$ choose $x_n\neq y_n\in F_n$ and define the measure $\nu_n$ as $\nu_n=\frac{1}{2}\big(\delta_{x_n}-\delta_{y_n}\big)$. We claim that the sequence $\seqn{\nu_n}$ is weakly* null and hence an fsJN-sequence. To see this, assume that there exists $f\in C(K)$ and $\eta>0$ such that the set
\[J=\big\{n\io\colon\ \frac{1}{2}\big|f\big(x_n\big)-f\big(y_n\big)\big|>\eta\big\}\]
is infinite. Let $z\in K$ be such that $\{z\}$ is a limit point of the set $\big\{F_n\colon\ n\in J\big\}$ in $\kK(K)$. Let $U$ be a neighborhood of $z$ such that for every $x,y\in U$ we have $\big|f(x)-f(y)\big|<2\eta$. Since $\{z\}$ is a limit point of $\big\{F_n\colon n\in J\big\}$, there is $n\in J$ such that $F_n\sub U$, and hence $x_n,y_n\in U$, which is a contradiction, as $\big|f\big(x_n\big)-f\big(y_n\big)\big|>2\eta$.
\end{proof}

\begin{remark}\label{remark:tot_disc_easier_prof_sizes_2}
Let us note that if $K$ is totally disconnected, then we can prove
Lemma \ref{lemma:supports_bounded_sizes_2} without appealing to
Lemma \ref{lemma:supports_minimal_sizes_limit_points}. Indeed, let
$\seqn{\mu_n}$ and $M$ be as in Lemma
\ref{lemma:supports_bounded_sizes_2}. By Lemma
\ref{prop:constant_coeffs}, we may assume that there exist non-zero
$\alpha_1,\ldots,\alpha_M\in[-1,1]$ such that for every $n\io$ the
measure $\mu_n$ is of the form
$\mu_n=\sum_{i=1}^M\alpha_i\delta_{x_i^n}$ for some
$x_1^n,\ldots,x_M^n\in K$. Note that for every clopen set $U\sub K$
the sequences $\seqn{\mu_n\rstr U}$ and $\seqn{\mu_n\rstr U^c}$ are
weakly* null, so it follows that for sufficiently large $n\io$
either $x_1^n,\ldots,x_M^n\in U$, or $x_1^n,\ldots,x_M^n\in
U^c$---otherwise, we would get a contradiction with the minimality
of $M$. Now, the formula
$\nu_n=\frac{1}{2}\big(\delta_{x_1^n}-\delta_{x_2^n}\big)$ defines a
fsJN-sequence on $K$,  with the property that
$\big|\supp\big(\nu_n\big)\big|=2$ for every $n\io$. Since $M$ is
minimal, it follows that $M=2$.
\end{remark}

We now immediately obtain the main theorem of this section.

\begin{theorem}\label{theorem:sizes_of_supps}
Let $K$ be a compact space with the fsJNP. Then, either there is an
fsJN-sequence $\seqn{\mu_n}$ of measures on $K$ such that for every
$n\io$ the measure $\mu_n$ is of the form
$\mu_n=\frac{1}{2}\big(\delta_{x_n}-\delta_{y_n}\big)$, where
$x_n,y_n\in K$, or every fsJN-sequence $\seqn{\mu_n}$ satisfies the
equality $\lim_{n\to\infty}\big|\supp\big(\mu_n\big)\big|=\infty$.
\end{theorem}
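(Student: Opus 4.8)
The plan is to derive Theorem~\ref{theorem:sizes_of_supps} directly from the structural work already done, essentially as a dichotomy that is forced once we show the negation of the right-hand alternative implies the left-hand one. So suppose $K$ has the fsJNP but it is \emph{not} the case that every fsJN-sequence $\seqn{\mu_n}$ satisfies $\lim_{n\to\infty}\big|\supp\big(\mu_n\big)\big|=\infty$. By definition of the limit being $\infty$ failing, there is an fsJN-sequence $\seqn{\mu_n}$ and an integer $M\io$ together with a subsequence $\seqk{\mu_{n_k}}$ such that $\big|\supp\big(\mu_{n_k}\big)\big|\le M$ for all $k\io$; passing to this subsequence (which is again an fsJN-sequence) we obtain an fsJN-sequence all of whose supports have size at most $M$. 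By pigeonhole we may pass once more to a subsequence on which the support size is a constant value $M'\le M$, i.e. $\big|\supp\big(\mu_n\big)\big|=M'$ for every $n\io$.

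Now the hypothesis of Lemma~\ref{lemma:supports_bounded_sizes_2} is met: there is an fsJN-sequence on $K$ whose supports all have the same finite cardinality $M'$. That lemma then yields an fsJN-sequence $\seqn{\nu_n}$ with $\nu_n=\frac12\big(\delta_{x_n}-\delta_{y_n}\big)$ for all $n\io$, where $x_n,y_n\in K$, which is exactly the left-hand alternative of the theorem. Hence at least one of the two alternatives always holds, and they are evidently mutually exclusive: if an fsJN-sequence of the form $\frac12(\delta_{x_n}-\delta_{y_n})$ exists then it is an fsJN-sequence whose support sizes are constantly $2$, so it does \emph{not} satisfy $\lim_{n\to\infty}\big|\supp\big(\mu_n\big)\big|=\infty$, and therefore not every fsJN-sequence can satisfy that equality. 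This proves the dichotomy.

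There is essentially no obstacle here, since all the substantive content has been placed in the earlier lemmas of Subsection~\ref{section:study_of_sizes_of_supports}; the only point requiring a word of care is the passage to a subsequence on which the support cardinality is genuinely constant (rather than merely bounded), and the trivial observation that a subsequence of an fsJN-sequence is again an fsJN-sequence (weak* nullity and the norm condition are obviously inherited). The real work — extracting constant coefficients (Proposition~\ref{prop:constant_coeffs}), analysing limit points of the family of supports in the Vietoris topology $\kK(K)$ and showing they must be singletons under minimality (Lemma~\ref{lemma:supports_minimal_sizes_limit_points}), and finally collapsing to supports of size $2$ (Lemma~\ref{lemma:supports_bounded_sizes_2}) — is all done. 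So the proof of the theorem itself is just the bookkeeping of assembling these pieces into the stated clean dichotomy, and I would present it in two or three lines.
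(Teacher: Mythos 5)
Your proposal is correct and follows essentially the same route as the paper: the paper's proof also just negates the second alternative to obtain an fsJN-sequence with constant finite support size $M>1$ and then invokes Lemma~\ref{lemma:supports_bounded_sizes_2}. The only difference is that you spell out the (routine) passage from bounded to constant support size via the pigeonhole principle, which the paper leaves implicit.
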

\begin{proof}
Assume that there exists an fsJN-sequence $\seqn{\mu_n}$ and an integer $M>1$ such that $\big|\supp\big(\mu_n\big)\big|=M$ for every $n\io$, and apply Lemma \ref{lemma:supports_bounded_sizes_2}.
\end{proof}

An immediate corollary to Theorem \ref{theorem:sizes_of_supps} is the following general form of the \jn\ theorem for $C(K)$-spaces.

\begin{corollary}
Let $K$ be an infinite compact space. Then, either there is a JN-sequence $\seqn{\mu_n}$ on $K$ such that $\big|\supp\big(\mu_n\big)\big|=2$ for every $n\io$, or $\lim_{n\to\infty}\big|\supp\big(\mu_n\big)\big|=\infty$ for every JN-sequences on $K$.\noproof
\end{corollary}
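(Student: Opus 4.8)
The plan is to read this statement directly off Theorem~\ref{theorem:sizes_of_supps}, treating it as a genuine corollary. First I would note, invoking the measure-theoretic proof of the Josefson--Nissenzweig theorem from Section~\ref{section:plebanek_proof}, that the infinite compact space $K$ carries at least one JN-sequence, so that neither alternative refers to an empty class; moreover the two alternatives are obviously mutually exclusive, since a JN-sequence $\seqn{\nu_n}$ with $\big|\supp(\nu_n)\big|=2$ for all $n\io$ is itself a JN-sequence failing $\lim_{n\to\infty}\big|\supp(\nu_n)\big|=\infty$. Hence it suffices to prove that if the second alternative fails, then the first one holds.

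So assume the second alternative fails: there is a JN-sequence $\seqn{\mu_n}$ on $K$ for which the equality $\lim_{n\to\infty}\big|\supp(\mu_n)\big|=\infty$ does \emph{not} hold. Unwinding this negation (and noting that it automatically discards all $\mu_n$ of infinite support), there are an integer $M$ and an infinite set $\{n_k\colon k\io\}\sub\omega$ such that $\big|\supp(\mu_{n_k})\big|\le M$ for every $k\io$. Since weak* nullity and the normalization $\|\cdot\|=1$ are inherited by subsequences, $\seqk{\mu_{n_k}}$ is again a JN-sequence; and now each $\mu_{n_k}$ has finite support, so $\seqk{\mu_{n_k}}$ is an fsJN-sequence. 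In particular $K$ has the fsJNP, and because $\big|\supp(\mu_{n_k})\big|\le M$ for all $k\io$, this fsJN-sequence witnesses that the second alternative of Theorem~\ref{theorem:sizes_of_supps} fails.

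Consequently the first alternative of Theorem~\ref{theorem:sizes_of_supps} holds: there is an fsJN-sequence $\seqn{\nu_n}$ on $K$ of the form $\nu_n=\frac12\big(\delta_{x_n}-\delta_{y_n}\big)$ with $x_n\neq y_n$, hence $\big|\supp(\nu_n)\big|=2$, for every $n\io$. As $\seqn{\nu_n}$ is in particular a JN-sequence, the first alternative of the corollary holds, completing the proof. I do not anticipate a real obstacle here, since the corollary merely repackages Theorem~\ref{theorem:sizes_of_supps}; the only points needing a word of care are that subsequences of JN-sequences are JN-sequences (so that bounded support sizes along a subsequence already produce an fsJN-sequence of exactly the kind Theorem~\ref{theorem:sizes_of_supps} concerns) and the careful unwinding of the negation of ``$\lim_{n\to\infty}\big|\supp(\mu_n)\big|=\infty$''.
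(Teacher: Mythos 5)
Your proposal is correct and matches the paper's intent: the corollary is stated with no proof precisely because it follows from Theorem~\ref{theorem:sizes_of_supps} by exactly the reduction you describe (negate the second alternative, pass to a subsequence with supports bounded by some $M$, observe this is an fsJN-sequence violating the theorem's second alternative, and conclude the first alternative). Your two points of care — that subsequences of JN-sequences are JN-sequences and that the negation of the limit condition yields a bounded-support subsequence — are the right ones, and the argument is complete.
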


Note that if a compact space $K$ admits an fsJN-sequence $\seqn{\mu_n}$ such that $\big|\supp\big(\mu_n\big)\big|=2$ for every $n\io$, then by Lemma \ref{lemma:disjoint_supps_size_2} there exists a disjointly supported fsJN-sequence on $K$ having the same property.

%

\section*{Part II. The Grothendieck property of $C(K)$-spaces}

\section{The $\ell_1$-\gr property and the fsJNP\label{section:grothendieck}}

Recall that a compact space $K$ has
\textit{the \gr property} if and only if every weakly* convergent
sequence of measures on $K$ is weakly convergent, or, in other words, the Banach space $C(K)$ is a Grothendieck space. We also say that a
Boolean algebra $\aA$ has \textit{the \gr property} if its Stone space
$St(\aA)$ has the \gr property.

It appears that the property is closely related to the finitely supported Josefson--Nissenzweig property of compact spaces. Namely, its variant, the $\ell_1$-\gr property, is equivalent to the negation of the fsJNP---in this section we shall show different approaches to this fact, starting with the issue of complementability of the Banach space $c_0$ in $C(K)$. Recall that Schachermayer \cite[Proposition 5.3]{Sch82} and Cembranos \cite[Corollary 2]{Cem84} proved that a compact space $K$ has the Grothendieck property if and only if $C(K)$ does not contain any complemented copy of $c_0$.

\begin{proposition}\label{prop:complemented_c0p_complemented_c0}
Let $K$ be a compact space such that in $C_p(K)$ there  is a
complemented closed subspace $E$ isomorphic to $(c_0)_p$. Then, $E$ with the
norm topology of $C(K)$ is complemented in $C(K)$ and isomorphic to
the Banach space $c_0$.
\end{proposition}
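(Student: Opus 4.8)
The plan is to apply the closed graph theorem twice. The point that makes everything work is that convergence in the supremum norm of $C(K)$ implies convergence in $C_p(K)$, so continuity of a linear map for the pointwise topology forces its graph to be closed for the norm topology, even though it yields no norm estimate directly.

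First I fix a continuous linear projection $P\colon C_p(K)\to C_p(K)$ with range $E$ and a linear homeomorphism $T\colon (c_0)_p\to E$, and I set $f_n=T(e_n)\in C(K)$, where $\seqn{e_n}$ denotes the unit vector basis of $c_0$. I regard $P$ also as a self-map of the underlying set $C(K)$. If $g_k\to g$ and $Pg_k\to h$ in $\|\cdot\|_\infty$, then both convergences also take place in $C_p(K)$; since $P$ is $C_p(K)$-continuous and the pointwise topology is Hausdorff, $h=Pg$. Hence $P$ has closed graph as an operator on the Banach space $C(K)$, and so is bounded. Being a bounded idempotent, $P$ has norm-closed range $E=\ker(\Id-P)$, and $P$ witnesses that $E$ is complemented in the Banach space $C(K)$.

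It remains to identify $E$, with the supremum norm, with the Banach space $c_0$. For each $x\in K$ the functional $c\mapsto T(c)(x)$ is linear and $(c_0)_p$-continuous, and the continuous linear functionals on $(c_0)_p$ are precisely the finitely supported ones $c\mapsto\sum_n\beta_n c_n$ (here the local structure of the product topology is used); hence there is a finite set $F_x\sub\omega$ with $T(c)(x)=\sum_{n\in F_x}c_n f_n(x)$ for every $c\in c_0$, and in particular $f_n(x)=0$ for $n\notin F_x$. Thus $J(c):=\sum_n c_n f_n$ — a pointwise sum with only finitely many nonzero summands at each point — defines a linear map $J\colon c_0\to C(K)$ that agrees with $T\rstr c_0$ (recall $(c_0)_p$ is $c_0$ carrying a coarser topology), hence is a bijection onto $E$. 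A second application of the closed graph theorem finishes the job: if $c^{(k)}\to c$ in $c_0$ and $J(c^{(k)})\to h$ in $\|\cdot\|_\infty$, then for every $x\in K$ we have $J(c^{(k)})(x)=\sum_{n\in F_x}c^{(k)}_n f_n(x)\to\sum_{n\in F_x}c_n f_n(x)=J(c)(x)$ and also $J(c^{(k)})(x)\to h(x)$, so $h=J(c)$. Therefore $J$ is a bounded linear bijection between the Banach spaces $c_0$ and $E$ (the latter complete by the previous paragraph), hence an isomorphism by the bounded inverse theorem.

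The genuine obstacle is precisely the automatic-continuity issue: $C_p$-continuity of $P$ and $T$ gives no norm estimate a priori, and boundedness has to be manufactured; the closed graph theorem is the right tool because the supremum norm dominates the pointwise topology. (An alternative to the last paragraph, avoiding $T$ altogether, uses the finitely supported measures $\mu_n$ on $K$ given by $\mu_n(f)=(T^{-1}Pf)_n$: one checks $\mu_n(f_m)=\delta_{nm}$ and $(\mu_n(f))_n\in c_0$ for all $f\in C(K)$, so $\seqn{\mu_n}$ is weak* null in $C(K)^*$ and hence norm bounded by Banach--Steinhaus; the bound on $\|\mu_n\|$ yields the lower $c_0$-estimate for the $f_n$, while $f\mapsto(\mu_n(f))_n$ is the co-restriction of the projection, so $J$ composed with it recovers $P$.)
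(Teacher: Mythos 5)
Your proof is correct and rests on exactly the same mechanism as the paper's: the norm topology of $C(K)$ refines the pointwise topology, so the relevant linear maps (you use the projection $P$ and the map $J=T\rstr c_0$; the paper uses the two identity maps between the norm topology on $E$ and the topology transported from $c_0$) automatically have closed graphs, and the closed graph / bounded inverse theorems supply the missing norm continuity. The explicit finitely-supported representation of $T$ and the concluding remark about the measures $\mu_n$ are correct but not needed for the argument.
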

\begin{proof}
Let $F$ be a closed subspace of $C_p(K)$ such that $C_p(K)=E\oplus
F$. Then,  since the norm topology of $C(K)$ is finer than the
product topology of $C_p(K)$, the spaces $(E,\|\cdot\|)$ and
$(F,\|\cdot\|)$ (i.e. endowed with the inherited norm topology of
$C(K)$) are still closed in $C(K)$ and $C(K)=E\oplus F$. It is
enough now to show that $(E,\|\cdot\|)$ is isomorphic to the Banach
space $c_0$. Since $(E,\tau_p)$ (i.e. with the inherited product
topology of $C_p(K)$) is isomorphic to $(c_0)_p$, there is a
topology $\tau$ on $E$ stronger than $\tau_p$ and such that $(E,\tau)$ is isomorphic to $c_0$.
The identity operator $T\colon(E,\|\cdot\|)\to(E,\tau)$ has the
closed graph, so it is continuous, and hence $\tau$ is a Banach
space topology on $E$ smaller than the norm topology of $E$. On the
other hand, the identity operator $S\colon(E,\tau)\to(E,\|\cdot\|)$
has the closed graph, too, so it is also continuous, and hence the
topology $\tau$ on $E$ is greater than the norm topology of $E$. It
follows that the both topologies are equal, and hence
$(E,\|\cdot\|)$ is isomorphic to the Banach space $c_0$.
\end{proof}

\begin{corollary}\label{cor:complemented_c0p_complemented_c0}
Let $K$ be an infinite compact space. If $C_p(K)$ contains a complemented copy of $(c_0)_p$, then $C(K)$ contains a complemented copy of $c_0$.\noproof
\end{corollary}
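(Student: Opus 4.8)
The plan is to observe that this corollary is an immediate consequence of Proposition \ref{prop:complemented_c0p_complemented_c0}, once we spell out what the hypothesis means. First I would unpack the phrase ``$C_p(K)$ contains a complemented copy of $(c_0)_p$'': this says exactly that there is a continuous linear projection $P\colon C_p(K)\to C_p(K)$ whose range $E=P\big(C_p(K)\big)$, with the product topology inherited from $C_p(K)$, is linearly homeomorphic to $(c_0)_p$. Since the range of a continuous linear projection on a topological vector space is a closed linear subspace, $E$ is a complemented closed subspace of $C_p(K)$ isomorphic to $(c_0)_p$, which is precisely the situation hypothesized in Proposition \ref{prop:complemented_c0p_complemented_c0}.

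Next I would simply invoke that proposition applied to this $E$. It gives that $E$, now equipped with the supremum norm inherited from $C(K)$, is a closed subspace of $C(K)$, that $C(K)=E\oplus F$ where $F$ is the corresponding algebraic complement (which the proof of the proposition shows is also norm-closed), and that $(E,\|\cdot\|)$ is linearly isomorphic to the Banach space $c_0$. Consequently $C(K)$ contains a complemented copy of $c_0$, as claimed.

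There is essentially no obstacle here: all the substantive work — the two closed-graph arguments identifying the natural $c_0$-topology on $E$ with the norm topology inherited from $C(K)$, and the verification that the complement remains closed after passing to the finer norm topology — is already carried out in the proof of Proposition \ref{prop:complemented_c0p_complemented_c0}. The only point that deserves an explicit sentence is the translation of ``complemented copy of $(c_0)_p$'' into the hypothesis of that proposition, namely that the range of a continuous projection is automatically closed, and this is routine.
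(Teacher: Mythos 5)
Your proposal is correct and matches the paper exactly: the corollary is stated with no separate proof precisely because it is the immediate specialization of Proposition \ref{prop:complemented_c0p_complemented_c0}, and your only added step --- noting that the range of a continuous linear projection is closed (being the kernel of the continuous map $I-P$), so the hypothesis of the proposition is met --- is the routine translation the paper leaves implicit. Nothing further is needed.
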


By the result of Banakh, K\k{a}kol and \'Sliwa \cite[Theorem 1]{BKS19}, mentioned in Introduction, stating that the fsJNP of a space $X$ is equivalent to the complementability of $(c_0)_p$ in $C_p(X)$, it follows that the \gr property of a compact space $K$ implies the lack of the fsJNP of $K$. Below, we provide an alternative proof of this fact (see Corollary \ref{cor:gr_no_fsJNP}) and strengthen it in Theorem \ref{theorem:ell1_grothendieck_equiv_no_fsjnp}.

\begin{definition}\label{def:ell1_gr}
A compact space $K$ has \textit{the $\ell_1$-\gr property}  (resp.
\textit{the $\Delta$-\gr property}) if and only if every weakly*
convergent sequence of measures $\seqn{\mu_n\in\ell_1(K)}$ (resp.
$\seqn{\mu_n\in\Delta(K)}$) is weakly convergent.

A Boolean algebra $\aA$ has \textit{the $\ell_1$-\gr property} (resp. \textit{the $\Delta$-\gr property}) if its Stone space $St(\aA)$ has the property.
\end{definition}

\begin{proposition}\label{prop:Delta_gr_equiv_ell1_gr}
The $\Delta$-\gr property and $\ell_1$-\gr property are equivalent.
\end{proposition}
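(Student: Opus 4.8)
The plan is to prove the two implications separately; only one direction requires any argument. The implication ``$\ell_1$-\gr property $\Rightarrow$ $\Delta$-\gr property'' is immediate, since $\Delta(K)\sub\ell_1(K)$ for every compact space $K$: a weakly* convergent sequence in $\Delta(K)$ is in particular a weakly* convergent sequence in $\ell_1(K)$, hence weakly convergent whenever $K$ has the $\ell_1$-\gr property.

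For the converse I would argue by contraposition, exploiting the routine fact that $\Delta(K)$ is norm-dense in the Banach space $\ell_1(K)$: truncating the absolutely summable series $\mu=\sum_{x\in\supp(\mu)}\alpha_x\delta_x$ to its finite partial sums produces finitely supported measures converging to $\mu$ in norm. So assume $K$ fails the $\ell_1$-\gr property and fix a sequence $\seqn{\mu_n}$ in $\ell_1(K)$ that is weakly* convergent, say to $\mu$, but not weakly convergent. For every $n\io$ pick $\mu_n'\in\Delta(K)$ with $\big\|\mu_n-\mu_n'\big\|<1/(n+1)$. Since $\big|\mu_n'(f)-\mu_n(f)\big|\le\|f\|_\infty\cdot\big\|\mu_n-\mu_n'\big\|\to 0$ for every $f\in C(K)$, the sequence $\seqn{\mu_n'}$ is again weakly* convergent (to $\mu$). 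I claim that $\seqn{\mu_n'}$ is not weakly convergent, which --- as each $\mu_n'\in\Delta(K)$ --- exhibits the failure of the $\Delta$-\gr property and completes the proof.

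To verify the claim, observe that $\seqn{\mu_n-\mu_n'}$ is norm-null, hence weakly null. If $\seqn{\mu_n'}$ were weakly convergent to some measure $\nu$, then, the weak topology on $C(K)^*$ being a vector topology, $\seqn{\mu_n}=\seqn{\mu_n'}+\seqn{\mu_n-\mu_n'}$ would be weakly convergent (to $\nu$), contradicting the choice of $\seqn{\mu_n}$. Note that no normalization of the $\mu_n'$ is needed here, because the $\Delta$-\gr and $\ell_1$-\gr properties are required of \emph{all} weakly* convergent sequences of (countably, resp.\ finitely, supported) measures, not merely of those of norm $1$; this is precisely what makes the density-and-perturbation argument go through so directly, and I do not foresee any genuine obstacle --- the only point demanding a little care is exactly this absence of a normalization constraint.

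For completeness I record an alternative route that instead uses the earlier machinery: from a weakly* convergent but not weakly convergent sequence in $\ell_1(K)$ --- which is therefore not norm-Cauchy --- one forms the normalized differences $\big(\mu_{n_k}-\mu_{m_k}\big)/\big\|\mu_{n_k}-\mu_{m_k}\big\|$ along suitable indices, obtaining a csJN-sequence; Proposition \ref{prop:fsJNP_equiv_csJNP} then yields an fsJN-sequence, which is never weakly convergent (by the Schur property of $\ell_1(K)\supseteq\Delta(K)$, since an fsJN-sequence has constant norm $1$), and so again witnesses the failure of the $\Delta$-\gr property. I would present the first, shorter and self-contained argument.
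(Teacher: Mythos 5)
Your proof is correct and uses essentially the same idea as the paper: approximate each countably supported measure in norm by a finitely supported truncation and observe that weak* convergence and (failure of) weak convergence are stable under norm-null perturbations. The paper phrases this directly (assuming the $\Delta$-Grothendieck property and testing the truncations against $x^{**}\in C(K)^{**}$) while you argue contrapositively, but the content is identical.
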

\begin{proof}
Let $K$ be a compact space. As $\Delta(K)\sub\ell_1(K)$, the $\ell_1$-\gr property implies immediately the $\Delta$-property. Assume now that $K$ has the $\Delta(K)$-\gr property and let $\seqn{\mu_n\in\ell_1(K)}$ be weakly* convergent. For each $n\io$ find a finite set $F_n\sub K$ such that $\big\|\mu_n\rstr\big(K\sm F_n\big)\big\|<1/n$. For every $x^{**}\in C(K)^{**}$ we have:
\[\big|x^{**}\big(\mu_n\big)\big|\le\big|x^{**}\big(\mu_n\rstr F_n\big)\big|+\big|x^{**}\big(\mu_n\rstr\big(K\sm F_n\big)\big)\big|\le\big|x^{**}\big(\mu_n\rstr F_n\big)\big|+\big\|x^{**}\big\|\cdot1/n,\]
so $\lim_{n\to\infty}\big|x^{**}\big(\mu_n\big)\big|=0$, since $\lim_{n\to\infty}\big|x^{**}\big(\mu_n\rstr F_n\big)\big|=0$. This proves that $K$ has the $\ell_1$-\gr property.
\end{proof}

\begin{proposition}\label{prop:grothendieck_null_ctbl_part}
Let $K$ be a compact space with the Grothendieck property. Assume that $\seqn{\mu_n}$ is a JN-sequence on $K$. For each $n\io$ write $\mu_n=\nu_n+\theta_n$, where $\nu_n\in\ell_1(K)$ and $\theta_n$ is non-atomic. Then, $\big\|\nu_n\big\|\to0$, or equivalently $\big\|\theta_n\big\|\to1$, as $n\to\infty$.
\end{proposition}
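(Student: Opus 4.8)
The plan is to recognize the assignment $\mu\mapsto\nu_n$ as (the restriction of) a norm-one linear projection $P\colon C(K)^*\to\ell_1(K)$, and then to exploit the Grothendieck hypothesis to upgrade the weak$^*$ convergence of $\seqn{\mu_n}$ to weak convergence, at which point the Schur property of $\ell_1(K)$ finishes the job.

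First I would recall that every Radon measure $\mu$ on $K$ decomposes uniquely as $\mu=\mu_a+\mu_c$ with $\mu_a\in\ell_1(K)$ its atomic part and $\mu_c$ non-atomic: uniqueness holds because a measure that is both atomic and non-atomic is $0$, and additivity of the decomposition follows from uniqueness. Moreover $|\mu_a|$ and $|\mu_c|$ are mutually singular (the atomic part is concentrated on a countable set, on which $\mu_c$ — and hence $|\mu_c|$ — vanishes), so $|\mu|=|\mu_a|+|\mu_c|$ and therefore $\|\mu\|=\big\|\mu_a\big\|+\big\|\mu_c\big\|$. Consequently $P\colon\mu\mapsto\mu_a$ is a linear projection of $C(K)^*$ onto $\ell_1(K)$ with $\|P\|\le1$, and by construction $\nu_n=P\mu_n$ and $\theta_n=\mu_n-P\mu_n$ for every $n\io$.

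Next, since $\seqn{\mu_n}$ is a JN-sequence it is weakly$^*$ null in $C(K)^*$; because $K$ has the Grothendieck property, $\seqn{\mu_n}$ is then weakly null in $C(K)^*$. As $P$ is a bounded linear operator between Banach spaces, it is weak-to-weak continuous, so $\seqn{\nu_n}=\seqn{P\mu_n}$ is weakly null in $\ell_1(K)$. Finally, $\ell_1(K)$ has the Schur property, so this weak null sequence is norm null, i.e. $\big\|\nu_n\big\|\to0$. The stated equivalence is then immediate: from $1=\big\|\mu_n\big\|=\big\|\nu_n\big\|+\big\|\theta_n\big\|$ we get $\big\|\nu_n\big\|\to0$ if and only if $\big\|\theta_n\big\|\to1$.

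I do not expect a genuine obstacle in this argument. The only points requiring a little care are that $P$ is well defined and bounded — which rests on the uniqueness and norm-additivity of the atomic/non-atomic decomposition noted above — and the observation that it is precisely \emph{weak} (not merely weak$^*$) convergence that $P$ preserves, which is exactly the reason the Grothendieck hypothesis enters. Both the complementation of $\ell_1(K)$ in $C(K)^*$ and its Schur property are recorded in Section \ref{section:notation}.
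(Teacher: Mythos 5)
Your proof is correct and follows essentially the same route as the paper's: use the Grothendieck property to upgrade weak$^*$ to weak convergence and then invoke the Schur property of $\ell_1$. The only cosmetic difference is how weak nullity is transferred to the atomic parts: you pass through the weak-to-weak continuity of the norm-one projection of $C(K)^*$ onto $\ell_1(K)$, whereas the paper evaluates $\mu_n$ directly on subsets of the countable set $\bigcup_{n}\supp\big(\nu_n\big)$, on which each $\theta_n$ vanishes.
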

\begin{proof}
By the \gr property, $\seqn{\mu_n}$ is weakly null, so $\lim_{n\to\infty}\mu_n(B)=0$ for every Borel set $B\sub K$. Let $L=\bigcup_{n\io}\supp\big(\nu_n\big)$. As $L$ is countable, $\lim_{n\to\infty}\nu_n(B)=\lim_{n\to\infty}\mu_n(B)=0$ for every $B\in\wp(L)$, so $\seqn{\nu_n}$ is weakly null. It follows that the sequence $\seqn{\nu_n}$ is also weakly null as a sequence of elements of the space $\ell_1(L)$. Since the space $\ell_1(L)$ has the Schur property, it follows that $\lim_{n\to\infty}\big\|\nu_n\big\|_{C(K)^*}=\lim_{n\to\infty}\big\|\nu_n\big\|_{\ell_1(L)}=0$.
\end{proof}

Since in every fsJN-sequence every measure has norm $1$ and belongs to $\ell_1(K)$, we immediately get the following corollary.

\begin{corollary}\label{cor:gr_no_fsJNP}
If a compact space $K$ has the Grothendieck property, then it does not have the fsJNP.\noproof
\end{corollary}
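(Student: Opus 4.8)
The plan is to derive this directly from Proposition \ref{prop:grothendieck_null_ctbl_part} (or, almost equivalently, from the Schur property of $\ell_1(K)$), since the point is simply that an fsJN-sequence lives entirely in the finitely supported part of $C(K)^*$ and has constant norm $1$, which is incompatible with the norm-nullity forced by \gr-ness.

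Concretely, I would argue by contradiction. Suppose $K$ has \grp\ and, towards a contradiction, that $K$ has the fsJNP, so there is an fsJN-sequence $\seqn{\mu_n}$ on $K$. Each $\mu_n$ is finitely supported, hence $\mu_n\in\Delta(K)\sub\ell_1(K)$; in the decomposition $\mu_n=\nu_n+\theta_n$ of Proposition \ref{prop:grothendieck_null_ctbl_part} into a countably supported part and a non-atomic part we therefore have $\nu_n=\mu_n$ and $\theta_n=0$. Since $\seqn{\mu_n}$ is weakly* null (being a JN-sequence, by Definition \ref{def:jn_sequence}), Proposition \ref{prop:grothendieck_null_ctbl_part} applies and yields $\big\|\nu_n\big\|\to0$, i.e. $\big\|\mu_n\big\|\to0$. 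This contradicts $\big\|\mu_n\big\|=1$ for every $n\io$, which is part of the definition of an fsJN-sequence. Hence $K$ does not have the fsJNP.

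Alternatively one can bypass the decomposition entirely: if $K$ has \grp\ then the weakly* null sequence $\seqn{\mu_n}$ is weakly null in $C(K)^*$, and since $\ell_1(K)$ is a (complemented) subspace of $C(K)^*$ with the Schur property, $\seqn{\mu_n}$ is norm null, again contradicting $\big\|\mu_n\big\|=1$. Either way, there is essentially no obstacle here---the statement is an immediate formal consequence of the preceding results---so the only thing to be careful about is recording that the finitely supported measures are exactly those killed off by the Schur/Grothendieck mechanism, which is precisely what Proposition \ref{prop:grothendieck_null_ctbl_part} was set up to say.
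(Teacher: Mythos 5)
Your argument is correct and is exactly the paper's intended derivation: the corollary is stated with no proof precisely because, as the preceding sentence notes, every measure in an fsJN-sequence has norm $1$ and lies in $\ell_1(K)$, so Proposition \ref{prop:grothendieck_null_ctbl_part} (with $\nu_n=\mu_n$, $\theta_n=0$) forces $\big\|\mu_n\big\|\to0$, a contradiction. Your alternative via the Schur property of $\ell_1(K)$ is also fine and is essentially the mechanism inside that proposition.
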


Corollary \ref{cor:gr_no_fsJNP} can be generalized to the following characterization of the finitely supported Josefson--Nissenzweig property.

\begin{theorem}\label{theorem:ell1_grothendieck_equiv_no_fsjnp}
Let $K$ be a compact space. Then, the following are equivalent:
\begin{enumerate}
    \item $K$ has the $\ell_1$-\gr property,
    \item $K$ has the $\Delta$-\gr property,
    \item $K$ does not have the fsJNP,
    \item $K$ does not have the csJNP.
\end{enumerate}
\end{theorem}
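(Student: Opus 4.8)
The plan is to prove Theorem \ref{theorem:ell1_grothendieck_equiv_no_fsjnp} by establishing the cycle of implications $(1)\Rightarrow(2)\Rightarrow(3)\Rightarrow(4)\Rightarrow(1)$, using material already developed in the excerpt. The equivalence $(1)\Leftrightarrow(2)$ is already Proposition \ref{prop:Delta_gr_equiv_ell1_gr}, so it suffices to close the loop through $(3)$ and $(4)$. Since Proposition \ref{prop:fsJNP_equiv_csJNP} tells us that the fsJNP and csJNP are equivalent for compact spaces, the implication $(3)\Leftrightarrow(4)$ is immediate, and the real content is $(2)\Rightarrow(3)$ together with $(4)\Rightarrow(1)$ (equivalently, $\neg(1)\Rightarrow\neg(4)$, i.e. the failure of the $\ell_1$-Grothendieck property yields a csJN-sequence).

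\textbf{The implication $(2)\Rightarrow(3)$.} I would argue by contraposition: suppose $K$ has the fsJNP, and fix an fsJN-sequence $\seqn{\mu_n}$ on $K$. Each $\mu_n$ lies in $\Delta(K)\sub\ell_1(K)$, has norm $1$, and the sequence is weakly* null by definition. If $K$ had the $\Delta$-Grothendieck property, then $\seqn{\mu_n}$ would be weakly convergent, hence weakly null (its weak* limit is $0$). But $\ell_1(K)$ has the Schur property, so a weakly null sequence in $\ell_1(K)$ is norm null, contradicting $\big\|\mu_n\big\|=1$ for all $n$. Hence $K$ fails the $\Delta$-Grothendieck property, i.e. $(2)$ fails. (This is essentially the argument already used to derive Corollary \ref{cor:gr_no_fsJNP}, now applied to the $\Delta$-variant.)

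\textbf{The implication $(4)\Rightarrow(1)$, via $\neg(1)\Rightarrow\neg(4)$.} Suppose $K$ does \emph{not} have the $\ell_1$-Grothendieck property, so there is a weakly* convergent sequence $\seqn{\mu_n}$ in $\ell_1(K)$ that is \emph{not} weakly convergent. Replacing $\mu_n$ by $\mu_n-\mu$ where $\mu$ is the weak* limit, we may assume $\seqn{\mu_n}$ is weakly* null but not weakly null; passing to a subsequence, there are a Borel set $B\sub K$ and $\eps>0$ with $\big|\mu_n(B)\big|>\eps$ for all $n$. The key step is to show one can extract from $\seqn{\mu_n}$ a sequence in $\ell_1(K)$ that is weakly* null, has norms bounded away from $0$, and is not norm null; then, by Lemma \ref{lemma:normalization_fsJN}, its normalization is a csJN-sequence. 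The cleanest route is: since $\seqn{\mu_n}$ is weakly* null in $C(K)^*$ but not weakly null, it is not norm null (a norm null sequence is weakly null), so $\liminf_n\big\|\mu_n\big\|>0$ along a subsequence; after passing to that subsequence and normalizing via Lemma \ref{lemma:normalization_fsJN}, we get a JN-sequence $\seqn{\nu_n}$ in $\ell_1(K)$, i.e. a csJN-sequence. Thus $K$ has the csJNP, contradicting $\neg(4)$.

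\textbf{Anticipated obstacle.} The subtle point in $(4)\Rightarrow(1)$ is justifying that a witness to the failure of the $\ell_1$-Grothendieck property can be taken with norms bounded below after subtracting the weak* limit: one must be careful that $\mu_n-\mu$ still lies in $\ell_1(K)$ (it does, since $\ell_1(K)$ is a linear subspace), that it is still weakly* null, and that it fails to be weakly null (if $\seqn{\mu_n}$ fails to converge weakly to $\mu$, then $\seqn{\mu_n-\mu}$ fails to converge weakly to $0$). Once the witnessing sequence is weakly* null and not norm null, the normalization lemma finishes the job. I would also explicitly invoke Proposition \ref{prop:fsJNP_equiv_csJNP} to pass between the fsJNP and csJNP so that the four conditions all link up, so that for instance the csJN-sequence produced in $(4)\Rightarrow(1)$ may if desired be upgraded to an fsJN-sequence, closing the cycle and completing the proof.

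\begin{proof}
By Proposition \ref{prop:Delta_gr_equiv_ell1_gr}, conditions $(1)$ and $(2)$ are equivalent, and by Proposition \ref{prop:fsJNP_equiv_csJNP}, conditions $(3)$ and $(4)$ are equivalent. It thus suffices to prove $(2)\Rightarrow(3)$ and $\neg(1)\Rightarrow\neg(4)$.

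$(2)\Rightarrow(3)$: We prove the contrapositive. Assume $K$ has the fsJNP and let $\seqn{\mu_n}$ be an fsJN-sequence on $K$. Then $\mu_n\in\Delta(K)$, $\big\|\mu_n\big\|=1$ for every $n\io$, and $\seqn{\mu_n}$ is weakly* null. If $K$ had the $\Delta$-\gr property, then $\seqn{\mu_n}$ would be weakly convergent, and since it is weakly* null, it would be weakly null. As $\mu_n\in\ell_1(K)$ for every $n\io$ and $\ell_1(K)$ has the Schur property, $\seqn{\mu_n}$ would then be norm null, contradicting $\big\|\mu_n\big\|=1$ for every $n\io$. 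Hence $K$ does not have the $\Delta$-\gr property, so $(2)$ fails.

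$\neg(1)\Rightarrow\neg(4)$: Assume $K$ does not have the $\ell_1$-\gr property. Then there exists a weakly* convergent sequence $\seqn{\mu_n}$ in $\ell_1(K)$ which is not weakly convergent. Let $\mu\in\ell_1(K)$ be its weak* limit and put $\mu_n'=\mu_n-\mu$ for every $n\io$; then $\mu_n'\in\ell_1(K)$, the sequence $\seqn{\mu_n'}$ is weakly* null, and it is not weakly null, since otherwise $\seqn{\mu_n}$ would be weakly convergent to $\mu$. In particular $\seqn{\mu_n'}$ is not norm null, because a norm null sequence is weakly null. Hence there exist $\eps>0$ and an increasing sequence $\seqk{n_k}$ such that $\big\|\mu_{n_k}'\big\|>\eps$ for every $k\io$. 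The sequence $\seqk{\mu_{n_k}'}$ is weakly* null as a subsequence of a weakly* null sequence. By Lemma \ref{lemma:normalization_fsJN}, the sequence $\seqk{\mu_{n_k}'/\big\|\mu_{n_k}'\big\|}$ is a JN-sequence on $K$ consisting of elements of $\ell_1(K)$, i.e. a csJN-sequence. Thus $K$ has the csJNP, so $(4)$ fails.
\end{proof}
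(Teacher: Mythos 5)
Your implication $(2)\Rightarrow(3)$ is correct: it is precisely the Schur-property argument that the paper itself flags as an available alternative (``we may easily adopt the proof of Proposition \ref{prop:grothendieck_null_ctbl_part}'') before choosing instead to route the proof through a disjointly supported fsJN-sequence (Theorem \ref{theorem:disjoint_supps}) and Lemma \ref{lemma:jnseq_pos_neg}. The genuine gap is in $\neg(1)\Rightarrow\neg(4)$, at the words ``let $\mu\in\ell_1(K)$ be its weak* limit''. The weak* limit of a weakly* convergent sequence from $\ell_1(K)$ need \emph{not} lie in $\ell_1(K)$: on $K=[0,1]$ the finitely supported measures $\mu_n=\frac{1}{n}\sum_{i<n}\delta_{i/n}$ converge weakly* to Lebesgue measure $\lambda$, which is non-atomic, and this sequence is a legitimate witness to the failure of the $\ell_1$-Grothendieck property since $\mu_n\big(\Q\cap[0,1]\big)=1\not\to 0=\lambda\big(\Q\cap[0,1]\big)$. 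Your parenthetical (``$\ell_1(K)$ is a linear subspace'') only gives $\mu_n-\mu\in\ell_1(K)$ \emph{provided} $\mu\in\ell_1(K)$, which is exactly what fails; in the example $\mu_n-\lambda$ has uncountable support, so the normalized measures $\mu'_{n_k}\big/\big\|\mu'_{n_k}\big\|$ are not countably supported and you have not produced a csJN-sequence.

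The repair is to take differences of terms of the sequence rather than subtracting the limit, since differences stay in $\ell_1(K)$. Concretely: $\seqn{\mu_n}$ cannot be norm Cauchy, for otherwise it would converge in the complete space $\ell_1(K)\sub C(K)^*$ to some $\nu$, hence converge to $\nu$ weakly and weakly*, forcing $\nu=\mu$ and weak convergence of $\seqn{\mu_n}$, a contradiction. So there exist $\eps>0$ and indices $n_k,m_k\to\infty$ with $\big\|\mu_{n_k}-\mu_{m_k}\big\|>\eps$; the measures $\nu_k=\mu_{n_k}-\mu_{m_k}$ belong to $\ell_1(K)$, form a weakly* null sequence, and have norms bounded below, so Lemma \ref{lemma:normalization_fsJN} yields a csJN-sequence. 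The paper sidesteps the issue entirely by proving $(3)\Rightarrow(2)$ instead: it takes as witness a weakly* null (not merely weakly* convergent) sequence in $\Delta(K)$ that is not weakly null, extracts a subsequence with norms bounded away from zero, and normalizes---no limit measure is ever subtracted.
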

\begin{proof}
The equivalences (1)$\Leftrightarrow$(2) and (3)$\Leftrightarrow$(4) were proved in Propositions \ref{prop:Delta_gr_equiv_ell1_gr} and \ref{prop:fsJNP_equiv_csJNP}, respectively. We now show the equivalence (2)$\Leftrightarrow$(3). To show the implication (2)$\Rightarrow$(3) we may easily adopt the proof of Proposition \ref{prop:grothendieck_null_ctbl_part}, but we proceed differently. Namely, for the sake of contradiction, let us assume that $K$ has the fsJNP. By Theorem \ref{theorem:disjoint_supps}, there exists a disjointly supported fsJN-sequence $\seqn{\mu_n}$. Let for each $n\io$ the set $P_n$ be as in Lemma \ref{lemma:jnseq_pos_neg}. Put $P=\bigcup_{n\io}P_n$. Since $K$ has the $\Delta$-\gr property, the sequence $\seqn{\mu_n}$ is weakly null and hence it is convergent to $0$ on every Borel subset of $K$, in particular on $P$. But this contradicts Lemma \ref{lemma:jnseq_pos_neg}.

To show (3)$\Rightarrow$(2), let us assume that there exists weakly* null sequence $\seqn{\mu_n\in\Delta(K)}$ which is not weakly null. Since the weak topology is weaker than the norm topology, it follows that there exists a subsequence $\seqk{\mu_{n_k}}$ and $\eps>0$ such that $\big\|\mu_{n_k}\big\|>\eps$ for every $k\io$. But then, by Lemma \ref{lemma:normalization_fsJN}, the sequence $\seqk{\mu_{n_k}/\big\|\mu_{n_k}\big\|}$ is an fsJN-sequence on $K$, a contradiction.
\end{proof}

\section{The $\ell_1$-Grothendieck property vs. \grp\label{section:ell1_gr_no_gr}}

In his unpublished note \cite{Ple05} Plebanek constructed in ZFC a compact space $K$ such that its every separable closed subspace $L$ has the \gr property, but $K$ itself does not have the property (cf. Bielas \cite{Bie11}). It follows that $K$ is not separable, but it has the $\ell_1$-\gr property.

Following the ideas of \cite{Ple05} and Plebanek's suggestions provided in the private communication, we will construct in
this section a separable compact space---in fact, a continuous image
of $\bo$---without the \gr property, but with the $\ell_1$-\gr
property.

\begin{lemma}\label{lemma:probability_grothendieck}
Let $K$ be a totally disconnected compact space  and $\mu$ a
probability measure on $K$. Let $\seqn{A_n}$ be a sequence of clopen
mutually disjoint subsets of $K$ such that $\mu\big(A_n\big)>0$ for
every $n\io$. Define the set $F$ as follows: $x\in F$ if and only if
for every clopen neighborhood $U$ of $x$ the following inequality is
satisfied:
\[\limsup_{n\to\infty}\frac{\mu\big(A_n\cap U\big)}{\mu\big(A_n\big)}>0.\]
Then, $F$ is closed and non-empty, and the quotient space $K/F$ does
not have the Grothendieck property.
\end{lemma}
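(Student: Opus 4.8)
The plan is to show three things: $F$ is closed, $F$ is non-empty, and $C(K/F)$ fails the Grothendieck property. Closedness of $F$ is routine: if $x \notin F$, there is a clopen neighbourhood $U$ of $x$ with $\limsup_n \mu(A_n \cap U)/\mu(A_n) = 0$, and then every point of $U$ also lies outside $F$ (witnessed by the same $U$), so the complement of $F$ is open. For non-emptiness, I would argue by a compactness/finite-intersection argument: suppose $F = \emptyset$. Then every $x \in K$ has a clopen neighbourhood $U_x$ with $\limsup_n \mu(A_n \cap U_x)/\mu(A_n) = 0$. By compactness finitely many $U_{x_1}, \dots, U_{x_m}$ cover $K$; refining to a clopen partition $V_1, \dots, V_k$ with each $V_j$ contained in some $U_{x_i}$, we get $\limsup_n \mu(A_n \cap V_j)/\mu(A_n) = 0$ for each $j$. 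Summing over $j$ (a finite sum) gives $\limsup_n \sum_j \mu(A_n \cap V_j)/\mu(A_n) = 0$, i.e. $\limsup_n \mu(A_n)/\mu(A_n) = 0$, which is absurd since that quantity is constantly $1$. Hence $F \neq \emptyset$.

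The heart of the matter is that $C(K/F)$ is not Grothendieck. Identify $C(K/F)$ with the subspace of $C(K)$ of functions constant on $F$; equivalently, a measure on $K/F$ pulls back to a measure on $K$, and weak$^*$/weak convergence transfer accordingly. The natural candidate for a witnessing sequence is
\[
\nu_n = \frac{1}{\mu(A_n)} \big(\mu \restriction A_n\big) - \delta_{q},
\]
for a suitably chosen point $q$, or more symmetrically a normalized difference built from the conditional measures $\mu_n := \mu(\cdot \cap A_n)/\mu(A_n)$. I would first check that, after passing to a subsequence, $\langle \mu_n \colon n \in \omega\rangle$ converges weak$^*$ (as measures on $K$) to some probability measure $\lambda$ concentrated on $F$: indeed any weak$^*$-cluster point $\lambda$ assigns, to a clopen $U$, a value that is a cluster value of $\mu(A_n \cap U)/\mu(A_n)$, so $\lambda(U) > 0$ forces $x \in F$ for points $x$ in the relevant part of $U$; more carefully, $\mathrm{supp}(\lambda) \subseteq F$ because any $x \notin F$ has a clopen neighbourhood $U$ with $\limsup_n \mu(A_n\cap U)/\mu(A_n) = 0$, hence $\lambda(U) = 0$. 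Then the measures $\mu_n - \lambda$ are weak$^*$ null on $K$; but since $\lambda$ lives on $F$ and each $\mu_n$ lives on $A_n$ which is clopen and (for $n$ large) disjoint from a prescribed clopen neighbourhood of any single point, the images of $\mu_n - \lambda$ in $C(K/F)^*$ remain weak$^*$ null while having norms bounded below. The key point making them non-weakly-null in $C(K/F)^*$: in $K/F$ the set $F$ is collapsed to a single point $\ast$, the measures $\mu_n$ push forward to measures $\bar\mu_n$ supported on the (still clopen, pairwise disjoint) images $\bar A_n$, and $\bar\mu_n(\bar A_n) = 1$ for all $n$ while $\bar A_n$ are pairwise disjoint Borel sets shrinking toward $\ast$ — so $\langle \bar\mu_n - \delta_\ast \colon n\in\omega\rangle$ is weak$^*$ null (this is exactly where $F$ being the "mass-concentration" set is used: for any $f \in C(K/F)$, $\bar\mu_n(f) \to f(\ast)$) but not weakly null, since on the Borel set $\bigcup_{k \ge n}\bar A_k$ it does not converge. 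That produces a weak$^*$ null, non-weakly null sequence in $C(K/F)^*$, i.e. $K/F$ fails Grothendieck.

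The main obstacle I anticipate is the verification that $\mu_n(f) \to f(\ast)$ for every $f \in C(K/F)$, i.e. that the conditional measures $\mu_n$ genuinely "collapse" onto $F$ in the quotient. This is not automatic from $\mathrm{supp}(\lambda) \subseteq F$ for a single cluster point $\lambda$; one needs it for the whole sequence, which requires that $\limsup_n \mu(A_n \cap U)/\mu(A_n) = 0$ whenever $U$ is a clopen set disjoint from $F$ — and since $K$ is totally disconnected and $F$ closed, the complement of any clopen neighbourhood of $F$ is covered by finitely many clopen sets each avoiding $F$, to each of which the definition of $F$ applies; summing the finitely many $\limsup$'s (all zero) gives that $\mu(A_n \cap U)/\mu(A_n) \to 0$, hence $\mu_n$ restricted to the complement of any clopen neighbourhood of $F$ has mass tending to $0$. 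Continuity of $f$ on the compact set $F$ (where it is constant, $= f(\ast)$) then upgrades this to $\mu_n(f) \to f(\ast)$ by the usual $\varepsilon$-neighbourhood argument. Once that is in hand, the rest is bookkeeping about pushing measures through the quotient map and the Dieudonné–Grothendieck-style observation that disjointly supported norm-one measures are never weakly null.
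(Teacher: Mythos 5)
Your argument is correct and follows essentially the same route as the paper: the same compactness/finite-subcover argument for $F\neq\emptyset$, and the same witnessing sequence (the conditional measures $\mu(\,\cdot\,\cap A_n)/\mu(A_n)$ pushed forward to $K/F$, which converge weak* to the Dirac measure at the collapsed point $\ast$ but not weakly, because $A_n\cap F=\emptyset$ by the mutual disjointness of the $A_n$). The only cosmetic point is that your witness for the failure of weak convergence should be a single fixed Borel set, e.g. $\{\ast\}$ (as in the paper) or $\bigcup_{k\in\omega}\varphi\big[A_k\big]$, rather than the $n$-dependent set $\bigcup_{k\ge n}\varphi\big[A_k\big]$.
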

\begin{proof}
We first show that $F\neq\emptyset$. Assume for the sake of contradiction that for every $x\in K$ there exists its clopen neighborhood $U_x$ such that $\lim_n\mu\big(A_n\cap U_x\big)/\mu\big(A_n\big)=0$. By compactness of $K$, there exists a finite cover $U_{x_1},\ldots,U_{x_k}$ of $K$. We then have:
\[1=\lim_{n\to\infty}\frac{\mu\big(A_n\cap K\big)}{\mu\big(A_n\big)}\le\sum_{i=1}^k\lim_{n\to\infty}\frac{\mu\big(A_n\cap U_{x_i}\big)}{\mu\big(A_n\big)}=0,\]
a contradiction.

Let us now prove that $K/F$ does not have the Grothendieck property.
Let $\varphi\colon K\to K/F$ be the quotient map.  Denote
$p=\varphi[F]$. For every $n\io$ define a measure $\mu_n$ on $K/F$
as follows:
\[\mu_n(A)=\frac{\mu\big(A_n\cap\varphi^{-1}[A]\big)}{\mu\big(A_n\big)},\]
where $A$ is a clopen subset of $K/F$. Then, $\mu_n$ converges weakly* to $\delta_p$ on $K/F$. Indeed, if $A$ is a clopen in $K/F$ not containing $p$, then $\varphi^{-1}[A]\cap F=\emptyset$ and hence, by compactness of $\varphi^{-1}[A]$, we have $\limsup_n\mu\big(A_n\cap\varphi^{-1}[A]\big)/\mu\big(A_n\big)=0$, and so $\lim_n\mu_n(A)=0$. On the other hand, if $p\in A$, then:
\[\lim_{n\to\infty}\mu_n(A)=\lim_{n\to\infty}\Big(\mu_n(K/F)-\mu_n(A^c)\Big)=1-\lim_{n\to\infty}\mu_n(A^c)=1-0=1.\]
Had $K/F$ the Grothendieck property, $\mu_n$ would converge weakly
to $\delta_p$ and hence $\mu_n(\{p\})$ would converge to $1$, which
is not the case, since $A_n\cap F=\emptyset$ for every $n\io$ as
elements of $\seqn{A_n}$ are mutually disjoint.
\end{proof}

\begin{lemma}\label{lemma:z_pseudo_intersection}
Let $K$ be an extremely disconnected compact space. Let $\mu$,
$\seqn{A_n}$ and $F$  be like in Lemma
\ref{lemma:probability_grothendieck}. Let $\zZ$ denote the family of
all clopen subsets $C$ of $K$ such that
$\lim_{n\to\infty}\frac{\mu\big(A_n\cap
C\big)}{\mu\big(A_n\big)}=0,$ i.e.,
$$ \zZ=\big\{C\subset K\: :\: K\mbox{ is clopen and }A\cap F=\emptyset\big\}. $$
 Then $\zZ$  has the following pseudo-intersection-like
property: for every sequence $\seqn{C_n}$ of elements in $\zZ$ there exists
$C\in\zZ$ such that \[\forall n\in\omega\: \exists m\in\omega\:
(C_n\setminus C\subset\bigcup_{j\leq m}A_j).\]
\end{lemma}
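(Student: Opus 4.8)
The plan is to take for $C$ the clopen closure of a union of suitably ``trimmed'' copies of the sets $C_n$; the only delicate point will be verifying that $C$ again lies in $\zZ$.

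First I would choose how much to trim. For each $n\io$ we have $C_n\in\zZ$, i.e. $\mu\big(A_m\cap C_n\big)/\mu\big(A_m\big)\to 0$ as $m\to\infty$, so I can pick a strictly increasing sequence of natural numbers $\seqn{k_n}$ such that
\[\frac{\mu\big(A_m\cap C_n\big)}{\mu\big(A_m\big)}<2^{-n}\qquad\text{for all }m>k_n.\]
Put $E_n=C_n\sm\bigcup_{j\le k_n}A_j$ (a clopen set) and $C=\ol{\bigcup_{n\io}E_n}$; since $K$ is extremely disconnected, $C$ is clopen. The required pseudo-intersection property is then immediate: $C_n\sm C\sub C_n\sm E_n\sub\bigcup_{j\le k_n}A_j$, so $m=k_n$ witnesses the condition for the index $n$.

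The heart of the argument is to show $C\in\zZ$, and this is where I expect the (mild) main obstacle to lie: a priori, passing to the closure $\ol{\bigcup_n E_n}$ could add boundary mass, and the naive bound for $\mu\big(A_m\cap C\big)$ involves a number of summands that grows with $m$. Both issues dissolve using the disjointness of the $A_m$'s. Fix $m\io$. Whenever $k_n\ge m$ we have $A_m\sub\bigcup_{j\le k_n}A_j$ and hence $A_m\cap E_n=\emptyset$; as the $k_n$ are strictly increasing, only finitely many $n$ remain, so $A_m\cap\bigcup_{n\io}E_n=\bigcup_{n\,:\,k_n<m}\big(A_m\cap E_n\big)$ is a finite union of clopen sets, hence closed. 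Using the elementary identity $A\cap\ol S=\ol{A\cap S}$, valid for any clopen $A$ and arbitrary $S$, this yields $A_m\cap C=\bigcup_{n\,:\,k_n<m}\big(A_m\cap E_n\big)\sub\bigcup_{n\,:\,k_n<m}\big(A_m\cap C_n\big)$, so no extra mass appears and
\[\frac{\mu\big(A_m\cap C\big)}{\mu\big(A_m\big)}\le\sum_{n\,:\,k_n<m}\frac{\mu\big(A_m\cap C_n\big)}{\mu\big(A_m\big)}.\]

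It remains to check that the right-hand side tends to $0$, which I would do with a routine two-part estimate. Given $\eps>0$, fix $L$ with $2^{-L}<\eps/2$. For $n>L$ with $k_n<m$ the corresponding term is smaller than $2^{-n}$ by the choice of $k_n$, so these terms contribute at most $\sum_{n>L}2^{-n}=2^{-L}<\eps/2$; the remaining summands, those with $n\le L$, are finitely many and each tends to $0$ as $m\to\infty$, hence they contribute less than $\eps/2$ once $m$ is large enough. Therefore $\mu\big(A_m\cap C\big)/\mu\big(A_m\big)\to 0$, i.e. $C\in\zZ$, which finishes the proof. Note that only finite additivity of $\mu$ and extremal disconnectedness of $K$ (to make $C$ clopen) were used.
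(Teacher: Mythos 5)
Your proof is correct and follows essentially the same route as the paper's: both take $C=\ol{\bigcup_{n\io}\big(C_n\sm\bigcup_{j\le k_n}A_j\big)}$ for a suitably fast-growing sequence $\seqn{k_n}$, read off the pseudo-intersection property immediately, and then verify $C\in\zZ$ by observing that each $A_m$ meets only finitely many of the trimmed pieces and that taking the closure adds no mass over $A_m$. The only cosmetic differences are that you control each $C_n$ individually by a summable bound $2^{-n}$ where the paper bounds the partial unions $\bigcup_{i\le k}C_i$ by $1/(k+1)$, and that you dispose of the closure excess via the identity $A\cap\ol{S}=\ol{A\cap S}$ for clopen $A$ rather than by the paper's direct neighborhood argument.
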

\begin{proof}
We shall show only the latter property, since the remaining two are obvious. The proof is now similar to the standard one showing that the density ideal on $\omega$ is a P-ideal. Namely, inductively find a strictly increasing sequence $\seqk{n_k}$ of indices such that
\[\frac{\mu\Big(A_n\cap\bigcup_{i=0}^{k}C_i\Big)}{\mu\big(A_n\big)}<\frac{1}{k+1}\]
for every $n>n_k$ and $k\io$. Put:
\[C=\ol{\bigcup_{k\io}\Big(C_k\sm\bigcup_{j=0}^{n_k}A_j\Big)}.\]
Since $K$ is extremely disconnected, $C$ is a clopen set. It
follows easily that for every $k\io$ we have:
\[C_k\sm C\sub\bigcup_{j=0}^{n_k}A_j.\]
We shall now show that $C\in\zZ$. Fix $n\io$, $n>n_0$, and let $k\io$ be such that $n_k<n\le n_{k+1}$. We have:
\[\frac{\mu\big(A_n\cap C\big)}{\mu\big(A_n\big)}\le\frac{\mu\Big(A_n\cap\bigcup_{i=0}^{k}C_i\Big)}{\mu\big(A_n\big)}+\frac{\mu\big(A_n\cap D\big)}{\mu\big(A_n\big)}<\frac{1}{k+1}+\frac{\mu\big(A_n\cap D\big)}{\mu\big(A_n\big)},\]
where:
\[D=C\sm\Big(\bigcup_{i\io}\big(C_i\sm\bigcup_{j=0}^{n_i}A_j\big)\Big),\]
so it is just the set added to
$\bigcup_{i\io}\big(C_i\sm\bigcup_{j=0}^{n_i}A_j\big)$ after taking
the closure. Since $k\to\infty$ if $n\to\infty$, it is enough to
show that $A_n\cap D=\emptyset$,  whence $\mu\big(A_n\cap
D\big)/\mu\big(A_n\big)=0$. Assume to the contrary that there exists
$x\in A_n\cap D$. Since $x\in D$, every neighborhood of $x$
intersects  $C_k\setminus\bigcup_{j=0}^{n_k}A_j$ for infinitely many
$k$. In particular, $A_n$ must intersect
$C_k\setminus\bigcup_{j=0}^{n_k}A_j$ for some $k$ with $n_k>n$,
which is impossible.
\end{proof}

\begin{theorem}\label{theorem:extr_disc_grothendieck}
For every extremely disconnected compact space $K$ there exists a compact space $L$ and a continuous surjection $\varphi\colon K\to L$ such that $L$ does not have the Grothendieck property but it has the $\ell_1$-Grothendieck property.
\end{theorem}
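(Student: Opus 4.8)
The plan is the following. We may assume $K$ is infinite. Fix any pairwise disjoint sequence $\seqn{A_n}$ of non-empty clopen subsets of $K$ (an infinite Boolean algebra contains an infinite antichain), choose $x_n\in A_n$, and put $\mu=\sum_{n\io}2^{-n-1}\delta_{x_n}$, a probability measure on $K$ with $\mu(A_n)>0$ for every $n$. Let $F\sub K$ be the non-empty closed set produced by Lemma~\ref{lemma:probability_grothendieck}, set $L=K/F$ with quotient map $\varphi\colon K\to L$ and distinguished point $p=\varphi[F]$. Then $L$ is a compact (Hausdorff, totally disconnected) space, $\varphi$ restricts to a homeomorphism of $K\sm F$ onto $L\sm\{p\}$, and under this identification the clopen subsets of $L$ missing $p$ are exactly the members of the family $\zZ$ of Lemma~\ref{lemma:z_pseudo_intersection} --- in particular every $A_n$ belongs to $\zZ$, since the $A_n$ are pairwise disjoint and hence no $A_n$ meets $F$. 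By Lemma~\ref{lemma:probability_grothendieck}, $L$ does not have the Grothendieck property.

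It remains to prove that $L$ has the $\ell_1$-Grothendieck property, which by Theorem~\ref{theorem:ell1_grothendieck_equiv_no_fsjnp} amounts to showing that $L$ carries no fsJN-sequence. Suppose towards a contradiction that $\seqn{\nu_n}$ is one. By Theorem~\ref{theorem:disjoint_supps} and Remark~\ref{remark:jn_pointwise_limits}, and discarding at most one term, we may assume $\seqn{\nu_n}$ is pointwise convergent, disjointly supported, and $p\notin\supp(\nu_n)$ for every $n$; thus each $\supp(\nu_n)$ is a finite subset of $K\sm F$. The crucial observation is
\[(\star)\qquad \big\|\nu_n\rstr C\big\|\longrightarrow 0\quad\text{for every }C\in\zZ.\]
Indeed, $C$ is clopen in $K$, hence an extremely disconnected compact space, hence (Grothendieck~\cite{Gro53}) a Grothendieck space, so by Corollary~\ref{cor:gr_no_fsJNP} it carries no fsJN-sequence. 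If $(\star)$ failed there would be $\eps>0$ and a subsequence with $\big\|\nu_{n_k}\rstr C\big\|\ge\eps$; normalising $\nu_{n_k}\rstr C$ would give a finitely supported norm-one sequence on $C$ which is weak$^*$ null there --- because any $g\in C(C)$ extends by $0$ off the clopen set $C$ to a continuous function on $K$ vanishing on $F$, hence descending to an element of $C(L)$ on which $\seqn{\nu_n}$ is weak$^*$ null --- i.e. an fsJN-sequence on $C$, a contradiction.

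Next, for each $n$ cover the finite set $\supp(\nu_n)\sub K\sm F$ by a clopen set $C_n\in\zZ$ and apply Lemma~\ref{lemma:z_pseudo_intersection} to $\seqn{C_n}$ to obtain $C\in\zZ$ such that for every $n$ there is $m_n$ with $\supp(\nu_n)\sm C\sub\bigcup_{j\le m_n}A_j$. By $(\star)$ we get $\big\|\nu_n\rstr C\big\|\to 0$, and also $\big\|\nu_n\rstr\bigcup_{j\le M}A_j\big\|\to 0$ for each fixed $M$ (a finite union of members of $\zZ$ lies in $\zZ$). If $\seqn{m_n}$ were bounded, then $\supp(\nu_n)$ would lie in a fixed member $C\cup\bigcup_{j\le M}A_j$ of $\zZ$, forcing $\|\nu_n\|\to 0$ and contradicting $\|\nu_n\|=1$. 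Hence $m_n\to\infty$, and after passing to a subsequence and choosing $M_0<M_1<\cdots$ suitably we may arrange that the mass of $\nu_n$ is, up to an error tending to $0$, carried by the pairwise disjoint clopen blocks $\Gamma_k=\bigcup_{M_{k-1}<j\le M_k}A_j\in\zZ$; by Lemma~\ref{lemma:jnseq_pos_neg} the positive and the negative parts of $\nu_n$ each carry mass tending to $1/2$ inside the corresponding block.

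The remaining step --- and the main obstacle --- is to turn this block picture into a contradiction with weak$^*$ nullity. Inside each $\Gamma_k$ one separates the finitely many positive and negative support points of $\nu_{n_k}$ by disjoint clopen sets $\hat P_k,\hat N_k\sub\Gamma_k$ (members of $\zZ$), and one seeks a continuous function $f$ on $K$ which is \emph{constant on $F$} (so that it descends to $C(L)$) and is $\approx 1$ on the $\hat P_k$ and $\approx-1$ on the $\hat N_k$; then $\limsup_k\big|\nu_{n_k}(f)\big|>0$, contradicting $\lim_n\nu_n(f)=0$. The delicate point is precisely the constancy on $F$: since $F\sub\ol{\bigcup_n A_n}^K$, the blocks $\Gamma_k$ accumulate onto $F$, so no clopen $\{-1,0,1\}$-valued function works directly. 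Here one uses the extreme disconnectedness of $K$ (closures of open sets are clopen; disjoint open sets have disjoint closures) together with a further appeal to the pseudo-intersection property of $\zZ$ to control how $\bigcup_k\hat P_k$ and $\bigcup_k\hat N_k$ cluster at $F$, observing along the way (via $(\star)$) that neither $\ol{\bigcup_k\hat P_k}^K$ nor $\ol{\bigcup_k\hat N_k}^K$ can belong to $\zZ$; carefully combining these facts produces the required $f$, or an outright contradiction. This last paragraph is where essentially all the difficulty of the theorem lies; the preceding reductions are routine given the two lemmas.
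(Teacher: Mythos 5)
There is a fatal gap at the very first step: your choice of $\mu$. With $\mu=\sum_{n\io}2^{-n-1}\delta_{x_n}$ one has $\mu\big(A_n\cap U\big)/\mu\big(A_n\big)=\chi_U\big(x_n\big)$ for every clopen $U$, so $F$ is precisely the set of cluster points of the discrete sequence $\seqn{x_n}$. Consequently, in $L=K/F$ the (distinct) points $\varphi\big(x_n\big)$ converge to $p=\varphi[F]$: any closed set missing $F$ misses all cluster points of $\seqn{x_n}$ and hence contains only finitely many $x_n$. So $L$ always contains a non-trivial convergent sequence, hence has the fsJNP and therefore \emph{fails} the $\ell_1$-Grothendieck property --- the statement you are trying to prove is false for your construction. (Concretely, for $K=\beta\omega$, $A_n=\{n\}$, $x_n=n$ you get $F=\beta\omega\sm\omega$ and $L$ is the one-point compactification of $\omega$.) The paper instead takes $\mu$ \emph{non-atomic}, which is available because an infinite extremely disconnected compact space is not scattered; it even flags that non-atomicity is used in exactly one place, but that place is indispensable.

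This also explains why the step you leave open cannot be completed along your lines. The paper's endgame does not construct a function constant on $F$; rather, after reaching the same block picture (a subsequence $\seqk{\mu_{n_k}}$ with $\big\|\mu_{n_k}\rstr\bigcup_{j\in Q_k}\varphi\big[A_j\big]\big\|>1/2$ for pairwise disjoint finite $Q_k$), it uses non-atomicity to choose clopen $B_k\sub\bigcup_{j\in Q_k}A_j$ containing $\supp\big(\mu_{n_k}\big)\cap\bigcup_{j\in Q_k}A_j$ with $\mu\big(B_k\cap A_j\big)/\mu\big(A_j\big)<2^{-k}$; then $D=\ol{\bigcup_{k\io}B_k}$ is clopen by extreme disconnectedness, belongs to $\zZ$, hence misses $F$, and $\varphi[D]$ is a Grothendieck clopen subset of $L$ carrying mass $>1/2$ of every $\mu_{n_k}$ --- a contradiction via Proposition~\ref{prop:grothendieck_null_ctbl_part}. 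With an atomic $\mu$ concentrated at the $x_n$ no such small $B_k$ can exist, and, as shown above, no alternative argument can exist either. Your reductions up to the block decomposition do run parallel to the paper's, but the entire content of the theorem sits in the step you acknowledge you have not carried out, and the setup you chose makes it impossible to carry out.
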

\begin{proof}
Let $\mu$, $\seqn{A_n}$, $F$ and $\zZ$ be like in Lemmas
\ref{lemma:probability_grothendieck} and
\ref{lemma:z_pseudo_intersection}. Note that since $K$ is not scattered we may assume that $\mu$ vanishes on points (see \cite[Theorem 19.7.6]{Sem71}). Put
$L=K/F$ and let $\varphi$ be the quotient map. It follows from Lemma
\ref{lemma:probability_grothendieck} that $L$ does not have
Grothendieck property. For the sake of contradiction assume that $L$
does not have the $\ell_1$-Grothendieck property either, so there is
a disjointly supported JN-sequence $\seqn{\mu_n}$ on $L$. We may
assume that $\varphi[F]\cap\supp\big(\mu_n\big)=\emptyset$ for every
$n\io$, and hence  for every $n\io$ we can find $C_n\in\zZ$ such
that $\supp\big(\mu_n\big)\sub\varphi\big[C_n\big]$. Let $C\in\zZ$
be like in Lemma \ref{lemma:z_pseudo_intersection} for the sequence
$\seqn{C_n}$, i.e., $C_n\setminus C\subset\bigcup_{j=0}^{m_n}A_j$
for some increasing number sequence $\seqn{m_n}$. By
Proposition~\ref{prop:grothendieck_null_ctbl_part},
 the Grothendieck property of $K$ (and hence of
$\varphi[C]$) yields
\[\lim_{n\to\infty}\big\|\mu_n\rstr\varphi[C]\big\|=0,\]
which together with $\|\mu_n\rstr\varphi[C_n]\big\|=1$ and
$C_n\setminus C\subset\bigcup_{j=0}^{m_n}A_j$ gives
\[\lim_{n\to\infty}\big\|\mu_n\rstr\bigcup_{j=0}^{m_n}\varphi\big[A_j\big]\big\|=1.\]
On the other hand, since  for every $Q\in\fso$ we have:
\[\lim_{n\to\infty}\big\|\mu_n\rstr\bigcup_{j\in Q}\varphi\big[A_j\big]\big\|=0,\]
it follows that there exists a subsequence $\seqk{\mu_{n_k}}$ and a
sequence $\seqk{Q_k\in\fso}$ of pairwise disjoint sets such that for
every $k\io$ we have:
\[\big\|\mu_{n_k}\rstr\bigcup_{j\in Q_k}\varphi\big[A_j\big]\big\|>1/2.\]
Let $B_k$ be a clopen subset of $\bigcup_{j\in Q_k}A_j$ containing
$\supp\mu_{n_k}\cap\bigcup_{j\in Q_k}A_j$ and such that
$\frac{\mu(B_k\cap A_j)}{\mu(A_j)}<1/2^k$ for all $j\in Q_k$ (this
is the only place where we use that $\mu$ vanishes on points). Set
$D=\overline{\bigcup_{k\in\omega}B_k}$ and note that $D$ is a clopen
subset of $K$ such that $D\cap A_j=B_k\cap A_j$ for all $k\in\omega$
and $j\in Q_k$, and $D\cap A_j=\emptyset $ for
$j\in\omega\setminus\bigcup_{k\in\omega}Q_k$. Thus
\[\lim_{j\to\infty}\frac{\mu(A_j\cap D)}{\mu(A_j)}=0,\]
which means $D\in\zZ$, i.e., $D\cap F=\emptyset$. It follows from
the above that
\[\big\|\mu_{n_k}\rstr\varphi[D]\big\|=\big\|\mu_{n_k}\rstr\bigcup_{j\in Q_k}\varphi\big[A_j\big]\big\|>1/2\]
for every $k\io$, which is a contradiction, since $K$ (and hence
$\varphi[D]$) has the Grothendieck property.
\end{proof}

Considering $K=\bo$ we obtain the following important corollary.

\begin{corollary}
There exists a separable compact space $L$ such that it does not have the Grothendieck property but it has the $\ell_1$-Grothendieck property. \noproof
\end{corollary}

By the Stone duality we also obtain the following corollary saying that every complete Boolean algebra may be slimmed down in such a way that it loses the Grothendieck property but preserves the $\ell_1$-Grothendieck property.

\begin{corollary}\label{cor:grothendieck_subalgebras}
For every $\sigma$-complete Boolean algebra $\aA$ there exists a subalgebra $\bB\sub\aA$ such that $\bB$ does not have the Grothendieck property but it has the $\ell_1$-Grothendieck property. \noproof
\end{corollary}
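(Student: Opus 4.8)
The plan is to deduce this from Theorem \ref{theorem:extr_disc_grothendieck} by Stone duality, taking $\bo$ as the extremely disconnected space. (We read the statement for infinite $\aA$; for finite $\aA$ every subalgebra is finite and hence trivially Grothendieck.) The first step is to show that every infinite $\sigma$-complete Boolean algebra $\aA$ contains a subalgebra isomorphic to $\wo$. Choose an infinite antichain $\big\{b_n\colon n\io\big\}$ in $\aA$ and, using $\sigma$-completeness, replace $b_0$ by $b_0\vee\big(\bigvee_{n\io}b_n\big)^c$ so that $\big\{b_n\colon n\io\big\}$ becomes a partition of unity into non-zero elements. Then $S\mapsto c_S:=\bigvee_{n\in S}b_n$ is a well-defined map $\wo\to\aA$, it is a Boolean homomorphism by the countable distributive law, and it is injective since $b_n\le c_S$ exactly when $n\in S$ (indeed, if $n\notin S$ then $c_S\le\bigvee_{m\ne n}b_m\le b_n^c$, so $b_n\wedge c_S=0$). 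Dually, this embedding yields a continuous surjection from $St(\aA)$ onto $St(\wo)\approx\bo$.

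Next I would invoke Theorem \ref{theorem:extr_disc_grothendieck} with $K=\bo$ to obtain a compact space $L$ and a continuous surjection $\psi\colon\bo\to L$ such that $L$ fails the Grothendieck property but has the $\ell_1$-Grothendieck property. The one point needing a brief verification is that $L$ is totally disconnected (so that it is the Stone space of its algebra of clopen sets): since $L=\bo/F$ for a closed set $F$ and $\bo$ is zero-dimensional, any two distinct points $x\ne y$ of $L$ have preimages at least one of which misses $F$, and zero-dimensionality of $\bo$ supplies a clopen $U\sub\bo$ with $U\cap F=\emptyset$ whose image $\psi[U]$ is a clopen subset of $L$ separating $x$ from $y$. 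Let $\bB_0$ be the algebra of clopen subsets of $L$, so that $St(\bB_0)\approx L$ and, dually to $\psi$, $\bB_0$ embeds into $\wo$.

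Finally, composing the two surjections gives a continuous surjection $St(\aA)\to\bo\to L\approx St(\bB_0)$, which under Stone duality corresponds to an embedding of $\bB_0$ into $\aA$; let $\bB\sub\aA$ be its image. Then $St(\bB)\approx L$, hence $\bB$ does not have the Grothendieck property but has the $\ell_1$-Grothendieck property, as required. I do not expect any real obstacle once Theorem \ref{theorem:extr_disc_grothendieck} is in hand: the only two things demanding a little care are the embedding $\wo\hookrightarrow\aA$ (the sole place where $\sigma$-completeness enters) and the total disconnectedness of the quotient $L$, both of which are routine.
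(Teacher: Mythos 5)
Your argument is correct, and it takes a slightly different route from the one the paper intends. The paper states the corollary with no proof, deriving it from Theorem \ref{theorem:extr_disc_grothendieck} ``by Stone duality'', i.e.\ by applying the theorem to $K=St(\aA)$ itself and reading the quotient $K\to L=K/F$ as a subalgebra of $\aA$; since $St(\aA)$ is extremely disconnected only when $\aA$ is complete, for merely $\sigma$-complete $\aA$ this tacitly requires re-inspecting Lemma \ref{lemma:z_pseudo_intersection} and the proof of Theorem \ref{theorem:extr_disc_grothendieck} to see that extremal disconnectedness is used only to form closures of \emph{countable} unions of clopen sets (which are already clopen in the basically disconnected space $St(\aA)$), together with the classical fact that $\sigma$-complete algebras have the Grothendieck property. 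You instead embed $\wo$ into $\aA$ via an infinite partition of unity, apply the theorem as a black box to the genuinely extremely disconnected space $\bo$, and pull the algebra of clopen subsets of $L$ back into $\aA$ along the composite surjection $St(\aA)\to\bo\to L$. This buys you two things: you never need to reopen the proof of the theorem, and your argument in fact establishes the stronger statement that the conclusion holds for every Boolean algebra containing an isomorphic copy of $\wo$, which is directly relevant to the open question the authors pose immediately after the corollary. The two verifications you single out --- that $S\mapsto\bigvee_{n\in S}b_n$ is an injective Boolean homomorphism (the only place $\sigma$-completeness enters) and that the quotient $L=\bo/F$ is zero-dimensional, hence a Stone space --- are both carried out correctly.
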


Analyzing the proof of Theorem \ref{theorem:extr_disc_grothendieck}, it seems that the theorem might also hold for those totally disconnected compact spaces which are the Stone spaces of Boolean algebras with e.g. Haydon's Subsequential Completeness Property (see Haydon \cite{Hay81}). We do not know however how far the $\sigma$-completeness of $\aA$ can be weakened in Corollary \ref{cor:grothendieck_subalgebras}, which motivates the following question.

\begin{question}
Let $\aA$ be a Boolean algebra with the Grothendieck property. Does there exist a Boolean subalgebra $\bB$ of $\aA$ which fails to have the Grothendieck property but which nonetheless has the $\ell_1$-Grothendieck property?
\end{question}

\section{The \nik property\label{section:nikodym_property}}

A property closely related to the Grothendieck property is the Nikodym property defined for Boolean algebras or, equivalently, for totally disconnected compact spaces as follows.

\begin{definition}
A Boolean algebra $\aA$ has \textit{the \nik property} if every pointwise convergent sequence of measures on $\aA$ is also weakly* convergent.
\end{definition}

Equivalently, a Boolean algebra $\aA$ has the \nik property if every pointwise bounded sequence of measures on $\aA$ is also uniformly bounded, see Sobota and Zdomskyy \cite[Proposition 2.4]{SZ19}. (Recall that a sequence $\seqn{\mu_n}$ of measures on a Boolean algebra $\aA$ is \textit{pointwise convergent} to a measure $\mu$ on $\aA$ if $\lim_{n\to\infty}\mu_n(A)=\mu(A)$ for every $A\iA$, \textit{pointwise bounded} if $\sup_{n\io}\big|\mu_n(A)\big|<\infty$ for every $A\iA$, and \textit{uniformly bounded} if $\sup_{n\io}\big\|\mu_n\big\|<\infty$.)

Nikodym \cite{Nik33} (see also Dieudonn\'e \cite{Die51}, Darst \cite{Dar67} or Rosenthal \cite{Ros70}) proved that every $\sigma$-complete Boolean algebra has the \nik property. Later on, many weakenings of $\sigma$-completeness of Boolean algebras also implying the property have been found, see e.g. Seever \cite{See68}, Molt\'o \cite{Mol81}, Freniche \cite{Fre84_vhs}, Aizpuru \cite{Aiz92} etc. The property has been also studied in the context of topological vector spaces, see e.g. Valdivia \cite{Val79,Val13} or K\k{a}kol and L\'opez-Pellicer \cite{KLP17}.

It seems hard to distinguish the \nik property  from the \gr property. So far, only a few examples of Boolean algebras having only one of the properties have been found. Schachermayer \cite[Propositions 3.2 and 3.3]{Sch82} proved that the Jordan algebra $\jJ$ of Jordan measurable subsets of the interval $[0,1]$ (i.e. such subsets $A$ of $[0,1]$ that have boundary of Lebesgue measure $0$: $\lambda(\partial A)=0$) has the \nik property, but lacks the \gr property (see also Graves and Wheeler \cite{GW83} for generalizations). Recently, an example of a minimally generated Boolean algebra with similar properties has been also obtained under the set-theoretic assumption of the Diamond Principle $\diamondsuit$ by Sobota and Zdomskyy \cite{SZ19_min_gen}. On the other hand, Talagrand \cite{Tal84}, assuming the Continuum Hypothesis (CH, in short), constructed a Boolean algebra with the \gr property, but without the \nik property (no ZFC example is known). It seems thus natural to ask about the relation of the \nik property and the fsJNP, however it appears that both properties are independent of each other. Indeed, we have the following examples:
\begin{enumerate}
    \item $\sigma$-complete Boolean algebras have the \nik property, but they lack the fsJNP (since they have the \gr property);
    \item Talagrand's example is (consistently) an example of a Boolean algebra without the fsJNP and without the \nik property;
    \item Schachermayer proved that $\jJ$ has the \nik property, but lacks the \gr property---in fact, in \cite[Proposition 3.2]{Sch82} he proves that $\jJ$ does not have the $\ell_1$-\gr property, so $\jJ$ has the fsJNP;
    \item Schachermayer's algebra $\sS$ (see Example \ref{example:schachermayer}) has the fsJNP, but it does not have the \nik property, as well as its Stone space $St(\sS)$ does not contain non-trivial convergent sequences;
    \item Boolean algebras whose Stone spaces have non-trivial convergent sequences are examples of Boolean algebras with the fsJNP, but without the \nik property.
\end{enumerate}
The following table summarizes the above points.
\begin{table}[h]\begin{tabular}{|c|c|c|c|}
\hline
\textbf{Example} & \textbf{fsJNP} & \textbf{the \nik property} & \textbf{conv. sequences}\\ \hline
\textit{$\sigma$-complete Boolean algebras} & no & yes & no\\ \hline
\textit{Talagrand's example (under CH)} & no & no & no\\ \hline
\textit{Jordan algebra $\jJ$} & yes & yes & no\\ \hline
\textit{Schachermayer's example $\sS$} & yes & no & no\\ \hline
\textit{$St(\aA)$ contains convergent sequences} & yes & no & yes\\ \hline
\end{tabular}\end{table}

However, as the next two propositions show, in some cases the lack of the Nikodym property implies the fsJNP and \textit{vice versa}. The proofs are straightforward (in the second statement of Proposition \ref{prop:2fsjnp_no_nik} and Corollary \ref{cor:nik_no_gr} we need to appeal to Theorem \ref{theorem:sizes_of_supps}).

\begin{proposition}\label{prop:no_nik_fsjnp}
Let $\aA$ be a Boolean algebra such that there exists a sequence $\seqn{\mu_n}$ of finitely supported measures on $St(\aA)$ which is pointwise convergent on $\aA$ but not uniformly bounded. Then, the sequence $\seqn{\mu_n/\big\|\mu_n\big\|}$ is an fsJN-sequence on $St(\aA)$, so $St(\aA)$ has the fsJNP.

In other words, if there is a sequence of finitely supported measures on a Boolean algebra $\aA$ witnessing the lack of the \nik property, then $\aA$ does not have the $\ell_1$-\gr property, too. \noproof
\end{proposition}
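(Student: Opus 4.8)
The plan is to show directly that the normalised sequence $\nu_n:=\mu_n/\big\|\mu_n\big\|$ is weakly* null, since each $\nu_n$ is automatically norm-one, finitely supported and Radon on the compact space $St(\aA)$. First I would pass to a subsequence: as $\seqn{\mu_n}$ is not uniformly bounded, the set $\big\{\big\|\mu_n\big\|\colon n\io\big\}$ is unbounded, so some subsequence $\seqk{\mu_{n_k}}$ satisfies $\lim_{k\to\infty}\big\|\mu_{n_k}\big\|=\infty$; a subsequence of a pointwise convergent sequence of measures remains pointwise convergent, and passing to a subsequence is harmless for the conclusion that $St(\aA)$ has the fsJNP. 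Relabelling, we may therefore assume $\lim_{n\to\infty}\big\|\mu_n\big\|=\infty$, so $\big\|\mu_n\big\|>0$ for all but finitely many $n$ and $\nu_n$ is well defined with $\big\|\nu_n\big\|=1$.

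Next I would note that $\seqn{\nu_n}$ is pointwise null on $\aA$: for each $A\iA$ the real sequence $\big\langle\mu_n(A)\colon n\io\big\rangle$ converges, hence is bounded, so $\nu_n(A)=\mu_n(A)/\big\|\mu_n\big\|\to0$ because $\big\|\mu_n\big\|\to\infty$. Equivalently, $\nu_n\big([A]_\aA\big)\to0$ for every clopen subset $[A]_\aA$ of $St(\aA)$.

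The key step is to upgrade this to weak* convergence using the uniform bound $\big\|\nu_n\big\|=1$. Fix $f\in C(St(\aA))$ and $\eps>0$. Since $St(\aA)$ is a compact zero-dimensional space, the linear span of the characteristic functions of clopen sets (i.e.\ step functions) is uniformly dense in $C(St(\aA))$, so there are clopen sets $A_1,\ldots,A_m$ and reals $c_1,\ldots,c_m$ with $\big\|f-\sum_{i=1}^m c_i\chi_{A_i}\big\|_\infty<\eps$. Then
\[\big|\nu_n(f)\big|\le\big\|\nu_n\big\|\cdot\Big\|f-\sum_{i=1}^m c_i\chi_{A_i}\Big\|_\infty+\sum_{i=1}^m|c_i|\cdot\big|\nu_n(A_i)\big|\le\eps+\sum_{i=1}^m|c_i|\cdot\big|\nu_n(A_i)\big|,\]
and letting $n\to\infty$ gives $\limsup_{n\to\infty}\big|\nu_n(f)\big|\le\eps$; as $\eps>0$ was arbitrary, $\nu_n(f)\to0$. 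Thus $\seqn{\nu_n}$ is weakly* null, and being norm-one and finitely supported it is an fsJN-sequence on $St(\aA)$, so $St(\aA)$ has the fsJNP. By Theorem \ref{theorem:ell1_grothendieck_equiv_no_fsjnp}, $St(\aA)$---equivalently, $\aA$---therefore fails the $\ell_1$-\gr property.

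I do not expect a genuine obstacle: the two points needing a word of care are the passage to a subsequence (so that $\big\|\mu_n\big\|\to\infty$, which is precisely what makes the normalisation annihilate the pointwise limits) and the density of step functions in $C(St(\aA))$ invoked in the key step, both of which are standard.
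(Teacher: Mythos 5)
Your proof is correct, and since the paper marks this proposition with ``\noproof'' (the authors call it straightforward), yours is precisely the standard argument they intend: normalize, observe that pointwise convergence makes each $\seqn{\mu_n(A)}$ bounded so that $\nu_n\big([A]_\aA\big)\to0$ once the norms blow up, and then upgrade to weak* nullity on all of $C(St(\aA))$ via the uniform bound $\big\|\nu_n\big\|=1$ and the density of simple functions over clopen sets. One point in your favour: the passage to a subsequence with $\big\|\mu_{n_k}\big\|\to\infty$, which you flag as ``needing a word of care,'' is in fact genuinely necessary and corrects a slight imprecision in the proposition's literal wording. Taking $\aA$ to be the finite--cofinite algebra on $\omega$, so that $St(\aA)=\omega\cup\{\infty\}$, the sequence given by $\mu_{2n}=\delta_0$ and $\mu_{2n+1}=\delta_0+n\big(\delta_n-\delta_\infty\big)$ is pointwise convergent on $\aA$ and not uniformly bounded, yet the even terms of the normalized sequence are all $\delta_0$, which is not weakly* null; so the full sequence $\seqn{\mu_n/\big\|\mu_n\big\|}$ need not itself be an fsJN-sequence, although a subsequence is, which is all that the conclusion (the fsJNP, hence by Theorem \ref{theorem:ell1_grothendieck_equiv_no_fsjnp} the failure of the $\ell_1$-Grothendieck property) requires.
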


\begin{proposition}\label{prop:2fsjnp_no_nik}
Let $\aA$ be a Boolean algebra such that $St(\aA)$ admits an fsJN-sequence $\seqn{\mu_n}$ of the form $\mu_n=\frac{1}{2}\big(\delta_{x_n}-\delta_{y_n}\big)$. Then, the sequence $\seqn{n\cdot\mu_n}$ is pointwise convergent but not uniformly bounded.

In other words, if there is an fsJN-sequence on $St(\aA)$ with
bounded  sizes of supports, then $\aA$ has neither the \gr property, nor the \nik property. \noproof
\end{proposition}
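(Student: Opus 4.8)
The plan is to check the two asserted properties of $\seqn{n\cdot\mu_n}$ directly and then read off the consequences for the \gr and \nik properties. The first property is immediate: since $\mu_n=\frac12\big(\delta_{x_n}-\delta_{y_n}\big)$ has $\big\|\mu_n\big\|=1$ for every $n\io$, we get $\big\|n\cdot\mu_n\big\|=n$, so $\sup_{n\io}\big\|n\cdot\mu_n\big\|=\infty$ and the sequence $\seqn{n\cdot\mu_n}$ is not uniformly bounded.

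The substantive point is pointwise convergence on $\aA$, and here I would use that $St(\aA)$ is totally disconnected, so for every $A\iA$ the characteristic function $\chi_{[A]_\aA}$ of the corresponding clopen set is continuous on $St(\aA)$. Consequently
\[
\mu_n(A)=\mu_n\big(\chi_{[A]_\aA}\big)=\tfrac12\big(\chi_{[A]_\aA}(x_n)-\chi_{[A]_\aA}(y_n)\big)\in\big\{-\tfrac12,\,0,\,\tfrac12\big\}
\]
for every $n\io$. Since $\seqn{\mu_n}$ is weakly* null and $\chi_{[A]_\aA}\in C(St(\aA))$, we have $\lim_{n\to\infty}\mu_n(A)=0$; but the values $\mu_n(A)$ lie in a discrete subset of $\R$, so in fact $\mu_n(A)=0$ for all but finitely many $n$, whence $n\cdot\mu_n(A)=0$ for all sufficiently large $n$ and therefore $\lim_{n\to\infty}n\cdot\mu_n(A)=0$. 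Thus $\seqn{n\cdot\mu_n}$ is pointwise convergent (to the zero measure on $\aA$). A pointwise convergent sequence is in particular pointwise bounded, so $\seqn{n\cdot\mu_n}$ is a pointwise bounded but not uniformly bounded sequence of finitely supported measures on $\aA$; by the characterization of the \nik property recalled above, $\aA$ does not have it.

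For the \gr property I would argue directly from $\seqn{\mu_n}$: it is weakly* null with $\big\|\mu_n\big\|=1$ for every $n\io$, so if it were also weakly null then, since $\mu_n\in\ell_1(St(\aA))$ and $\ell_1(St(\aA))$ has the Schur property, we would get $\big\|\mu_n\big\|\to0$, a contradiction; hence $\seqn{\mu_n}$ is a weakly* null sequence that is not weakly null, and $\aA$ has no \gr property (this could equally be quoted from Corollary \ref{cor:gr_no_fsJNP}). Finally, to obtain the ``in other words'' reformulation, note that if $St(\aA)$ carries an fsJN-sequence with $\big|\supp\big(\mu_n\big)\big|\le M$ for some fixed $M\io$ and all $n\io$, then $\lim_{n\to\infty}\big|\supp\big(\mu_n\big)\big|\neq\infty$, so by Theorem \ref{theorem:sizes_of_supps} there is an fsJN-sequence on $St(\aA)$ of the required two-point form, and the preceding argument applies. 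I do not expect a genuine obstacle here; the only step that must not be overlooked is the use of total disconnectedness of $St(\aA)$, which is precisely what makes the range of the sequence $\seqn{\mu_n(A)}$ discrete, hence eventually zero, and which is why the statement is phrased for Boolean algebras rather than arbitrary compact spaces.
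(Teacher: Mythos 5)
Your proof is correct and is essentially the argument the paper intends: the paper marks this proposition \noproof, calling it straightforward apart from the appeal to Theorem \ref{theorem:sizes_of_supps} for the bounded-supports reformulation, which you make correctly. The key observation that $\mu_n(A)\in\{-1/2,0,1/2\}$ forces $\mu_n(A)=0$ eventually (so that $n\cdot\mu_n(A)\to 0$) is exactly the point of phrasing the statement for Boolean algebras, and your remaining steps (non-uniform boundedness from $\|n\cdot\mu_n\|=n$, failure of the Grothendieck property via the Schur property) are all sound.
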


\begin{corollary}\label{cor:nik_no_gr}
If a Boolean algebra $\aA$ has the Nikodym property but not the $\ell_1$-Grothendieck property, then for every fsJN-sequence $\seqn{\mu_n}$ on $St(\aA)$ we have $\lim_{n\to\infty}\big|\supp\big(\mu_n\big)\big|=\infty$. \noproof
\end{corollary}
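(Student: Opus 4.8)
The plan is to derive the statement as a purely formal consequence of the dichotomy in Theorem \ref{theorem:sizes_of_supps} together with Proposition \ref{prop:2fsjnp_no_nik}, using Theorem \ref{theorem:ell1_grothendieck_equiv_no_fsjnp} to translate the hypothesis into a statement about the fsJNP. First I would record that, since $\aA$ does not have the $\ell_1$-Grothendieck property, the equivalence (1)$\Leftrightarrow$(3) of Theorem \ref{theorem:ell1_grothendieck_equiv_no_fsjnp} gives that $St(\aA)$ has the fsJNP. In particular fsJN-sequences on the compact space $K:=St(\aA)$ exist, so Theorem \ref{theorem:sizes_of_supps} applies to $K$.

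Next I would argue by contradiction. Suppose there is some fsJN-sequence $\seqn{\mu_n}$ on $St(\aA)$ for which $\lim_{n\to\infty}\big|\supp\big(\mu_n\big)\big|=\infty$ fails. Then the second alternative in the dichotomy of Theorem \ref{theorem:sizes_of_supps} does not hold, so its first alternative must hold: there exists an fsJN-sequence $\seqn{\nu_n}$ on $St(\aA)$ with $\nu_n=\frac{1}{2}\big(\delta_{x_n}-\delta_{y_n}\big)$ for some $x_n,y_n\in St(\aA)$ and every $n\io$ (necessarily $x_n\neq y_n$, since $\big\|\nu_n\big\|=1$).

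Finally I would apply Proposition \ref{prop:2fsjnp_no_nik} to this sequence $\seqn{\nu_n}$: the sequence $\seqn{n\cdot\nu_n}$ is then pointwise convergent on $\aA$ but not uniformly bounded, so $\aA$ has neither \grp\ nor \nikp. This contradicts the hypothesis that $\aA$ has \nikp, and the contradiction completes the proof. There is no genuine obstacle here: the whole argument is bookkeeping, and the only point requiring a moment's care is verifying that Theorem \ref{theorem:sizes_of_supps} is actually available, i.e. that the failure of the $\ell_1$-Grothendieck property really does supply the fsJNP of $St(\aA)$ — which is exactly Theorem \ref{theorem:ell1_grothendieck_equiv_no_fsjnp}.
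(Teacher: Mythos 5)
Your proof is correct and follows exactly the route the paper intends: the paper leaves the corollary unproved but remarks that it follows from Proposition \ref{prop:2fsjnp_no_nik} together with an appeal to Theorem \ref{theorem:sizes_of_supps}, which is precisely your contradiction argument. (The invocation of Theorem \ref{theorem:ell1_grothendieck_equiv_no_fsjnp} is harmless but not strictly needed, since the assumed existence of an fsJN-sequence with bounded supports already gives the fsJNP required by Theorem \ref{theorem:sizes_of_supps}.)
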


In Section \ref{section:sizes_two_examples}, we mentioned that the density Boolean algebra $\dD$ and Schachermayer's algebra $\sS$ have the following properties:
\begin{itemize}
    \item their Stone spaces do not have any non-trivial convergent sequences;
    \item every infinite subset of their Stone spaces contains a subset $Y$ such that $\ol{Y}$ is homeomorphic to $\bo$;
    \item they have the fsJNP;
    \item every fsJN-sequence on $St(\dD)$ has supports with cardinalities convergent to $\infty$, while $St(\sS)$ has an fsJN-sequence with supports of size $2$.
\end{itemize}
As said above, the Jordan algebra $\jJ$ has the Nikodym property, so its Stone space lacks any non-trivial convergent sequences, and it has the fsJNP. Schachermayer \cite[Proposition 3.11]{Sch82} provided also a proof that if $X$ is an infinite subset of $St(\jJ)$, then there exists a subset $Y$ of $X$ such that $\ol{Y}^{St(\jJ)}$ is homeomorphic to $\bo$. 
Regarding sizes of supports of fsJN-sequences on $St(\jJ)$, by Corollary \ref{cor:nik_no_gr}, $\jJ$ must necessarily have the same property as $\dD$.

\begin{proposition}
Let $\seqn{\mu_n}$ be an fsJN-sequence on $St(\jJ)$. Then, $\lim_{n\to\infty}\big|\supp\big(\mu_n)\big|=\infty$. \noproof
\end{proposition}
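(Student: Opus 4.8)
The proof is essentially an application of Corollary \ref{cor:nik_no_gr}, so the plan is simply to verify that the Jordan algebra $\jJ$ satisfies its two hypotheses. First I would recall that $\jJ$ has the Nikodym property: this is Schachermayer's theorem \cite[Proposition 3.2]{Sch82}. Second, I would note that $\jJ$ does \emph{not} have the $\ell_1$-Grothendieck property; indeed, as already observed in the discussion preceding this proposition (point (3) in the list above, again from \cite[Proposition 3.2]{Sch82}), $\jJ$ has the fsJNP, and by Theorem \ref{theorem:ell1_grothendieck_equiv_no_fsjnp} the fsJNP of $St(\jJ)$ is equivalent to the failure of the $\ell_1$-Grothendieck property of $\jJ$.

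Having checked both hypotheses, Corollary \ref{cor:nik_no_gr} applies verbatim and yields that for every fsJN-sequence $\seqn{\mu_n}$ on $St(\jJ)$ one has $\lim_{n\to\infty}\big|\supp\big(\mu_n\big)\big|=\infty$, which is the claim.

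For completeness I would also unwind why Corollary \ref{cor:nik_no_gr} gives this, since the contrapositive is short and illuminating: suppose toward a contradiction that $\lim_{n\to\infty}\big|\supp\big(\mu_n\big)\big|=\infty$ fails for some fsJN-sequence $\seqn{\mu_n}$ on $St(\jJ)$. Then there is $M\io$ and a subsequence $\seqk{\mu_{n_k}}$ with $\big|\supp\big(\mu_{n_k}\big)\big|\le M$ for all $k$; passing to a further subsequence we may assume $\big|\supp\big(\mu_{n_k}\big)\big|$ is constantly equal to some $M'\le M$. By Theorem \ref{theorem:sizes_of_supps} (or directly Lemma \ref{lemma:supports_bounded_sizes_2}) there is then an fsJN-sequence $\seqn{\nu_n}$ on $St(\jJ)$ with $\nu_n=\frac{1}{2}\big(\delta_{x_n}-\delta_{y_n}\big)$ for each $n\io$. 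Proposition \ref{prop:2fsjnp_no_nik} now produces the sequence $\seqn{n\cdot\nu_n}$ of finitely supported measures which is pointwise convergent on $\jJ$ but not uniformly bounded, contradicting the Nikodym property of $\jJ$.

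There is no genuine obstacle here: all the real work has already been done in Theorem \ref{theorem:sizes_of_supps}, Proposition \ref{prop:2fsjnp_no_nik} and Corollary \ref{cor:nik_no_gr}, and the only new input is the citation of Schachermayer's two results about $\jJ$ (that it has the Nikodym property and fails the $\ell_1$-Grothendieck property). The statement is therefore immediate.
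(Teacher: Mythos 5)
Your proposal is correct and follows exactly the route the paper intends: the proposition is stated with no written proof precisely because it is the instance of Corollary \ref{cor:nik_no_gr} for $\jJ$, whose two hypotheses (the Nikodym property and the failure of the $\ell_1$-Grothendieck property) are Schachermayer's results recalled in the preceding discussion. Your unwinding of the contrapositive via Theorem \ref{theorem:sizes_of_supps} and Proposition \ref{prop:2fsjnp_no_nik} is also accurate, so there is nothing to add.
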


\section*{Part III. Examples of classes of spaces with the fsJNP}

\section{Systems of simple extensions and the fsJNP}

In this section we will show, combining several already known results, that the limit of every inverse system of simple extensions of compact spaces has the fsJNP (Theorem \ref{theorem:inverse_fsjnp}). This yields a corollary that many consistent examples of Efimov spaces from the literature (e.g. \cite{Fed76}, \cite{DPM09}, \cite{DS13}), constructed under such axioms as the Continuum Hypothesis or Martin's axiom, have the fsJNP as well. In Subsection \ref{section:tau_simple_ext} we will generalize this result.

\subsection{Systems of simple extensions\label{section:simple_ext}}


We start this subsection with recalling what an inverse system of simple extensions is. For general information on limits of inverse systems, see Engelking \cite[Chapters 2.5 and 3.2]{Eng89}.

\begin{definition}\label{def:inv_sys_simple_ext}
An inverse system $\big\langle K_\alpha,\pi_\alpha^\beta\colon\alpha\le\beta\le\delta\big\rangle$ of totally disconnected compact spaces is \textit{a system of simple extensions} if
\begin{itemize}
    \item it is \textit{continuous}, i.e. for every limit ordinal $\gamma\le\delta$ the space $K_\gamma$ is the limit of the inverse system $\big\langle K_\alpha,\pi_\alpha^\beta\colon\alpha\le\beta\le\gamma\big\rangle$,
    \item $K_0=2^\omega$ and each $K_\alpha$ is \textit{perfect}, i.e. has no isolated points,
    \item for every $\alpha<\delta$ the space $K_{\alpha+1}$ is \textit{a simple extension of }$K_\alpha$, i.e. there is $x_\alpha\in K_\alpha$ such that $\Big|\big(\pi_\alpha^{\alpha+1}\big)^{-1}\big(x_\alpha\big)\Big|=2$ and for every $y\in K_\alpha\sm\big\{x_\alpha\big\}$ it holds $\Big|\big(\pi_\alpha^{\alpha+1}\big)^{-1}(y)\Big|=1$.
\end{itemize}
\end{definition}

To prove the main result of this section, Theorem \ref{theorem:inverse_fsjnp}, we also need to provide several definitions concerning complexity of probability measures on compact spaces.

\begin{definition}
\textit{The Maharam type} of a probability measure $\mu$ on a compact space $K$ is the minimal cardinality of a family $\cC$ of Borel subsets of $K$ such that for every Borel subset $B$ of $K$ and $\eps>0$ there exists $C\in\cC$ such that $\mu(B\triangle C)<\eps$.
\end{definition}

\noindent Equivalently, the Maharam type of a probability measure $\mu$ is the density of the Banach space $L_1(\mu)$ of all $\mu$-integrable functions. For more information on the topic, see Maharam \cite{Mah42}, Fremlin \cite{Fre89}, or Plebanek and Sobota \cite{PS14}.

A notion closely related to the countable Maharam type is the uniform regularity, introduced by Babiker \cite{Bab77} and later studied by Pol \cite{Pol82} and Mercourakis \cite{Mer96}; see also Krupski and Plebanek \cite{KP11}.

\begin{definition}\label{def:unif_reg_measure}
A probability measure $\mu$ on a compact space $K$ is \textit{uniformly regular} if there exists a countable family $\cC$ of zero subsets of $K$ such that for every open subset $U$ of $K$ and every $\eps>0$ there exists $F\in\cC$ such that $F\sub U$ and $\mu(U\sm F)<\eps$.
\end{definition}

\noindent Uniformly regular measures are also  called
\textit{strongly countably determined} (cf. Pol \cite{Pol82}). Note
that every uniformly regular probability measure $\mu$ has
necessarily separable support and countable Maharam type.

It is an easy fact that every zero set in a normal space is a closed $\G_\delta$-set,
 thus the definition of uniformly regular measures may be stated in
 terms of closed $\G_\delta$-sets. Recall that a subset $Y$ of a  space $K$
 is \textit{a $\G_\delta$-subset} if there exists a countable collection $\uU$ of open
  subsets of $K$ such that $Y=\bigcap\uU$. An element $x\in X$ is called \textit{a $\G_\delta$-point}
  if $\{x\}$ is a   $\G_\delta$-subset of $X$.

\begin{proposition}\label{prop:unif_reg_g_delta}
Let $\mu$ be a uniformly regular measure on a compact space $K$ and $x\in K$ be such that $\mu(\{x\})>0$. Then, $x$ is a $\G_\delta$-point.
\end{proposition}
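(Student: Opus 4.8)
The plan is to use directly the countable family of zero sets witnessing uniform regularity. Write $a=\mu(\{x\})>0$ and fix a countable family $\cC$ of zero subsets of $K$ as in Definition \ref{def:unif_reg_measure}. The crucial observation I would establish first is this: \emph{for every open $U\sub K$ with $x\in U$ there is $F\in\cC$ such that $x\in F\sub U$.} Indeed, applying uniform regularity to $U$ and $\eps=a/2$ yields $F\in\cC$ with $F\sub U$ and $\mu(U\sm F)<a/2$. If $x\notin F$ then, $F$ being closed, $U\sm F$ is an open (hence Borel) neighbourhood of $x$, so $a=\mu(\{x\})\le\mu(U\sm F)<a/2$, a contradiction; therefore $x\in F$.

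Next I would consider the countable family $\cC_x=\{F\in\cC\colon x\in F\}$ and verify that $\bigcap\cC_x=\{x\}$. The inclusion $\{x\}\sub\bigcap\cC_x$ is immediate. For the reverse inclusion, given $y\in K\sm\{x\}$, use that $K$ is Hausdorff to pick an open $U$ with $x\in U$ and $y\notin U$; the observation above provides $F\in\cC$ with $x\in F\sub U$, so $F\in\cC_x$ while $y\notin F$, hence $y\notin\bigcap\cC_x$.

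Finally, since $K$ is compact Hausdorff and therefore normal, every zero subset of $K$ is a closed $\G_\delta$-subset of $K$ (the easy fact recalled just before Proposition \ref{prop:unif_reg_g_delta}). Thus each member of $\cC_x$ is $\G_\delta$, and consequently $\{x\}=\bigcap\cC_x$, being a countable intersection of $\G_\delta$-sets, is itself a $\G_\delta$-subset of $K$; that is, $x$ is a $\G_\delta$-point. I do not expect any serious obstacle in this argument — the only step requiring a moment's care is the crucial observation, namely that the approximating zero set $F$ must actually contain $x$, which works precisely because $\mu(\{x\})>\eps$ once $\eps$ is chosen strictly below $a=\mu(\{x\})$.
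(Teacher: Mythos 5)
Your proof is correct and follows essentially the same route as the paper: both consider the subfamily $\cC_x=\{F\in\cC\colon x\in F\}$, show $\bigcap\cC_x=\{x\}$ by separating any $y\neq x$ from $x$ with an open set and noting that any zero set approximating that set with error below $\mu(\{x\})$ must contain $x$, and conclude via the fact that a countable intersection of closed $\G_\delta$-sets is $\G_\delta$. Your version is in fact marginally cleaner, since the paper additionally shrinks the separating neighbourhood $U$ so that $\mu(U\sm\{x\})<\mu(\{x\})/3$, a step your argument shows to be unnecessary.
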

\begin{proof}
Let $\cC$ be a countable collection of zero sets (closed $\G_\delta$'s) witnessing that $\mu$ is uniformly regular. Put $\cC'=\big\{F\in\cC\colon\ x\in F\big\}$. It follows that $\{x\}=\bigcap\cC'$. To see this, assume that there is $y\in\bigcap\cC'$ such that $x\neq y$. Put $\eps=\mu(\{x\})$; so $\eps>0$. Using the regularity of $\mu$, it is easy to see that there is an open neighborhood $U$ of $x$ not containing $y$ and such that $\mu(U\sm\{x\})<\eps/3$. Note that $\eps\le\mu(U)<4\eps/3$. However, there is no $F\in\cC$ such that $F\sub U$ and $\mu(U\sm F)<\eps/3$, since otherwise $x\in F$ and hence $y\in F\in\cC'$ and $y\in U$, which is a contradiction. Since the intersection of a countable collection of $\G_\delta$-sets is $\G_\delta$, $x$ is a $\G_\delta$-point.
\end{proof}

\begin{remark}\label{remark:atom_unif_reg_conv_seq}
Note that from Proposition \ref{prop:unif_reg_g_delta} it immediately follows that if a uniformly regular measure on a compact space $K$ has an atom (i.e. it does not vanish on points), then $K$ contains a non-trivial convergent sequence.
\end{remark}

\begin{definition}
A probability measure $\mu$ on a compact space $K$  \textit{admits a
uniformly distributed sequence $\seqn{x_n\in K}$} if
$\frac{1}{n}\sum_{i=0}^{n-1}\delta_{x_i}$ converges weakly* to
$\mu$. We then say that $\seqn{x_n}$ is \textit{$\mu$-uniformly
distributed}. 
\end{definition}

Uniformly distributed sequences constitute a useful tool for
investigating various properties of probability measures as they
allow to treat those measure in a way similar to the classical
Jordan measure on the real line, see e.g. the monograph of Kuipers
and Niederreiter \cite{KN74}, Losert \cite{Los78,Los79}, or
Mercourakis \cite{Mer96}.


Recall that a sequence $\seqn{x_n}$ in a space $X$ is \textit{injective} if $x_n\neq x_{n'}$ for every $n\neq n'\io$. The following proposition will be crucial for the proof of the main theorem of this section.

\begin{proposition}\label{prop:unif_reg_meas_unif_distr_injective_seq}
If $\mu$ is a non-atomic uniformly regular measure on a compact space $K$, then $\mu$ admits a uniformly distributed injective sequence.
\end{proposition}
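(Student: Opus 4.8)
The plan is to first produce *some* $\mu$-uniformly distributed sequence and then perturb it into an injective one, using the non-atomicity of $\mu$ to absorb the perturbation. For the existence of a uniformly distributed sequence at all, I would invoke the standard fact (see Kuipers--Niederreiter \cite{KN74}, or Losert \cite{Los78}) that a probability measure $\mu$ on a compact space $K$ admits a uniformly distributed sequence whenever $L_1(\mu)$ is separable, equivalently whenever $\mu$ has countable Maharam type; and every uniformly regular measure has countable Maharam type (this is remarked right after Definition \ref{def:unif_reg_measure}). So fix a $\mu$-uniformly distributed sequence $\seqn{y_n}$, not necessarily injective. The task is to replace each $y_n$ by a nearby point $x_n$ so that the $x_n$'s are pairwise distinct while $\frac1n\sum_{i<n}\delta_{x_i}$ still converges weakly* to $\mu$.

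The key step is the perturbation. Since $\mu$ is uniformly regular, fix a countable family $\cC=\{F_k\colon k\io\}$ of closed $\G_\delta$ (zero) sets witnessing uniform regularity, and for each $k$ fix a sequence of open sets shrinking to $F_k$. I would build the $x_n$'s recursively: having chosen $x_0,\dots,x_{n-1}$ distinct, I want $x_n$ close to $y_n$ (say inside a prescribed open neighbourhood $U_n\ni y_n$ with $U_n$ chosen to shrink appropriately — e.g. so that $U_n$ is contained in all the ``first $n$'' members of a fixed countable open base-like family, to guarantee $x_n$ and $y_n$ are eventually in the same basic sets) and with $x_n\notin\{x_0,\dots,x_{n-1}\}$. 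The point is that such a choice is always possible: if $U_n$ could meet $\{x_0,\dots,x_{n-1}\}$ only, then $U_n$ is a finite set, so it contains an isolated point of $K$, and since $\mu$ is non-atomic and uniformly regular, Remark \ref{remark:atom_unif_reg_conv_seq} and Proposition \ref{prop:unif_reg_g_delta} are the relevant obstructions — more precisely, a non-atomic uniformly regular measure has no atoms, but I actually need that $U_n$ is infinite. If $K$ has an isolated point $p$ with $\mu(\{p\})=0$ there is a subtlety; I would handle it by first passing to $\supp(\mu)$, which is separable (uniform regularity forces separable support), is non-atomic in the inherited measure, and is perfect because a non-atomic measure assigns $0$ to every isolated point, so its support has none. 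Working inside $\supp(\mu)$ every non-empty open set is infinite, so the recursion never gets stuck.

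Finally I would verify that the resulting sequence is still $\mu$-uniformly distributed. This is where I use that $x_n$ and $y_n$ lie in the same members of a fixed countable family $\cC$ generating the measure algebra (from uniform regularity), together with the standard Weyl-type criterion: a sequence $\seqn{x_n}$ is $\mu$-uniformly distributed iff $\frac1n\sum_{i<n}\chi_F(x_i)\to\mu(F)$ for each $F$ in a family that is dense in the measure algebra, e.g. the boundaries-of-small-measure sets or the Jordan-type sets associated to $\cC$. Because $x_n$ and $y_n$ agree on membership in each such $F$ for all sufficiently large $n$, the Cesàro averages for $\seqn{x_n}$ and $\seqn{y_n}$ differ by $o(1)$, so the former converges to the same limit $\mu(F)$. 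The main obstacle is making the perturbation precise enough that this ``agree on a measure-dense family eventually'' property holds: I expect to need to fix once and for all a countable family generating the Maharam algebra whose elements are continuity sets for $\mu$ (sets $A$ with $\mu(\partial A)=0$), arrange via uniform regularity that their topological boundaries are small, and choose $U_n$ to avoid those boundaries and to lie inside the ``$n$-th'' stage of the generating family — a bookkeeping argument rather than a deep one, but the place where care is required.
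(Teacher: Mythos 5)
Your argument reaches the right conclusion, but by a genuinely different and longer route than the paper. The paper's proof is a two-line measure-theoretic argument: by Mercourakis \cite[Corollary 2.8]{Mer96}, uniform regularity of $\mu$ implies that the set $S\sub K^\omega$ of $\mu$-uniformly distributed sequences has full $\mu^\infty$-measure, while non-atomicity implies (via Fubini applied to the closed diagonal) that the set of non-injective sequences is $\mu^\infty$-null; an injective uniformly distributed sequence then exists because a set of full measure cannot be contained in a null set. Your existence-plus-perturbation scheme avoids the ``almost every sequence is uniformly distributed'' statement at the cost of the recursive bookkeeping you describe. That bookkeeping does work, and is cleanest with the one-sided portmanteau criterion: fix a countable uniformly regular family $\cC$ of zero sets and, for each $F\in\cC$, a decreasing sequence of open sets $W^F_m\supseteq F$ with $\bigcap_m\ol{W^F_m}=F$; it then suffices that $\liminf_n\frac{1}{n}\sum_{i<n}\chi_{W}(x_i)\ge\mu(W)$ for the countably many $W=W^F_m$, so you only need $x_i\in W$ whenever $y_i\in W$ for all $i$ beyond the index of $W$ in a fixed enumeration, rather than the two-sided agreement on continuity sets you sketch. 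Your reduction to $\supp(\mu)$ (perfect by non-atomicity, so the recursion never runs out of room) is the right fix for injectivity, provided the initial uniformly distributed sequence is also taken inside $\supp(\mu)$, which it need not be automatically.

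One genuine caution: the ``standard fact'' you invoke for the existence step --- that countable Maharam type, i.e.\ separability of $L_1(\mu)$, already guarantees a uniformly distributed sequence --- is not what Kuipers--Niederreiter \cite{KN74} or Losert \cite{Los78} prove, and it should not be treated as known in that generality; the existence of uniformly distributed sequences for arbitrary measures of countable type is a delicate question going back to \cite{Mer96}. What is available, and is exactly what the paper uses, is Mercourakis' theorem for \emph{uniformly regular} measures. Since uniform regularity is the hypothesis of the proposition, your first step goes through once you cite that result instead, but as written the justification overreaches.
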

\begin{proof}
Let $\mu^\infty$ be the product measure on the countable product space $K^\omega$ induced by $\mu$. \cite[Corollary 2.8]{Mer96} implies that $\mu^\infty(S)=1$, where $S$ denotes the subspace of $K^\omega$ consisting of all $\mu$-uniformly distributed sequences in $K$. Since by the non-atomicity of $\mu$ the subspace $T$ of $K^\omega$ consisting of all non-injective sequences in $K$ satisfies $\mu(T)=0$, it follows that there exists a $\mu$-uniformly distributed sequence in $S$ which is injective.
\end{proof}

\begin{proposition}\label{prop:uds_fsjnp}
Let $\mu$ be a probability measure on a compact space $K$ and $\seqn{x_n}$ be a $\mu$-uniformly distributed injective sequence. Then, $K$ has the fsJNP.
\end{proposition}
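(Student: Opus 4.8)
The plan is to exhibit the required fsJN-sequence explicitly in terms of the given sequence $\seqn{x_n}$. For $n\geq 1$ I would set
\[
\nu_n=\frac{1}{2n}\Big(\sum_{i=0}^{n-1}\delta_{x_i}-\sum_{i=n}^{2n-1}\delta_{x_i}\Big),
\]
that is, the normalized difference of the empirical (Cesàro) measures of two consecutive length-$n$ blocks along the sequence. (The indexing from $n=1$ is immaterial; one may reindex to get a sequence indexed by $\omega$.) The point is that the two blocks are disjoint, so no cancellation occurs when passing to the canonical form of $\nu_n$.

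The first step is to check that $\big\|\nu_n\big\|=1$ and that $\nu_n$ is finitely supported. Here the injectivity of $\seqn{x_n}$ is used: the points $x_0,\dots,x_{2n-1}$ are pairwise distinct, so in canonical form $\nu_n$ has exactly $2n$ atoms, each with coefficient $\pm 1/(2n)$, whence $\big\|\nu_n\big\|=2n\cdot\frac{1}{2n}=1$ and $\supp\big(\nu_n\big)=\big\{x_0,\dots,x_{2n-1}\big\}$ is finite. (In passing this yields $\big|\supp\big(\nu_n\big)\big|=2n\to\infty$, which is consistent with Theorem \ref{theorem:sizes_of_supps} and shows this construction falls in case (2) of that dichotomy.)

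The second step is weak* nullity. Write $\sigma_N=\frac1N\sum_{i=0}^{N-1}\delta_{x_i}$; the hypothesis that $\seqn{x_n}$ is $\mu$-uniformly distributed says precisely that $\sigma_N(f)\to\mu(f)$ for every $f\in C(K)$. For fixed $f\in C(K)$, from $2n\,\sigma_{2n}(f)=n\,\sigma_n(f)+\sum_{i=n}^{2n-1}f(x_i)$ one gets $\frac1n\sum_{i=n}^{2n-1}f(x_i)=2\sigma_{2n}(f)-\sigma_n(f)$, hence
\[
\nu_n(f)=\tfrac12\sigma_n(f)-\tfrac12\big(2\sigma_{2n}(f)-\sigma_n(f)\big)=\sigma_n(f)-\sigma_{2n}(f)\longrightarrow \mu(f)-\mu(f)=0.
\]
Thus $\seqn{\nu_n}$ is an fsJN-sequence on $K$ and $K$ has the fsJNP.

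I do not anticipate any real obstacle: the only subtlety is that the two averaging blocks must be disjoint so that the norm of their difference equals $2$ before normalization, and this is exactly what the injectivity of $\seqn{x_n}$ provides (which is why Proposition \ref{prop:unif_reg_meas_unif_distr_injective_seq} was phrased with an \emph{injective} uniformly distributed sequence). Without injectivity one would only obtain $\big\|\nu_n\big\|\le 1$, and normalization could be problematic if this norm were to degenerate.
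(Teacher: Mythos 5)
Your proposal is correct and follows essentially the same route as the paper: the paper's proof also forms differences of the empirical measures $\frac1N\sum_{i<N}\delta_{x_i}$ at two scales determined by a block partition of $\omega$, uses uniform distribution for weak* nullity and injectivity to bound the norm below by $1/2$ before normalizing. Your choice of the blocks $[0,n)$ and $[n,2n)$ is just a clean special case that makes the norm exactly $1$ and avoids the normalization step.
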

\begin{proof}
Let $\omega=\bigcup_{n\io}P_n$ be a partition of $\omega$ into finite sets such that
\[\max P_n<\min P_{n+1}\quad\text{and}\quad\big|P_n\big|\big/\max P_n\ge1/2\]
for every $n\io$ (see the classical proof that $\sum_{n\io}1/n=\infty$). For every $n\io$ let us write:
\[\nu_n=\frac{1}{\max P_{n+1}}\sum_{k\le\max P_{n+1}}\delta_{x_k}-\frac{1}{\max P_n}\sum_{k\le\max P_n}\delta_{x_k}.\]
Then, by the injectivity of $\seqn{x_n}$,
\[\big\|\nu_n\big\|\ge\frac{1}{\max P_{n+1}}\cdot\big|P_{n+1}\big|\ge1/2.\]
Since either sum in the definition of $\nu_n$'s is weakly* convergent to $\mu$, $\seqn{\nu_n}$ converges weakly* to $0$. Normalizing $\mu_n=\nu_n/\big\|\nu_n\big\|$, $\seqn{\mu_n}$ is an fsJN-sequence on $K$.
\end{proof}

The following theorem was proved by Borodulin-Nadzieja \cite{PBN07} in the language of minimally generated Boolean algebras, the dual notion to the limits of inverse systems of simple extensions, cf. Koppelberg \cite{Kop89}.

\begin{theorem}[Borodulin-Nadzieja]\label{theorem:pbn}
The following assertions hold for every totally disconnected compact space $K$:
\begin{enumerate}
    \item \cite[Theorem 4.6]{PBN07} $K$ carries either a uniformly regular measure or a measure of uncountable Maharam type;
    \item \cite[Theorem 4.9]{PBN07} If $K$ is the limit of an inverse system of simple extensions, then every measure on $K$ has countable Maharam type. In particular, there exists a uniformly regular measure on $K$.\noproof
\end{enumerate}
\end{theorem}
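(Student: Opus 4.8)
The result is due to Borodulin-Nadzieja \cite{PBN07}; the plan below is how we would organize its proof, leaning on \cite{PBN07} for the most delicate step. First note that, granting~(1), the ``in particular'' clause of~(2) is free: a limit $K$ of an inverse system of simple extensions is a nonempty compact space and hence carries probability measures, and once the first part of~(2) is known no measure on $K$ has uncountable Maharam type, so by~(1) a uniformly regular measure on $K$ must exist. Thus the two substantial tasks are the dichotomy~(1) and the first assertion of~(2); we expect~(1) to be the main obstacle, so we describe the proof of the first assertion of~(2) in some detail and only sketch~(1).

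For the first assertion of~(2), let $K$ be the limit of $\big\langle K_\alpha,\pi_\alpha^\beta\colon\alpha\le\beta\le\delta\big\rangle$ with $K_0=\Cantor$ and each $K_{\alpha+1}$ a simple extension of $K_\alpha$ splitting a point $x_\alpha\in K_\alpha$ into $x_\alpha^0,x_\alpha^1$; let $\mu$ be a probability measure on $K$ and $\mu_\alpha$ its image on $K_\alpha$ under the projection $\pi_\alpha$. Since the Maharam type of a measure is the density of its $L_1$-space, and by continuity of the system $L_1(\mu)$ is the $L_1$-closure of $\bigcup_{\alpha\le\delta}\pi_\alpha^{*}L_1(\mu_\alpha)$, it is enough to exhibit a \emph{countable} family of Borel sets whose $\mu$-classes generate the measure algebra of $(K,\mu)$; the clopen subsets of $\Cantor=K_0$, pulled back to $K$, are a countable family of such classes ``coming from stage $0$''. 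The clopen algebra of $K_{\alpha+1}$ is generated over (the pull-back of) that of $K_\alpha$ by one new clopen set $V_{\alpha+1}$ separating $x_\alpha^0$ from $x_\alpha^1$, and modulo pull-backs from $K_\alpha$ this generator is canonical. The local point is that $[V_{\alpha+1}]$ already lies in the measure algebra pulled up from $K_\alpha$ unless $x_\alpha$ is a $\mu_\alpha$-atom that splits into two $\mu_{\alpha+1}$-atoms of positive mass: if $\mu_\alpha(\{x_\alpha\})=0$ one shrinks clopen neighbourhoods of $x_\alpha$ in $K_\alpha$ and obtains $[V_{\alpha+1}]$ as a limit of pull-backs, and if $\mu_\alpha(\{x_\alpha\})>0$ but one of the two fibre points carries no mass then $V_{\alpha+1}$ agrees up to a null set with the pull-back of a Borel subset of $K_\alpha$, so $[V_{\alpha+1}]$ is again a class coming from $K_\alpha$. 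Call the remaining stages \emph{active}.

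It then remains to see that there are only countably many active stages. These index the branching nodes of a binary tree $T$ whose nodes are the atoms occurring along the system (each threaded forward until its next active split), weighted by the mass $w(v)>0$ of the corresponding atom; the roots are the at most countably many atoms of $\mu_0$ on $\Cantor$, and the two children of a branching node have positive masses summing to that of the node. Hence $w$ strictly decreases down every chain of $T$, so every chain of $T$ has countable order type, while every antichain of $T$ has weight at most $1$ by the usual ``disjoint cylinders'' estimate. Consequently, for each $n$ the subtree $T_n=\{v\in T\colon w(v)\ge 1/n\}$ has width at most $n$ and only countable chains, hence is countable, and so $T=\bigcup_{n}T_n$ is countable; therefore only countably many $V_{\alpha+1}$ matter, and together with the clopen subsets of $\Cantor$ they form the desired countable generating family. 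Thus $\mu$ has countable Maharam type.

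Finally, for~(1): if some measure on $K$ has uncountable Maharam type there is nothing to prove, so assume all measures on $K$ have countable type and build a uniformly regular one by transfinite recursion, constructing an increasing continuous chain of subalgebras of $\mathrm{Clop}(K)$, each obtained from its predecessor by adjoining a single element, together with a coherent chain of probability measures, at every successor step choosing the new generator as ``spread out'' as possible for the measure built so far. If this chain can be driven until it exhausts $\mathrm{Clop}(K)$, then the dual inverse system realizes $K$ as a limit of simple extensions in which the successive split points are introduced at countable stages, hence are $\G_\delta$-points whose fibres provide a countable family of zero sets witnessing that the limit measure is uniformly regular; if instead the recursion cannot be continued past some limit stage, the resulting obstruction---a subalgebra admitting no suitably generic one-step extension carrying a measure---is precisely what one needs to manufacture a measure on $K$ of uncountable Maharam type. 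Making ``spread out''/``generic'' precise and analysing this stuck case is the delicate heart of the argument, and here we would follow \cite[Section~4]{PBN07} closely.
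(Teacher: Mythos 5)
The paper does not actually prove this theorem: it is quoted from Borodulin-Nadzieja and stated without proof, with explicit references to \cite[Theorems 4.6 and 4.9]{PBN07}, so there is no internal argument to compare yours against. Judged on its own terms, your treatment of the first assertion of (2) is a sound reconstruction of the standard argument: the measure algebra of $(K,\mu)$ is generated by the classes of the clopen sets of $K_0$ together with the single new generators $[V_{\alpha+1}]$; such a generator is redundant unless the split point $x_\alpha$ is a $\mu_\alpha$-atom breaking into two atoms of positive mass; and the active splits form a weighted binary tree that is countable because weights strictly decrease along chains (so chains are countable) while antichains have total weight at most $1$. The one step worth writing out explicitly is the final count: a downward-closed tree all of whose antichains have size at most $n$ and all of whose chains are countable is countable, because if it were uncountable its height would be at least $\omega_1$ and a bounded-width tree of height $\omega_1$ contains an uncountable branch. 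Your derivation of the ``in particular'' clause of (2) from (1) is also correct.

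The genuine shortfall is part (1): you describe a transfinite-recursion strategy and then explicitly defer ``the delicate heart'' --- making the genericity of the one-step extensions precise and extracting a measure of uncountable Maharam type from the stuck case --- to \cite[Section 4]{PBN07}. As written, that half of the theorem is a citation rather than a proof, and it is the substantially harder half. To be fair, this puts you in exactly the same position as the paper, which cites the entire theorem; but if the goal were a self-contained argument, (1) is where the missing work lies.
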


\noindent Let us recall here that D\v{z}amonja and Plebanek \cite[Lemma 4.1]{DP07} proved that if an inverse system of simple extensions has length at most $\omega_1$, then every measure on its limit is uniformly regular.

We are in the position to prove the main theorem of this section.

\begin{theorem}\label{theorem:inverse_fsjnp}
If $K$ is the limit of an inverse system of simple extensions, then $K$ has the fsJNP.
\end{theorem}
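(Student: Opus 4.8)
The plan is to combine the Borodulin--Nadzieja dichotomy (Theorem \ref{theorem:pbn}) with the tools on uniformly regular measures and uniformly distributed sequences established just above. Since $K$ is the limit of an inverse system of simple extensions, Theorem \ref{theorem:pbn}(2) provides a uniformly regular probability measure $\mu$ on $K$. I would then distinguish two cases according to whether or not $\mu$ is non-atomic.

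In the non-atomic case the argument is immediate: Proposition \ref{prop:unif_reg_meas_unif_distr_injective_seq} yields a $\mu$-uniformly distributed \emph{injective} sequence $\seqn{x_n}$ in $K$, and Proposition \ref{prop:uds_fsjnp} then gives directly that $K$ has the fsJNP.

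In the case where $\mu$ has an atom, say $\mu(\{x\})>0$ for some $x\in K$, Proposition \ref{prop:unif_reg_g_delta} tells us that $x$ is a $\G_\delta$-point of $K$. Here I would use that $K$ is perfect --- this is part of Definition \ref{def:inv_sys_simple_ext}, since $K=K_\delta$ is required to have no isolated points --- so that $x$ is a \emph{non-isolated} $\G_\delta$-point; a routine compactness argument (this is the content of Remark \ref{remark:atom_unif_reg_conv_seq}) produces a sequence $\seqn{x_n}$ in $K\sm\{x\}$ converging to $x$. Then $\mu_n=\frac{1}{2}\big(\delta_{x_n}-\delta_x\big)$ is an fsJN-sequence on $K$ by Lemma \ref{lemma:fsjn_conv_seq}, so $K$ has the fsJNP in this case as well.

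Since each case is closed off by a result already proved in the paper, there is no genuinely hard step remaining in this particular argument --- the difficulty has been front-loaded into Theorem \ref{theorem:pbn}(2) and into Proposition \ref{prop:unif_reg_meas_unif_distr_injective_seq}. The only point that needs a moment's attention is not to overlook that $K$ is perfect, which is precisely what turns ``$x$ is a $\G_\delta$-point'' into ``$K$ has a non-trivial convergent sequence'' in the atomic case.
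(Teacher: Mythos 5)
Your argument is exactly the paper's proof: Theorem \ref{theorem:pbn}(2) supplies a uniformly regular measure, the non-atomic case is handled via Propositions \ref{prop:unif_reg_meas_unif_distr_injective_seq} and \ref{prop:uds_fsjnp}, and the atomic case via Proposition \ref{prop:unif_reg_g_delta} and Remark \ref{remark:atom_unif_reg_conv_seq}. Your explicit remark that perfectness of $K$ is what makes the $\G_\delta$-point non-isolated (and hence the convergent sequence non-trivial) is a small but welcome precision that the paper leaves implicit.
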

\begin{proof}
By Theorem \ref{theorem:pbn}.(2) there exists a uniformly regular
measure $\mu$ on $K$. If  $\mu$ has an atom, then  $K$ contains a
non-trivial convergent sequence by Proposition
\ref{prop:unif_reg_g_delta} and Remark
\ref{remark:atom_unif_reg_conv_seq}, and hence $K$ has trivially the
fsJNP. If on the other hand $\mu$ is non-atomic, then
Proposition \ref{prop:unif_reg_meas_unif_distr_injective_seq} implies that $\mu$ admits a uniformly distributed injective sequence. Now, Proposition \ref{prop:uds_fsjnp} yields an fsJN-sequence on $K$.
\end{proof}

The following corollary generalizes the well-known fact that no minimally generated Boolean algebra has the Grothendieck property.

\begin{corollary}\label{cor:min_gen_no_ell_1_gr}
If $\aA$ is a minimally generated Boolean algebra, then $\aA$ does not have the $\ell_1$-\gr property.\noproof
\end{corollary}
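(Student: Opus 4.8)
The plan is to derive Corollary \ref{cor:min_gen_no_ell_1_gr} by showing that $St(\aA)$ has the fsJNP and then applying the equivalence (1)$\Leftrightarrow$(3) of Theorem \ref{theorem:ell1_grothendieck_equiv_no_fsjnp}. Throughout we take $\aA$ to be infinite, as is customary for minimally generated Boolean algebras (for a finite $\aA$ the space $C(St(\aA))$ is finite-dimensional and even has the Grothendieck property, so nothing is claimed). The core of the argument is just the proof of Theorem \ref{theorem:inverse_fsjnp}: by Stone duality a Boolean algebra is minimally generated precisely when its Stone space is the limit of an inverse system whose successor bonding maps are simple extensions (cf. Koppelberg \cite{Kop89}), and Borodulin-Nadzieja \cite[Theorem 4.9]{PBN07} — in the form valid for all minimally generated Boolean algebras, not merely for the systems normalised as in Definition \ref{def:inv_sys_simple_ext} — gives that every measure on $St(\aA)$ has countable Maharam type.

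Combining this with Theorem \ref{theorem:pbn}.(1), which holds for every totally disconnected compact space, I obtain a uniformly regular measure $\mu$ on $St(\aA)$. First I would split into two cases according to whether $\mu$ has an atom. If $\mu$ has an atom, then $St(\aA)$ contains a non-trivial convergent sequence by Proposition \ref{prop:unif_reg_g_delta} together with Remark \ref{remark:atom_unif_reg_conv_seq}, and Lemma \ref{lemma:fsjn_conv_seq} turns such a sequence into an fsJN-sequence; so $St(\aA)$ has the fsJNP. If $\mu$ is non-atomic, then Proposition \ref{prop:unif_reg_meas_unif_distr_injective_seq} provides a $\mu$-uniformly distributed injective sequence, and Proposition \ref{prop:uds_fsjnp} converts it into an fsJN-sequence on $St(\aA)$; so again $St(\aA)$ has the fsJNP. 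In either case Theorem \ref{theorem:ell1_grothendieck_equiv_no_fsjnp} yields that $\aA$ does not have the $\ell_1$-Grothendieck property, which is the assertion of the corollary. The fact that ``no minimally generated Boolean algebra has the Grothendieck property'' then follows since the $\ell_1$-Grothendieck property is implied by the Grothendieck property.

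The only genuinely delicate point — and where I expect most of the care to go, though it is bookkeeping rather than new mathematics — is reconciling the abstract notion of a minimally generated Boolean algebra with the inputs above. Definition \ref{def:inv_sys_simple_ext}, and hence the statement of Theorem \ref{theorem:pbn}.(2), is restricted to systems whose base is $2^\omega$ and whose intermediate spaces are perfect, whereas a general minimally generated algebra need not be atomless and its generating chain may start from a finite algebra. Two ways around this are available: either cite \cite{PBN07} in the unrestricted form used above, or stay entirely within the paper's formalism by treating the scattered case separately — an infinite scattered compact space carries a non-trivial convergent sequence (the folklore fact recalled at the start of Section \ref{section:plebanek_proof}), which already gives an fsJN-sequence via Lemma \ref{lemma:fsjn_conv_seq} — and, in the non-scattered case, re-indexing the generating chain so that a copy of $2^\omega$ occurs at the bottom and checking that simple extensions and inverse limits preserve perfectness, thereby landing exactly in the scope of Theorem \ref{theorem:inverse_fsjnp}.
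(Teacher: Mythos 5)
Your proposal is correct and follows exactly the route the paper intends: the corollary carries no proof because it is the Stone-dual restatement of Theorem \ref{theorem:inverse_fsjnp} combined with the equivalence of Theorem \ref{theorem:ell1_grothendieck_equiv_no_fsjnp}, and your reconstruction of the argument behind Theorem \ref{theorem:inverse_fsjnp} (a uniformly regular measure via Borodulin-Nadzieja, then the atomic/non-atomic dichotomy) matches the paper's proof step for step. Your extra care in reconciling general minimally generated algebras with the normalisation of Definition \ref{def:inv_sys_simple_ext} (the scattered case, perfectness and the Cantor base) is a sensible piece of bookkeeping that the paper glosses over, but it does not constitute a different approach.
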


As a corollary to Theorem \ref{theorem:inverse_fsjnp} we obtain also that many Efimov spaces constructed in the literature have the fsJNP.

\begin{corollary}\label{cor:efimov_fsjnp}
If $K$ is an Efimov space obtained as the limit of an inverse system of simple extensions, then $K$ has the fsJNP. In particular, the examples of Efimov spaces by Fedorchuk (under $\diamondsuit$; see \cite{Fed76}), Dow and Pichardo-Mendoza (under CH; see \cite{DPM09}), or Dow and Shelah (under Martin's axiom; see \cite{DS13}) have the fsJNP.\noproof
\end{corollary}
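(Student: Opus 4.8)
The plan is extremely short, since the first assertion is an immediate instance of Theorem~\ref{theorem:inverse_fsjnp}: that theorem already yields the fsJNP for \emph{every} compact space which is the limit of an inverse system of simple extensions, regardless of whether it is an Efimov space. The Efimov hypothesis is recorded here only to emphasize that the fsJNP is not confined to the ``trivial'' case in which $K$ carries a non-trivial convergent sequence. Thus, for the first sentence I would simply invoke Theorem~\ref{theorem:inverse_fsjnp}.

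For the ``in particular'' part the plan is to recall the shape of each of the three quoted constructions and to observe that each of them literally matches Definition~\ref{def:inv_sys_simple_ext}. In every case the space is produced by a transfinite recursion which starts with $K_0=2^\omega$ and, at successor stages, passes from $K_\alpha$ to $K_{\alpha+1}$ by a one-point resolution: a single, suitably chosen non-isolated point $x_\alpha\in K_\alpha$ is split into a two-point fibre, all other fibres being singletons, and at limit stages one takes the inverse limit. Fedorchuk's $\diamondsuit$-example from \cite{Fed76} and the Dow--Pichardo-Mendoza example from \cite{DPM09} have exactly this form (equivalently, their dual Boolean algebras are minimally generated in the sense of Koppelberg), and the same holds for the Dow--Shelah example from \cite{DS13}. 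It then remains only to check that each intermediate space $K_\alpha$ is perfect, which is clear: $K_0=2^\omega$ has no isolated points, splitting a non-isolated point of a perfect space into two points again gives a perfect space, and the inverse limits taken at limit stages of such systems remain perfect. Hence each of the three spaces is the limit of an inverse system of simple extensions, and Theorem~\ref{theorem:inverse_fsjnp} applies verbatim.

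There is no genuine mathematical obstacle here; the only point demanding care is bibliographic, namely verifying that in each of \cite{Fed76}, \cite{DPM09} and \cite{DS13} the successor steps really are two-point splittings (rather than more elaborate resolutions, such as splitting a point into a whole Cantor set) and that the points being split are never isolated. In the first two references this is explicit; for the Dow--Shelah construction one can either read off the same structure directly from their recursion or, if preferred, appeal instead to the more general framework developed in Section~\ref{section:tau_simple_ext}, which subsumes that example.
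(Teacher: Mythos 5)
Your proposal is correct and coincides with the paper's treatment: the corollary is stated with no proof precisely because the first assertion is an immediate instance of Theorem~\ref{theorem:inverse_fsjnp}, and the ``in particular'' clause rests only on the (bibliographic) fact that the cited constructions are inverse limits of simple extensions. Your additional care about verifying that the successor steps in \cite{Fed76}, \cite{DPM09} and \cite{DS13} really are two-point splittings of non-isolated points is exactly the right point to check, and adds nothing beyond what the paper implicitly assumes.
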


Let us note that consistently there exist Efimov spaces with the \gr property and hence without the fsJNP, see e.g. Talagrand \cite{Tal80}, Brech \cite{Bre06}, or Sobota and Zdomskyy \cite{SZ19}.

In the next section we will prove that some other classes of Efimov spaces do have the fsJNP, too.

\subsection{$\tau$-simple extensions\label{section:tau_simple_ext}}

The aim of this section is to generalize Theorem \ref{theorem:inverse_fsjnp} to a broader class of inverse systems of compact spaces (however of length at most $\frakc$). We start with the following simple observations.

\begin{lemma}\label{lemma:simple_ext_boundaries}
Let $\big\langle K_\alpha,\pi_\alpha^\beta\colon\alpha\le\beta\le\delta\big\rangle$ be an inverse system of simple extensions. For every $\alpha<\delta$ and every subset $X\sub K_{\alpha+1}$ we have $\partial\pi_\alpha^{\alpha+1}[X]\sm\pi_\alpha^{\alpha+1}[\partial X]\sub\big\{x_\alpha\big\}$.
\end{lemma}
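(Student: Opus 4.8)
The plan is to reduce everything to two elementary facts about the bonding map $\pi:=\pi_\alpha^{\alpha+1}\colon K_{\alpha+1}\to K_\alpha$. First, being a continuous surjection between compact Hausdorff spaces, $\pi$ is closed, so $\ol{\pi[A]}=\pi\big[\ol A\big]$ for every $A\sub K_{\alpha+1}$. Second, by the definition of a simple extension, $\pi^{-1}(z)$ is a singleton whenever $z\in K_\alpha\sm\big\{x_\alpha\big\}$, since $x_\alpha$ is the only point with a two-element fibre.

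With these in hand I would fix $z\in\partial\pi[X]$ with $z\ne x_\alpha$ and let $w$ be the unique point of $K_{\alpha+1}$ with $\pi(w)=z$; it then suffices to show that $w\in\partial X$, since that gives $z=\pi(w)\in\pi[\partial X]$. To obtain $w\in\ol X$: from $z\in\partial\pi[X]\sub\ol{\pi[X]}=\pi\big[\ol X\big]$ together with $\pi^{-1}(z)=\{w\}$ we read off $w\in\ol X$. To obtain $w\in\ol{K_{\alpha+1}\sm X}$: surjectivity of $\pi$ yields $K_\alpha\sm\pi[X]\sub\pi\big[K_{\alpha+1}\sm X\big]$ (any $\pi$-preimage of a point lying outside $\pi[X]$ cannot lie in $X$), hence
\[z\in\ol{K_\alpha\sm\pi[X]}\sub\ol{\pi\big[K_{\alpha+1}\sm X\big]}=\pi\big[\ol{K_{\alpha+1}\sm X}\big],\]
and once more $\pi^{-1}(z)=\{w\}$ forces $w\in\ol{K_{\alpha+1}\sm X}$. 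Combining the two inclusions, $w\in\ol X\cap\ol{K_{\alpha+1}\sm X}=\partial X$, so $z\in\pi[\partial X]$.

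Since $z$ was an arbitrary element of $\partial\pi[X]\sm\big\{x_\alpha\big\}$, this establishes $\partial\pi_\alpha^{\alpha+1}[X]\sm\pi_\alpha^{\alpha+1}[\partial X]\sub\big\{x_\alpha\big\}$. I do not expect any genuine obstacle here: the entire argument is point-set topology, and the only points that deserve to be spelled out are the closedness of $\pi$ (equivalently the identities $\ol{\pi[\,\cdot\,]}=\pi\big[\ol{\,\cdot\,}\big]$, valued on $\ol X$ and on $\ol{K_{\alpha+1}\sm X}$) and the observation that staying off the fibre over $x_\alpha$ is precisely what makes $\pi^{-1}(z)$ a single point, which is where the hypothesis $z\neq x_\alpha$ is used.
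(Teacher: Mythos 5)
Your proof is correct, and it takes a genuinely different route from the paper's. The paper argues by contradiction and localizes: given $x\in\partial\pi_\alpha^{\alpha+1}[X]\sm\pi_\alpha^{\alpha+1}[\partial X]$ with $x\neq x_\alpha$, it picks a clopen $V\sub K_\alpha$ separating $x_\alpha$ from $x$, replaces $X$ by $X'=X\sm\big(\pi_\alpha^{\alpha+1}\big)^{-1}[V]$, and uses that $\pi_\alpha^{\alpha+1}$ restricted to the complement of $\big(\pi_\alpha^{\alpha+1}\big)^{-1}[V]$ is a homeomorphism, under which boundaries correspond exactly --- this exploits the total disconnectedness of the spaces to produce the clopen $V$. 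You instead work pointwise on the fibre over $z$: closedness of the map gives $\ol{\pi[A]}=\pi\big[\ol{A}\big]$, applied once to $X$ and once to $K_{\alpha+1}\sm X$ (via the inclusion $K_\alpha\sm\pi[X]\sub\pi\big[K_{\alpha+1}\sm X\big]$ coming from surjectivity), and the singleton fibre forces the unique preimage $w$ of $z$ into both closures, hence into $\partial X$. Every step checks out. Your argument is slightly more general and more elementary: it needs only that $\pi$ is a closed continuous surjection whose fibre over the point in question is a singleton, and in particular it does not use total disconnectedness; the paper's version is shorter to state but leans on the clopen separation and on the (implicit) verification that cutting away $\big(\pi_\alpha^{\alpha+1}\big)^{-1}[V]$ preserves membership in $\partial\pi_\alpha^{\alpha+1}[X']\sm\pi_\alpha^{\alpha+1}[\partial X']$.
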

\begin{proof}
Fix $\alpha<\delta$ and a subset $X\sub K_{\alpha+1}$. For the sake of contradiction, assume there is $x\in\partial\pi_\alpha^{\alpha+1}[X]\sm\pi_\alpha^{\alpha+1}[\partial X]$ such that $x\neq x_\alpha$. Let $V$ be a clopen subset of $K_\alpha$ such that $x_\alpha\in V$ but $x\not\in V$. Put $X'=X\sm\big(\pi_\alpha^{\alpha+1}\big)^{-1}[V]$. Since $\big(\pi_\alpha^{\alpha+1}\big)^{-1}[V]$ is closed, $x\in\partial\pi_\alpha^{\alpha+1}[X']\sm\pi_\alpha^{\alpha+1}[\partial X']$. But, as $K_{\alpha+1}\sm\pi_\alpha^{\alpha+1}[V]$ is homeomorphic to $K_\alpha\sm V$, we have $\partial\pi_\alpha^{\alpha+1}[X']=\pi_\alpha^{\alpha+1}[\partial X']$, a contradiction.
\end{proof}

\begin{lemma}\label{lemma:simple}
Let $\big\langle K_\alpha,\pi_\alpha^\beta\colon\alpha\le\beta\le\delta\big\rangle$ be an inverse system of simple extensions. Then, $\pi^\beta_\alpha$ is irreducible for any $\alpha<\beta\le\delta$.
\end{lemma}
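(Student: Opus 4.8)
The plan is to use the standard characterization that a continuous surjection $f\colon X\to Y$ of compact spaces is irreducible exactly when no proper closed subset of $X$ maps onto $Y$, and to prove the statement by transfinite induction on $\beta$ (with $\alpha<\beta\le\delta$), resting on two elementary facts: (i) a composition of irreducible surjections is irreducible, and (ii) each one-step bonding map $\pi_\alpha^{\alpha+1}$ is irreducible.

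Fact (ii) is immediate from the definition of a simple extension: given a proper closed set $A\subsetneq K_{\alpha+1}$, the complement $U=K_{\alpha+1}\sm A$ is nonempty and open, and since $K_{\alpha+1}$ is perfect it cannot be contained in the two-point fibre $\big(\pi_\alpha^{\alpha+1}\big)^{-1}(x_\alpha)$; picking $z\in U$ with $\pi_\alpha^{\alpha+1}(z)\neq x_\alpha$ gives $\big(\pi_\alpha^{\alpha+1}\big)^{-1}\big(\pi_\alpha^{\alpha+1}(z)\big)=\{z\}\subseteq U$, so $\pi_\alpha^{\alpha+1}(z)\notin\pi_\alpha^{\alpha+1}[A]$, i.e. $\pi_\alpha^{\alpha+1}[A]\neq K_\alpha$. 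Fact (i) is the usual argument: if $A\subsetneq X$ is closed then $f[A]$ is closed and, by irreducibility of $f$, a proper subset of $Y$, whence $g\big[f[A]\big]\neq Z$ by irreducibility of $g$; and a composition of surjections is a surjection.

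With these in hand I would run the induction, proving that $\pi_\alpha^\beta$ is an irreducible surjection for all $\alpha<\beta$. The successor case is trivial: $\pi_\alpha^{\eta+1}=\pi_\alpha^\eta\circ\pi_\eta^{\eta+1}$, the first factor being an irreducible surjection by the inductive hypothesis and the second by (ii), so (i) applies. The limit case $\beta=\gamma$ is where the real work lies: here $K_\gamma=\varprojlim_{\alpha<\gamma}K_\alpha$, and the projections $\pi_\alpha^\gamma$ are surjective by the standard fact about inverse limits of onto maps between nonempty compact Hausdorff spaces. Fixing $\alpha_0<\gamma$ and a proper closed $A\subsetneq K_\gamma$, choose $p\in K_\gamma\sm A$; because the index set is linearly ordered, every basic open neighbourhood of $p$ has the form $\big(\pi_\beta^\gamma\big)^{-1}[W]$ for a single $\beta<\gamma$ and open $W\subseteq K_\beta$, and we may take $\beta\ge\alpha_0$. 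Taking such a neighbourhood disjoint from $A$, the set $\pi_\beta^\gamma[A]$ is closed and misses the nonempty set $W$, hence is a proper closed subset of $K_\beta$; by the inductive hypothesis $\pi_{\alpha_0}^\beta$ is irreducible, so $\pi_{\alpha_0}^\gamma[A]=\pi_{\alpha_0}^\beta\big[\pi_\beta^\gamma[A]\big]\neq K_{\alpha_0}$. This closes the induction and gives irreducibility of $\pi_\alpha^\beta$ for all $\alpha<\beta\le\delta$.

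The main obstacle is precisely the limit step: one must use the continuity of the inverse system together with the linear order on the index set to reduce a basic open subset of $K_\gamma$ to the preimage of an open set from a single earlier stage, and then invoke the surjectivity of inverse-limit projections; once that reduction is in place, the argument collapses to the inductive hypothesis applied at that earlier stage. Everything else — the single-step case and the composition lemma — is routine.
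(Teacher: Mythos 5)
Your proof is correct and follows essentially the same route as the paper's: the paper likewise reduces, via the continuity of the system, to the irreducibility of the one-step maps $\pi_\alpha^{\alpha+1}$, which it verifies through the clopen-set characterization of irreducibility (by citing the first paragraph of the proof of Proposition \ref{prop:delavega_omega_simple} with $|G_\alpha|=1$), whereas you verify the one-step case directly from perfectness and the fact that fibres over points other than $x_\alpha$ are singletons. The only real difference is that you spell out the transfinite induction, the composition lemma, and the limit step that the paper compresses into the phrase ``by the continuity of this inverse system, it is enough to prove that $\pi_\alpha^{\alpha+1}$ is irreducible.''
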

\begin{proof}
By the continuity of this inverse system, it is enough to prove that $\pi^{\alpha+1}_\alpha$ is irreducible for every $\alpha<\delta$. But this is fairly simple, and is actually proved in the first paragraph of the proof of Proposition \ref{prop:delavega_omega_simple}---one just needs to consider the case when $\big|G_\alpha\big|=1$.
\end{proof}

Let us also note that if $\big\langle
K_\alpha,\pi_\alpha^\beta\colon\alpha\le\beta\le\delta\big\rangle$
is an inverse system of simple extensions, then
$w\big(K_\alpha\big)=w\big(K_{\alpha+1}\big)$ for every
$\alpha<\delta$. Motivated by these three
observations, we introduce the following generalization of systems
of simple extensions.

\begin{definition}\label{def:tau_simple_extensions}
Let $\tau\le\frakc$ be a cardinal number. An inverse system $\big\langle K_\alpha,\pi_\alpha^\beta\colon\alpha\le\beta\le\delta\big\rangle$ of totally disconnected compact spaces is \textit{a system of $\tau$-simple extensions} if
\begin{itemize}
    \item it is continuous,
    \item $K_0=2^\omega$ and each $K_\alpha$ is perfect, i.e. has no isolated
    points,
    \item for every $\alpha<\delta$ the space $K_{\alpha+1}$ is \textit{a $\tau$-simple extension of }$K_\alpha$, i.e. $\Big|\partial\pi_\alpha^{\alpha+1}[U]\sm\pi_\alpha^{\alpha+1}[\partial U]\Big|\le\tau$ for every closed subset $U\sub K_{\alpha+1}$,
    \item the map $\pi_\alpha^\beta$ is irreducible for every $\alpha<\beta\le\delta$,
    \item $w\big(K_\alpha\big)=w\big(K_{\alpha+1}\big)$ for every $\alpha<\delta$.
\end{itemize}
\end{definition}

Theorem \ref{theorem:tau_simple_extensions_fsjnp} states that systems of $\tau$-simple extensions of length at most $\frakc$ have the fsJNP. In order to show this, we need first to prove several technical results.

\begin{lemma}\label{lemma:image_boundary}
Let $K$ and $L$ be two compact spaces and $f\colon K\to L$ a continuous surjection. Assume that for a clopen subset $U\sub K$ the interior $\big(f[U]\cap f[K\sm U]\big)^\circ=\emptyset$. Then, $f[U]\cap f[K\sm U]=\partial f[U]\cup\partial f[K\sm U]$.
\end{lemma}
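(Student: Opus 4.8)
The plan is to prove the two inclusions separately. Write $W=K\sm U$, which is clopen as well, and note that $f[U]$ and $f[W]$ are compact, hence closed, subsets of $L$, and that $f[U]\cup f[W]=f[U\cup W]=f[K]=L$ by surjectivity of $f$. Thus for each of these two closed sets its boundary is obtained simply by removing its interior, e.g. $\partial f[U]=f[U]\sm (f[U])^\circ$.

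For the inclusion $\partial f[U]\cup\partial f[W]\sub f[U]\cap f[W]$ I would argue as follows. Since $f[U]\cup f[W]=L$, the open set $L\sm f[W]$ is contained in $f[U]$, hence $L\sm f[W]\sub (f[U])^\circ$. Consequently
\[
\partial f[U]=f[U]\sm (f[U])^\circ\sub f[U]\sm\big(L\sm f[W]\big)=f[U]\cap f[W],
\]
and by symmetry $\partial f[W]\sub f[U]\cap f[W]$ as well. I would emphasize that this direction uses only the surjectivity of $f$ and the fact that $U$ and $W$ partition $K$ into clopen pieces; the hypothesis on the empty interior is not needed here.

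For the reverse inclusion $f[U]\cap f[W]\sub\partial f[U]\cup\partial f[W]$, take $x\in f[U]\cap f[W]$ and suppose, towards a contradiction, that $x\notin\partial f[U]\cup\partial f[W]$. Since $f[U]$ is closed and $x\in f[U]\sm\partial f[U]$, necessarily $x\in (f[U])^\circ$; likewise $x\in (f[W])^\circ$. Then $x$ belongs to the open set $(f[U])^\circ\cap (f[W])^\circ$, which is contained in $f[U]\cap f[W]$ and therefore in $\big(f[U]\cap f[W]\big)^\circ=\emptyset$ by hypothesis, a contradiction.

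I do not expect any genuine obstacle: the whole statement is a routine point-set topology argument. The only place where care is needed is the bookkeeping — invoking the hypothesis exactly once, in the form $(f[U])^\circ\cap (f[W])^\circ\sub\big(f[U]\cap f[W]\big)^\circ=\emptyset$, and observing that the ``$\sub$'' half of the desired equality comes for free from surjectivity.
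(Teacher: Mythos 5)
Your proof is correct and follows essentially the same route as the paper's: both establish the inclusion $\partial f[U]\cup\partial f[K\sm U]\sub f[U]\cap f[K\sm U]$ using only surjectivity and the closedness of the compact images, and both derive the reverse inclusion from the empty-interior hypothesis by showing that a point interior to both $f[U]$ and $f[K\sm U]$ would force $\big(f[U]\cap f[K\sm U]\big)^\circ\neq\emptyset$. The only differences are cosmetic (you phrase the first inclusion via interiors rather than closures, and the second by contradiction rather than by cases).
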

\begin{proof}
We have:
\[\partial f[U]=\ol{f[U]}\cap\ol{L\sm f[U]}=f[U]\cap\ol{L\sm f[U]}\sub f[U]\cap\ol{f[K\sm U]}=f[U]\cap f[K\sm U],\]
where the only inclusion
follows from the surjectivity of $f$ and the last
equality from the closedness of $f$. We show
similarly that $\partial f[K\sm U]\sub f[U]\cap f[K\sm U]$, whence
we get:
\[f[U]\cap f[K\sm U]\sub\partial f[U]\cup\partial f[K\sm U].\]

Let now $x\in f[U]\cap f[K\sm U]$. By the assumption, for every open neighborhood $V$ of $x$ we have $V\not\subseteq f[U]\cap f[K\sm U]$, so either $V\sm f[U]\neq\emptyset$ or $V\sm f[K\sm U]\neq\emptyset$. If for every $V$ we have $V\sm f[U]\neq\emptyset$, then $x\in\partial f[U]$. 
So let us assume that there exists an open neighborhood $V$ of $x$ such that $V\sm f[U]=\emptyset$, equivalently $V\sub\big(f[U]\big)^\circ$. It follows that $x\in\partial f[K\sm U]$, since otherwise there is  an open neighborhood $W$ of $x$ such that $W\sm f[K\sm U]=\emptyset$, so $W\sub\big(f[K\sm U]\big)^\circ$, and hence:
\[x\in V\cap W\sub f[U]^\circ\cap f[K\sm U]^\circ=\big(f[U]\cap f[K\sm U]\big)^\circ,\]
a contradiction. We get thus:
\[\partial f[U]\cup\partial f[K\sm U]\sub f[U]\cap f[K\sm U].\]
\end{proof}

Proposition \ref{prop:transport_fsjn_seq}, shows how fsJN-sequences may be recovered from the Cantor space via continuous surjections. Note that if for every $n\io$, $i\in 2$ and $s\in 2^n$ we put $x_s^i=s\concat(i)$, where $(i)$ denotes the constant sequence of length $\omega$ all of whose members equal $i$, then the measures defined as
\[\mu_n=\frac{1}{2^{n+1}}\sum_{s\in 2^n}\big(\delta_{x_s^1}-\delta_{x_s^0}\big)\]
form an fsJN-sequence on the Cantor space $\Cantor$. Recall that $\lambda$ denotes the standard product measure on $\Cantor$.

\begin{proposition}\label{prop:transport_fsjn_seq}
Let $Y$ be a totally disconnected compact space and  $f\colon Y\to\Cantor$ a continuous surjection such that $\lambda\big(f[U]\cap f[Y\setminus U]\big)=0$ for every clopen $U\sub Y$. For every $n\io$, $i\in 2$ and $s\in 2^n$ fix $y_s^i\in f^{-1}\big(x^i_s\big)$ and define the measure on $Y$ as follows:
\[\nu_n=\frac{1}{2^{n+1}}\sum_{s\in 2^n}\big(\delta_{y^1_s}-\delta_{y^0_s}\big).\]
Then, the sequence $\seqn{\nu_n}$ is an fsJN-sequence on $Y$.
\end{proposition}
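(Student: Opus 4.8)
The plan is to transport the canonical fsJN-sequence $\seqn{\mu_n}$ on $\Cantor$ (the one displayed just before the statement) through $f$ and verify that the lifted sequence $\seqn{\nu_n}$ satisfies the two requirements of an fsJN-sequence: each $\nu_n$ has norm $1$, and $\seqn{\nu_n}$ is weakly* null. The norm condition is immediate: since the $2^{n+1}$ points $y_s^i$ ($s\in 2^n$, $i\in 2$) are pairwise distinct (they lie over the distinct points $x_s^i\in\Cantor$, because $f(y_s^i)=x_s^i$), the measure $\nu_n$ is a signed combination of $2^{n+1}$ Dirac masses each with coefficient $\pm 1/2^{n+1}$, so $\big\|\nu_n\big\|=1$. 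It therefore remains to prove that $\lim_{n\to\infty}\nu_n(g)=0$ for every $g\in C(Y)$.

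The key step is to reduce weak* convergence to a statement about $\lambda$ on $\Cantor$. For a clopen $U\sub Y$, write $f_U=\chi_{f[U]}\in L_\infty(\lambda)$; the hypothesis $\lambda\big(f[U]\cap f[Y\sm U]\big)=0$ says that $f_U$ and $f_{Y\sm U}=\chi_{f[Y\sm U]}$ agree $\lambda$-a.e.\ with complementary indicator functions, i.e.\ $f[U]$ is, up to a $\lambda$-null set, a clopen set of $\Cantor$. First I would observe that $\nu_n(\chi_U) = \mu_n'\big(f[U]\big)$ where $\mu_n'$ is obtained from $\mu_n$ by a suitable bookkeeping of which $x_s^i$ land in $f[U]$; more precisely, $\nu_n(\chi_U)=\mu_n\big(\chi_{f[U]}\big)$ because each summand $\delta_{y_s^i}$ of $\nu_n$ evaluated on $\chi_U$ equals $\chi_{f[U]}(x_s^i)$ up to the $\lambda$-null ambiguity — and since the points $x_s^i$ are dyadic-rational-type points of $\Cantor$, one must check the null set does not interfere, which is where the ``interior is empty / boundary'' Lemma \ref{lemma:image_boundary} enters. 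Concretely: by Lemma \ref{lemma:image_boundary}, $f[U]\cap f[Y\sm U]=\partial f[U]\cup\partial f[Y\sm U]$, a closed $\lambda$-null set, hence nowhere dense; so $f[U]$ differs from a genuine clopen subset $V_U$ of $\Cantor$ only on a nowhere dense $\lambda$-null set, and the points $x_s^i$ for $n$ large avoid this set in the sense needed. The upshot is that for each fixed clopen $U$ there is $N$ such that for all $n\ge N$, $\nu_n(\chi_U)=\mu_n(\chi_{V_U})$, and since $\seqn{\mu_n}$ is an fsJN-sequence on $\Cantor$, $\mu_n(\chi_{V_U})\to 0$.

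From convergence on characteristic functions of clopen sets one passes to all of $C(Y)$ by a standard density/uniform-approximation argument: $Y$ is totally disconnected and compact, so finite linear combinations of clopen indicators are uniformly dense in $C(Y)$; given $g\in C(Y)$ and $\eps>0$, pick such an approximant $h$ with $\|g-h\|_\infty<\eps$, use $\big\|\nu_n\big\|=1$ to get $\big|\nu_n(g)-\nu_n(h)\big|<\eps$, and use the previous paragraph (applied to the finitely many clopen sets appearing in $h$) to get $\big|\nu_n(h)\big|<\eps$ for $n$ large; hence $\limsup_n\big|\nu_n(g)\big|\le 2\eps$. Letting $\eps\to 0$ gives weak* nullity, and combined with $\big\|\nu_n\big\|=1$ this shows $\seqn{\nu_n}$ is an fsJN-sequence on $Y$.

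The main obstacle I expect is the bookkeeping in the middle step: making precise that the $\lambda$-null ``error set'' $\partial f[U]\cup\partial f[Y\sm U]$ really is irrelevant for the specific points $x_s^i$ used in $\mu_n$, i.e.\ that for $n$ large enough the assignment $x_s^i\mapsto \chi_{f[U]}(x_s^i)$ coincides with $x_s^i\mapsto\chi_{V_U}(x_s^i)$. This needs the observation that $V_U$ can be chosen so that the symmetric difference $f[U]\triangle V_U$ is contained in a clopen set of arbitrarily small $\lambda$-measure (by regularity and the fact that $\mu_n$ averages uniformly over dyadic points), together with the explicit form of $\mu_n$, for which $\big|\mu_n\big|$ restricted to any clopen set $W$ satisfies $\big|\mu_n\big|(W)\le 2\lambda(W)+o(1)$ as $n\to\infty$. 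That quantitative comparison between $\big|\mu_n\big|$ and $\lambda$ on clopen sets of $\Cantor$ is the one genuinely technical ingredient; everything else is routine.
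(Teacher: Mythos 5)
Your proposal follows essentially the same route as the paper's proof: reduce weak* nullity to indicators of clopen sets $U\sub Y$ (Stone--Weierstrass on the totally disconnected compact $Y$), use Lemma \ref{lemma:image_boundary} together with the hypothesis to see that $f[U]\cap f[Y\sm U]$ is a closed $\lambda$-null set, enclose it in a clopen $B\sub\Cantor$ with $\lambda(B)<\eps$, note that off $B$ the membership of $y_s^i$ in $U$ is decided by the clopen sets $f[U]\sm B$ and $f[Y\sm U]\sm B$ (so for large $n$ the two terms of each pair $\big(y_s^0,y_s^1\big)$ cancel), and bound the contribution of the exceptional pairs by $\lambda(B)$ using the explicit form of $\nu_n$. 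One caveat: your intermediate claim that there is an $N$ with $\nu_n(\chi_U)=\mu_n\big(\chi_{V_U}\big)$ \emph{exactly} for all $n\ge N$ is false as stated---a closed nowhere dense $\lambda$-null subset of $\Cantor$ (e.g.\ the set $\big\{1^n0^\omega\colon n\io\big\}\cup\big\{1^\omega\big\}$, whose elements are of the form $x_s^0$) can meet $\big\{x_s^i\colon s\in 2^n,\ i\in 2\big\}$ at every level $n$, so the points $x_s^i$ need not eventually avoid the error set. What saves the argument is exactly the quantitative comparison you sketch in your last paragraph: for $n$ large one has $B=\bigcup_{s\in S_n}[s]$ with $S_n\sub 2^n$, the number of indices $s$ for which some $x_s^i$ lies in $B$ is at most $2\big|S_n\big|$, and hence $\big|\nu_n(U)\big|\le\big|S_n\big|/2^n=\lambda(B)<\eps$ (indeed $\big|\mu_n\big|(W)=\lambda(W)$ exactly for clopen $W$ and large $n$, so your bound $2\lambda(W)+o(1)$ is more than enough). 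With that substitution in place of the exact-equality claim, your argument is the paper's proof.
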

\begin{proof}
We need only to show that $\seqn{\nu_n(U)}$ converges to $0$ for every clopen $U\sub Y$. Fix $\eps>0$. Since $\lambda\big(f[U]\cap f[Y\setminus U]\big)=0$, it follows that $f[U]\cap f[Y\setminus U]$ has empty interior in $\Cantor$. By Lemma \ref{lemma:image_boundary}, there is a clopen set $B\sub\Cantor$ such that $\lambda(B)<\eps$ and
\[\partial f[U]\cup\partial f[Y\sm U]=f[U]\cap f[Y\setminus U]\sub B.\]
It follows that $f[U]\setminus B$ and $f[Y\setminus U]\setminus B$ are also clopen sets. Since
\[f^{-1}\big[f[U]\setminus B\big]\sub U\]
and
\[f^{-1}\big[f[Y\setminus U]\setminus B\big]\sub Y\setminus U,\]
we have that $y^i_s\in U$ if $x^i_s\in f[U]\setminus B$, and $y^i_s\in Y\setminus U$ if $x^i_s\in f[Y\setminus U]\setminus B$. Let $n_0\io$ be such that $x^0_s\in f[U]\setminus B$ if and only if $x^1_s\in f[U]\setminus B$, for all $n\ge n_0$ and $s\in 2^n$. Then,
\[\big|\nu_n(U)\big|=\Big|\frac{1}{2^{n+1}}\sum_{s\in 2^n}\big(\delta_{y^1_s}(U)-\delta_{y^0_s}(U)\big)\Big|=\]
\[\frac{1}{2^{n+1}}\Big|\sum_{s\in 2^n}\big(\delta_{y^1_s}(U)-\delta_{y^0_s}(U)\big)-\sum_{s\in 2^n}\big(\delta_{x^1_s}(f[U]\setminus B)-\delta_{x^0_s}(f[U]\setminus B)\big)\Big|=\]
\[\frac{1}{2^{n+1}}\Big|\sum_{s\in 2^n}\big(\delta_{y^1_s}(U)-\delta_{x^1_s}(f[U]\setminus B)\big)-\sum_{s\in 2^n}\big(\delta_{y^0_s}(U)-\delta_{x^0_s}(f[U]\setminus B)\big)\Big|\le\]
\[\frac{1}{2^{n+1}}\Big(\big|\sum_{f(y^1_s)\in B}\delta_{y^1_s}(U)\big|+\big|\sum_{f(y^0_s)\in B}\delta_{y^0_s}(U)\big|\Big)=\frac{1}{2^{n+1}}\Big(\big|\sum_{x^1_s\in B}\delta_{y^1_s}(U)\big|+\big|\sum_{x^0_s\in B}\delta_{y^0_s}(U)\big|\Big)\le\]
\[\frac{1}{2^{n+1}}\big(|\{s\in 2^n:x^1_s\in B\}|+|\{s\in 2^n:x^0_s\in B\}|\big)\]
for all $n\geq n_0$.
Let $n_1\io$ be such that for every $n\ge n_1$ there exists $S_n\sub 2^n$ for which $B=\bigcup_{s\in S_n}[s]$.
Then $\lambda(B)=\big|S_n\big|/2^n<\eps$ for all $n\ge n_1$. Then, for all $n\ge\max\{n_0,n_1\}$ we have:
\[\big|\nu_n(U)\big|\le\frac{1}{2^{n+1}}\Big(\big|\big\{s\in 2^n\colon\ x^1_s\in B\big\}\big|+\big|\big\{s\in 2^n\colon\ x^0_s\in B\big\}\big|\Big)\le\frac{1}{2^{n+1}}\cdot 2\cdot\big|S_n\big|=\frac{\big|S_n\big|}{2^n}<\eps,\]
which completes the proof.
\end{proof}

\begin{lemma}\label{lemma:size_boundaries_induction}
Fix three cardinal numbers $\delta,\kappa,\tau<\frakc$, where $\kappa$ is infinite. Assume that $\big\langle K_\alpha,\pi_\alpha^\beta\colon\alpha\le\beta\le\delta\big\rangle$ is an inverse system of $\tau$-simple extensions. Then, for any $\alpha<\beta\le\delta$
and every closed set $U\sub X_\beta$ such that $|\partial U|\le\kappa$, we have $\big|\partial\pi_\alpha^\beta[U]\big|\le|\beta|\cdot\tau\cdot\kappa$.
\end{lemma}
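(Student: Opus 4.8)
The plan is to prove the slightly sharper, additive inequality $|\partial\pi_\alpha^\beta[U]|\le|\partial U|+|\beta|\cdot\tau$ for all $\alpha\le\beta\le\delta$ and all closed $U\sub K_\beta$ (here $|\beta|$ is the cardinality of $\beta$); this is enough, since $\kappa$ is infinite and we may harmlessly assume $1\le\tau\le\kappa$ (a $0$-simple extension is $1$-simple, and replacing $\kappa$ by $\max(\kappa,\tau)$ changes neither the hypothesis ``$|\partial U|\le\kappa$'' nor the target bound $|\beta|\cdot\tau\cdot\kappa$), so that $|\partial U|+|\beta|\cdot\tau\le\kappa+|\beta|\cdot\tau\le|\beta|\cdot\tau\cdot\kappa$. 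I would run a transfinite induction on $\beta$, the case $\beta=\alpha$ (equivalently $\beta=0$) being trivial.

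The engine of the argument is a one-step estimate: by Definition~\ref{def:tau_simple_extensions}, if $\pi_\sigma^{\sigma+1}$ is a $\tau$-simple extension map then for every closed $V\sub K_{\sigma+1}$ we have $\partial\pi_\sigma^{\sigma+1}[V]\sub\pi_\sigma^{\sigma+1}[\partial V]\cup E$ with $|E|\le\tau$, whence $|\partial\pi_\sigma^{\sigma+1}[V]|\le|\partial V|+\tau$. Combined with the factorisation $\pi_\alpha^\beta=\pi_\alpha^\sigma\circ\pi_\sigma^\beta$, this settles the successor case immediately: for $\beta=\beta_0+1$ and $\alpha\le\beta_0$ set $V=\pi_{\beta_0}^{\beta_0+1}[U]$, so $|\partial V|\le|\partial U|+\tau$, and feed $V$ into the inductive hypothesis at $\beta_0$ to get $|\partial\pi_\alpha^{\beta_0+1}[U]|=|\partial\pi_\alpha^{\beta_0}[V]|\le|\partial V|+|\beta_0|\cdot\tau\le|\partial U|+|\beta_0+1|\cdot\tau$.

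The limit case $\beta=\gamma$ is the heart of the matter, and the step I expect to be the main obstacle. For $\alpha<\gamma$ I would push $U$ down, writing $U_\sigma=\pi_\sigma^\gamma[U]$ (a closed subset of $K_\sigma$) for $\sigma\le\gamma$; since $\pi_\alpha^\gamma[U]=\pi_\alpha^{\alpha+1}[U_{\alpha+1}]$ and $\alpha+1<\gamma$ (as $\gamma$ is a limit), one more application of the one-step estimate reduces everything to the auxiliary claim that the boundaries do not blow up along the system, namely $|\partial U_\sigma|\le|\partial U|+|\gamma|\cdot\tau$ for all $\sigma\le\gamma$. This auxiliary claim itself requires a transfinite induction; its successor substages are dispatched by the one-step estimate, so the only genuine content is at limit substages $\lambda\le\gamma$, where one must understand the boundary of a closed set in the inverse limit $K_\lambda=\varprojlim_{\sigma<\lambda}K_\sigma$. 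For this I would first establish the structural inclusion $\partial U_\lambda\sub\bigcup_{\sigma<\lambda}(\pi_\sigma^\lambda)^{-1}[\partial U_\sigma]$, using the irreducibility of the bonding maps (part of Definition~\ref{def:tau_simple_extensions}): if some $x\in\partial U_\lambda$ had $\pi_\sigma^\lambda(x)$ in the interior of $U_\sigma$ for \emph{every} $\sigma<\lambda$, then, choosing a clopen $C_\sigma$ with $\pi_\sigma^\lambda(x)\in C_\sigma\sub U_\sigma$, the clopen neighbourhood $(\pi_\sigma^\lambda)^{-1}[C_\sigma]$ of $x$ would not be contained in $U_\lambda$ (since $x\notin\intt U_\lambda$), so $W_\sigma:=(\pi_\sigma^\lambda)^{-1}[C_\sigma]\sm U_\lambda$ would be a non-empty open subset of $K_\lambda$; irreducibility of $\pi_\sigma^\lambda$ then yields $y\in K_\sigma$ with $(\pi_\sigma^\lambda)^{-1}(y)\sub W_\sigma$, and $y\in C_\sigma\sub U_\sigma=\pi_\sigma^\lambda[U_\lambda]$ forces a point of $U_\lambda$ into $W_\sigma\sub K_\lambda\sm U_\lambda$, a contradiction.

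Converting this inclusion into the cardinality estimate $|\partial U_\lambda|\le|\partial U|+|\lambda|\cdot\tau$ is the delicate remaining point, and the one I would be most careful about: the naive bound on $|(\pi_\sigma^\lambda)^{-1}[\partial U_\sigma]|$ is useless, because the fibres of $\pi_\sigma^\lambda$ could a priori be huge. One must exploit the $\tau$-simple structure and the bound $\delta\le\frakc$ on the length of the system: iterating the one-step estimate and the structural inclusion, one checks that the set of points of $K_\sigma$ over which $\pi_\sigma^\lambda$ fails to be injective has size at most $|\lambda|\cdot\tau$ and that every $\pi_\sigma^\lambda$-fibre has size $<\frakc$ (this is where $\tau<\frakc$, the length bound, and irreducibility are all used), whence $|\partial U_\lambda\cap(\pi_\sigma^\lambda)^{-1}[\partial U_\sigma]|\le|\partial U_\sigma|+|\lambda|\cdot\tau$; summing over $\sigma<\lambda$ and invoking the inductive hypothesis $|\partial U_\sigma|\le|\partial U|+|\lambda|\cdot\tau$ of the auxiliary claim gives $|\partial U_\lambda|\le|\partial U|+|\lambda|\cdot\tau$. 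Feeding this back into the main induction yields $|\partial\pi_\alpha^\gamma[U]|\le|\partial U|+|\gamma|\cdot\tau$, and hence the stated bound $|\partial\pi_\alpha^\beta[U]|\le|\beta|\cdot\tau\cdot\kappa$.
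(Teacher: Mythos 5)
Your successor step is exactly the paper's (project one level down via the $\tau$-simple estimate, then invoke the inductive hypothesis at the predecessor with an enlarged cardinal), and your additive sharpening there is harmless. The limit case, however, which you rightly flag as the heart of the matter, does not go through. Your ``auxiliary claim'' $\big|\partial U_\sigma\big|\le|\partial U|+|\gamma|\cdot\tau$ for $U_\sigma=\pi_\sigma^\gamma[U]$ is the lemma itself restated, and the induction you propose for it is not well-founded: the one-step estimate transfers a bound on $\big|\partial U_{\sigma+1}\big|$ \emph{downward} to $\big|\partial U_\sigma\big|$, while your limit substage transfers bounds on all $\big|\partial U_\sigma\big|$, $\sigma<\lambda$, \emph{upward} to $\big|\partial U_\lambda\big|$. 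The only level at which a bound is known a priori is the top one, $\lambda=\gamma$, and since $\gamma$ is a limit it has no predecessor; neither step can therefore ever produce a bound at any $\sigma<\gamma$ (already for $\gamma=\omega$ nothing is deducible about any finite level). Relatedly, your structural inclusion $\partial U_\lambda\sub\bigcup_{\sigma<\lambda}\big(\pi_\sigma^\lambda\big)^{-1}\big[\partial U_\sigma\big]$ is correct (irreducibility even gives it with an intersection, since $\pi_\sigma^\lambda[\partial U_\lambda]\sub\partial U_\sigma$ for every $\sigma$), but it controls the boundary \emph{at} the limit level by the boundaries below it, which is the reverse of what is needed. Finally, the quantitative ingredient ``every $\pi_\sigma^\lambda$-fibre has size $<\frakc$'' is false: over a point whose descendants keep being split cofinally often below $\lambda$ the fibre is a Cantor set of size $\frakc$ (this is precisely what happens in the Efimov-type constructions the lemma is meant to cover); and even if all fibres had size $<\frakc$, the $|\lambda|\cdot\tau$ many points of $\partial U_\sigma$ with non-trivial fibres could contribute far more than $|\lambda|\cdot\tau$ points, so the estimate $\big|\partial U_\lambda\cap\big(\pi_\sigma^\lambda\big)^{-1}\big[\partial U_\sigma\big]\big|\le\big|\partial U_\sigma\big|+|\lambda|\cdot\tau$ does not follow.

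The idea you are missing is the paper's reduction, at a limit $\beta$, to \emph{clopen} sets pulled back from strictly earlier stages. One writes $\partial U=\bigcap_{\iota<|\beta|\cdot\kappa}A_\iota$ with each $A_\iota$ clopen (possible since $w\big(K_\beta\big)\le|\beta|\cdot\kappa$, and one may take the family closed under finite intersections). Each $U\sm A_\iota$ is then clopen in $K_\beta$, hence equals $\big(\pi_{\xi_\iota}^\beta\big)^{-1}\big[B_\iota\big]$ for some $\xi_\iota<\beta$ and clopen $B_\iota\sub K_{\xi_\iota}$, so $\pi_\alpha^\beta\big[U\sm A_\iota\big]=\pi_\alpha^{\xi_\iota}\big[B_\iota\big]$ and the inductive hypothesis applies at the strictly earlier stage $\xi_\iota$ with $\partial B_\iota=\emptyset$. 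A compactness argument (if $x\in\partial\pi_\alpha^\beta[U]\sm\pi_\alpha^\beta[\partial U]$, then the compact set $\big(\pi_\alpha^\beta\big)^{-1}(x)\cap U$ lies in $U^\circ$ and hence in some single $U\sm A_\iota$, which forces $x\in\partial\pi_\alpha^\beta\big[U\sm A_\iota\big]$) yields $\partial\pi_\alpha^\beta[U]\sub\pi_\alpha^\beta[\partial U]\cup\bigcup_\iota\partial\pi_\alpha^\beta\big[U\sm A_\iota\big]$, and the cardinality bound follows by summing over the $|\beta|\cdot\kappa$ indices. This decomposition is what should replace your fibre-counting argument.
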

\begin{proof}
Let us first observe that the case $\beta=\alpha+1$, where $\alpha<\delta$, follows immediately from Definition \ref{def:tau_simple_extensions}, thus we need only to prove the case where $\alpha+1<\beta$. Fix $\alpha<\delta$. The proof is by induction on $\beta>\alpha$. Let us thus fix also $\beta\le\delta$ and assume that the thesis holds for every $\alpha<\xi<\beta$. We have two cases:
\begin{enumerate}
    \item $\beta=\xi+1$ for some $\alpha<\xi<\delta$. Then, by the beginning remark, $\big|\partial\pi_\xi^\beta[U]\big|\le|\beta|\cdot\tau\cdot\kappa$. By the inductive assumption used for an ordinal number $\xi$, a closed set $\pi_\xi^\beta[U]$, and the cardinal $|\beta|\cdot\tau\cdot\kappa$, we conclude that:
\[\big|\partial\pi_\alpha^\beta[U]\big|=\big|\partial\pi^\xi_\alpha\big[\pi_\xi^\beta[U]\big]\big|\le|\xi|\cdot\tau\cdot|\beta|\cdot\tau\cdot\kappa=|\beta|\cdot\tau\cdot\kappa.\]
    \item $\beta$ is limit. First note that $w\big(K_\beta\big)\le|\beta|+\omega\le|\beta|\cdot\kappa$, because the inverse system is based on $\tau$-simple extensions. It follows that $\partial U=\bigcap_{\iota<|\beta|\cdot\kappa}A_\iota$ for some family $\big\{A_\iota\colon\ \iota<|\beta|\cdot\kappa\big\}$ of clopen subsets of $K_\beta$. Then,
\[\pi^\beta_\alpha[U\setminus\partial U]=\bigcup_{\iota<|\beta|\cdot\kappa}\pi^\beta_\alpha\big[U\setminus A_\iota\big].\]
We now claim that
\[\tag{$*$}\partial\pi^\beta_\alpha[U]\sub\bigcup_{\iota<|\beta|\cdot\kappa}\partial\pi^\beta_\alpha\big[U\setminus A_\iota\big]\cup\pi^\beta_\alpha[\partial U].\]
To see this, fix $x\in\partial\pi^\beta_\alpha[U]\setminus\pi^\beta_\alpha[\partial U]$ and note that
$\big(\pi^\beta_\alpha\big)^{-1}(x)\cap U\sub U^\circ$, and hence there exists $\iota<|\beta|\cdot\kappa$ such
that $\big(\pi^\beta_\alpha\big)^{-1}(x)\cap U\sub U\setminus A_\iota$. It follows that $x\in\pi^\beta_\alpha\big[U\setminus A_\iota\big]$, and hence $x\in \partial\pi^\beta_\alpha\big[U\setminus A_\iota\big]$, because otherwise $x\in\big(\pi^\beta_\alpha\big[U\setminus A_\iota\big]\big)^\circ\sub\big(\pi^\beta_\alpha[U]\big)^\circ$, thus contradicting $x\in\partial\pi^\beta_\alpha[U]$.

Since for every $\iota<|\beta|\cdot\kappa$ the set $U\setminus A_\iota$ is clopen in $K_\beta$, for every $\iota<|\beta|\cdot\kappa$ there are $\xi_\iota\in\beta\setminus\alpha$ and clopen $B_\iota\sub K_{\xi_\iota}$ such that $U\setminus A_\iota=\big(\pi^\beta_{\xi_\iota}\big)^{-1}\big[B_\iota\big]$, and hence $\pi^\beta_\alpha\big[U\setminus A_\iota\big]=\pi^{\xi_\iota}_\alpha\big[B_\iota\big]$ for all $\iota<|\beta|\cdot\kappa$. It follows from our inductive assumption that
\[\partial\pi^\beta_\alpha\big[U\setminus A_\iota\big]=\partial\pi^{\xi_\iota}_\alpha\big[B_\iota\big]\le\big|\xi_i\big|\cdot\tau\cdot\kappa\le|\beta|\cdot\tau\cdot\kappa,\]
and hence we conclude from ($*$) that
\[\big|\partial\pi^\beta_\alpha[U]\big|\le|\beta|\cdot\kappa\cdot|\beta|\cdot\tau\cdot\kappa+\kappa\le|\beta|\cdot\tau\cdot\kappa,\]
which completes our proof.
\end{enumerate}
\end{proof}

We are in the position to prove the main theorem of this section.

\begin{theorem}\label{theorem:tau_simple_extensions_fsjnp}
Let $\tau<\frakc$ be a cardinal number. Assume that $\big\langle K_\alpha,\pi_\alpha^\beta\colon\alpha\le\beta\le\delta\big\rangle$ is an inverse system of $\tau$-simple extensions with $\delta\le\frakc$. Then, $K_\delta$ has the fsJNP.
\end{theorem}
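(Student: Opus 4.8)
The plan is to transport the canonical fsJN-sequence on the Cantor space $K_0=\Cantor$ up to $K_\delta$ along the first projection $\pi_0^\delta\colon K_\delta\to\Cantor$, using Proposition \ref{prop:transport_fsjn_seq}. Since $K_\delta$ is a totally disconnected compact space and, by Definition \ref{def:tau_simple_extensions}, $\pi_0^\delta$ is an irreducible (in particular continuous and surjective) map, Proposition \ref{prop:transport_fsjn_seq} reduces the theorem to the single claim that
\[\lambda\big(\pi_0^\delta[U]\cap\pi_0^\delta[K_\delta\sm U]\big)=0\]
for every clopen $U\sub K_\delta$. (If $\delta=0$ there is nothing to prove, since $K_\delta=\Cantor$ carries the explicit fsJN-sequence written down just before Proposition \ref{prop:transport_fsjn_seq}; so assume $\delta>0$.)

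First I would replace the intersection $\pi_0^\delta[U]\cap\pi_0^\delta[K_\delta\sm U]$ by a union of two boundaries. Because $\pi_0^\delta$ is irreducible, this set has empty interior in $\Cantor$: if a nonempty open $W$ were contained in it, then, using $W\sub\pi_0^\delta[U]$ (so that $\big(\pi_0^\delta\big)^{-1}[W]\cap U$ is a nonempty open subset of $K_\delta$), irreducibility would yield a nonempty open $W'\sub\Cantor$ with $\big(\pi_0^\delta\big)^{-1}[W']\sub\big(\pi_0^\delta\big)^{-1}[W]\cap U$ and $W'\sub W$; but then $W'\sub\pi_0^\delta[K_\delta\sm U]$, so some point of $K_\delta\sm U$ would map into $W'$ and hence lie in $\big(\pi_0^\delta\big)^{-1}[W']\sub U$, a contradiction. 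Lemma \ref{lemma:image_boundary}, applied to $\pi_0^\delta$ and the clopen set $U$, now gives
\[\pi_0^\delta[U]\cap\pi_0^\delta[K_\delta\sm U]=\partial\pi_0^\delta[U]\cup\partial\pi_0^\delta[K_\delta\sm U].\]

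Next I would bound the sizes of these two boundaries. Every clopen $U\sub K_\delta$ is, by compactness, of the form $U=\big(\pi_\gamma^\delta\big)^{-1}[V]$ for some $\gamma\le\delta$ and some clopen $V\sub K_\gamma$; moreover, when $\delta=\frakc$ the ordinal $\delta$ is a limit of cofinality $\cf(\frakc)>\omega$ (König's theorem), so one may take $\gamma<\delta$. In all cases $\gamma<\frakc$, and since $\pi_\gamma^\delta$ is onto we have $\pi_0^\delta[U]=\pi_0^\gamma[V]$ and $\pi_0^\delta[K_\delta\sm U]=\pi_0^\gamma[K_\gamma\sm V]$. Applying Lemma \ref{lemma:size_boundaries_induction} to the system $\big\langle K_\alpha,\pi_\alpha^\beta\colon\alpha\le\beta\le\gamma\big\rangle$ of $\tau$-simple extensions of length $\gamma<\frakc$, with $\kappa=\omega$ and the clopen (hence boundaryless) sets $V$ and $K_\gamma\sm V$, gives $\big|\partial\pi_0^\gamma[V]\big|,\big|\partial\pi_0^\gamma[K_\gamma\sm V]\big|\le|\gamma|\cdot\tau\cdot\omega<\frakc$ (the case $\gamma=0$ is trivial, since then already $\pi_0^\delta[U]\cap\pi_0^\delta[K_\delta\sm U]=\emptyset$). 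Consequently $\pi_0^\delta[U]\cap\pi_0^\delta[K_\delta\sm U]$ is a closed subset of the Polish space $\Cantor$ of cardinality $<\frakc$, hence countable by the perfect set theorem, hence $\lambda$-null. This establishes the claim, and Proposition \ref{prop:transport_fsjn_seq} then produces the required fsJN-sequence on $K_\delta$: choose $y_s^i\in\big(\pi_0^\delta\big)^{-1}\big(x_s^i\big)$ and set $\nu_n=2^{-(n+1)}\sum_{s\in2^n}\big(\delta_{y_s^1}-\delta_{y_s^0}\big)$.

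The one delicate point, and the sole reason the hypothesis is $\delta\le\frakc$ rather than $\delta<\frakc$, is the last step when $\delta=\frakc$: Lemma \ref{lemma:size_boundaries_induction} is not available for the whole system, and one must exploit that a clopen subset of a continuous inverse limit indexed by an ordinal of uncountable cofinality already appears at a proper initial stage. The remaining ingredients — the empty-interior argument from irreducibility and the cardinal arithmetic $|\gamma|\cdot\tau\cdot\omega<\frakc$ — are routine.
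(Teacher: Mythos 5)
Your proposal is correct and follows essentially the same route as the paper's proof: reduce to Proposition \ref{prop:transport_fsjn_seq}, use irreducibility plus Lemma \ref{lemma:image_boundary} to rewrite $\pi_0^\delta[U]\cap\pi_0^\delta[K_\delta\sm U]$ as a union of boundaries, bound its cardinality below $\frakc$ via Lemma \ref{lemma:size_boundaries_induction} (factoring a clopen $U$ through a stage $\gamma<\frakc$ when $\delta=\frakc$), and conclude it is a countable closed set of Lebesgue measure zero. You in fact supply slightly more detail than the paper does on the empty-interior step and on why a clopen set in the limit appears at a proper initial stage.
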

\begin{proof}
Since $\pi^\beta_\alpha$ are irreducible for any $\alpha<\beta\leq\delta$, for every clopen $U\sub K_\beta$ we have $\big(\pi^\beta_\alpha[U]\cap\pi^\beta_\alpha\big[K_\beta\setminus U\big]\big)^\circ=\emptyset$ and hence, by Lemma \ref{lemma:image_boundary},
\[\pi^\beta_\alpha[U]\cap\pi^\beta_\alpha\big[K_\beta\setminus U\big]=\partial\pi^\beta_\alpha[U]\cup\partial\pi^\beta_\alpha\big[K_\beta\setminus U\big].\]

Let $U\sub K_\delta$ be clopen. If $\delta<\frakc$, then it follows from the above equality and Lemma \ref{lemma:size_boundaries_induction} (with $\kappa=\omega$---note that $|\partial U|=0$) that
\[\pi^\delta_0[U]\cap \pi^\delta_0\big[K_\delta\setminus U\big]\le|\delta|\cdot\tau\cdot\omega<\frakc.\]
If $\delta=\frakc$, then $U=\big(\pi^\delta_\beta\big)^{-1}[W]$ for some $\beta<\delta$ and clopen $W\sub K_\beta$, and hence, again by Lemma \ref{lemma:size_boundaries_induction},
\[\big|\pi^\delta_0[U]\cap\pi^\delta_0\big[K_\delta\setminus U\big]\big|=\big|\pi^\beta_0[W]\cap\pi^\beta_0\big[K_\delta\setminus W\big]\big|\le|\beta|\cdot\tau\cdot\omega<\frakc.\]
Thus, $\big|\pi^\delta_0[U]\cap\pi^\delta_0\big[K_\delta\setminus U\big]\big|<\frakc$ in any case. Since
$\pi^\delta_0[U]\cap \pi^\delta_0[X_\delta\setminus U]$ is a closed subset of $\Cantor$ of size $<\frakc$, we conclude that
it is countable, and hence it must have Lebesgue measure $0$. It remains to apply Proposition \ref{prop:transport_fsjn_seq} for $Y=K_\delta$.
\end{proof}

Rephrasing the theorem, we get that the limits of inverse systems of $\tau$-simple extensions of length at most $\frakc$ do not have the $\ell_1$-\gr property, which generalizes Corollary \ref{cor:min_gen_no_ell_1_gr}.

The assumption in Theorem \ref{theorem:tau_simple_extensions_fsjnp} that the ordinal number $\delta$ is not greater than $\frakc$ seems to be essential as it allows us to appeal to Proposition \ref{prop:transport_fsjn_seq} in order to ``transport'' the fsJN-sequence $\seqn{\mu_n}$ from the Cantor space onto $K_\delta$. We do not know whether the conclusion of the theorem holds true without this assumption.

\begin{question}\label{ques:tau_simple_extensions_fsjnp_delta}
Let $\tau<\frakc$ be a cardinal number. Assume that $\big\langle K_\alpha,\pi_\alpha^\beta\colon\alpha\le\beta\le\delta\big\rangle$ is an inverse system of $\tau$-simple extensions with $\delta>\frakc$. Does $K_\delta$ necessarily have the fsJNP?
\end{question}

As the application of Theorem
\ref{theorem:tau_simple_extensions_fsjnp}, we will show that some
special Efimov spaces have the fsJNP, too. Namely, in
\cite{dlV04,dlV05}, under $\diamondsuit$, de la Vega  introduced
continuous inverse systems $\big\langle
K_\alpha,\pi_\alpha^\beta\colon\alpha\le\beta\le\omega_1\big\rangle$
such that $K_{\omega_1}$ is a hereditarily separable Efimov space
with various homogeneity properties, defined as follows: for every
$\alpha<\omega_1$ the space $K_\alpha$ is homeomorphic to the space
$\Cantor$ and there exist:
\begin{itemize}
    \item closed subsets $A_\alpha^0,A_\alpha^1\sub K_\alpha$ and a point $p_\alpha\in K_\alpha$ such that $K_\alpha=A_\alpha^0\cup A_\alpha^1$ and
$A_\alpha^0\cap A_\alpha^1=\big\{p_\alpha\big\}$, and
    \item a countable group $G_\alpha$ acting on $K_\alpha$ freely, i.e. $gx\neq x$
for any $x\in K_\alpha$ and $g\in G_\alpha\setminus\big\{e_\alpha\big\}$, where $e_\alpha\in G_\alpha$ is the group identity, such that
\[K_{\alpha+1}=\big\{(x,\phi)\in K_\alpha\times 2^{G_\alpha}\colon\ x\in gA_\alpha^{\phi(g)}\text{ for every }g\in G_\alpha\big\}\]
and $\pi^{\alpha+1}_\alpha\big((x,\phi)\big)=x$.
\end{itemize}
Let us call such inverse systems \textit{de la Vega systems}. Below, we show that they are based on $\omega$-simple extensions and hence their limits $K_{\omega_1}$ have the fsJNP.

\begin{proposition}\label{prop:delavega_omega_simple}
Every de la Vega system is based on $\omega$-simple extensions.
\end{proposition}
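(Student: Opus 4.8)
The plan is to verify directly that a de la Vega system $\big\langle K_\alpha,\pi_\alpha^\beta\colon\alpha\le\beta\le\omega_1\big\rangle$ satisfies all five requirements of Definition \ref{def:tau_simple_extensions} with $\tau=\omega$. Continuity is built into the notion of a de la Vega system, and since every $K_\alpha$ is homeomorphic to $\Cantor$ (so we may take $K_0=2^\omega$), each $K_\alpha$ is perfect and $w(K_\alpha)=\omega=w(K_{\alpha+1})$; thus three of the conditions are immediate, and only irreducibility of the bonding maps and the $\omega$-simple extension inequality require work. The first thing I would do is compute the fibres of $\pi:=\pi_\alpha^{\alpha+1}$. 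For $x\in K_\alpha$ and $g\in G_\alpha$ we have $gA_\alpha^0\cup gA_\alpha^1=gK_\alpha=K_\alpha$ and $gA_\alpha^0\cap gA_\alpha^1=\{gp_\alpha\}$, so there is a unique admissible value of $\phi(g)$ with $x\in gA_\alpha^{\phi(g)}$ unless $x=gp_\alpha$, in which case both values are admissible; since $G_\alpha$ acts freely, $x=gp_\alpha$ for at most one $g$, and the orbit $O_\alpha:=G_\alpha p_\alpha$ is countable. Hence $\big|\pi^{-1}(x)\big|=1$ for $x\in K_\alpha\setminus O_\alpha$ and $\big|\pi^{-1}(x)\big|=2$ for $x\in O_\alpha$, so in particular $\pi$ is at most two-to-one.

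Next I would prove irreducibility. By continuity of the system it suffices to check that each $\pi_\alpha^{\alpha+1}$ is irreducible: the composition of two irreducible maps is irreducible, and at a limit ordinal $\gamma$ every nonempty open subset of $K_\gamma$ contains a set of the form $\big(\pi_\beta^\gamma\big)^{-1}[V]$ with $\beta<\gamma$ and $V$ nonempty open in $K_\beta$, so an irreducibility witness for $\pi_\alpha^\beta$ inside $V$ pulls back to one for $\pi_\alpha^\gamma$. (Specialised to $|G_\alpha|=1$ this is exactly the argument invoked in the proof of Lemma \ref{lemma:simple}.) So let $W\sub K_{\alpha+1}$ be a nonempty clopen set, clopen sets being a base since $K_{\alpha+1}$ is totally disconnected. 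As $K_{\alpha+1}$ has no isolated points, $W$ is a nonempty perfect compact metrizable space, hence uncountable; since $\pi$ is at most two-to-one, $\pi[W]$ is uncountable, so it meets $K_\alpha\setminus O_\alpha$. Choosing $x\in\pi[W]\setminus O_\alpha$, the fibre $\pi^{-1}(x)$ is a single point lying in $W$, whence $\pi^{-1}(x)\sub W$. Since $W$ was an arbitrary nonempty basic open set, and a continuous surjection of compacta is irreducible exactly when every nonempty open set contains a full fibre, $\pi$ is irreducible.

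Finally I would establish the $\omega$-simple extension inequality, and this is the step where I expect the only real subtlety. To bound $\big|\partial\pi[U]\setminus\pi[\partial U]\big|$ for closed $U\sub K_{\alpha+1}$ I would show $\partial\pi[U]\setminus\pi[\partial U]\sub O_\alpha$, so that the size is at most $|O_\alpha|\le\omega$. Suppose $x$ belongs to this set and $x\notin O_\alpha$; then $\pi^{-1}(x)=\{\tilde x\}$ is a singleton, $\tilde x\in U$ because $x\in\pi[U]$, and $\tilde x\notin\partial U$ because $x\notin\pi[\partial U]$, so $\tilde x\in U^\circ$ (as $U$ is closed, $U\setminus\partial U=U^\circ$). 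Then $\pi^{-1}(x)\sub U^\circ$, i.e. $x\notin\pi\big[K_{\alpha+1}\setminus U^\circ\big]$, so $x$ lies in the open set $K_\alpha\setminus\pi\big[K_{\alpha+1}\setminus U^\circ\big]$; but that open set is contained in $\pi[U^\circ]\sub\pi[U]$ (if $y$ avoids $\pi\big[K_{\alpha+1}\setminus U^\circ\big]$ then $\emptyset\neq\pi^{-1}(y)\sub U^\circ$), so $x\in\big(\pi[U]\big)^\circ$, contradicting $x\in\partial\pi[U]$.

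The hard part, such as it is, is making this last boundary estimate go through without any information about whether the countable set $O_\alpha$ is closed, dense, or scattered in $K_\alpha$; the device that removes the difficulty is the observation that for any open $V\sub K_{\alpha+1}$ the set $K_\alpha\setminus\pi[K_{\alpha+1}\setminus V]$ is open and contained in $\pi[V]$ — which is what lets the argument run uniformly and which, for $|G_\alpha|=1$, reproduces Lemma \ref{lemma:simple_ext_boundaries}. Once the five conditions of Definition \ref{def:tau_simple_extensions} are checked, $\big\langle K_\alpha,\pi_\alpha^\beta\big\rangle$ is a system of $\omega$-simple extensions, and since its length $\omega_1$ is at most $\frakc$, Theorem \ref{theorem:tau_simple_extensions_fsjnp} then applies to give the fsJNP of $K_{\omega_1}$.
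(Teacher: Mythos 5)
Your proof is correct, and it rests on the same structural fact as the paper's argument: the fibres of $\pi_\alpha^{\alpha+1}$ are singletons except over the countable orbit $G_\alpha p_\alpha$, where they have exactly two points, so that both nontrivial conditions of Definition~\ref{def:tau_simple_extensions} reduce to the containment $\partial\pi_\alpha^{\alpha+1}[U]\sm\pi_\alpha^{\alpha+1}[\partial U]\sub G_\alpha p_\alpha$. Where you diverge is in how this is exploited. For irreducibility the paper works with the explicit basic clopen sets $([s]\times[t])\cap K_{\alpha+1}$ and exhibits, for each such set $U$, a concrete clopen $B\sub\pi_\alpha^{\alpha+1}[U]\sm\big\{g_ip_\alpha\colon i\le n\big\}$ with $\big(\pi_\alpha^{\alpha+1}\big)^{-1}[B]\sub U$; you instead argue by cardinality --- a nonempty clopen subset of the perfect metrizable $K_{\alpha+1}$ is uncountable, so its image under the at-most-two-to-one map meets the co-countable set of singleton fibres, and any such point gives a full fibre inside the clopen set. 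For the boundary estimate the paper reruns its clopen-set construction to manufacture an open neighbourhood of $x$ inside $\pi_\alpha^{\alpha+1}[U]$, whereas you observe that $K_\alpha\sm\pi_\alpha^{\alpha+1}\big[K_{\alpha+1}\sm U^\circ\big]$ is already such a neighbourhood. Your route is more abstract and buys generality: it applies verbatim to any closed surjection of perfect compact metrizable spaces whose non-singleton fibres are finite and lie over a countable set, with no reference to the combinatorics of $K_\alpha\times 2^{G_\alpha}$; the paper's route buys explicitness (the witnessing clopen sets are written down) at the cost of running the basic-clopen-set analysis twice. Your reduction to the successor case via composition and the limit stage is the same one the paper invokes without proof in Lemma~\ref{lemma:simple}, and your verification of the remaining three conditions of Definition~\ref{def:tau_simple_extensions} is routine and correct.
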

\begin{proof}
Let $\big\langle
K_\alpha,\pi_\alpha^\beta\colon\alpha\le\beta\le\omega_1\big\rangle$
be a de la Vega system. For each $\alpha<\omega_1$ fix
$A_\alpha^0,A_\alpha^1,p_\alpha$ and $G_\alpha$ as in the definition
of the system. We need to show that each $\pi_\alpha^\beta$ is
irreducible and that for each $\alpha<\omega_1$ and closed $U\sub
K_{\alpha+1}$ the set
$\partial\pi_\alpha^{\alpha+1}[U]\sm\pi_\alpha^{\alpha+1}[\partial
U]$ is countable.

\begin{enumerate}
    \item For every $\alpha<\beta\le\omega_1$ the function $\pi_\alpha^\beta$ is irreducible.

Since the system is continuous it is enough to show that $\pi_\alpha^{\alpha+1}$ is irreducible. To prove this we will use the following simple characterization of irreducible mappings: a function $f\colon K\to L$ between two totally disconnected compact spaces is irreducible if and only if for every clopen $U\sub K$ there is clopen $B\sub L$ such that $f^{-1}[B]\sub U$.

Thus fix $\alpha<\omega_1$ and a clopen $U\subset K_{\alpha+1}$. Without loss of generality let us assume $K_\alpha=\Cantor$. Shrinking $U$ if necessary we may assume that $\emptyset\neq U=([s]\times [t])\cap K_{\alpha+1} $ for some $s=\langle s(0),\ldots,s(n)\rangle\in 2^{n+1}$ and
$t=\langle t(g_0),\ldots,t(g_n)\rangle\in 2^{\{g_i\colon\ i\leq n\}}$, where $[s]=\big\{x\in\Cantor\colon\ x\rstr (n+1)=s\big\}$
and $[t]\subset 2^{G_\alpha}$ is defined analogously. Put:
\[W=\bigcap_{i\le n}g_i A_\alpha^{t(g_i)}\cap [s].\]
Since $\pi_\alpha^{\alpha+1}[U]=W$, $W$ is non-empty, and
$W\setminus\big\{g_ip\colon\ i\le n\big\}$ is open in $K_\alpha$.
Fix any clopen set $B\subset W\setminus\{g_ip:i\leq n\}$ and a pair
$(x,\phi)\in B\times 2^{G_\alpha}$ such that $(x,\phi)\in
K_{\alpha+1}$, i.e., $(x,\phi)\in (\pi^{\alpha+1}_\alpha)^{-1}(x)$.
Since $x\not\in\big\{g_ip:i\leq n\big\}$, for every $i\le n$ there
is a unique $j_i\in 2$ such that $x\in g_i
A_\alpha^{j_i}$, and hence $\phi\big(g_i\big)=t(g_i)=j_i$ for all
$i\le n$. It follows that $(x,\phi)\in [s]\times [t]\cap
K_{\alpha+1}\subset U$, so,
summarizing,$\big(\pi^{\alpha+1}_\alpha\big)^{-1}[B]\subset U$.

    \item for every $\alpha<\omega_1$ and closed $U\sub K_{\alpha+1}$, $\Big|\partial\pi_\alpha^{\alpha+1}[U]\sm\pi_\alpha^{\alpha+1}[\partial U]\Big|\le\omega$.

We shall show that
\[\partial\pi_\alpha^{\alpha+1}[U]\sm\pi_\alpha^{\alpha+1}[\partial U]\sub\big\{gp\colon\ g\in G_\alpha\big\}.\]
This has been almost done in the previous paragraph. Indeed,
suppose that
\[x\in\pi^{\alpha+1}_\alpha[U]\setminus\big(\pi^{\alpha+1}_\alpha[\partial U]\cup\{gp\colon\ g\in G_\alpha\big\}\big).\]
Then, $x\in\pi^{\alpha+1}_\alpha[U^\circ]\setminus\big\{gp\colon\ g\in G_\alpha\big\}$, and, by the same argument as
in (1), we get a clopen set $B\subset K_\alpha$ containing $x$ and such that $\big(\pi^{\alpha+1}_\alpha\big)^{-1}(x)\sub\big(\pi^{\alpha+1}_\alpha\big)^{-1}[B]\sub U^\circ$, and therefore $x\in B\sub\big(\pi^{\alpha+1}_\alpha\big)[U]^\circ$. But this implies that $x\not\in\partial\pi^{\alpha+1}_\alpha[U]$, which completes the proof.
\end{enumerate}
\end{proof}

Recall that a space $X$ is called \textit{rigid} if it has no non-trivial autohomeomorphisms, i.e. every homeomorphism $f\colon X\to X$ is the identity. Combining Theorem \ref{theorem:tau_simple_extensions_fsjnp} with Proposition \ref{prop:delavega_omega_simple} and de la Vega's \cite[Theorems 5.1 and 5.2]{dlV04}, we get the following corollary, important in the view of K\k{a}kol and \'Sliwa \cite[Example 15]{KS18}.

\begin{corollary}\label{cor:delavega_fsjnp}
Assume $\diamondsuit$.
\begin{enumerate}
    \item There exists a hereditarily separable totally disconnected rigid Efimov space satisfying the fsJNP.
    \item There exists a hereditarily separable totally disconnected Efimov space $K$ satisfying the fsJNP and such that any two non-empty clopen subsets of $F$ are homeomorphic.
\end{enumerate}
\end{corollary}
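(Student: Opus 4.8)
The plan is to obtain both statements by feeding de la Vega's constructions into the machinery of Section \ref{section:tau_simple_ext}. First I would invoke de la Vega's \cite[Theorems 5.1 and 5.2]{dlV04}: under $\diamondsuit$ there is a de la Vega system $\big\langle K_\alpha,\pi_\alpha^\beta\colon\alpha\le\beta\le\omega_1\big\rangle$ whose limit $K_{\omega_1}$ is a hereditarily separable totally disconnected Efimov space, and moreover the system can be arranged so that $K_{\omega_1}$ is additionally rigid (this will give part (1)) or so that any two non-empty clopen subsets of $K_{\omega_1}$ are homeomorphic (this will give part (2)). With the topological side of the statements thus supplied, the only thing left to check is that such a space has the fsJNP.

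For that I would appeal to Proposition \ref{prop:delavega_omega_simple}, which asserts precisely that every de la Vega system is a system of $\omega$-simple extensions in the sense of Definition \ref{def:tau_simple_extensions}. Here it is worth spelling out briefly that the remaining clauses of that definition are satisfied: $K_0=\Cantor=2^\omega$; each $K_\alpha$ with $\alpha<\omega_1$ is homeomorphic to $\Cantor$ and hence perfect; $w(K_\alpha)=\omega=w(K_{\alpha+1})$ for every $\alpha<\omega_1$; and the bonding maps $\pi_\alpha^\beta$ are irreducible, as already verified in the proof of Proposition \ref{prop:delavega_omega_simple}. Since $\diamondsuit$ entails the Continuum Hypothesis, we have $\omega_1=\frakc$, so the de la Vega system has length $\delta=\omega_1\le\frakc$ while $\tau=\omega<\frakc$. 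Theorem \ref{theorem:tau_simple_extensions_fsjnp} therefore applies verbatim and yields that $K_\delta=K_{\omega_1}$ has the fsJNP. Combining this with the respective topological properties from de la Vega's theorems completes the proof of (1) and (2).

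I do not expect any genuine obstacle here; the argument is a composition of results already in hand, and the only point that calls for a moment's attention is the verification that de la Vega's inverse systems literally meet every clause of Definition \ref{def:tau_simple_extensions} — in particular the weight-preservation requirement, which is immediate because every $K_\alpha$ for $\alpha<\omega_1$ is a copy of the Cantor space — together with the length bound $\delta\le\frakc$, which is exactly where $\diamondsuit$ (through CH) is used to guarantee $\omega_1\le\frakc$.
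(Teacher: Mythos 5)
Your proposal is correct and follows exactly the paper's route: the corollary is obtained there by combining de la Vega's Theorems 5.1 and 5.2 with Proposition \ref{prop:delavega_omega_simple} and Theorem \ref{theorem:tau_simple_extensions_fsjnp}, precisely as you describe. Your additional remarks checking the clauses of Definition \ref{def:tau_simple_extensions} and the bound $\delta=\omega_1\le\frakc$ via CH are sound and only make explicit what the paper leaves implicit.
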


Furthermore, using the generalizations of de la Vega \cite{dlV05} obtained in
 Back\'e \cite{Back18}, we get the next corollary.

\begin{corollary}\label{cor:backe_fsjnp}
Under $\diamondsuit$ there exists a hereditarily separable totally disconnected Efimov space $K$ satisfying the fsJNP and such that there are no disjoint infinite closed homeomorphic subspaces of $K$.
\end{corollary}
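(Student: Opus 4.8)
The plan is to combine Theorem \ref{theorem:tau_simple_extensions_fsjnp} with the work of Back\'e \cite{Back18}, exactly as Corollary \ref{cor:delavega_fsjnp} combines that theorem with de la Vega \cite{dlV04,dlV05}. First I would recall that under $\diamondsuit$ Back\'e constructs, by a transfinite recursion of length $\omega_1$ generalizing the de la Vega construction, a continuous inverse system $\big\langle K_\alpha,\pi_\alpha^\beta\colon\alpha\le\beta\le\omega_1\big\rangle$ of copies of $\Cantor$ whose limit $K=K_{\omega_1}$ is a hereditarily separable Efimov space admitting no two disjoint infinite closed homeomorphic subspaces; moreover each successor step $K_{\alpha+1}$ is obtained from $K_\alpha$ by a de la Vega-type amalgamation over a splitting into two closed halves along a free countable group action. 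Thus the crucial point to verify is that a Back\'e system is again a system of $\omega$-simple extensions in the sense of Definition \ref{def:tau_simple_extensions}.

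The verification is essentially identical to Proposition \ref{prop:delavega_omega_simple}. One must check the three non-trivial clauses of Definition \ref{def:tau_simple_extensions}: that each bonding map $\pi_\alpha^\beta$ is irreducible, that $\Big|\partial\pi_\alpha^{\alpha+1}[U]\sm\pi_\alpha^{\alpha+1}[\partial U]\Big|\le\omega$ for every closed $U\sub K_{\alpha+1}$, and that $w\big(K_\alpha\big)=w\big(K_{\alpha+1}\big)$. The last holds because every $K_\alpha$ is homeomorphic to $\Cantor$, hence has weight $\omega$; continuity of the system is part of the construction; $K_0=\Cantor$ and each $K_\alpha$ is perfect. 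For irreducibility and the countable-boundary estimate I would reproduce verbatim the argument of Proposition \ref{prop:delavega_omega_simple}: shrinking a clopen $U\sub K_{\alpha+1}$ to a basic clopen box $([s]\times[t])\cap K_{\alpha+1}$, its image under $\pi_\alpha^{\alpha+1}$ differs from the clopen set $\bigcap_{i\le n}g_iA_\alpha^{t(g_i)}\cap[s]$ only on the countable orbit $\big\{g p_\alpha\colon g\in G_\alpha\big\}$ of the single splitting point, which simultaneously yields a clopen $B$ with $\big(\pi_\alpha^{\alpha+1}\big)^{-1}[B]\sub U$ (irreducibility) and the inclusion $\partial\pi_\alpha^{\alpha+1}[U]\sm\pi_\alpha^{\alpha+1}[\partial U]\sub\big\{g p_\alpha\colon g\in G_\alpha\big\}$ (the $\omega$-simple condition). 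Since the system has length $\omega_1\le\frakc$, Theorem \ref{theorem:tau_simple_extensions_fsjnp} then applies and gives that $K_{\omega_1}$ has the fsJNP, while the other topological properties (hereditary separability, being Efimov, total disconnectedness, and the absence of disjoint infinite closed homeomorphic subspaces) are supplied by Back\'e's construction itself.

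The main obstacle I anticipate is not the fsJNP part, which is purely formal once the previous paragraph is in place, but rather confirming that Back\'e's generalized construction really does proceed via single-point (group-orbit) splittings of the sort covered by Definition \ref{def:tau_simple_extensions}, and in particular that the amalgamation used to kill pairs of disjoint homeomorphic closed subspaces is still of the form $K_{\alpha+1}=\big\{(x,\phi)\in K_\alpha\times 2^{G_\alpha}\colon x\in gA_\alpha^{\phi(g)}\ \text{for every }g\in G_\alpha\big\}$ with $K_\alpha=A_\alpha^0\cup A_\alpha^1$, $A_\alpha^0\cap A_\alpha^1=\{p_\alpha\}$ and $G_\alpha$ acting freely. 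If Back\'e's steps are instead phrased via several splitting points, or via actions with finitely many non-free points, one would replace ``$\omega$-simple'' by the appropriate $\tau$ with $\tau<\frakc$; the conclusion is unaffected since Theorem \ref{theorem:tau_simple_extensions_fsjnp} permits any $\tau<\frakc$. In either case the argument closes.
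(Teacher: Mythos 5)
Your proposal matches the paper's intended argument: the paper gives no explicit proof, simply stating that the corollary follows by combining Theorem \ref{theorem:tau_simple_extensions_fsjnp} with Back\'e's generalization of the de la Vega construction, exactly as Corollary \ref{cor:delavega_fsjnp} is obtained via Proposition \ref{prop:delavega_omega_simple}. Your filling-in of the details --- verifying that Back\'e's inverse system is based on $\omega$-simple extensions by repeating the argument of Proposition \ref{prop:delavega_omega_simple}, and noting that any $\tau<\frakc$ would do --- is correct and is precisely the reasoning the paper leaves implicit.
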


%

Let us not here that we do not know however whether the classes of compact spaces obtained by simple extensions and $\tau$-simple extensions for $\tau\in[\omega,\frakc]$ are essentially different. Thus, we ask the following crucial questions.

\begin{question}\label{ques:tau_simple_ext_simple_ext}
\begin{enumerate}
    \item Does there exist a compact space which is the limit of an inverse system based on $\omega$-simple extensions but not the limit of any inverse system based on simple extensions?
    \item Assume that $\frakc>\omega_1$. Does there exist a compact space which is the limit of an inverse system based on $\omega_1$-simple extensions but not the limit of any inverse system based on simple extensions?
\end{enumerate}
\end{question}

The following problem is a special case of Question \ref{ques:tau_simple_ext_simple_ext}.

\begin{question}\label{ques:dlV_simple}
Does there (consistently) exist a de la Vega system whose limit cannot be represented as the limit of an inverse system based on simple extensions?
\end{question}

\section{l-Equivalence}

It is a well-known fact that given two spaces $X$
and $Y$ if  an operator $L\colon C_p(X)\to C_p(Y)$ is a linear
homeomorphism, then the operator $L^*\colon\Delta(Y)\to\Delta(X)$ given
by the formula $L^*(\mu)=\mu\circ L$ is also a linear homeomorphism
(see Tkachuk \cite[Problem 237]{TkaVol4}). Since for every measure
$\mu\in\Delta(X)$, written
$\mu=\sum_{i=1}^n\alpha_i\cdot\delta_{x_i}$, we have
$L^*(\mu)=\mu\circ L=\sum_{i=1}^n\alpha_i\cdot\big(\delta_{x_i}\circ
L\big)$, we obtain the following proposition.

\begin{proposition}\label{prop:l_equiv_fsjnp}
Let $X$ and $Y$ be two spaces. Assume that $X$ has the fsJNP. If $C_p(X)$ and $C_p(Y)$ are linearly homeomorphic, then $C_p(Y)$ has the fsJNP, too. \noproof
\end{proposition}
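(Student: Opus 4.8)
The plan is to push a finitely supported Josefson--Nissenzweig sequence from $X$ over to $Y$ along the dual of the given linear homeomorphism, and then to take care of the normalization — the only genuinely delicate point — by invoking the Banakh--K\k{a}kol--\'Sliwa characterization of the fsJNP. Fix a linear homeomorphism $L\colon C_p(X)\to C_p(Y)$. By the fact recalled just before the statement, the adjoint $L^*\colon\Delta(Y)\to\Delta(X)$, $L^*(\mu)=\mu\circ L$, is a linear homeomorphism for the topologies that $\Delta(Y)$ and $\Delta(X)$ inherit from $C_p(C_p(Y))$ and $C_p(C_p(X))$ respectively (that is, for the weak* topologies), and hence so is its inverse $(L^*)^{-1}=(L^{-1})^*\colon\Delta(X)\to\Delta(Y)$. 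Since $X$ has the fsJNP, fix an fsJN-sequence $\seqn{\mu_n}$ on $X$ and put $\nu_n=(L^{-1})^*\big(\mu_n\big)=\mu_n\circ L^{-1}$. Writing $\mu_n=\sum_i\alpha_i^n\delta_{x_i^n}$, we have $\nu_n=\sum_i\alpha_i^n\big(\delta_{x_i^n}\circ L^{-1}\big)$, a finite linear combination of finitely supported measures on $Y$, so $\nu_n\in\Delta(Y)$ is finitely supported for every $n\io$.

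Next I would verify that $\seqn{\nu_n}$ is weakly* null on $Y$: for every $g\in C(Y)$ the function $L^{-1}(g)$ belongs to $C(X)$, hence $\nu_n(g)=\mu_n\big(L^{-1}(g)\big)\to 0$ as $n\to\infty$ because $\seqn{\mu_n}$ is weakly* null on $X$. At this point it is tempting to normalize $\seqn{\nu_n}$ by Lemma \ref{lemma:normalization_fsJN} and conclude, and indeed this works whenever $\liminf_n\big\|\nu_n\big\|>0$. The main obstacle is precisely here: $L^*$ need not be bounded as a map between the normed spaces $\big(\Delta(Y),\|\cdot\|\big)$ and $\big(\Delta(X),\|\cdot\|\big)$, since $l$-equivalence does not preserve norms of measures, so a priori the norms $\big\|\nu_n\big\|$ could tend to $0$ and no single transported sequence need witness the fsJNP of $Y$.

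To circumvent this I would appeal to the theorem of Banakh, K\k{a}kol and \'Sliwa \cite[Theorem 1]{BKS19}: a Tychonoff space $Z$ has the fsJNP if and only if $C_p(Z)$ contains a complemented copy of $(c_0)_p$. The property of containing a complemented copy of a fixed topological vector space is manifestly invariant under linear homeomorphisms — if $C_p(X)=E\oplus F$ with $E$ isomorphic to $(c_0)_p$, then $C_p(Y)=L[E]\oplus L[F]$ with $L|_E\colon E\to L[E]$ a linear homeomorphism, so $L[E]$ is a complemented subspace of $C_p(Y)$ isomorphic to $(c_0)_p$. Therefore $C_p(Y)$ contains a complemented copy of $(c_0)_p$, and hence $Y$ has the fsJNP. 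The transported sequence $\seqn{\nu_n}$ from the previous paragraph is then what realizes this conclusion on the level of measures: it is the weakly* null image in $\Delta(Y)$ of an fsJN-sequence on $X$, and normalizing its ``good'' tail (which exists once we know $Y$ has the fsJNP) produces an explicit fsJN-sequence on $Y$.
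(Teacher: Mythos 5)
Your proof is correct, but its logical backbone is different from the paper's. The paper derives the proposition solely from the transport of measures: the adjoint $L^*\colon\Delta(Y)\to\Delta(X)$ is a linear homeomorphism for the topologies inherited from $C_p(C_p(Y))$ and $C_p(C_p(X))$ and preserves finite supportedness, so an fsJN-sequence on $X$ is pushed to a weakly* null, finitely supported sequence on $Y$; the proposition is then stated with no further argument, i.e.\ the normalization step is left implicit. You perform the same transport but correctly observe that $L^*$ need not be bounded for the variation norms (the image of the sup-norm unit ball of $C(X)$ under $L$ is only a pointwise bounded subset of $C_p(Y)$, not a uniformly bounded one, so the norms $\big\|\nu_n\big\|$ of the transported measures are not obviously bounded away from $0$ and Lemma \ref{lemma:normalization_fsJN} cannot be applied directly). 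Your way around this --- invoking \cite[Theorem 1]{BKS19} together with the evident invariance of ``contains a complemented copy of $(c_0)_p$'' under linear homeomorphisms of $C_p$-spaces --- is a complete and valid argument, entirely in the spirit of the paper, which uses that characterization elsewhere; what it buys is rigor at the normalization step, at the cost of resting on the nontrivial theorem of Banakh, K\k{a}kol and \'Sliwa rather than on the elementary duality alone. One small caveat about your closing sentence: knowing that $Y$ has the fsJNP does not by itself show that the particular transported sequence $\seqn{\nu_n}$ has a tail whose norms are bounded below, so the claim that normalizing a ``good'' tail of $\seqn{\nu_n}$ yields an explicit fsJN-sequence is not justified as written; this does not affect the validity of the proof, since the fsJNP of $Y$ is already established at that point.
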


Let $K$ be a compact space. Recall that \textit{the Alexandrov Duplicate} $AD(K)$ of $K$ is the space defined as follows: $AD(K)=K\times\{0,1\}$ where for each $x\in K$ the point $(x,1)$ is isolated and the basic open neighborhoods of $(x,0)$ are the sets of the form $(U\times\{0\})\cup\big((U\sm\{x\})\times\{1\}\big)$ for every open basic neighborhood $U$ of $x$ in $K$. It follows that $AD(K)$ is compact (cf. \cite[Problem 364]{TkaVol1}) and $C_p(AD(K))$ is linearly homoemorphic to $C_p\big(K\cup\alpha(|K|)\big)$, where $\alpha(|K|)$ denotes the one-point compactification of the cardinal number $|K|$ (see \cite[Problem 267]{TkaVol4}). Since $\alpha(|K|)$ contains a non-trivial convergent sequence, $K\cup\alpha(|K|)$ has the fsJNP and hence $AD(K)$ does, too.

\begin{proposition}
Let $K$ be a compact space. Then, its Alexandrov Duplicate $AD(K)$ has the fsJNP. \noproof
\end{proposition}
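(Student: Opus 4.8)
The plan is to exhibit an fsJN-sequence on $AD(K)$ explicitly, without invoking the $l$-equivalence machinery recalled in the previous paragraph (although that route works as well). We may assume $K$ is infinite, so fix a sequence $\seqn{x_n}$ of pairwise distinct points of $K$. For every $n\io$ set
\[\mu_n=\frac{1}{2}\big(\delta_{(x_n,0)}-\delta_{(x_n,1)}\big),\]
a finitely supported measure on $AD(K)$ with $\big\|\mu_n\big\|=1$ and $\big|\supp\big(\mu_n\big)\big|=2$. It then remains only to check that $\seqn{\mu_n}$ is weakly* null.

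The heart of the argument is the following observation: for every $f\in C(AD(K))$ the function $g\colon K\to\R$, $g(x)=f(x,0)-f(x,1)$, has the property that $\big\{x\in K\colon\ |g(x)|\ge\eps\big\}$ is finite for each $\eps>0$. I would prove this by contradiction. If for some $\eps>0$ this set were infinite, then by compactness of $K$ it would have an accumulation point $x_0$. Continuity of $f$ at $(x_0,0)$ yields a basic open neighbourhood $\big(U\times\{0\}\big)\cup\big((U\sm\{x_0\})\times\{1\}\big)$ of $(x_0,0)$ on which $f$ takes values within $\eps/3$ of $f(x_0,0)$. Choosing $y\in U\sm\{x_0\}$ with $|g(y)|\ge\eps$ (possible since $x_0$ is an accumulation point of the set in question), both $(y,0)$ and $(y,1)$ lie in this neighbourhood, so $|g(y)|\le|f(y,0)-f(x_0,0)|+|f(x_0,0)-f(y,1)|<2\eps/3$, contradicting $|g(y)|\ge\eps$. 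This step is the only place where the explicit description of the topology of $AD(K)$ is used; it is a routine compactness computation and I do not expect it to present any real difficulty.

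Granting this, the conclusion is immediate: since the points $x_n$ are pairwise distinct, for each $\eps>0$ there are only finitely many $n\io$ with $|g(x_n)|\ge\eps$, hence $\mu_n(f)=\frac{1}{2}g(x_n)\to0$. Thus $\seqn{\mu_n}$ is an fsJN-sequence on $AD(K)$, which shows that $AD(K)$ has the fsJNP. For an alternative, non-constructive route one may instead invoke \cite[Problem 267]{TkaVol4}: $C_p(AD(K))$ is linearly homeomorphic to $C_p(K\cup\alpha(|K|))$, where $\alpha(|K|)$ denotes the one-point compactification of the discrete space of cardinality $|K|$; since $\alpha(|K|)$ contains a non-trivial convergent sequence, $K\cup\alpha(|K|)$ has the fsJNP by Lemma \ref{lemma:fsjn_conv_seq}, and Proposition \ref{prop:l_equiv_fsjnp} transfers the fsJNP back to $AD(K)$.
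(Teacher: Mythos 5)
Your proof is correct, but it takes a genuinely different route from the paper's. The paper disposes of the proposition in one line by citing the linear homeomorphism $C_p(AD(K))\cong C_p\big(K\cup\alpha(|K|)\big)$ from Tkachuk's problem book and then applying Proposition \ref{prop:l_equiv_fsjnp} together with the trivial fsJN-sequence coming from the non-trivial convergent sequence in $\alpha(|K|)$ --- exactly the ``alternative route'' you sketch at the end. Your main argument instead constructs an fsJN-sequence by hand: the key observation that for every $f\in C(AD(K))$ the set $\big\{x\in K\colon\ |f(x,0)-f(x,1)|\ge\eps\big\}$ is finite is proved correctly (an infinite subset of a compact space has an accumulation point $x_0$, and the basic neighbourhood of $(x_0,0)$ traps both $(y,0)$ and $(y,1)$ for the points $y\neq x_0$ of that set lying in $U$), and the reduction of weak* nullity of $\seqn{\mu_n}$ to this observation is immediate since the $x_n$ are pairwise distinct. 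What your approach buys is that it is self-contained (no appeal to the $C_p$-duality machinery or to external results on the duplicate) and it yields strictly more information: $AD(K)$ admits an fsJN-sequence with supports of size $2$, so it falls into the first alternative of Theorem \ref{theorem:sizes_of_supps} even when $K$ (and hence possibly $AD(K)$) has no non-trivial convergent sequences --- e.g.\ for $K=\bo$. What the paper's route buys is brevity and the observation, used in the surrounding discussion, that $AD(K)$ is $l$-equivalent to a space with an explicit convergent sequence. One cosmetic remark: as you note, the statement implicitly assumes $K$ infinite (a finite $AD(K)$ cannot carry an fsJN-sequence by Lemma \ref{lemma:s_infinite}), and the paper's formulation glosses over this in the same way.
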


It follows that $AD(\bo)$ has the fsJNP, although $\bo$ does not have. This is interesting in the context of Sections \ref{section:sizes_two_examples} and \ref{section:nikodym_property}, where examples of compact spaces with the fsJNP and containing many copies of $\bo$ were given---$AD(\bo)$ is another such example but of completely different kind.

\section{Products and the JNP\label{section:products}}

Khurana \cite[Theorem 2]{Khu78}, Cembranos \cite[Corollaries 2--3]{Cem84} and Freniche \cite[Corollary 2.6]{Fre84} proved that given two infinite compact spaces $K$ and $L$ the space $C(K\times L)$ does not have the Grothendieck property. In this section we will strengthen their result by proving that the product $K\times L$ does not even have the $\ell_1$-\gr property (Theorem \ref{theorem:products_fsjnp} and Corollary \ref{cor:product_ell1_grothendieck})---our proof is totally constructive and does not require any techniques from Banach space theory. As a corollary, we obtain that the space $C_p(K\times L)$ has always the quotient isomorphic to $(c_0)_p$.



\medskip

Let us start with some additional notation. For every $n\io_+$
put $\Omega_n=\{-1,1\}^n$ and $\Sigma_n=n\times\{n\}$ (so
$\big|\Omega_n\big|=2^n$ and $\big|\Sigma_n\big|=n$). To simplify
the notation, we will usually write $i\in\Sigma_n$ instead of
$(i,n)\in\Sigma_n$---this should cause no confusion. Put also
$\Omega=\bigcup_{n\io_+}\Omega_n$ and
$\Sigma=\bigcup_{n\io_+}\Sigma_n$, and endow these two sets with the
discrete topology. This way, we can think of the product space
$\Omega\times\Sigma$ as a countable union of pairwise disjoint
discrete rectangles $\Omega_k\times\Sigma_m$ of size $m2^k$---the
rectangles $\Omega_n\times\Sigma_n$, lying along the diagonal, will
bear a special meaning, namely, they will be the supports of
measures from an fsJN-sequence $\seq{\mu_n}{n\io_+}$ on the space
$\beta\Omega\times\beta\Sigma$ defined as follows ($n\io_+$):
\[\mu_n=\sum_{\substack{s\in\Omega_n\\i\in\Sigma_n}}\frac{s(i)}{n2^n}\delta_{(s,i)}.\]
Then, $\supp\big(\mu_n\big)=\Omega_n\times\Sigma_n$, so $\big|\supp\big(\mu_n\big)\big|=n2^n$,
$\big\|\mu_n\big\|=1$, and
\[\pi_i\big[\supp\big(\mu_n\big)\big]\cap\pi_i\big[\supp\big(\mu_{n'}\big)\big]=\emptyset\]
for every $n\neq n'$ and $i\in\{0,1\}$ (here $\pi_i$ denotes the projection on the $i$-th coordinate). Note that for each $n\io_+$ and any two sets $A\in\wp(\Omega)$ and $B\in\wp(\Sigma)$ we have:
\[\tag{$\dagger$}\big|\mu_n\big([A]\times[B]\big)\big|\le\frac{\big|A\cap \Omega_n\big|}{2^n}\cdot\frac{\big|B\cap \Sigma_n\big|}{n},\]
where $[A]$ and $[B]$ always denote the clopen subsets of $\beta\Omega$ and $\beta\Sigma$ corresponding in the sense of the Stone duality to $A$ and $B$, respectively---since $\beta\Omega$ and $\beta\Sigma$ are extremely disconnected, we have $[A]=\ol{A}^{\beta\Omega}$ and $[B]=\ol{B}^{\beta\Sigma}$.

\medskip

Before we state and prove the main proposition of this section, we need to provide a bit of explanation of probability tools we use in the proof. For every $n\io_+$ and $i\in n$ define the function $X_i\colon\Omega_n\to\{0,1\}$ as follows: $X_i(r)=1$ if and only if $r(i)=1$, where $r\in\Omega_n$. Put $S_n=\sum_{i=0}^{n-1}X_i$, so $S_n\colon\Omega_n\to n$ is the function computing the number of $1$'s in the argument sequence $r\in\Omega_n$. For a finite set $A\in\fso$, let $P_A$ denotes the standard product probability on $2^A$ (assigning $1/2^{|A|}$ to each elementary event, i.e. $P_A(\{r\})=1/2^{|A|}$ for each $r\in2^{|A|}$). Recall that for every $k\le n$ it holds:
\[P_n(S_n=k)=P_n\big(\big\{r\in\Omega_n\colon\ S_n(r)=k\big\}\big)={n\choose k}1/2^n.\]
We will need the following fact estimating the probability that $S_n(r)$ has value ``far'' (with respect to $\eps$) from $n/2$, i.e. that ``$r$ contains \emph{significantly} more (with respect to $\eps$) $1$'s than $-1$'s, or \textit{vice versa}''.

\begin{fact}\label{fact:bollobas}
If $n\io_+$ and $\eps\in\big(0,1/12\big]$ are such numbers that $n\ge3/\eps$, then:
\[P_n\big(\big|S_n-n/2\big|\ge\eps n/2\big)\le
\frac{\sqrt{2}}{\eps\sqrt{n}}.\]
\end{fact}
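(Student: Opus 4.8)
The plan is to prove Fact~\ref{fact:bollobas} as a routine consequence of a Chernoff-type tail bound for the binomial distribution, made slightly sharper by a Stirling-type estimate in the regime we care about. Write $B_n=\big|\big\{i<n\colon r(i)=1\big\}\big|=S_n(r)$, which under $P_n$ has the distribution $\mathrm{Bin}(n,1/2)$; the event $\big\{\big|S_n-n/2\big|\ge\eps n/2\big\}$ is the event that $B_n$ deviates from its mean $n/2$ by at least $\eps n/2$.

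First I would observe that the stated bound $\sqrt{2}/(\eps\sqrt{n})$ is much weaker than the exponential Chernoff bound $2\exp(-\eps^2 n/4)$ valid for $\eps\in(0,1]$, so in principle one could simply cite the standard Chernoff inequality and note that $2\exp(-\eps^2 n/4)\le \sqrt 2/(\eps\sqrt n)$ whenever $n\ge 3/\eps$ and $\eps\le 1/12$ (this last comparison is a one-variable calculus exercise: put $t=\eps^2 n\ge 3\eps$, so one needs $2e^{-t/4}\le\sqrt2\,\sqrt{\eps}/\sqrt t\cdot\sqrt{\eps}$\,; after simplification it reduces to showing $t\,e^{-t/2}$ is bounded by a small constant, which holds for all $t>0$). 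However, the cleanest route, and the one matching the cited source Bollob\'as, is probably a direct second-moment / Chebyshev-type argument: since $\mathrm{Var}(S_n)=n/4$, Chebyshev gives $P_n\big(\big|S_n-n/2\big|\ge\eps n/2\big)\le \frac{n/4}{(\eps n/2)^2}=\frac{1}{\eps^2 n}$. This already has the right shape but is off by a factor of $\eps^{-1}$ versus $\eps^{-1/2}$; to recover the claimed power $\eps^{-1}n^{-1/2}$ I would instead estimate the central binomial coefficients directly.

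Concretely, the main step is: for $k$ with $\big|k-n/2\big|\ge \eps n/2$, bound $P_n(S_n=k)=\binom{n}{k}2^{-n}$ and sum. Using the standard estimate $\binom{n}{k}2^{-n}\le \sqrt{\tfrac{2}{\pi n}}\exp\!\big(-\tfrac{2(k-n/2)^2}{n}\big)$ (which follows from Stirling's formula with explicit error control), one gets
\[
P_n\big(\big|S_n-n/2\big|\ge \eps n/2\big)\le \sqrt{\tfrac{2}{\pi n}}\sum_{\,|j|\ge \eps n/2}\exp\!\Big(-\tfrac{2j^2}{n}\Big)\le \sqrt{\tfrac{2}{\pi n}}\cdot 2\int_{\eps n/2-1}^\infty \exp\!\Big(-\tfrac{2x^2}{n}\Big)\,dx,
\]
and the Gaussian tail integral $\int_a^\infty e^{-2x^2/n}\,dx\le \frac{n}{4a}e^{-2a^2/n}$ for $a>0$ finishes it: with $a=\eps n/2-1\ge \eps n/4$ (using $n\ge 3/\eps\ge 4/(3\eps)$ so that $\eps n/4\ge 1$, hence $\eps n/2-1\ge \eps n/4$), one arrives at a bound of the form $C\,\eps^{-1}n^{-1/2}e^{-\eps^2 n/8}\le C\,\eps^{-1}n^{-1/2}$, and a careful tracking of the constant $C$ against the hypotheses $\eps\le 1/12$ and $n\ge 3/\eps$ yields $C\le\sqrt2$. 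I expect the only real obstacle to be bookkeeping the constant precisely enough to land on $\sqrt 2$ rather than some larger absolute constant; this is where the specific numerical hypotheses $\eps\le 1/12$ and $n\ge 3/\eps$ are used, presumably to absorb the $e^{-\eps^2 n/8}$ factor and the discretization losses. Since Fact~\ref{fact:bollobas} is attributed to Bollob\'as, I would in the write-up simply cite the relevant inequality from Bollob\'as's book and verify that the hypotheses there are implied by ours, rather than reproducing the Stirling estimates in full.
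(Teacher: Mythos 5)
The paper's entire proof of this Fact is the single line ``See Bollob\'as [Theorem 1.7.(i)]'', so your final plan---citing Bollob\'as's inequality and verifying that its hypotheses are met---is exactly what the paper does, and the Stirling/Gaussian-tail sketch is machinery you would not need to write out. One caveat on your parenthetical Chernoff shortcut: with the exponent $\eps^2n/4$ the claimed comparison $2e^{-\eps^2n/4}\le\sqrt{2}/(\eps\sqrt{n})$ is \emph{false} in general (setting $t=\eps^2n$ it reduces to $te^{-t/2}\le 1/2$, which fails for $t$ roughly in $(0.7,4.3)$ since the maximum of $te^{-t/2}$ is $2/e>1/2$ at $t=2$, and such $t$ are attainable under your hypotheses, e.g.\ $\eps=1/12$, $n=288$), whereas with the correct Hoeffding exponent $\eps^2n/2$ the reduction becomes $\sqrt{t}\,e^{-t/2}\le 1/\sqrt{2}$, which does hold for all $t>0$ because the maximum is $e^{-1/2}$ at $t=1$.
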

\begin{proof}
See Bollob\'as \cite[Theorem 1.7.(i)]{Bol01}.
\end{proof}

\medskip

We are ready to prove the main result of this section.

\begin{proposition}\label{prop:fsjnp_product_omega_sigma}
The sequence $\seq{\mu_n}{n\io_+}$ defined above is an fsJN-sequence. Consequently, $\beta\Omega\times\beta\Sigma$ has the fsJNP.
\end{proposition}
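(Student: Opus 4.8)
The plan is to verify the only non-trivial clause of Definition~\ref{def:jn_sequence}, namely that $\seq{\mu_n}{n\io_+}$ is weakly* null; the equalities $\big\|\mu_n\big\|=1$ and the finiteness of $\supp\big(\mu_n\big)=\Omega_n\times\Sigma_n$ have already been recorded above. First I would reduce the collection of test functions to clopen rectangles. Since $\sup_n\big\|\mu_n\big\|=1<\infty$, the set of $\phi\in C(\beta\Omega\times\beta\Sigma)$ with $\lim_{n\to\infty}\mu_n(\phi)=0$ is a norm-closed linear subspace; and the linear span of the characteristic functions $\chi_{[A]\times[B]}$, for $A\in\wp(\Omega)$ and $B\in\wp(\Sigma)$, is a point-separating subalgebra of $C(\beta\Omega\times\beta\Sigma)$ containing the constants (here one uses that $\beta\Omega=St(\wp(\Omega))$ and $\beta\Sigma=St(\wp(\Sigma))$ are extremely disconnected with clopen algebras $\wp(\Omega)$, $\wp(\Sigma)$), hence dense by the Stone--Weierstrass theorem. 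Thus it suffices to prove $\lim_{n\to\infty}\mu_n\big([A]\times[B]\big)=0$ for all $A\in\wp(\Omega)$ and $B\in\wp(\Sigma)$.

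Next, fixing such $A,B$ and writing $m_n=\big|B\cap\Sigma_n\big|\le n$ and $f_n(s)=\sum_{i\in B\cap\Sigma_n}s(i)$ for $s\in\Omega_n$, the definition of $\mu_n$ gives
\[\big|\mu_n\big([A]\times[B]\big)\big|=\frac{1}{n2^n}\Big|\sum_{s\in A\cap\Omega_n}f_n(s)\Big|\le\frac{1}{n2^n}\sum_{s\in\Omega_n}\big|f_n(s)\big|.\]
The key observation is that under the product probability $P_n$ on $\Omega_n$ the coordinates $s(i)=2X_i(s)-1$ are independent and uniform in $\{-1,1\}$, so $f_n=2\sum_{i\in B\cap\Sigma_n}X_i-m_n$ has the same distribution as $2S_{m_n}-m_n$ computed in $\Omega_{m_n}$ with respect to $P_{m_n}$; in particular
\[\frac{1}{2^n}\sum_{s\in\Omega_n}\big|f_n(s)\big|=2\sum_{k=0}^{m_n}\big|k-\tfrac{m_n}{2}\big|\,P_{m_n}\big(S_{m_n}=k\big).\]

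This is exactly where Fact~\ref{fact:bollobas} enters, and the concentration of $S_{m_n}$ around $m_n/2$ is the heart of the argument (the rest being routine). Given $\eps\in(0,1/12]$, I would argue as follows. If $m_n<3/\eps$, then $\big|f_n\big|\le m_n$, so the quantity above is at most $3/(\eps n)$. If $m_n\ge3/\eps$, split the last sum according to whether $\big|k-m_n/2\big|<\eps m_n/2$ or $\big|k-m_n/2\big|\ge\eps m_n/2$: the first range contributes at most $\eps m_n$, while by Fact~\ref{fact:bollobas} the second contributes at most $m_n\cdot\sqrt2/(\eps\sqrt{m_n})=\sqrt{2m_n}/\eps$; combining and using $m_n\le n$,
\[\big|\mu_n\big([A]\times[B]\big)\big|\le\frac1n\Big(\eps m_n+\frac{\sqrt{2m_n}}{\eps}\Big)\le\eps+\frac{\sqrt2}{\eps\sqrt n}.\]
In both cases $\limsup_{n\to\infty}\big|\mu_n([A]\times[B])\big|\le\eps$, and letting $\eps\to0$ yields $\lim_{n\to\infty}\mu_n([A]\times[B])=0$, which by the reduction above completes the proof that $\seq{\mu_n}{n\io_+}$ is an fsJN-sequence; the final assertion about the fsJNP of $\beta\Omega\times\beta\Sigma$ is then immediate. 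I do not expect any single step to be a serious obstacle: the Stone--Weierstrass reduction and the distributional identification are mechanical, and the two-range split is the only real computation. As a shortcut one can even bypass Fact~\ref{fact:bollobas}: since $\frac1{2^n}\sum_{s\in\Omega_n}f_n(s)^2=m_n$, the Cauchy--Schwarz inequality applied to $\sum_{s\in A\cap\Omega_n}f_n(s)$ gives directly $\big|\mu_n([A]\times[B])\big|\le\sqrt{m_n}/n\le1/\sqrt n$.
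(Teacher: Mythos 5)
Your argument is correct, and its core is the same as the paper's: reduce weak* nullity to clopen rectangles $[A]\times[B]$, and then control $\mu_n([A]\times[B])$ by splitting according to whether $s\in\Omega_n$ is nearly balanced on $B\cap\Sigma_n$, invoking Fact~\ref{fact:bollobas} for the unbalanced part. The differences are organizational rather than substantive: the paper splits the index set into $I_0=\{n:|B\cap\Sigma_n|<2/\eps^4\}$ and its complement and then partitions $\Omega_n$ into the ``bad'' set $\Delta_{n,\eps}$ and its complement, whereas you first pass to the distribution of $f_n(s)=\sum_{i\in B\cap\Sigma_n}s(i)$ (identifying it with that of $2S_{m_n}-m_n$) and perform the same two-range split on the values of $S_{m_n}$, with the threshold $3/\eps$ in place of $2/\eps^4$; your explicit Stone--Weierstrass justification of the reduction to rectangles is also slightly more careful than the paper's one-line assertion. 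The genuinely different contribution is your closing remark: since $\frac{1}{2^n}\sum_{s\in\Omega_n}f_n(s)^2=m_n$ by independence, Cauchy--Schwarz gives $\big|\mu_n([A]\times[B])\big|\le\sqrt{m_n}/n\le 1/\sqrt{n}$ uniformly in $A$ and $B$, which bypasses Fact~\ref{fact:bollobas}, the $\eps$-bookkeeping, and the case distinction entirely. That second-moment argument is simpler and strictly cleaner than both your main argument and the paper's proof, and it yields an explicit rate of convergence; the only thing the concentration approach ``buys'' in comparison is sharper tail information about how the mass of $\mu_n$ distributes over $\Omega_n$, which is not needed for the proposition.
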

\begin{proof}
Since $\beta\Omega\times\beta\Sigma$ is a totally disconnected compact space, to prove that $\seq{\mu_n}{n\io_+}$ is weakly* null it is enough to show that it converges to $0$ on every clopen subset of the form $[A]\times[B]$, where $A\in\wp(\Omega)$ and $B\in\wp(\Sigma)$. So let us fix two such sets $A$ and $B$.

Fix $\eps\in\big(0,1/12\big]$ and put:
\[I_0=\big\{n\io_+\colon\ \big|B\cap\Sigma_n\big|<2/\eps^4\big\}\]
and
\[I_1=\omega_+\sm I_0=\big\{n\io_+\colon \big|B\cap\Sigma_n\big|\ge2/\eps^4\big\}.\]
For each $i\in\{0,1\}$ we will find $N_i\io$ such that for every $n\ge N_i$, $n\in I_i$, it holds
\[\big|\mu_n\big([A]\times[B]\big)\big|=\big|\mu_n\big(\big[A\cap\Omega_n\big]\times\big[B\cap\Sigma_n\big]\big)\big|<2\eps.\]

\medskip

If for some $i\in\{0,1\}$ the set $I_i$ is finite, then let immediately $N_i=1+\max I_i$. If $I_0$ is infinite, then by ($\dagger$) for every $n\in I_0$ we have:
\[\big|\mu_n\big([A]\times[B]\big)\big|\le\frac{\big|A\cap \Omega_n\big|}{2^n}\cdot\frac{\big|B\cap\Sigma_n\big|}{n}\le\frac{\big|B\cap\Sigma_n\big|}{n}<\frac{2}{n\eps^4},\]
so there exists $N_0\io$ such that for every $n\ge N_0$, $n\in I_0$, we have:
\[\big|\mu_n\big([A]\times[B]\big)\big|<2\eps.\]

\medskip

Let us now assume that $I_1$ is infinite. For every $n\in I_1$ define also the set $\Delta_{n,\eps}$ as follows:
\[\Delta_{n,\eps}=\Big\{s\in\Omega_n\colon\ \Big|\big|\big\{i\in B\cap\Sigma_n\colon\ s(i)=1\big\}\big|-\frac{\big|B\cap\Sigma_n\big|}{2}\Big|\ge\eps\frac{\big|B\cap\Sigma_n\big|}{2}\Big\},\]
so $\Delta_{n,\eps}$ denotes the event that $s\in\Omega_n$ is ``far'' (with respect to $\eps$) from having the same numbers of $1$'s and $-1$'s when restricted to the set $B$. If we put similarly:
\[\Gamma_{n,\eps}=\Big\{s\in2^{B\cap\Sigma_n}\colon\ \Big|\big|\big\{i\in B\cap\Sigma_n\colon\ s(i)=1\big\}\big|-\frac{\big|B\cap\Sigma_n\big|}{2}\Big|\ge\eps\frac{\big|B\cap\Sigma_n\big|}{2}\Big\},\]
then we trivially have:
\[\tag{$\times$}\Delta_{n,\eps}=\Gamma_{n,\eps}\times2^{\Sigma_n\sm B}.\]
Using this, for every $n\in I_1$ we will estimate the values of measures (see (2) and (3)):
\[\tag{$*$}\big|\mu_n\big(\big[A\cap\Delta_{n,\eps}\big]\times[B\cap\Sigma_n]\big)\big|=\Big|\sum_{\substack{s\in A\cap\Delta_{n,\eps}\\i\in B\cap\Sigma_n}}\frac{s(i)}{n2^n}\Big|\]
and
\[\tag{$**$}\big|\mu_n\big(\big[A\cap\big(\Omega_n\sm\Delta_{n,\eps}\big)\big]\times[B\cap\Sigma_n]\big)\big|=\Big|\sum_{\substack{s\in A\cap(\Omega_n\sm\Delta_{n,\eps})\\i\in B\cap\Sigma_n}}\frac{s(i)}{n2^n}\Big|.\]
Note that:
\[\big|\mu_n\big([A]\times[B]\big)\big|\le\big|\mu_n\big(\big[A\cap\Delta_{n,\eps}\big]\times[B\cap\Sigma_n]\big)\big|+\big|\mu_n\big(\big[A\cap\big(\Omega_n\sm\Delta_{n,\eps}\big)\big]\times[B\cap\Sigma_n]\big)\big|,\]
so obtaining ``good'' estimations of ($*$) and ($**$) will finish the proof.

\medskip

Fix $n\in I_1$ and let us start with the estimation of ($*$). Note that $\big|B\cap\Sigma_n\big|\ge3/\eps$, so recall ($\times$) and apply Fact \ref{fact:bollobas} with the set $B\cap\Sigma_n$ instead of the set $n$ to get that: 
\[\tag{$0$}P_n\big(\Delta_{n,\eps}\big)=P_{B\cap\Sigma_n}\big(\Gamma_{n,\eps}\big)\cdot P_{\Sigma_n\sm B}\big(2^{\Sigma_n\sm B}\big)=P_{B\cap\Sigma_n}\big(\Gamma_{n,\eps}\big)\le\frac{\sqrt{2}}{\eps\sqrt{\big|B\cap\Sigma_n\big|}}.\]
It follows that for every $s\in\Omega_n\sm \Delta_{n,\eps}$ we have:
\[\big|\sum_{i\in B\cap\Sigma_n}s(i)\big|=\Big|\big|\big\{i\in B\cap\Sigma_n\colon\ s(i)=1\big\}\big|-\big|\big\{i\in B\cap\Sigma_n\colon\ s(i)=-1\big\}\big|\Big|\le\]
\[\Big|\big|\big\{i\in B\cap\Sigma_n\colon\ s(i)=1\big\}\big|-\frac{\big|B\cap\Sigma_n\big|}{2}\Big|+\]
\[\Big|\big|\big\{i\in B\cap\Sigma_n\colon\ s(i)=-1\big\}\big|-\frac{\big|B\cap\Sigma_n\big|}{2}\Big|<\]
\[2\cdot\eps\frac{\big|B\cap\Sigma_n\big|}{2}=\eps\big|B\cap\Sigma_n\big|,\]
so:
\[\tag{$1$}\big|\sum_{i\in B\cap\Sigma_n}s(i)\big|<\eps\big|B\cap\Sigma_n\big|.\]
Finally, it holds that:
\[\Big|\sum_{\substack{s\in A\cap\Delta_{n,\eps}\\i\in B\cap\Sigma_n}}\frac{s(i)}{n2^n}\Big|\le\sum_{\substack{s\in A\cap\Delta_{n,\eps}\\i\in B\cap\Sigma_n}}\frac{1}{n2^n}=\frac{\big|A\cap\Delta_{n,\eps}\big|\cdot\big|B\cap\Sigma_n\big|}{n2^n}\le\]
\[\frac{\big|\Delta_{n,\eps}\big|\cdot n}{n2^n}=P_n\big(\Delta_{n,\eps}\big),\]
so by (0):
\[\tag{$2$}\Big|\sum_{\substack{s\in A\cap\Delta_{n,\eps}\\i\in B\cap\Sigma_n}}\frac{s(i)}{n2^n}\Big|\le\frac{\sqrt{2}}{\eps\sqrt{\big|B\cap\Sigma_n\big|}}.\]

\medskip

We estimate ($**$) in a similar way:
\[\Big|\sum_{\substack{s\in A\cap(\Omega_n\sm\Delta_{n,\eps})\\i\in B\cap\Sigma_n}}\frac{s(i)}{n2^n}\Big|\le\frac{1}{n2^n}\Big|\sum_{\substack{s\in A\cap(\Omega_n\sm\Delta_{n,\eps})\\i\in B\cap\Sigma_n}}s(i)\Big|=\]
\[\frac{1}{n2^n}\Big|\sum_{s\in A\cap(\Omega_n\sm\Delta_{n,\eps})}\ \sum_{i\in B\cap\Sigma_n}s(i)\Big|\le\frac{1}{n2^n}\sum_{s\in A\cap(\Omega_n\sm\Delta_{n,\eps})}\big|\sum_{i\in B\cap\Sigma_n}s(i)\big|\le\]
\[\frac{1}{n}\max\Big\{\big|\sum_{i\in B\cap\Sigma_n}s(i)\big|\colon\ s\in\Omega_n\sm\Delta_{n,\eps}\Big\},\]
so by (1):
\[\tag{$3$}\Big|\sum_{\substack{s\in A\cap(\Omega_n\sm\Delta_{n,\eps})\\i\in B\cap\Sigma_n}}\frac{s(i)}{n2^n}\Big|<\frac{\eps\big|B\cap\Sigma_n\big|}{n}\le\frac{\eps n}{n}=\eps.\]

\medskip

We are ready to finish the proof---using (2), (3) and the fact that $\big|B\cap\Sigma_n\big|>2/\eps^4$, we conclude that:
\[\big|\mu_n\big([A]\times[B]\big)\big|=\Big|\sum_{\substack{s\in A\cap\Omega_n\\i\in B\cap\Sigma_n}}\frac{s(i)}{n2^n}\Big|\le\Big|\sum_{\substack{s\in A\cap\Delta_{n,\eps}\\i\in B\cap\Sigma_n}}\frac{s(i)}{n2^n}\Big|+\Big|\sum_{\substack{s\in A\cap(\Omega_n\sm\Delta_{n,\eps})\\i\in B\cap\Sigma_n}}\frac{s(i)}{n2^n}\Big|<\]
\[\frac{\sqrt{2}}{\eps\sqrt{\big|B\cap\Sigma_n\big|}}+\eps<\eps+\eps=2\eps.\]
It follows that if for $N_1$ we take any number from $I_1$, then for every $n\ge N_1$ we have:
\[\big|\mu_n\big([A]\times[B]\big)\big|<2\eps.\]

We finish the proof by denoting $N=\max\big(N_0,N_1\big)$ and seeing that for every $n\ge N$ we obviously have the same inequality, i.e.:
\[\big|\mu_n\big([A]\times[B]\big)\big|<2\eps.\]
Since $\eps\in\big(0,1/12\big]$ is arbitrary, it holds that $\lim_{n\to\infty}\mu_n\big([A]\times[B]\big)=0$ and hence $\seq{\mu_n}{n\io_+}$ is weakly* null.
\end{proof}

\begin{theorem}\label{theorem:products_fsjnp}
For every two infinite compact spaces $K$ and $L$, their product $K\times L$ has the fsJNP.
\end{theorem}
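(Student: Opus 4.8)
The plan is to deduce the theorem directly from Proposition~\ref{prop:fsjnp_product_omega_sigma}, which already supplies an explicit fsJN-sequence $\seq{\mu_n}{n\io_+}$ on $\beta\Omega\times\beta\Sigma$ with $\supp\big(\mu_n\big)=\Omega_n\times\Sigma_n$, by transporting it along suitable continuous surjections onto separable subspaces of $K$ and of $L$.

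First I would fix an injective family $\seq{a_s}{s\in\Omega}$ of pairwise distinct points of $K$ — possible since $\Omega$ is countable and $K$ is infinite — and likewise an injective family $\seq{b_i}{i\in\Sigma}$ of pairwise distinct points of $L$. Put $K_0=\ol{\{a_s\colon s\in\Omega\}}^K$ and $L_0=\ol{\{b_i\colon i\in\Sigma\}}^L$; these are compact and contain $\{a_s\}$, resp. $\{b_i\}$, as dense subsets. Since $\Omega$ and $\Sigma$ carry the discrete topology, the universal property of the \v{C}ech--Stone compactification lets me extend $s\mapsto a_s$ and $i\mapsto b_i$ to continuous maps $q_K\colon\beta\Omega\to K_0$ and $q_L\colon\beta\Sigma\to L_0$; each is surjective (its image is compact and dense in the respective space), and $q_K\rstr\Omega$, $q_L\rstr\Sigma$ are injective by construction.

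Next I would push the measures forward: for $n\io_+$ set $\nu_n=(q_K\times q_L)_*\mu_n$, viewed as a finitely supported (hence Radon) measure on $K\times L$ concentrated on $K_0\times L_0$. Explicitly
\[\nu_n=\sum_{s\in\Omega_n,\ i\in\Sigma_n}\frac{s(i)}{n2^n}\,\delta_{(a_s,b_i)},\]
and since $q_K\times q_L$ is injective on $\Omega\times\Sigma$, the points $(a_s,b_i)$ for $(s,i)\in\Omega_n\times\Sigma_n$ are pairwise distinct, so no cancellation of masses occurs and $\big\|\nu_n\big\|=1$. For weak* nullity, given $F\in C(K\times L)$ the change-of-variables formula yields $\nu_n(F)=\mu_n\big(F\circ(q_K\times q_L)\big)$, and $F\circ(q_K\times q_L)\in C(\beta\Omega\times\beta\Sigma)$, so $\nu_n(F)\to0$ by Proposition~\ref{prop:fsjnp_product_omega_sigma}. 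Hence $\seq{\nu_n}{n\io_+}$ is an fsJN-sequence on $K\times L$, which is exactly the assertion; combining this with Theorem~\ref{theorem:ell1_grothendieck_equiv_no_fsjnp} one moreover recovers that $K\times L$ fails the $\ell_1$-\gr property.

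The only genuinely delicate point is the preservation of the norms under the push-forward: a continuous surjection does \emph{not} in general transfer the fsJNP, precisely because it may glue support points carrying masses of opposite sign and thereby collapse the norm. This is circumvented here because the supports of all the $\mu_n$'s lie inside the dense copy of $\Omega\times\Sigma$ on which $q_K\times q_L$ was arranged to be injective; everything else — density of $\{a_s\}$ in $K_0$ (hence surjectivity of $q_K$), the extension property of $\beta\Omega$, and the change-of-variables identity for push-forwards — is routine.
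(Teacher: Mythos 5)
Your proposal is correct and follows essentially the same route as the paper's own proof: both transport the explicit fsJN-sequence from Proposition~\ref{prop:fsjnp_product_omega_sigma} into $K\times L$ via Stone extensions of injections of the countable discrete index sets into $K$ and $L$, with injectivity on the supports guaranteeing that the push-forward preserves the norms and composition with the continuous maps giving weak* nullity. The only cosmetic difference is that the paper picks discrete countable subsets $D\sub K$, $E\sub L$ and works with $\beta\omega$ in place of $\beta\Omega$, $\beta\Sigma$, which changes nothing of substance.
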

\begin{proof}
First, notice that $\omega$ is homeomorphic to both $\Omega$ and $\Sigma$, so $\bo$, $\beta\Omega$ and $\beta\Sigma$ are mutually homeomorphic. Consequently, by Proposition \ref{prop:fsjnp_product_omega_sigma}, $(\bo)^2$ has the fsJNP and an fsJN-sequence witnessing this fact may be defined with supports contained completely in $\omega^2$ (as measures $\mu_n$'s defined above on $\beta\Omega\times\beta\Sigma$ have supports contained in $\Omega\times\Sigma$).

Let $D$ and $E$ be discrete countable subsets of $K$ and $L$, respectively. Let $\varphi\colon\omega\to D$ and $\psi\colon\omega\to E$ be bijections. By the Stone Extension Property of $\bo$, there are continuous maps $\Phi\colon\bo\to K$ and $\Psi\colon\bo\to L$ such that $\Phi\rstr\omega=\varphi$ and $\Psi\rstr\omega=\psi$. Let $\seqn{\mu_n}$ be an fsJN-sequence of measures on $(\bo)^2$ with supports in $\omega^2$. For each $n\io$ define a measure $\nu_n$ on $K\times L$ as follows:
\[\nu_n=\sum_{(x,y)\in\supp(\mu_n)}\mu_n\big(\{(x,y)\}\big)\cdot\delta_{(\varphi(x),\psi(y))},\]
it follows that $\big\|\nu_n\big\|=1$ and $\supp\big(\nu_n\big)$ is finite. Since $\seqn{\mu_n}$ is weakly* null, for every $f\in C(K\times L)$ we have:
\[\lim_{n\to\infty}\nu_n(f)=\lim_{n\to\infty}\mu_n\big(f(\Phi,\Psi)\big)=0,\]
where $f(\Phi,\Psi)(x,y)=f(\Phi(x),\Psi(y))\in C(\bo\times\bo)$, so $\seqn{\nu_n}$ is also weakly* null. This proves that $K\times L$ has also the fsJNP.
\end{proof}

Theorem \ref{theorem:products_fsjnp} yields a strengthening of the result of Khurana, Cembranos and Freniche.

\begin{corollary}\label{cor:product_ell1_grothendieck}
For every two infinite compact spaces $K$ and $L$, their product $K\times L$ does not have the $\ell_1$-Grothendieck property. \noproof
\end{corollary}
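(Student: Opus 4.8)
The plan is to obtain the corollary as an immediate consequence of the two theorems just proved, with essentially no extra work. First I would record the trivial observation that, since $K$ and $L$ are infinite compact Hausdorff spaces, so is their product $K\times L$; hence the $\ell_1$-Grothendieck property is meaningfully defined for it and the characterization of Theorem~\ref{theorem:ell1_grothendieck_equiv_no_fsjnp} applies verbatim.

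The decisive input is Theorem~\ref{theorem:products_fsjnp}, which states that $K\times L$ has the fsJNP. All of the genuine content sits here: the sequence $\seq{\mu_n}{n\io_+}$ of Proposition~\ref{prop:fsjnp_product_omega_sigma} on $\beta\Omega\times\beta\Sigma$, with supports contained in $\Omega\times\Sigma$, is shown to be weakly* null by means of the binomial deviation estimate of Fact~\ref{fact:bollobas}, and this fsJN-sequence is then transported to $K\times L$ by pushing it forward along continuous maps $\Phi\colon\bo\to K$ and $\Psi\colon\bo\to L$ extending bijections of $\omega$ onto countable discrete subsets of $K$ and of $L$. Consequently there is no real obstacle left at the level of the corollary itself --- the ``hard part'' has already been dealt with in the preceding section.

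It then suffices to invoke the equivalence (1)$\Leftrightarrow$(3) of Theorem~\ref{theorem:ell1_grothendieck_equiv_no_fsjnp}: a compact space enjoys the fsJNP precisely when it fails to have the $\ell_1$-Grothendieck property. Since $K\times L$ has the fsJNP, it does not have the $\ell_1$-Grothendieck property, which is exactly what was to be shown.

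Finally, I would point out how this strengthens the theorems of Khurana, Cembranos and Freniche. The ordinary Grothendieck property of a compact space implies its $\ell_1$-Grothendieck property (by Corollary~\ref{cor:gr_no_fsJNP} combined with Theorem~\ref{theorem:ell1_grothendieck_equiv_no_fsjnp}), so the failure of the latter for $K\times L$ in particular re-proves that $C(K\times L)$ is not a Grothendieck space; moreover, by the theorem of Banakh, K\k{a}kol and \'Sliwa together with Corollary~\ref{cor:complemented_c0p_complemented_c0}, $C_p(K\times L)$ contains a complemented copy of $(c_0)_p$ and $C(K\times L)$ a complemented copy of $c_0$.
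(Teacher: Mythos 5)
Your proposal is correct and coincides with the paper's (implicit) argument: the corollary is stated with no proof precisely because it follows immediately by combining Theorem~\ref{theorem:products_fsjnp} (the product $K\times L$ has the fsJNP) with the equivalence (1)$\Leftrightarrow$(3) of Theorem~\ref{theorem:ell1_grothendieck_equiv_no_fsjnp}. Your closing remarks on recovering the Khurana--Cembranos--Freniche results are accurate and match the surrounding discussion in the paper.
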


Schachermayer \cite[Proposition 5.3]{Sch82} and Cembranos \cite[Corollary 2]{Cem84} proved that a $C(K)$-space is Grothendieck if and only if it does not contain any complemented copy of $c_0$. This implies Cembranos' result \cite[Corollary 3]{Cem84} stating that the space $C(K\times L)$ admits a complemented copy of $c_0$ for any two infinite compact spaces $K$ and $L$. Corollary \ref{cor:product_ell1_grothendieck}, together with the characterization of Banakh, K\k{a}kol and \'Sliwa \cite[Theorem 1]{BKS19}, implies the following stronger result.

\begin{corollary}\label{cor:product_c0}
For every two infinite compact spaces $K$ and $L$, the space $C_p(K\times L)$ admits $(c_0)_p$ as a quotient. \noproof
\end{corollary}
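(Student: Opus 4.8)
The plan is to read the statement off from Theorem~\ref{theorem:products_fsjnp} together with the characterization of the Josefson--Nissenzweig property of $C_p$-spaces due to Banakh, K\k{a}kol and \'Sliwa. First, by Theorem~\ref{theorem:products_fsjnp} the product $K\times L$ has the fsJNP, which is precisely to say that $C_p(K\times L)$ has the JNP. Next, by \cite[Theorem~1]{BKS19} the JNP of $C_p(X)$ is equivalent to the existence of a closed subspace $E\sub C_p(X)$ linearly homeomorphic to $(c_0)_p$ and \emph{complemented} in $C_p(X)$; applied to $X=K\times L$ this yields a decomposition $C_p(K\times L)=E\oplus F$ with $E$ isomorphic to $(c_0)_p$ and $F$ a closed subspace. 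Finally, the canonical projection $P\colon C_p(K\times L)\to E$ with kernel $F$ is a continuous, open, linear surjection onto a space isomorphic to $(c_0)_p$, so $(c_0)_p$ is a quotient of $C_p(K\times L)$, which is the assertion of the corollary.

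For readers who prefer an argument in the constructive spirit of the rest of Section~\ref{section:products}, I would instead exhibit the quotient map directly from the fsJN-sequence $\seqn{\nu_n}$ built in the proof of Theorem~\ref{theorem:products_fsjnp}. The operator $T\colon C_p(K\times L)\to\R^\omega$, $T(f)=\seqn{\nu_n(f)}$, takes values in $(c_0)_p$ because $\seqn{\nu_n}$ is weakly* null, and it is continuous for the pointwise topologies. Here one uses that each $\supp(\nu_n)$ is finite and that, after transporting back to the model $(\bo)^2$, the coordinate projections of $\supp(\mu_n)=\Omega_n\times\Sigma_n$ are pairwise disjoint in $n$, so that prescribed values on any finite union of the sets $\supp(\nu_n)$ can be realized by a continuous function on $K\times L$; this is what is needed to see that $T$ maps onto $(c_0)_p$, and openness of $T$ then follows either from the open mapping principle adapted to $C_p$-spaces or, again, from the complementability above. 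Since \cite{BKS19} already packages exactly this equivalence, the citation route is the most economical one.

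There is no genuine obstacle here: the whole content is already carried by Theorem~\ref{theorem:products_fsjnp}. The only point that must be handled with care is that one has to invoke the \emph{complemented} form of the Banakh--K\k{a}kol--\'Sliwa characterization, since merely containing an isomorphic copy of $(c_0)_p$ would not by itself make $(c_0)_p$ a quotient; with the complemented copy in hand, the conclusion is immediate once the hypotheses of Theorem~\ref{theorem:products_fsjnp} and of \cite[Theorem~1]{BKS19} are matched up.
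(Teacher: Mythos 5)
Your first paragraph is exactly the paper's intended argument: Theorem~\ref{theorem:products_fsjnp} gives the fsJNP of $K\times L$, the Banakh--K\k{a}kol--\'Sliwa characterization \cite[Theorem 1]{BKS19} upgrades this to a complemented copy of $(c_0)_p$ in $C_p(K\times L)$, and the projection onto that complemented subspace is the desired quotient map. The proposal is correct and follows the same route as the paper; the alternative constructive sketch in your second paragraph is not needed, and your closing remark about needing the \emph{complemented} (not merely isomorphic-copy) form of the characterization is exactly the right point of care.
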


Let us note here that the results presented in this section 
were also studied and generalized by K\k{a}kol, Marciszewski, Sobota and Zdomskyy \cite{KMSZ20} in the class of pseudocompact spaces.

%


\begin{thebibliography}{10}

\bibitem{Aiz92} A. Aizpuru, {\em On the Grothendieck and Nikodym properties of Boolean algebras}, Rocky Mountain J. Math. 22 (1992), no. 1, 1--10.


\bibitem{And61} T. And\^o, {\em Convergent sequences of finitely additive measures}, Pacific J. Math. 11 (1961), no. 2, 395--404.

\bibitem{Bab77} A.G. Babiker, {\em Uniform regularity of measures on compact spaces}, J. reine, angew. Math. 289 (1977), 188--198.

\bibitem{Back18} J. Back\'{e}, {\em Separable quotients of function spaces and Efimov spaces,}
Master Thesis, Institute of Mathematics, University of Vienna, 2018.

\bibitem{Bat92} E.M. Bator, {\em Unconditionally converging and compact operators on $c_0$}, Rocky Mountain J. Math. 22 (1992), no 2., 417--422.

\bibitem{BKS18} T. Banakh, J. K\k{a}kol, W. \'Sliwa, {\em Metrizable quotients of $C_p$-spaces}, Topol. Appl. 249 (2018), 95--102.

\bibitem{BKS19} T. Banakh, J. K\k{a}kol, W. \'Sliwa, {\em Josefson-Nissenzweig property for $C_{p}$-spaces}, RACSAM 113 (2019), 3015--3030.

\bibitem{Beh94} E. Behrends, {\em Lectures on Rosenthal's $\ell^1$-theorem}, Rend. Istit. Mat. Univ. Trieste XXXI (2000), Suppl. 1, 1--36.

\bibitem{Beh95} E. Behrends, {\em New proofs of Rosenthal's $\ell^1$-theorem and the Josefson--Nissenzweig theorem}, Bull. Pol. Acad. Sci., Math. 43 (1995), no. 4, 283--295.

\bibitem{Bie11} W. Bielas, {\em On convergence of sequences of Radon measures}, {\em Praca semestralna nr 2}, \'{S}rodowiskowe Studia Doktoranckie z Nauk Matematycznych, 2011, preprint, available at:

\texttt{http://ssdnm.mimuw.edu.pl/pliki/prace-studentow/st/pliki/wojciech-bielas-2.pdf}

\bibitem{Bog07} V.I. Bogachev, {\em Measure theory}, 2 volumes, Springer--Verlag, 2007.

\bibitem{Bol01} B. Bollob\'as, {\em Random graphs}, 2nd ed., Cambridge Studies in Advanced Mathematics 73, Cambridge University Press, 2001.

\bibitem{Bon91} J. Bonet, {\em A question of Valdivia on quasinormable Fr\'echet spaces}, Canad. Math. Bull. 34 (1991), no. 3, 301--304.

\bibitem{BLV} J. Bonet, M. Lindstr\"om, M. Valdivia, {\em Two theorems of Josefson--Nissenzweig type for Fr\'echet spaces}, Proc. Amer. Math. Soc. 117 (1993), no. 2, 363--364.

\bibitem{PBN07} P. Borodulin-Nadzieja, {\em Measures on minimally generated Boolean algebras}, Topol. Appl. 154 (2007), 3107--3124.

\bibitem{BF93} J.M. Borwein, M. Fabian, {\em On convex functions having points of Gateaux differentiability which are not points of Fr\'echet differentiability}, Can. J. Math. 45 (1993), no. 6, 1121--1134.

\bibitem{Bou83} J. Bourgain, {\em $H^\infty$ is a Grothendieck space}, Studia Math. 75 (1983), 193--216.

\bibitem{BD84} J. Bourgain, J. Diestel, {\em Limited operators and strict cosingularity}, Math. Nachr. 119 (1984), 55--58.

\bibitem{Bre06} C. Brech, {\em On the density of Banach spaces $C(K)$ with the Grothendieck property}, Proc. Amer. Math. Soc. 134 (2006), no. 12, 3653--3663.

\bibitem{Cem84} P. Cembranos, {\em $C(K, E)$ contains a complemented copy of $c_0$}, Proc. Amer. Math. Soc. 91 (1984), 556--558.

\bibitem{Dar67} R.B. Darst, {\em On a theorem of Nikodym with applications to weak convergence and von Neumann algebras}, Pacific J. Math. 23 (1967), no. 3, 473--477.

\bibitem{dlV04} R.H. de la Vega, {\em Homogeneity properties in large compact S-spaces}, 2004, preprint, \texttt{arxiv:math/0408339}.

\bibitem{dlV05} R.H. de la Vega, {\em Homogeneity properties on compact spaces}, PhD Thesis, University of Wisconsin-Madison, 2005, 47 pp.

\bibitem{Die51} J. Dieudonn\'e, {\em Sur la convergence de suites de measures de Radon}, An. Acad. Brasil. Ciên. 23 (1951), 277--282.

\bibitem{Die84} J. Diestel, {\em Sequences and Series in Banach Spaces}, Springer--Verlag, 1984.

\bibitem{Dow05} A. Dow, {\em Efimov spaces and the splitting number}, Spring Topology and Dynamical Systems Conference, Topology Proc. 29 (2005), 105--113.

\bibitem{DF07} A. Dow, D. Fremlin, {\em Compact sets without converging sequences in the random real model}, Acta Math. Univ. Comenianae 76 (2007), 161--171.

\bibitem{DPM09} A. Dow, R. Pichardo-Mendoza, {\em Efimov Spaces, CH, and Simple Extensions}, Topol. Proc. 33 (2009), 277--283.

\bibitem{DS13} A. Dow, S. Shelah, {\em An Efimov space from Martin’s Axiom}, Houston J. Math. 39 (2013), no. 4, 1423--1435.

\bibitem{DP07} M. D\v{z}amonja, G. Plebanek, {\em On Efimov spaces and Radon measures}, Topol. Appl. 154(2007), no. 10, 2063--2072.

\bibitem{Eng89} R. Engelking, {\em General topology. Revised and completed edition}, Heldermann Verlag, 1989.

\bibitem{Fed76} V.V. Fedorchuk, {\em Completely closed mappings, and the consistency of certain general topology theorems with the axioms of set theory}, Mat. Sb. Novaya Seriya 99 (141) (1976), 1--26 (Russian); English transl.: Math. USSR Sb. 28 (1976), 3--33 and 135.

\bibitem{Fed77} V.V. Fedorchuk, {\em A compact space having the cardinality of the continuum with no convergent sequences}, Math. Proc. Cambridge 81 (1977), 177--181.


\bibitem{Fre89} D.H. Fremlin, {\em Measure algebras}, in: {\em Handbook of Boolean algebras}, Vol. III, Chapter 22, J.D. Monk (ed.), North--Holland, 1989.

\bibitem{Fre84} F.J. Freniche, {\em Barrelledness of the space of vector valued and simple functions}, Math. Ann. 267 (1984), 479--486.

\bibitem{Fre84_vhs} F.J. Freniche, {\em The Vitali--Hahn--Saks theorem for Boolean algebras with the subsequential interpolation property}, Proc. Amer. Math. Soc. 92 (1984), no. 3, 362--36.

\bibitem{GW83}  W.H. Graves, R.F. Wheeler, {\em On the Grothendieck and Nikodym properties for algebras of Baire, Borel and universally measurable sets}, Rocky Mountain J. Math. 13 (1983), no. 2, 333--354.

\bibitem{Gro53} A. Grothendieck, {\em Sur les applications lineaires faiblement compactes d'espaces du type $C(K)$}, Canadian J. Math. 5 (1953), 129--173.

\bibitem{HJ77} J. Hagler, W.B. Johnson, {\em On Banach spaces whose dual balls are not weak* sequentially compact}, Israel J. Math. 28 (1977), 325--330.

\bibitem{HT14} P. H\'ajek, J. Talponen, {\em Note on Kadets Klee property and Asplund spaces}, Proc. Amer. Math. Soc. 142 (2014), no. 11, 3933--3939.

\bibitem{Har07} K.P. Hart, {\em Efimov's problem} in {\em Open problems in topology. II.}, ed. E. Pearl, Elsevier, 2007, 171--177.

\bibitem{Hay81} R. Haydon, {\em A nonreflexive Grothendieck space that does not contain $\ell_\infty$}, Israel J. Math. 40 (1981), no. 1, 65--73.

\bibitem{Hay01} R. Haydon, {\em Boolean Rings that are Baire Spaces}, Serdica Math. J. 27 (2001), no. 2, 91--106.

\bibitem{Jos75} B. Josefson, {\em Weak sequential convergence in the dual of a Banach space does not imply norm convergence}, Ark. Mat. 13 (1975), 79--89.

\bibitem{KLP17} J. K\k{a}kol, M. L\'opez-Pellicer, {\em On Valdivia strong version of Nikodym boundedness property}, J. Math. Anal. Appl. 446 (2017), no. 1, 1--17.

\bibitem{KMSZ20} J. K\k{a}kol, W. Marciszewski, D. Sobota, L. Zdomskyy, {\em On complemented copies of the space $c_0$ in spaces $C_p(X\times Y)$}, 2020, preprint, \texttt{arxiv:2007.14723}.

\bibitem{KS18} J. K\k{a}kol, W. \'Sliwa, {\em Efimov spaces and the separable quotient problem for spaces $C_p(K)$}, J. Math. Anal. Appl. 457 (2018), no. 1, 104--113.

\bibitem{Khu78} S.S. Khurana, {\em Grothendieck spaces}, Illinois J. Math. 22 (1978), no. 1, 79--80.

\bibitem{Kop88} S. Koppelberg, {\em Counterexamples in minimally generated Boolean algebras}, Acta Univ. Carol. Math. Phys. 29 (1988), no. 2, 27--36.

\bibitem{Kop89} S. Koppelberg, {\em Minimally generated Boolean algebras}, Order 5 (1989), no. 4, 393--406.

\bibitem{Kop89hbk} S. Koppelberg, {\em General theory of Boolean algebras} in {\em Handbook of Boolean algebras}, Vol. I, Part 1, ed. J.D. Monk, North--Holand 1989.

\bibitem{Kos04} P. Koszmider {\em Banach spaces of continuous functions with few operators}, Math. Ann. 330 (2004), 151--183.

\bibitem{KS12} P. Koszmider, S. Shelah, {\em Independent families in Boolean algebras with some separation properties}, Algebra Universalis 69 (2013), 305--312.

\bibitem{KP11} M. Krupski, G. Plebanek, {\em A dichotomy for the convex spaces of probability measures}, Topol. Appl. 158 (2011), no. 16, 2184--2190.

\bibitem{KN74} L. Kuipers, H. Niederreiter, {\em Uniform distribution of sequences}, John Wiley \& Sons, 1974.

\bibitem{Kun80} K. Kunen, {\em Set theory: An introduction to independence proofs}, North--Holland, 1980.

\bibitem{Lev77} R. Levy, {\em Countable spaces without points of first countability}, Pacific J. Math. 70 (1977), no. 2, 391--399.

\bibitem{LS93} M. Lindstr\"om, T. Schlumprecht, {\em A Josefson--Nissenzweig theorem for Fr\'echet spaces}, Bull. London Math. Soc. 25 (1993), 55--58.

\bibitem{Los78} V. Losert, {\em On the existence of uniformly distributed sequences in compact topological spaces I}, Trans. Amer. Math. Soc. 246 (1978), 463--471.

\bibitem{Los79} V. Losert, {\em On the existence of uniformly distributed sequences in compact topological spaces II}, Mh. Math. 87 (1979), 247--260.

\bibitem{Mah42} D. Maharam, {\em On homogeneous measure algebras}, Proc. Nat. Acad. Sci. U.S.A. 28 (1942), 108--111.

\bibitem{Mer96} S. Mercourakis, {\em Some remarks on countably determined measures and uniform distribution of sequences}, Mh. Math. 121 (1996), 79--111.

\bibitem{Mol81} A. Molt\'o, {\em On the Vitali--Hahn--Saks theorem}, Proc. Roy. Soc. Edinburgh Sect. A 90 (1981), 163--173.

\bibitem{Muj03} J. Mujica, {\em Banach spaces not containing $\ell_1$}, Ark. Mat. 41 (2003), no. 2, 363--374.

\bibitem{Nik33} O. Nikodym, {\em Sur les familles bornées de fonctions parfaitement additives d’ensemble abstrait}, Monatsh. Math. 40 (1933), 418--426.

\bibitem{Nis75} A. Nissenzweig, {\em w* sequential convergence}, Israel J. Math. 22 (1975), 266--272.

\bibitem{Pfi94} H. Pfitzner, {\em Weak compactness in the dual of a C$^*$-algebra is determined commutatively}, Math. Ann. 298 (1994), 349--371.

\bibitem{Ple05} G. Plebanek, {\em On Grothendieck spaces}, 2005, an (unfortunately) unpublished note.

\bibitem{PS14} G. Plebanek, D. Sobota, {\em Countable tightness in the spaces of regular probability measures}, Fund. Math. 229 (2015), no. 2, 159--169.

\bibitem{Pol82} R. Pol, {\em Note on the spaces of regular probability measures whose topology is determined by countable subsets}, Pacific J. Math. 100 (1982), 185--201.

\bibitem{Ros70} H. Rosenthal, {\em On relatively disjoint families of measures, with some applications to Banach space theory}, Studia Math. 37 1970, 13--36.

\bibitem{Sch82} W. Schachermayer, {\em On some classical measure-theoretic theorems for non-sigma-complete Boolean algebras}, Rozpr. Mat. 214 (1982), pp. 34.

\bibitem{Schl87} T. Schlumprecht, {\em Limited sets in Banach spaces}, PhD thesis, University of Munchen, 1987.

\bibitem{Sem71} Z. Semadeni, {\em Banach spaces of continuous functions. Volume 1}, PWN---Polish Scientific Publishers, 1971.

\bibitem{See68} G.L. Seever, {\em Measures on F-spaces}, Trans. Amer. Math. Soc. 133 (1968), no. 1, 267--280.

\bibitem{Sob19} D. Sobota, {\em The Nikodym property and cardinal characteristics of the continuum}, Ann. Pure Appl. Logic 170 (2019), no. 1, 1--35.

\bibitem{SZ19} D. Sobota, L. Zdomskyy, {\em Convergence of measures in forcing extensions}, Israel J. Math 232 (2019), no. 2, 501--529.

\bibitem{SZ19_min_gen} D. Sobota, L. Zdomskyy, {\em Minimally generated Boolean algebras with the Nikodym property}, 2020, preprint.

\bibitem{Tal80} M. Talagrand, {em Un nouveau $C(K)$ qui possede la propriete de Grothendieck}, Israel J. Math. 37 (1980), 181--191.

\bibitem{Tal84Pettis} M. Talagrand, {\em Pettis integral and measure theory}, Memoirs Amer. Math. Soc. 307 (1984), vol. 51.

\bibitem{Tal84} M. Talagrand, {\em Propri\'et\'e de Nikodym et propri\'et\'e de Grothendieck}, Studia Math. 78 (1984), 165--171.

\bibitem{TkaVol1} V. Tkachuk, {\em A $C_{p}$-theory problem book. Volume 1: Topological and function spaces}, Problem Books in Mathematics, Springer--Verlag, 2010.

\bibitem{TkaVol4} V. Tkachuk, {\em A $C_{p}$-theory problem book. Volume 4: Functional equivalencies}, Problem Books in Mathematics, Springer--Verlag, 2016.

\bibitem{Val79} M. Valdivia, {\em On certain barrelled normed spaces}, Ann. Inst. Fourier (Grenoble) 29 (1979), 39--56.

\bibitem{Val13} M. Valdivia, {\em On Nikodym boundedness property}, Rev. Real Acad. Cien. Ex., Fis. Natur. Serie A. Matematicas 107 (2013), no. 2, 355--37.

\bibitem{Woj07} M. W\'{o}jtowicz, {\em A Banach-lattice version of the Josefson--Nissenzweig theorem}, Indag. Math. 18 (2007), no. 3, 479--484.


\end{thebibliography}
\end{document}